\def\thm@space@setup{%
\thm@preskip=1em \thm@postskip=0pt
}
\numberwithin{equation}{section}
\newtheorem{assumption}{Assumption}[section]
\newtheorem{proposition}{Proposition}[section]
\newtheorem{lemma}{Lemma}[section]
\newtheorem{definition}{Definition}[section]
\newtheorem{theorem}{Theorem}[section]
\newtheorem{example}{Example}[section]
\newtheorem{remark}{Remark}[section]
\acrodef{ouq}[OUQ]{optimal uncertainty quantification}
\acrodef{dro}[DRO]{distributionally robust optimization}
\acrodef{saa}[SAA]{sample average approximation}
\acrodef{ldt}[LDT]{large deviation theory}
\acrodef{ldp}[LDP]{large deviation principle}
\acrodef{lln}[LLN]{law of large numbers}
\acrodef{kl}[KL]{Kullback-Leibler}
\acrodef{iid}[{i.i.d.\ \!\!}]{independent and identically distributed}
\acrodef{qp}[QP]{quadratic program}
\acrodef{qcqp}[QCQP]{quadratically constrained quadratic program}
\acrodef{vod}[VoD]{value of data}
\acrodef{fl}[FL]{Fenchel-Legendre}
\newcommand{\noopsort}[1]{}
\newcommand{\drop}[1]{}
\newcommand{\Indic}[1]{\mathsf{1}_{#1}}
\newcommand{\inprod}[2]{\ensuremath{\left\langle{#1}\vphantom{\big|},\vphantom{\big|}{#2}\right\rangle}}
\newcommand{\tnorm}[1]{\Vert#1\Vert}
\newcommand{\D}[2]{D\!\left(#1 \Vert #2 \right)}
\newcommand{\Dc}[2]{D_c\!\left(#1 \Vert #2 \right)}
\newcommand{\tpose}{^\top}
\newcommand{\iprod}[2]{\langle #1, #2 \rangle}
\newcommand{\A}{A}
\newcommand{\xivec}{{\xi}_{[T]}}
\renewcommand{\tfrac}[2]{{#1}/{#2}}
\newcommand{\mc}{\mathcal}
\newcommand{\mb}{\mathbb}
\renewcommand{\emph}{\textbf}
\def\d{\mathrm{d}}
\def\N{\mathbb{N}}
\def\Re{\mathbb{R}}
\def\SS{\mathbb{S}}
\DeclareMathOperator{\cl}{cl}
\DeclareMathOperator{\interior}{int}
\DeclareMathOperator{\dom}{dom}
\providecommand{\keywords}[1]{\textbf{\textit{Keywords---}} #1}
\title{\bf A Pareto Dominance Principle\\ for Data-Driven Optimization}
\author{Tobias Sutter$^1$ \and Bart P.G.\ \mbox{Van Parys}$^2$ \and Daniel Kuhn$^3$}
\date{\small{
    $^1$Department of Computer Science, University of Konstanz, {tobias.sutter@uni-konstanz.de}\\
    $^2$Sloan School of Management, Massachusetts Institute of Technology, {vanparys@mit.edu}\\
    $^3$Risk Analytics and Optimization Chair, Ecole Polytechnique F\'ed\'erale de Lausanne,{ daniel.kuhn@epfl.ch}\\%
    [2ex]}
  \today
}
\pgfplotsset{compat=1.16}
\begin{document}
\setlength{\baselineskip}{1.5em}

\maketitle

\begin{abstract}
We propose a statistically optimal approach to construct data-driven decisions for stochastic optimization problems. Fundamentally, a data-driven decision is simply a function that maps the available training data to a feasible action. It can always be expressed as the minimizer of a surrogate optimization model constructed from the data. The quality of a data-driven decision is measured by its out-of-sample risk. An additional quality measure is its out-of-sample disappointment, which we define as the probability that the out-of-sample risk exceeds the optimal value of the surrogate optimization model. The crux of data-driven optimization is that the data-generating probability measure is unknown. An ideal data-driven decision should therefore minimize the out-of-sample risk simultaneously with respect to {\em every} conceivable probability measure (and thus in particular with respect to the unknown true measure). Unfortunately, such ideal data-driven decisions are generally unavailable. 
This prompts us to seek data-driven decisions that minimize the in-sample risk subject to an upper bound on the out-of-sample disappointment-again simultaneously with respect to every conceivable probability measure.
We prove that such Pareto dominant data-driven decisions exist under conditions that allow for interesting applications: the unknown data-generating probability measure must belong to a parametric ambiguity set, and the corresponding parameters must admit a sufficient statistic that satisfies a large deviation principle. If these conditions hold, we can further prove that the surrogate optimization model generating the optimal data-driven decision must be a distributionally robust optimization problem constructed from the sufficient statistic and the rate function of its large deviation principle. This shows that the optimal method for mapping data to decisions is, in a rigorous statistical sense, to solve a distributionally robust optimization model. Maybe surprisingly, this result holds irrespective of whether the original stochastic optimization problem is convex or not and holds even when the training data is non-i.i.d. As a byproduct, our analysis reveals how the structural properties of the data-generating stochastic process impact the shape of the ambiguity set underlying the optimal distributionally robust optimization model.
\end{abstract}

\keywords{Data-driven decision-making, stochastic optimization, robust optimization, large deviations}


\section{Introduction}

A fundamental challenge in data-driven decision-making is to construct estimators for the optimal solutions of stochastic optimization problems based on limited training data. We address this challenge within a well-defined framework that is sufficiently general to support a broad spectrum of applications. The primitives of this framework are a stochastic optimization problem representing the ground truth against which the estimators will be assessed, a family of probability measures that capture prior structural knowledge and a stochastic process that generates training samples. The stochastic optimization problem minimizes a generic objective function that depends on the probability measure governing the uncertain problem parameters. Examples of such objective functions include the expected value or some risk measure of an uncertain loss function, the conditional expectation of an uncertain loss function given contextual covariates or the long-run average expected cost of a parametric control policy etc. The crux of data-driven decision-making is that the probability measure underlying the stochastic optimization problem is unknown. Throughout this paper we assume, however, that this probability measure is known to belong to a parametric family of the form~$\{\mathbb P_\theta:\theta\in\Theta\}$. In addition, we assume that we have access to a finite trajectory of an exogenous stochastic process, which generates training samples that provide statistical information about~$\theta$. Examples of stochastic processes to be studied in this paper include processes of \ac{iid} random variables on a finite state space, finite-state Markov chains, different classes of vector autoregressive processes or \ac{iid} processes with parametric distribution functions, but many other examples are conceivable. These examples highlight that we actually allow the training samples to display serial dependence.

It is convenient to embed the original stochastic optimization problem into a parametric family of problems that are obtained by replacing the unknown true probability measure with any~$\mathbb P_\theta$, $\theta\in\Theta$. The resulting stochastic optimization problems can be concisely represented as $\min_{x\in X} c(x,\theta)$, $\theta\in\Theta$, where $X$ denotes the feasible set and~$c(x,\theta)$ stands for the risk or cost of the decision~$x$ under the probability measure~$\mathbb P_\theta$. As the parameter~$\theta$ corresponding to the true probability measure is unknown, however, it is unclear which problem instance should be solved. We thus have no choice but to solve a data-driven surrogate optimization problem~$\min_{x\in X} \widehat c_T(x)$, whose objective function~$\widehat c_T$ is constructed independently of~$\theta$ from~$T$ training samples. In the following, we denote by~$\widehat x_T$ an optimal solution of the surrogate optimization problem, which is necessarily a function of the~$T$ training samples, too. For the sake of a succinct terminology, we henceforth refer to~$\widehat c_T$ as a {\em data-driven predictor} because it predicts the risk of any decision~$x$ in view of the available data. Similarly, we refer to~$\widehat x_T$ as a {\em data-driven prescriptor} because it prescribes a feasible decision in view of the available data. We emphasize that a data-driven prescriptor could be essentially {\em any} function that maps the available training data to a feasible decision. Indeed, it is easy to convince oneself that any such function can be expressed as the minimizer of a carefully constructed surrogate optimization problem. The main goal of this paper is to design---in a rigorous statistical sense---an `optimal' surrogate optimization problem, which is equivalent to finding `optimal' data-driven predictors and prescriptors.

The quality of a data-driven prescriptor~$\widehat x_T$ under~$\mathbb P_\theta$ is unequivocally measured by its out-of-sample risk~$c(\widehat x_T,\theta)$. As the true~$\theta$ is unknown, an ideal prescriptor would have to minimize the out-of-sample risk simultaneously for all~$\theta\in\Theta$ and thus necessarily also for the unknown true~$\theta$. Unfortunately, such ideal data-driven prescriptors are unavailable for non-trivial stochastic optimization problems. To circumvent this difficulty, we recall that any data-driven prescriptor~$\widehat x_T$ is induced by some data-driven predictor~$\widehat c_T$, and we define~$\widehat c_T(\widehat x_T)$ as the in-sample risk of~$\widehat x_T$, which is a function of the training samples alone and therefore accessible to the decision-maker. Note, however, that~$\widehat x_T$ may be induced by many different data-driven predictors~$\widehat c_T$ and that our definition of the in-sample risk depends on the particular choice of~$\widehat c_T$. In particular, $\widehat c_T$ could be shifted by a constant without affecting~$\widehat x_T$. Minimizing the in-sample risk instead of the out-of-sample risk is therefore nonsensical unless we restrict the choice of~$\widehat c_T$. To this end, we define the {\em out-of-sample disappointment} of~$\widehat x_T$ under~$\mathbb P_\theta$ as the probability that the out-of-sample risk strictly exceeds the in-sample risk of~$\widehat x_T$. This means that if the out-of-sample disappointment is high, then the predicted risk of~$\widehat x_T$ is likely to underestimate its true risk, which lulls the decision-maker into a false sense of security and invariably leads to disappointment in out-of-sample tests. Note that the true out-of-sample disappointment is again {\em in}accessible to the decision-maker because it depends on the unknown parameter~$\theta$. By construction, however, the out-of-sample disappointment decreases as $\widehat c_T$ increases. This reasoning motivates us to formulate an optimization problem that finds data-driven predictor-prescriptor pairs with an optimal trade-off between in-sample risk and out-of-sample disappointment. As each data-driven predictor encodes itself a surrogate optimization problem, any optimization problem over~$\widehat c_T$ and $\widehat x_T$ constitutes indeed a {\em meta-optimization problem}, that is, an optimization problem over surrogate optimization problems.

To describe the envisaged meta-optimization problem more precisely, we define the asymptotic in-sample risk of a data-driven predictor~$\widehat c_T$ and the corresponding data-driven prescriptor~$\widehat x_T$ under~$\mathbb P_\theta$ as
\[
    \lim_{T\to\infty} \mb E_\theta \left[\widehat c_T(\widehat x_T) \right],
\]
and we define the asymptotic decay rate of the out-of-sample disappointment of~$\widehat c_T$ and~$\widehat x_T$ under~$\mathbb P_\theta$ as
\[
    \limsup_{T\to\infty} \frac{1}{T} \log \mb P_\theta [ c(\widehat x_T, \theta) > \widehat c_T(\widehat x_T)].
\]
Both of these statistical performance indicators, which are well-defined under mild regularity conditions, depend on the unknown parameter~$\theta$. We therefore intend to optimize them simultaneously for all~$\theta\in\Theta$, which leads to a multi-objective optimization problem. This problem minimizes the asymptotic in-sample risk simultaneously for all~$\theta\in\Theta$ under the condition that the asymptotic decay rate of out-of-sample disappointment is smaller than $r\ge 0$ for every~$\theta\in\Theta$. The risk-aversion parameter $r$ is chosen by the decision-maker. Even though it plays the role of a hyperparameter, it is directly interpretable thanks to its link to the out-of-sample disappointment.
Multi-objective optimization problems typically only admit Pareto optimal solutions, that is, feasible solutions that are not Pareto dominated by any other feasible solution. Maybe surprisingly, however, we will see that the proposed meta-optimization problem sometimes admits Pareto dominant solutions, that is, feasible solutions that Pareto dominate all other feasible solutions. Thus, such Pareto dominant solutions simultaneously minimize all objective functions of the meta-optimization problem. Moreover, if they exist, these solutions are available in closed-form and admit an intuitive interpretation.

Data-driven predictors and prescriptors are essentially arbitrary functions of the available~$T$ training samples. Processing or even storing such functions might easily become impractical for large~$T$. A natural approach to simplify the proposed meta-optimization problem is to compress the observation history of the training samples into a statistic~$\widehat S_T$ of constant dimension and to restrict attention to {\em compressed} data-driven predictors and prescriptors that depend on the training samples only indirectly through~$\widehat S_T$. The resulting restricted meta-optimization problem is often easier to handle than the original meta-optimization problem.

We are now ready to summarize the main contributions of this work.

\begin{enumerate}
    \setlength\itemsep{0em}
\item We prove that if the statistic~$\widehat S_T$ satisfies a large deviation principle, then the restricted meta-optimiza\-tion problem over all compressed data-driven predictors and prescriptors admits a Pareto dominant solution. Moreover, the optimal data-driven predictor evaluates, for every fixed decision~$x$, the worst case of the risk~$c(x,\theta)$ across all~$\theta$ in a ball of radius~$r$ around~$\widehat S_T$, where the discrepancy between~$\widehat S_T$ and~$\theta$ is measured via the rate function of the large deviation principle at hand. The surrogate optimization problem induced by this optimal predictor thus represents a distributionally robust optimization problem, and the radius~$r$ of the underlying ambiguity set coincides with the upper bound on the decay rate of the out-of-sample disappointment enforced by the restricted meta-optimization problem. 

\item We demonstrate that the restricted meta-optimization problem and its Pareto dominant solution are invariant under homeomorphic coordinate transformations of the statistic~$\widehat S_T$ as well as the distribution family~$\{\mathbb P_\theta:\theta\in\Theta\}$. This implies that the chosen parametrizations, which are invariably somewhat arbitrary, have no impact on how the optimal data-driven prescriptor maps the raw data to decisions.

\item We prove that if the set~$\{\mathbb P_\theta:\theta\in\Theta\}$ represents an exponential family with sufficient statistic~$\widehat S_T$ and if~$\widehat S_T$ satisfies a large deviation principle, then compressing the training samples into~$\widehat S_T$ destroys no useful statistical information, and the original meta-optimization problem is indeed equivalent to the restricted meta-optimization problem. Thus, the original meta-optimization problem also admits a Pareto dominant solution that has a distributionally robust interpretation. This result establishes a separation principle that enables a decoupling of estimation and optimization, and it can be viewed as a non-trivial extension of the celebrated Rao-Blackwell theorem~\cite{ref:Rao-45, ref:Blackwell-47} to data-driven decision problems.

\item We explicitly derive the optimal data-driven predictors corresponding to different data-generating stochastic processes including finite-state \ac{iid} processes, finite-state Markov chains, two different classes of autoregressive processes as well as \ac{iid} processes with parametric distribution functions.
\end{enumerate}

Our results suggest that the optimal method for mapping data to decisions is, in a rigorous statistical sense, to solve a distributionally robust optimization model. As we will see, this conclusion persists irrespective of whether the original stochastic optimization problem is convex or not, and it persists even when the training data is non-i.i.d. As a byproduct, our analysis reveals how the structural properties of the data-generating stochastic process impact the shape of the ambiguity set underlying the optimal (distributionally robust) surrogate optimization problem. This paper therefore generalizes the preliminary results for \ac{iid} training samples on a finite state space reported in~\cite{ref:vanParys:fromdata-17}. In fact, we will demonstrate through a running example that these results emerge as a special case of a significantly more general theory of data-driven decision-making.

The existing literature on data-driven stochastic optimization is vast. Arguably the most popular approach is the \ac{saa}, which replaces the unknown true probability distribution of the uncertain parameters in the problem's objective function with the empirical distribution corresponding to the training samples. The asymptotic properties of the resulting \ac{saa} problem are well understood if the training samples are \ac{iid}; see, {\em e.g.}, \cite{ref:shapiro-89,ref:shapiro-90,ref:Shapiro-91,ref:Shaprio-93,shapiro2014lectures,ref:King-91,ref:King-93}. In particular, the optimal value of the \ac{saa} problem is known to be strongly consistent and asymptotically normal \cite[Sections~5.1.1--5.1.2]{shapiro2014lectures}, which facilitates a rigorous probabilistic error analysis that yields increasingly accurate confidence bounds as the sample size grows. If the sample size is small relative to the number of uncertain problem parameters, however, then the optimal solution of the \ac{saa} problem tends to display an excellent in-sample performance alongside a poor out-of-sample performance. This phenomenon can be interpreted as an overfitting effect, which is sometimes referred to as the optimization bias \cite{shapiro2003} or the optimizer's curse \cite{smith2006optimizer}. Data-driven \ac{dro} has been widely championed as an effective means to combat this phenomenon. It seeks a decision that minimizes the worst-case risk with respect to all probability distributions in an ambiguity set constructed from the training samples. If one can guarantee that the unknown true distribution belongs to the ambiguity set with high probability, then the optimal value of the \ac{dro} problem provides an upper confidence bound on the out-of-sample performance of its optimal solution. Out-of-sample guarantees of this kind were first obtained for a Chebyshev ambiguity set that contains all probability distributions whose mean vectors and covariance matrices are close to the empirical mean and the empirical covariance matrix \cite{delage2010distributionally}. As the sample size grows, the moment estimates become increasingly accurate, in which case this Chebyshev ambiguity set reduces to the family of all probability distributions that share the same first- and second-order moments as the unknown true distribution. Since this family contains distributions with strikingly different shapes (and not only the true distribution), the optimal value of the corresponding \ac{dro} problem fails to be asymptotically consistent. Pertinent out-of-sample guarantees have also been established for ambiguity sets containing all probability distributions that are close to the empirical distribution with respect to some information divergence \cite{bental2013uncertain}, for ambiguity sets containing all distributions that pass a statistical goodness-of-fit test against the observed training data \cite{ref:Bertsimas-2018} or for ambiguity sets containing all distributions that are close to the empirical distribution with respect to some Wasserstein distance \cite{ref:Peyman-18,ref:DROtutorial-19}. If these ambiguity sets are scaled sufficiently slowly, then the corresponding \ac{dro} problems can be rendered asymptotically consistent without compromising their out-of-sample guarantees. By leveraging ideas from empirical likelihood theory, it has recently been shown that significantly tighter out-of-sample bounds can be obtained by relaxing the requirement that the ambiguity set must contain the unknown true distribution with high probability \cite{ref:Lam-16,ref:Duchi-21}. 

In view of the many ambiguity sets permeating the extant literature, it is natural to wonder which ones of them offer optimal statistical guarantees. For example, given an ambiguity set with a prescribed `shape' determined by the choice of a specific information divergence or probability metric, it is natural to seek the smallest radius for which the corresponding \ac{dro} problem offers an upper confidence bound on the original stochastic optimization problem with a desired significance level. The scaling of the optimal radius with respect to the sample size~$T$ is indeed known both for divergence ambiguity sets \cite{ref:Lam-16,ref:Duchi-21} as well as for Wasserstein ambiguity sets \cite{ref:Blanchet_2019,gao2020finitesample}. A more challenging task than merely tuning the size would be to tune the size and the shape of the ambiguity set simultaneously. The study of optimal ambiguity sets was pioneered in~\cite{ref:Gupta-19}, where the smallest convex ambiguity sets that satisfy a Bayesian robustness guarantee are identified under certain convexity assumptions about the stochastic optimization problem.

In addition, ambiguity sets that offer optimal statistical guarantees in view of the central limit theorem are investigated in \cite{ref:Lam-16}. In this case the optimal ambiguity sets constitute carefully scaled Burg-entropy divergence balls centered at the empirical distribution. Recently it has been shown that if the training samples are i.i.d., then among {\em all} data-driven decisions whose out-of-sample risk is dominated by their in-sample risk with high confidence, the decision with the lowest in-sample risk can be computed by solving a \ac{dro} problem with a relative entropy ambiguity set centered at the empirical distribution \cite{ref:vanParys:fromdata-17}. This result indicates that, at least in simple stylized settings, data-driven \ac{dro} provides an optimal approach for mapping data to decisions. In this paper we extend the main results of~\cite{ref:vanParys:fromdata-17} to more general (not necessarily risk-neutral) stochastic optimization problems, more general (not necessarily finitely supported) parametric distribution families and more general (not necessarily i.i.d.) data-generating stochastic processes. As a byproduct of our general theory of data-driven decision-making, we discover several new \ac{dro} schemes that are statistically optimal for different structures of the data-generating stochastic process. 
Our theory thus provides practical guidance for choosing the best decision model in different data-driven decision situations.
We also stress that \cite{ref:vanParys:fromdata-17} assumes the predictors and prescriptors to depend on the training data only indirectly through the empirical distribution. Here, we do not impose such an implicit structure. Instead, we consider a much larger class of prescriptors that essentially depend on the training data in an arbitrary manner.

All statistical guarantees reviewed so far rely indeed on the assumption that the training samples are~i.i.d. Moreover, the literature on data-driven \ac{dro} with non-\ac{iid} data is remarkably scarce. We are only aware of three recent papers addressing this topic. First, if the training samples are generated by a fast mixing process, then asymptotic confidence intervals for the optimal value of a stochastic optimization problem can be obtained by solving \ac{dro} problems with divergence ambiguity sets \cite{ref:Duchi-21}. However, the resulting confidence bounds depend on the unknown probability distribution and are therefore primarily of theoretical interest. In addition, data-driven \ac{dro} models with Wasserstein ambiguity sets constructed from training samples following an autoregressive process are proposed in~\cite{ref:Dou-19}. While these ambiguity sets offer rigorous out-of-sample guarantees, their shapes are chosen ad hoc. Finally, distributionally robust Markov decision processes with Wasserstein ambiguity sets for the uncertain transition kernel are developed in~\cite{ref:derman2020distributional}. In this case the training dataset consists of multiple \ac{iid} trajectories of serially correlated states, which may be difficult to acquire in practice. In contrast to all of these approaches, we devise here a principled approach to generate statistically optimal data-driven decisions based on a {\em single} trajectory of the data-generating process.

While this paper was under review, its main results were extended along several dimensions. For example, if the training data is generated by a Markov chain, then the statistically optimal DRO models derived in Section~\ref{sssec:Markovchain} of this paper give rise to high-dimensional non-convex optimization problems. An efficient Frank-Wolfe algorithm to solve these problems is developed in \cite{ICML-Li-21}. In addition, a critical assumption of this paper is that the training and the test data are generated by the same stochastic process. This assumption is relaxed in \cite{ref:Sutter:NeurIPS-21}, where the large deviation-type results of this paper for i.i.d.~data are combined with the principle of minimum discriminating information to address data-driven decision problems suffering from a distribution shift. Another basic assumption of this paper is that the decision-maker requires the out-of-sample disappointment to decay at a fixed exponential rate. This assumption can be relaxed using ideas from moderate deviations theory if the training samples are generated by a finite state i.i.d.\ process \cite{ref:Amine-21}. Specifically, it is shown that if the out-of-sample disappointment must decay superexponentially, then the Pareto dominant data-driven prescriptor is obtained by solving a classical robust optimization model that minimizes the worst-case risk with respect to all possible uncertainty realizations. On the other hand, if the out-of-sample disappointment must decay subexponentially, then the Pareto dominant data-driven prescriptor is obtained by solving an empirical risk minimization problem with a variance penalty. Finally, we assume in this paper that the decision-maker has access to noise-free training samples. This assumption is relaxed in \cite{vanparys2021optimal}, where a DRO model based on an entropic optimal tranport distance is shown to provide Pareto dominant data-driven prescriptors when the training samples are corrupted by noise.

 The out-of-sample disappointment and the in-sample risk are by no means the only performance criteria for which the best representatives within a certain class of prescriptors are accessible. Another performance criterion of interest is the regret convergence rate. For example, in the context of data-driven linear optimixation with side information, it has recently been shown that the na\"ive ``estimate and then optimize'' approach is markedly superior to the ``induced empirical risk method'' with respect to this criterion \cite{ref:hu2020fast}.

The paper develops as follows. \Cref{sec:stochastic:programming} formalizes our approach to data-driven decision-making and constructs the meta-optimization problems that will be used to find optimal data-driven predictors and prescriptors. Sections~\ref{sec:optimal:dd:prescriptors} and~\ref{sec:equivalence} establish sufficient conditions under which the restricted and original meta-optimization problems have Pareto dominant solutions, respectively, and \Cref{sec:models} showcases practically relevant examples in which these conditions hold. All proofs, along with several auxiliary results, are provided in the appendix.

\paragraph{Notation.} A multi-objective optimization problem $\min_{x\in\mc X}  \{f_\alpha(x)\}_{\alpha\in\mc A}$ is determined by its feasible set~$\mc X$ and its objective functions $f_\alpha:\mc X\to \Re$ indexed by $\alpha \in \mc A$.
A strong solution is a feasible solution~$x^\star\in\mc X$ that Pareto dominates every other feasible solution in the sense that $f_\alpha(x^\star)\leq f_\alpha(x)$ for all $x\in\mc X$ and $\alpha \in\mc A$. A weak solution is a feasible solution $x^\star\in\mc X$ that is not Pareto dominated by any other feasible solution in the sense that there exists no $x\in \mc X$ such that $f_\alpha(x) \leq f_\alpha(x^\star)$ for all $\alpha \in\mc A$. A function $f:\mc X \to \mc Y$ from $\mc X \subseteq \Re^n$ to $\mc Y\subseteq \Re^m$ is called quasi-continuous at $x\in\mc X$ if for every neighbourhood $\mc U\subseteq \mc X$ of $x$ and every neighborhood $\mc V\subseteq \mc Y$ of $f(x)$ there exists a non-empty open set $\mc W\subseteq \mc U$ with $f(x')\in\mc V$ for all $x'\in\mc W$. Note that $\mc W$ may not contain $x$. The $n$-dimensional probability simplex is denoted by $\Delta_n = \{x \in \mb R^n_+ : \sum_{i=1}^n x_i = 1\}$. For any logical expression $\mathcal E$, the indicator function $\Indic{\mathcal E}$ evaluates to~1 if $\mathcal E$ is true and to~0 otherwise, and for any $A,B\in\Re^{n\times m}$ the trace inner
product is denoted by $\inprod{A}{B}=\mathsf{tr}(A^\top B)$.

\section{Data-driven optimization} \label{sec:stochastic:programming}
Throughout this paper we assume that all random objects are defined on the same abstract probability space~$(\Omega, \mc F, \mb P_\star)$, and we study a general stochastic optimization problem of the form
\begin{equation}
\label{eq:initial:problem-f}
    \min_{x\in X} \; c(x,\mb P_\star),
\end{equation}
where the goal is to find a decision~$x \in X\subseteq \Re^n$ that minimizes a real-valued objective or `cost' function~$c(x,\mb P_\star)$ depending on the probability measure~$\mb P_\star$. We assume throughout the paper that~$X$ is compact and that~$c(x,\mb P_\star)$ is continuous in~$x$. These assumptions guarantee that the minimum in~\eqref{eq:initial:problem-f} is attained.

\begin{example}[Objective functions]
\label{ex:objective-functions}
A popular objective arising in operations research and statistics is to minimize the expected value of a loss function~$\ell(x,\xi)$ that depends both on the decision~$x$ and an exogenous random vector~$\xi\in\Re^m$. Denoting the expectation operator with respect to~$\mb P_\star$ by~$\mb E_{\mb P_\star}[\cdot]$, we thus set
\begin{subequations}\label{eq:examples:c}
\begin{equation}\label{eq:prescription_problem}
    c(x,\mb P_\star)= \mb E_{\mb P_\star}[\ell(x, \xi)].
\end{equation}
In risk averse optimization~\cite[Chapter~6]{shapiro2014lectures} the expectation is replaced with a risk measure~$\varrho_{\mb P_\star}[\cdot]$. We thus~set
\begin{equation}\label{eq:prescription_problem:2}
    c(x,\mb P_\star)= \varrho_{\mb P_\star}[\ell(x,\xi)]. 
\end{equation}
Examples of risk measures include the variance, the value-at-risk or the conditional value-at-risk of the loss as well as their convex combinations with the expected loss. Decision makers sometimes have access to contextual covariates, that is,  observable random variables that are correlated with the unobservable random variables impacting the loss function. In such situations it is beneficial to solve a conditional stochastic optimization problem that minimizes the conditional expectation of the loss given the contextual covariates~\cite{ref:Cynthia-19, ref:Kallus-2020,ref:hu2020fast}. If the matrix~$C\in\Re^{m\times m_C}$ filters out~$m_C$ observable covariates from~$\xi$ and if these covariates are known to fall within a Borel set~$B\subseteq \Re^{m_C}$ (note that $B$ could represent a singleton), then we set the objective function to
\begin{equation}
\label{eq:SP:covariate}
    c(x,\mb P_\star)= \mb E_{\mb P_\star}[\ell(x, \xi)|C\xi\in B].
\end{equation}
Contextual information may include weather forecasts, Twitter feeds or Google Trends data. Stochastic control, as a last example, aims to guide a dynamical system to a desirable state, assuming that the system's state obeys a recursion~$s_{t+1}=f(s_t,u_t,\xi_t)$ that depends on some control inputs~$u_t$ and exogenous random disturbances~$\xi_t$ at time~$t\in\N$. If the inputs are set to~$u_t=\pi_x(s_t)$ for some control policy~$\pi_x$ parametrized by~$x\in X$ and if~$\ell(u_t,s_t)$ represents the cost at time~$t$, then one may minimize the long-run average cost
\begin{equation}
\label{eq:ex:dynamical:system}
    c(x,\mb P_\star)=\lim_{T\to\infty}\frac{1}{T}\sum_{t=1}^T \mb E_{\mb P_\star}[\ell(\pi_x(s_t),s_t)].
\end{equation}
\end{subequations}
Note that~$x$ impacts the objective function~\eqref{eq:ex:dynamical:system} both directly through the policy~$\pi_x$ as well as indirectly through the states~$s_t$, $t\in\N$, which are defined by a recursion that depends on~$x$. We also emphasize that some mild technical assumptions are needed for the objective functions~\eqref{eq:prescription_problem}--\eqref{eq:ex:dynamical:system} to be well-defined. But the above examples show that the abstract stochastic optimization problem~\eqref{eq:initial:problem-f} is remarkably general.
\end{example}


When reasoning about the stochastic optimization problem \eqref{eq:initial:problem-f}, it is expedient to distinguish the {\em prediction problem}, which aims to evaluate the cost $c(x,\mb P_\star)$ associated with a fixed decision $x$, from the {\em prescription problem}, which aims to find a decision $x^\star$ that minimizes the cost $c(x,\mb P_\star)$ across all $x\in X$. We emphasize that any procedure for solving the prescription problem invariably necessitates a procedure for solving the corresponding prediction problem. As the prediction problem is reminiscent of an uncertainty quantification problem \cite{lemaitre2010introduction}, however, it is of interest in its own right. Unfortunately, already the prediction problem poses two formidable challenges. On the one hand, the probability measure~$\mb P_\star$, which is needed to evaluate the objective function, is usually unobservable. On the other hand, even if one had access to~$\mb P_\star$, computing the objective function $c(x,\mb P_\star)$ for a fixed decision~$x$ might be difficult. For example, evaluating the expectation in~\eqref{eq:prescription_problem} is \#P-hard even if~$\ell(x,\xi)$ is defined as the non-negative part of an affine function of~$\xi$ and if~$\xi$ is uniformly distributed on the standard hypercube in~$\Re^m$ under the probability measure~$\mb P_\star$ \cite[Corollary~1]{Hanasusanto2016complexity}.

In the following we develop a systematic approach for addressing the prediction and prescription problems when~$\mb P_\star$ is only indirectly observable through finitely many training samples. We endeavor to keep the proposed framework as general as possible. In particular, we will forgo any restrictive independence assumptions and explicitly account for the possibility that the training data are serially dependent.

\subsection{The data-driven newsvendor problem} \label{sec:newsvendor}

We first exemplify several popular approaches to data-driven decision-making in the context of the classical newsvendor problem, which 
captures the fundamental dilemma faced by the seller of a perishable good. The textbook example of such a seller is a newsvendor who sells a daily newspaper that becomes worthless at the end of the day. At the beginning of each day, the newsvendor orders~$x\in X$ newspapers from the publisher at the wholesale price~$k\ge 0$, where $X=\{1,\hdots, d\}$. Then, the uncertain demand~$\xi\in\Xi$ is revealed, where $\Xi=X$, and the newsvendor sells newspapers at the retail price $p>k$ until either the inventory or the demand is exhausted. The number of newspapers sold is thus given by~$\min\{x,\xi \}$, and the total cost amounts to~$\ell(x,\xi)=kx -p \min\{x,\xi\}$. If the probability measure $\mb P_\star$ governing the demand is known, then the problem of minimizing the expected cost can be formulated as a stochastic optimization problem of the form~\eqref{eq:initial:problem-f} with objective function $\mb E_{\mb P_\star}[\ell(x,\xi)]= \sum_{i\in \Xi} \ell(x, i)\, (\theta_\star)_i$, where the probability vector $\theta_\star\in\Delta_{d}$ is defined through $(\theta_\star)_i=\mb P_\star[\xi=i]$ for all $i\in \Xi$. Note that~$\theta_\star$ captures all information about~$\mb P_\star$ that is needed to solve the newsvendor's decision problem. By slight abuse of notation, we may thus identify~$\mb P_\star$ with~$\theta_\star$ and use~$c(x,\theta_\star)$ as a shorthand for the expected cost of any fixed order quantity $x\in X$. 
If the demands on different days are \ac{iid}, then the law of large numbers guarantees that $\min_{x\in X} c(x,\theta_\star)$ represents the minimum cost attainable by the newsvendor on average in the long run.

In reality, the probability measure $\mb P_\star$ and the probability vector $\theta_\star$ are unobservable and must be estimated from historical demand realizations $\xi_t\in\Xi$, $t=1,\ldots,T$, which we refer to as training samples. We assume here for simplicity that the training samples are mutually independent, but the general methods developed in this paper do not rely on this assumption. Given a batch of only~$T$ training samples, the newsvendor now seeks to answer three intertwined questions: (i)~What is the expected cost of a given ordering decision? (ii)~How many newspapers should be ordered so as to minimize the expected cost? (iii)~What is the probability that the unknown true expected cost of the chosen ordering decision exceeds the estimated cost? 

In the following we designate all estimators ({\em i.e.}, all functions of the data) with a superscript `\;$\widehat{}$\;' as well as a subscript~`$T$' indicating the size of the underlying dataset. For example, we use $\widehat c_T(x)$ to denote an estimator of the expected cost~$c(x,\theta_\star)$  constructed from~$T$ demand samples, where~$x$ is any feasible ordering decision. Similarly, we use~$\widehat x_T$ to denote an estimator for the optimal ordering decision constructed from~$T$ demand samples. Below we assume that $\widehat x_T\in\arg\min_{x\in X}\widehat c_T(x)$, that is, we assume that any estimator for the optimal ordering decision is induced by some estimator for the expected cost function. Note that this assumption can be imposed without loss of generality. Indeed, any estimator~$\widehat x_T$ for the optimal decision can be expressed as a minimizer of a cost function estimator; for example, we may set~$\widehat c_T(x)=(x-\widehat x_T)^2$. 

Questions~(i) and~(ii) above address the construction of the estimators~$\widehat c(x)$ and~$\widehat x_T$, respectively, while question~(iii) asks for the probability of the event~$c(\widehat x_T, {\theta_\star}) > \widehat c_T(\widehat x_T)$. In this event the true (out-of-sample) expected cost of the chosen decision $\widehat x_T$ exceeds the estimated (in-sample) expected cost, which might lead to a budget overrun and force the newsvendor into financial distress. In the following we refer to the probability of this event (with respect to the sampling of the training dataset) as the \textit{out-of-sample disappointment}. 
Note that in the event $c(\widehat x_T, {\theta_\star}) < \widehat c_T(\widehat x_T)$ there is also a discrepancy between the estimated budget and the true expected cost. However, in this event the newsvendor faces no severe financial repercussions.

There are countless possibilities to construct cost estimators~$\widehat c_T(x)$ and the corresponding decision estimators~$\widehat x_T$ from the training data. Different estimators may offer different statistical guarantees and display different computational properties. However, the existing literature offers little guidance on how to choose among these many estimators. Moreover, there could exist yet undiscovered estimators that dominate all known estimators in terms of some meaningful statistical criteria. In the following we will compare different estimators in terms of the exponential decay rate of their out-of-sample disappointment, which is defined as
\[
  \lim_{T\to\infty} - \frac 1T \log \mb P_\star[c(\widehat x_T, {\theta_\star}) > \widehat c_T(\widehat x_T)],
\]
and in terms of their asymptotic in-sample cost, which is defined as~$\lim_{T\to\infty} \mb E_{\mb P_\star}[\widehat c_T(\widehat x_T)]$. We will see later that these quantities are well-defined for a wide range of estimators. In the remainder we thus view a pair of cost and decision estimators as `desirable' if the asymptotic in-sample cost is low ({\em i.e.}, the expected cost of~$\widehat x_T$ is predicted to be low) and if the decay rate of the out-of-sample disappointment is high ({\em i.e.}, the probability that the true expected cost of~$\widehat x_T$ exceeds the predicted cost decays quickly as~$T$ grows).


Arguably one of the simplest conceivable cost estimators is the empirical cost~$\widehat c_T(x)=\frac{1}{T}\sum_{t=1}^T \ell(x,\xi_t)$. Thus, we have~$\widehat c_T(x) = c(x, \widehat S_T)$, where~$c(x,\theta)=\sum_{i\in\Xi} \ell(x,i) \theta_i$ represents the expected cost of the decision~$x$ when the demand uncertainty is described by the probability vector~$\theta\in\Delta_d$, and the statistic~$\widehat S_T\in\Delta_d$ stands for the empirical probability vector, whose $i^{\rm th}$ component $(\widehat S_T)_{i} = \frac1T \sum_{t=1}^{T} \Indic{ \xi_t=i}$ records the empirical frequency of the~$i^{\rm th}$ demand scenario for each~$i\in\Xi$. Using the central limit theorem, one can show that the expected in-sample cost of the empirical cost estimator and its induced decision estimator converges to the true optimum~$\min_{x\in X} c(x,\theta_\star)$ and that the out-of-sample disappointment converges to~$50\%$ as~$T$ grows. Thus, the decay rate of the out-of-sample disappointment vanishes completely; see also~\cite[Example~2]{ref:vanParys:fromdata-17}. 

A na\"ive approach to force the out-of-sample disappointment to decay would be to add a constant positive penalty~$\varepsilon$ to the empirical cost estimator, thereby increasing its asymptotic in-sample cost and thus introducing a conservative bias. This reasoning suggests that the in-sample cost and the out-of-sample disappointment stand in direct competition. 
In order to provide a better intuition for the trade-off between these statistical performance criteria, we further investigate three distributionally robust cost estimators of the form~$\widehat c_T(x)=\max_{\theta\in \widehat \Theta_T} c(x,\theta)$, which evaluate the worst-case expected cost of the decision~$x$ with respect to all probability vectors from within some ambiguity set~$\widehat\Theta_T \subseteq \Delta_d$ constructed from the training data. 

Traditionally, distributionally robust optimization has mostly studied moment ambiguity sets such as
\(
    \textstyle \widehat\Theta_T = \{\theta\in\Delta_{d} : |\sum_{i\in \Xi} i^j \theta_{i}-\sum_{i\in \Xi} i^j (\widehat S_T)_{i} |\leq \varepsilon ~ \forall j=1,\hdots,J \}.
\)
All probability vectors in this ambiguity set share, to within an absolute error tolerance~$\varepsilon\ge 0$, the same moments of all orders up to~$J$ as the empirical probability vector~$\widehat S_T$. In the subsequent numerical experiments we set~$J=4$. The tolerance~$\varepsilon$ is usually tuned to ensure that~$\widehat\Theta_T$ contains the unknown data-generating probability vector~$\theta_\star$ with a prescribed high confidence; see~\cite[Section~3]{delage2010distributionally} for the first results of this kind.
The recent literature has witnessed an increasing interest in Wasserstein ambiguity sets of the form
\(
    \widehat \Theta_T = \{\theta\in\Delta_{d} : \mathsf{d_W}(\theta,\widehat S_T)\leq \varepsilon \},
\)
where $\mathsf{d_W}(\theta,\widehat S_T)$ denotes the first Wasserstein distance between~$\theta$ and~$\widehat S_T$ \cite{kantorovich1958space}. This ambiguity set can be viewed as a Wasserstein ball of radius~$\varepsilon\ge 0$ around~$\widehat S_T$ in~$\Delta_d$. Unlike the moment ambiguity set, the Wasserstein ambiguity set shrinks to the singleton that contains merely the empirical probability vector if we set~$\varepsilon=0$. In general, $\varepsilon$ can again be tuned to ensure that~$\theta_\star\in\widehat \Theta_T$ with any prescribed high confidence \cite[Section~3]{ref:Peyman-18}. 
Finally, we also study relative entropy ambiguity sets of the form
\(
    \widehat \Theta_T = \{\theta\in\Delta_{d} : D(\widehat S_T\|\theta)\leq \varepsilon \},
\)
where~$D(\widehat S_T\|\theta)$ stands for the relative entropy (or Kullback-Leibler divergence) of~$\widehat S_T$ with respect to~$\theta$. This ambiguity set also shrinks to a singleton for~$\varepsilon=0$, and~$\varepsilon$ can again be tuned to guarantee that~$\widehat \Theta_T$ covers~$\theta_\star$ with a prescribed probability \cite[Section~3]{bental2013uncertain}. In contrast to most of the existing literature on distributionally robust optimization, here we are {\em not} concerned about whether or not the ambiguity set covers~$\theta_\star$. Instead, we view any distributionally robust optimization model simply as a vehicle for transforming data to decisions, and we are merely interested in the statistical properties of the resulting cost and decision estimators. 


\begin{figure}[t] 
\centering
\subfloat[Out-of-sample disappointment versus training sample size~$T$ 
]{
%
%
\begin{tikzpicture}

\begin{axis}[%
width=1.4in,
height=1.4in,
at={(1.011111in,0.641667in)},
scale only axis,
ymode=log,
log ticks with fixed point,
xmin=10,
xmax=200,
xlabel={{\color{white}{$\lim_{T\to\infty}\frac{\widehat 1}{T}$}}$T${\color{white}{$\lim_{T\to\infty}\frac{1}{T}$}}},
ymin=0.0001,
ymax=1,
ylabel={\footnotesize{$\mb P_{\star} [ c(\widehat x_T, {\theta_\star}) > \widehat c_T(\widehat x_T) ]$}},
y label style={yshift=0.0em},
ytick = {0.0001,0.001,0.01,0.1,0.5},
yticklabels ={$10^{-4}$,$10^{-3}$,$10^{-2}$,$10^{-1}$,$0.5$},
title style={font=\bfseries},
legend to name=named,
legend style={legend cell align=left,align=left,draw=white!15!black,legend columns=2}
]

\addplot [color=brown,solid,line width = 1.2pt, smooth]
  table[row sep=crcr]{%
10	0.679633333333333\\
25	0.5275\\
41	0.521566666666667\\
56	0.5491\\
71	0.498866666666667\\
86	0.5273\\
102	0.536033333333333\\
117	0.497933333333333\\
132	0.523633333333333\\
147	0.5428\\
163	0.498333333333333\\
178	0.520166666666667\\
193	0.490733333333333\\
208	0.5126\\
224	0.5183\\
239	0.494233333333333\\
254	0.512033333333333\\
269	0.529666666666667\\
285	0.495433333333333\\
300	0.513833333333333\\
};
\addlegendentry{Empirical cost};

\addplot [color=green,solid,line width = 1.2pt]
  table[row sep=crcr]{%
10	0.31719\\
14	0.27314\\
18	0.23969\\
22	0.21361\\
26	0.19231\\
29	0.17963\\
33	0.16369\\
37	0.14951\\
41	0.13792\\
45	0.12643\\
49	0.11746\\
53	0.10964\\
57	0.10236\\
60	0.09772\\
64	0.0905\\
68	0.08476\\
72	0.07955\\
76	0.07394\\
80	0.06836\\
84	0.06402\\
88	0.05931\\
91	0.0565\\
95	0.05271\\
99	0.04885\\
103	0.04617\\
107	0.04264\\
111	0.04055\\
115	0.03814\\
119	0.03599\\
122	0.03381\\
126	0.03212\\
130	0.0296\\
134	0.02812\\
138	0.02648\\
142	0.02442\\
146	0.02315\\
150	0.02196\\
153	0.02122\\
157	0.01977\\
161	0.01869\\
165	0.01771\\
169	0.0166\\
173	0.01577\\
177	0.01455\\
181	0.01378\\
184	0.01315\\
188	0.01262\\
192	0.01165\\
196	0.01068\\
200	0.01037\\
};
\addlegendentry{Moment-based ambiguity set  ($\varepsilon=0.05$)};

\addplot [color=cyan, solid,line width = 1.2pt]
  table[row sep=crcr]{%
10	0.25757\\
14	0.19503\\
18	0.15218\\
22	0.12195\\
26	0.09899\\
29	0.08498\\
33	0.06981\\
37	0.05799\\
41	0.04793\\
45	0.03972\\
49	0.03327\\
53	0.02786\\
57	0.02299\\
60	0.02002\\
64	0.01718\\
68	0.01434\\
72	0.01223\\
76	0.01022\\
80	0.00812\\
84	0.00688\\
88	0.00595\\
91	0.00507\\
95	0.0043\\
99	0.00373\\
103	0.00327\\
107	0.00279\\
111	0.00234\\
115	0.00195\\
119	0.00157\\
122	0.00132\\
126	0.00129\\
130	0.00109\\
134	0.00097\\
138	0.00076\\
142	0.0006\\
146	0.0006\\
150	0.0005\\
153	0.00044\\
157	0.00037\\
161	0.00036\\
165	0.00032\\
169	0.00023\\
173	0.0002\\
177	0.00012\\
181	0.00013\\
184	0.0001\\
188	9e-05\\
192	6e-05\\
196	4e-05\\
200	4e-05\\
};
\addlegendentry{Moment-based ambiguity set ($\varepsilon=0.13$) $\quad$};

\addplot [color=blue,solid,line width = 1.2pt]
  table[row sep=crcr]{%
10	0.21461\\
14	0.1518\\
18	0.10972\\
22	0.08186\\
26	0.06124\\
29	0.05061\\
33	0.03902\\
37	0.0307\\
41	0.02392\\
45	0.01823\\
49	0.01413\\
53	0.01086\\
57	0.00865\\
60	0.00723\\
64	0.0057\\
68	0.00437\\
72	0.00348\\
76	0.00278\\
80	0.00211\\
84	0.0018\\
88	0.00146\\
91	0.00112\\
95	0.00095\\
99	0.00072\\
103	0.00052\\
107	0.00046\\
111	0.00039\\
115	0.00028\\
119	0.00026\\
122	0.00023\\
126	0.00012\\
130	9e-05\\
134	6e-05\\
138	7e-05\\
142	6e-05\\
146	8e-05\\
150	4e-05\\
153	1e-05\\
157	1e-05\\
161	0\\
165	0\\
169	0\\
173	1e-05\\
177	1e-05\\
181	1e-05\\
184	1e-05\\
188	0\\
192	0\\
196	0\\
200	0\\
};

\addlegendentry{Moment-based ambiguity set ($\varepsilon=0.2$)};

\addplot [color=orange,solid,line width = 1.2pt]
  table[row sep=crcr]{%
10	0.1609\\
20	0.0716\\
30	0.0483\\
40	0.023\\
50	0.0149\\
60	0.0071\\
70	0.0035\\
80	0.0024\\
90	0.0014\\
100	0.0007\\
110	0.0004\\
120	0.0003\\
130	0.0001\\
140	0.00002\\
150	0.00002\\
160	0.00001\\
170	0.00001\\
180	0\\
190	0\\
200	0\\
};
\addlegendentry{Wasserstein ambiguity set ($\varepsilon=0.28$)};

\addplot [color=magenta,solid, line width = 1.2pt]
  table[row sep=crcr]{%
10	0.21869\\
14	0.12618\\
18	0.06063\\
22	0.04038\\
26	0.05102\\
29	0.03583\\
33	0.02079\\
37	0.01353\\
41	0.00815\\
45	0.00602\\
49	0.00422\\
53	0.00396\\
57	0.00256\\
60	0.00203\\
64	0.00136\\
68	0.00093\\
72	0.00071\\
76	0.00065\\
80	0.00044\\
84	0.0003\\
88	0.00016\\
91	0.00016\\
95	9e-05\\
99	3e-05\\
103	6e-05\\
107	2e-05\\
111	1e-05\\
115	2e-05\\
119	0\\
122	0\\
126	2e-05\\
130	1e-05\\
134	1e-05\\
138	1e-05\\
142	0\\
146	0\\
150	0\\
153	0\\
157	0\\
161	0\\
165	0\\
169	0\\
173	0\\
177	0\\
181	0\\
184	0\\
188	0\\
192	0\\
196	0\\
200	0\\
};
\addlegendentry{Relative entropy ambiguity set ($\varepsilon=0.12$)};

\end{axis}
\end{tikzpicture}
\hspace{1mm}
\subfloat[In-sample cost versus training sample size~$T$ 
]{\begin{tikzpicture}

\begin{axis}[%
width=1.4in,
height=1.4in,
at={(1.011111in,0.641667in)},
scale only axis,
area legend,
xmin=10,
xmax=200,
ymin=-8.6,
ymax=-5.8,
xlabel={{\color{white}{$\lim_{T\to\infty}\frac{\widehat 1}{T}$}}$T${\color{white}{$\lim_{T\to\infty}\frac{1}{T}$}}},
legend style={legend cell align=left,align=left,draw=white!15!black},
ylabel={\footnotesize{$\mb E_{\mathbb{P}_\star}[\widehat c_T(\widehat x_T)]$}},
y label style={yshift=-0.65em}
]

\addplot [color=magenta,solid,line width = 1.2pt]
  table[row sep=crcr]{%
10	-7.01863929524303\\
14	-6.90027394621443\\
18	-6.83523203072383\\
22	-6.79194211874691\\
26	-6.76247493534892\\
29	-6.74339996775228\\
33	-6.72516671016689\\
37	-6.71153583823882\\
41	-6.69790088911979\\
45	-6.68888939032243\\
49	-6.68028311284254\\
53	-6.67262355322833\\
57	-6.66792425670287\\
60	-6.66229739103664\\
64	-6.65755604436462\\
68	-6.65165354147988\\
72	-6.64761961492752\\
76	-6.64335526560934\\
80	-6.63848125410692\\
84	-6.63582544829923\\
88	-6.63212180998676\\
91	-6.63075208696928\\
95	-6.62798938118148\\
99	-6.62541179695912\\
103	-6.62369614471228\\
107	-6.62113419532301\\
111	-6.61894269286582\\
115	-6.61706578935708\\
119	-6.6159502396686\\
122	-6.61422161970633\\
126	-6.61242192038233\\
130	-6.61076299645775\\
134	-6.60883654912161\\
138	-6.60744437780694\\
142	-6.60544842319226\\
146	-6.60419451010456\\
150	-6.60354396701989\\
153	-6.60227579648789\\
157	-6.60080681755029\\
161	-6.59934029866129\\
165	-6.59837346167927\\
169	-6.5973143953698\\
173	-6.59646283072847\\
177	-6.59541273376944\\
181	-6.59432614854369\\
184	-6.5940253379534\\
188	-6.59313805193528\\
192	-6.59203604071451\\
196	-6.59087427895434\\
200	-6.59025949075887\\
};

\addplot [color=green,solid,line width = 1.2pt]
  table[row sep=crcr]{%
10	-7.36862612817014\\
14	-7.26212839876729\\
18	-7.21062762920112\\
22	-7.17771135985715\\
26	-7.15670825804205\\
29	-7.14316632813764\\
33	-7.1303421244018\\
37	-7.12162931759408\\
41	-7.11125651467403\\
45	-7.10555605456781\\
49	-7.10006255263995\\
53	-7.09559226799943\\
57	-7.09350788679994\\
60	-7.0897724019163\\
64	-7.08729566585501\\
68	-7.08403313768041\\
72	-7.08183027312909\\
76	-7.07923732461732\\
80	-7.07650482892414\\
84	-7.0746480290928\\
88	-7.07222642752249\\
91	-7.07186629598221\\
95	-7.07017089822603\\
99	-7.06919359854525\\
103	-7.06836578101265\\
107	-7.06692013651942\\
111	-7.06606657083382\\
115	-7.06545269956678\\
119	-7.06520993780891\\
122	-7.06449717320911\\
126	-7.06364058246025\\
130	-7.06272539424893\\
134	-7.06203307355256\\
138	-7.06143505517369\\
142	-7.06028080425262\\
146	-7.05985333010664\\
150	-7.05980922964139\\
153	-7.05925093301287\\
157	-7.05861020702926\\
161	-7.05798578363577\\
165	-7.05777902496632\\
169	-7.0573792424501\\
173	-7.05727731965586\\
177	-7.05685493602322\\
181	-7.05636498692783\\
184	-7.0564262150854\\
188	-7.05620713745863\\
192	-7.05582366626725\\
196	-7.05536228721087\\
200	-7.05537462478842\\
};

\addplot [color=cyan,solid,line width = 1.2pt]
  table[row sep=crcr]{%
10	-6.69320502720096\\
14	-6.5756586834422\\
18	-6.51798203906592\\
22	-6.48157237959006\\
26	-6.45804012579001\\
29	-6.44282942708689\\
33	-6.42865515234826\\
37	-6.41874169344973\\
41	-6.40681095825039\\
45	-6.3996670266256\\
49	-6.3931298065299\\
53	-6.38759532968607\\
57	-6.38447758220575\\
60	-6.37985216175065\\
64	-6.37674071299751\\
68	-6.37261525635777\\
72	-6.3695466945371\\
76	-6.36626678483915\\
80	-6.36278170513888\\
84	-6.36022386369045\\
88	-6.3570245046212\\
91	-6.35626743851034\\
95	-6.35394952011694\\
99	-6.35237248824914\\
103	-6.35101178663701\\
107	-6.34892165989356\\
111	-6.34757146124627\\
115	-6.34644149315219\\
119	-6.34562572592672\\
122	-6.34448315099729\\
126	-6.3430841786155\\
130	-6.34163529049144\\
134	-6.34053534041042\\
138	-6.33954435772143\\
142	-6.33791669170572\\
146	-6.33707451019405\\
150	-6.33659080246299\\
153	-6.33579736589872\\
157	-6.33483232074662\\
161	-6.33384754512071\\
165	-6.33333121876265\\
169	-6.33256098349599\\
173	-6.33216905996465\\
177	-6.3314386194209\\
181	-6.33052431783457\\
184	-6.33037798036836\\
188	-6.32990242338654\\
192	-6.32922705267361\\
196	-6.32839631217685\\
200	-6.3281884240556\\
};

\addplot [color=blue, solid,line width = 1.2pt]
  table[row sep=crcr]{%
10	-6.38125543788758\\
14	-6.25282753547663\\
18	-6.18846449243821\\
22	-6.14791219279667\\
26	-6.12190400884754\\
29	-6.10517059146406\\
33	-6.08943185320195\\
37	-6.07820060937101\\
41	-6.06512797587913\\
45	-6.05692418512957\\
49	-6.04961005423323\\
53	-6.04336838887214\\
57	-6.03953809682298\\
60	-6.03434702867522\\
64	-6.03066805596749\\
68	-6.02600411402943\\
72	-6.02236263692597\\
76	-6.01866413146194\\
80	-6.01466010618053\\
84	-6.01168065972225\\
88	-6.00805168640155\\
91	-6.00699372472526\\
95	-6.00425963859653\\
99	-6.002361143165\\
103	-6.00071435553399\\
107	-5.99819195633437\\
111	-5.99650552788786\\
115	-5.99508488164248\\
119	-5.99399945016809\\
122	-5.9925933007013\\
126	-5.99080655250441\\
130	-5.9890553157692\\
134	-5.98764790358213\\
138	-5.98634473309981\\
142	-5.98438777275429\\
146	-5.98332164767129\\
150	-5.98260570003707\\
153	-5.98168530392157\\
157	-5.98044509983019\\
161	-5.97919745545893\\
165	-5.97842688885525\\
169	-5.97741836003951\\
173	-5.97688023966608\\
177	-5.97596917951036\\
181	-5.97479810211788\\
184	-5.97455170083938\\
188	-5.97382458063238\\
192	-5.97294001192715\\
196	-5.97193453562356\\
200	-5.97152747044442\\
};

\addplot [color=orange,solid,line width = 1.2pt]
  table[row sep=crcr]{%
10	-6.71161999999998\\
20	-6.53963999999999\\
30	-6.48222333333333\\
40	-6.45378750000003\\
50	-6.43493200000009\\
60	-6.43163500000002\\
70	-6.43070000000002\\
80	-6.42473125\\
90	-6.42588444444451\\
100	-6.42297799999996\\
110	-6.42034727272734\\
120	-6.41896750000003\\
130	-6.41809461538457\\
140	-6.41804500000001\\
150	-6.41717999999999\\
160	-6.416105625\\
170	-6.41548058823528\\
180	-6.41463388888889\\
190	-6.41417368421049\\
200	-6.41370450000001\\
};

\addplot [color=brown,solid,smooth,line width = 1.2pt]
  table[row sep=crcr]{%
10	-8.60679333333325\\
25	-8.41675466666896\\
41	-8.3689495934956\\
56	-8.35042083333333\\
71	-8.34377793427228\\
86	-8.34046240310076\\
102	-8.33747287581692\\
117	-8.33561339031361\\
132	-8.33470429292926\\
147	-8.33465873015873\\
163	-8.33457464212675\\
178	-8.33416629213473\\
193	-8.33407979274629\\
208	-8.33363076923076\\
224	-8.33326666666667\\
239	-8.33356708507657\\
254	-8.33361706036747\\
269	-8.33403568773238\\
285	-8.33395976608193\\
300	-8.33381622222225\\
};


\end{axis}
\end{tikzpicture}%

%
\hspace{1mm}
\subfloat[Asymptotic in-sample cost versus decay rate of out-of-sample disappointment]{
%
%
\definecolor{mycolor1}{rgb}{0.00000,0.44700,0.74100}%
\begin{tikzpicture}

\begin{axis}[%
width=1.4in,
height=1.4in,
at={(1.011111in,0.41667in)},
scale only axis,
xlabel={\footnotesize{$\lim\limits_{T\to\infty}-\frac{1}{T}\log \mb P_{\star} [ c(\widehat x_T, {\theta_\star}) > \widehat c_T(\widehat x_T) ]$}},
ylabel={\footnotesize{$\lim\limits_{T\to\infty} \mb E_{\mathbb{P}_\star}[\widehat c_T(\widehat x_T)]$}},
y label style={yshift=-0.75em},
xmin=0.00,
xmax=0.12,
xtick = {0,0.02,0.05,0.1},
xticklabels = {0,2\%,5\%,10\%},
ymin=-8.6,
ymax=-5.8,
scaled ticks=false, tick label style={/pgf/number format/fixed},
legend style={legend cell align=left,align=left,draw=white!15!black}
]

\addplot [color=magenta,only marks,mark=*,
mark options={scale=1.2, fill=magenta},
forget plot]
  table[row sep=crcr]{%
0.1171	-6.60\\
};

\addplot [color=magenta,style=densely dotted, line width = 1.2pt, smooth]
  table[row sep=crcr]{%
0	-8.29921852664511\\
0.00789473684210526	-8.057757190922\\
0.0157894736842105	-7.80980474826911\\
0.0236842105263158	-7.63536119868644\\
0.0315789473684211	-7.47442654217398\\
0.0394736842105263	-7.32700077873173\\
0.0473684210526316	-7.1930839083597\\
0.0552631578947368	-7.07267593105789\\
0.0631578947368421	-6.96577684682629\\
0.0710526315789474	-6.87238665566491\\
0.0789473684210526	-6.79250535757374\\
0.0868421052631579	-6.72613295255278\\
0.0947368421052631	-6.67326944060205\\
0.102631578947368	-6.63391482172152\\
0.110526315789474	-6.60806909591122\\
0.118421052631579	-6.59573226317112\\
0.126315789473684	-6.59690432350125\\
0.134210526315789	-6.61158527690159\\
0.142105263157895	-6.63977512337214\\
0.15	-6.68147386291291\\
};

\addplot [color=green,only marks,mark=*,
mark options={scale=1.2, fill=green},
forget plot]
  table[row sep=crcr]{%
0.0233	-7.055\\
};

\addplot [color=cyan,only marks,mark=*,
mark options={scale=1.2, fill=cyan},
forget plot]
  table[row sep=crcr]{%
0.048	-6.42 \\
};

\addplot [color=brown,only marks,mark=*,
mark options={scale=1.2, fill=brown},
forget plot]
  table[row sep=crcr]{%
0.0	-8.33 \\
};

\addplot [color=blue,only marks,mark=*,
mark options={scale=1.2, fill=blue},
forget plot]
  table[row sep=crcr]{%
0.0766	-5.97\\
};



\addplot [color=blue,style=densely dotted, line width = 1.5pt, smooth]
  table[row sep=crcr]{%
0	-7.79002355531436\\
0.00789473684210526	-7.51911178500722\\
0.0157894736842105	-7.26745189998762\\
0.0236842105263158	-7.03504390025555\\
0.0315789473684211	-6.82188778581101\\
0.0394736842105263	-6.62798355665401\\
0.0473684210526316	-6.45333121278454\\
0.0552631578947368	-6.2979307542026\\
0.0631578947368421	-6.1617821809082\\
0.0710526315789474	-6.04488549290133\\
0.0789473684210526	-5.94724069018199\\
0.0868421052631579	-5.86884777275019\\
0.0947368421052631	-5.80970674060591\\
0.102631578947368	-5.76981759374918\\
0.110526315789474	-5.74918033217997\\
0.118421052631579	-5.7477949558983\\
};


\addplot [color=orange,only marks,mark=*,
mark options={scale=1.2, fill=orange},
forget plot]
  table[row sep=crcr]{%
0.0766	-6.3\\
};


\addplot [color=orange,style=densely dotted, line width = 1.5pt, smooth]
  table[row sep=crcr]{%
0	-8.31126778615304\\
0.00789473684210526	-8.04142042798743\\
0.0157894736842105	-7.78550851544254\\
0.0236842105263158	-7.54353204851838\\
0.0315789473684211	-7.31549102721494\\
0.0394736842105263	-7.10138545153223\\
0.0473684210526316	-6.90121532147025\\
0.0552631578947368	-6.71498063702899\\
0.0631578947368421	-6.54268139820845\\
0.0710526315789474	-6.38431760500864\\
0.0789473684210526	-6.23988925742955\\
0.0868421052631579	-6.10939635547119\\
0.0947368421052631	-5.99283889913356\\
0.102631578947368	-5.89021688841664\\
0.110526315789474	-5.80153032332046\\
0.118421052631579	-5.726779203845\\
0.126315789473684	-5.66596352999026\\
0.134210526315789	-5.61908330175625\\
0.142105263157895	-5.58613851914296\\
0.15	-5.5671291821504\\
};

\addplot [color=brown,style=densely dotted, line width = 1.5pt, smooth]
  table[row sep=crcr]{%
0	-8.37691106727298\\
0.00789473684210526	-8.01185126941229\\
0.0157894736842105	-7.67208518188246\\
0.0236842105263158	-7.35761280468348\\
0.0315789473684211	-7.06843413781535\\
0.0394736842105263	-6.80454918127807\\
0.0473684210526316	-6.56595793507165\\
0.0552631578947368	-6.35266039919607\\
0.0631578947368421	-6.16465657365135\\
0.0710526315789474	-6.00194645843749\\
0.0789473684210526	-5.86453005355447\\
0.0868421052631579	-5.75240735900231\\
0.0947368421052631	-5.665578374781\\
0.102631578947368	-5.60404310089055\\
0.110526315789474	-5.56780153733095\\
};


\addplot [color=black,dashed]
  table[row sep=crcr]{%
0.05	-20\\
0.05	-5\\
};


\end{axis}
\end{tikzpicture}
\ref{named}
\caption[]{Statistical properties of different cost and decision estimators for a data-driven newsvendor problem with ordering cost~$k=5$ and retail price~$p=7$, where the demand~$\xi$ follows a shifted binomial distribution with~$10$ trials, success probability~$0.5$ and shift~$1$. All probabilities and expectations involving random training data are evaluated empirically using~$10^4$ independent training sets. 
}
\label{fig:newsvendor}
\end{figure}


Figure~\ref{fig:newsvendor} visualizes the out-of-sample disappointment and the expected in-sample cost as well as the trade-off between the asymptotic in-sample cost and the decay rate of the out-of-sample disappointment for different estimators. Figure~\ref{fig:newsvendor:disappointment} shows that, as a function of~$T$, the out-of-sample disappointment always traces out an almost perfect straight line on a logarithmic scale. This observation suggests that the out-of-sample disappointment decays exponentially and is therefore faithfully represented by its decay rate. 

The solid lines in Figures~\ref{fig:newsvendor:disappointment} and~\ref{fig:newsvendor:conservatism} correspond to the empirical cost estimator (light brown) and to distributionally robust cost estimators with a moment ambiguity set
(green: $\varepsilon=0.05$, light blue: $\varepsilon=0.13$, dark blue: $\varepsilon=0.2$), a Wasserstein ambiguity set (orange: $\varepsilon=0.28$) and a relative entropy ambiguity set (magenta: $\varepsilon=0.12$). The $\varepsilon$ hyperparameters are chosen to ensure efficient use of the available plotting area. As expected, the empirical cost estimator is the most optimistic one in the sense that it displays the lowest in-sample cost, but its out-of-sample disappointment fails to decay. Any distributionally robust cost estimator becomes increasingly pessimistic as the size parameter~$\varepsilon$ of the underlying ambiguity set increases. The dashed lines in Figure~\ref{fig:newsvendor:comparison} visualize the trade-off between the asymptotic in-sample cost and the decay rate of the out-of-sample disappointment for the na\"ive penalized empirical cost estimator~$\widehat c(x)=c(x,\widehat S_T)+\varepsilon$ (light brown) and for the distributionally robust cost estimators with a moment ambiguity set (dark blue), a Wasserstein ambiguity set (orange) and a relative entropy ambiguity set (magenta) as~$\varepsilon$ is swept. The six dots in Figure~\ref{fig:newsvendor:comparison} correspond to the six estimators investigated in Figures~\ref{fig:newsvendor:disappointment} and~\ref{fig:newsvendor:conservatism}. As expected, the dashed lines corresponding to the distributionally robust cost estimators with a Wasserstein and a relative entropy ambiguity set intersect because both of these estimators reduce to the empirical cost estimator for~$\varepsilon=0$. Maybe surprisingly, the distributionally robust cost estimators associated with the relative entropy ambiguity set dominate those associated with the Wasserstein ambiguity set and even more so those associated with the moment ambiguity set, that is, their asymptotic in-sample cost is lowest for any fixed decay rate of the out-of-sample disappointment. They also dominate the penalized empirical cost estimators. It is now natural to ask whether there exists a globally {\em least conservative} cost estimator whose asymptotic in-sample risk is minimal across {\em all} conceivable cost estimators (not necessarily only distributionally robust ones) with a prescribed decay rate of the out-of-sample disappointment. 
For example, if we require a decay rate of at least $5\%$, all cost estimators on the right hand side of the vertical dashed line in Figure~\ref{fig:newsvendor:comparison} are feasible. A simple line search reveals that this includes all penalized empirical cost estimators with penalty $\varepsilon\geq 1.9$, all distributionally robust cost estimators with a moment ambiguity set of size~$\varepsilon\geq 0.14$, all distributionally robust cost estimators with a Wasserstein ambiguity set of radius $\varepsilon\geq 0.23$ and all distributionally robust cost estimators with a relative entropy ambiguity set of radius $\varepsilon\geq 0.05$. But many other estimators not considered in this experiment are feasible, too. We endeavor to identify the least conservative of {\em all} such feasible estimators. In the remainder we address this fundamental challenge under significantly more~general~conditions.


\subsection{Data-driven predictors and prescriptors} \label{ssec:dd_predictions_prescriptions}


We now return to the general stochastic optimization problem~\eqref{eq:initial:problem-f}, and we assume that the unknown probability measure~$\mb P_\star$ must be learned from a finite sample path of a stochastic process $\{\xi_t\}_{t\in\mb N}$ with state space~$\Xi\subseteq \Re^m$. Like any random object, this data-generating stochastic process is defined on the measurable space~$(\Omega, \mc F)$. From now on we assume that even though the probability measure~$\mb P_\star$ is unknown, it belongs to a known finitely parametrized ambiguity set. This premise is formalized in the following assumption.

\begin{assumption}[Finitely parametrized ambiguity set]
\label{ass:parametrization}
The probability measure $\mb P_\star$ belongs to a finitely parametrized ambiguity set $\mc P=\{\mb P_{\theta}:\theta \in \Theta\}$, where $\Theta$ is the relative interior of a convex subset of the finite-dimensional parameter space~$\Re^d$, and~$\mb P_\theta$ is a probability measure on~$(\Omega,\mc F)$ for every~$\theta\in\Theta$. 
\end{assumption}
As each~$\theta\in\Theta$ encodes a unique probabilistic model~$\mb P_\theta$, for ease of terminology, we will henceforth refer to~$\theta$ as a {\em model} and to~$\Theta$ as the {\em model space}. The ambiguity set~$\mc P$ is meant to capture all structural information on~$\mb P_\star$ that is available before observing any statistical data. This justifies our assumption that~$\mc P$ is known to contain the probability measure~$\mb P_\star$ with certainty (and not only with high confidence). Assumption~\ref{ass:parametrization} thus implies that there exists a model~$\theta_\star\in\Theta$ with~$\mb P_{\theta_\star}=\mb P_\star$.

To provide some intuition for the abstract concepts introduced in this paper, we use the class of \ac{iid}~stochastic processes with a finite state space as a running example. This example will further show that the approach to data-driven decision-making developed in~\cite{ref:vanParys:fromdata-17} emerges as a simple special case of a considerably more general framework. Several alternative data generation processes will be discussed in Section~\ref{sec:models}. 

\begin{example}[Ambiguity set for finite state i.i.d.\ processes] \label{ex:finitestate:iid:part1}
Assume that $\Xi=\{1,\ldots,d\}$, the random variables $\xi_t$ are serially independent under $\mb P_\star$ and $\mb P_\star[\xi_t=i]=(\theta_\star)_{i}>0$ for all $i\in\Xi$ and $t\in\mb N$. The vector~$\theta_\star$ thus encodes the unknown probability mass function of $\xi_t$, which is independent of~$t$. These assumptions imply that $\mb P_\star$ belongs to an ambiguity set of the form $\mc P=\{\mb P_\theta:\theta\in\Theta \}$, where $\Theta =\{\theta\in\mb R^d_{++}: \sum_{i=1}^d \theta_i=1\}$ is the positive probability simplex, and each $\theta$ encodes a probability measure $\mb P_\theta$ on $(\Omega, \mc F)$ satisfying
\(
	\mb P_\theta [ \xi_t=i_t~\forall t=1,\ldots,T] = \prod_{t=1}^T \theta_{i_t}~ \forall i_t\in \Xi , \, t=1,\ldots,T,\,T\in\mb N.
\)
\end{example}

We now embed the original stochastic optimization problem~\eqref{eq:initial:problem-f} into a family of problems corresponding to the probability measures~$\mb P_\theta$, $\theta\in\Theta$. Therefore, by slightly abusing notation with the goal to avoid clutter, we henceforth parametrize the objective function of problem~\eqref{eq:initial:problem-f} by~$\theta$ instead of~$\mb P_\theta$.


\begin{definition}[Model-based predictors and prescriptors]
\label{def:parametric-predictor}
For any fixed model $\theta\in \Theta$, we define the model-based predictor~$c(x,\theta)$ as the objective function of problem~\eqref{eq:initial:problem-f} when~$\mb P_\star$ is replaced with~$\mb P_\theta$ and the corresponding model-based prescriptor $x^\star(\theta)\in\arg\min_{x\in X}c(x,\theta)$ as a decision that minimizes $c(x,\theta)$ over $x\in X$. 
\end{definition}

The stochastic program~\eqref{eq:initial:problem-f} can now be identified with the {\em prescription problem} of computing $x^\star(\theta_\star)$. Similarly, the evaluation of the objective function of a given decision~$x\in X$ in~\eqref{eq:initial:problem-f} can be identified with the {\em prediction problem} of computing $c(x,\theta_\star)$. In the remainder we impose the following regularity condition.

\begin{assumption}[Uniform continuity and boundedness of the model-based predictor]
\label{ass:continuity}
The model-based predictor~$c(x,\theta)$ is uniformly continuous and bounded on $X\times \Theta$.
\end{assumption}

Note that if~$c(x,\theta)$ is uniformly continuous and bounded on $X\times \Theta$, then it admits a unique uniformly continuous and bounded extension to~$X\times\cl\Theta$ \cite[Theorem~5.15]{ref:aliprantis-07}. By slight abuse of notation, we will denote this extension by~$c(x,\theta)$, too. Assumption~\ref{ass:continuity} is trivially satisfied by the newsvendor problem of Section~\ref{sec:newsvendor}.
As neither the model-based predictor~$c(x, \theta_\star)$ nor the model-based prescriptor~$x^\star(\theta_\star)$ can be evaluated for the unknown true model~$\theta_\star$, we will now approximate them by functions of the available data. In order to formally define data-driven predictors and prescriptors, we denote by $\xivec=(\xi_1,\hdots,\xi_T)$ the history of the data-generating process up to time~$T$, and we let $\mc F_T\subseteq \mc F$ be the $\sigma$-algebra generated by~$\xivec$ for any $T\in\mb N$. We also use $\mb E_\theta[\cdot]$ to denote the expectation operator with respect to $\mb P_\theta$ for any model~$\theta\in\Theta$. 



\begin{definition}[Data-driven predictors]
\label{def:dd_prediction}
A decision-dependent stochastic process $\widehat c=\{\widehat c_T(x)\}_{T\in\N,\,x\in X}$ valued in $\mb R$ is called a data-driven predictor if it satisfies the following conditions.
\begin{enumerate}[(i)]
  \setlength\itemsep{0em}
\item \label{def:dd_prediction:0} \textbf{Continuity in the decisions.} The random variable $\widehat c_T(x)$ is continuous in~$x\in X$ for all $T\in\mb N$.
\item \label{def:dd_prediction:i} \textbf{Non-anticipativity.} The process $\{\widehat c_T(x)\}_{T\in\N}$ is adapted to the filtration $\{\mc F_T\}_{T\in\N}$ for every $x\in X$.
\item \label{def:dd_prediction:ii:boundedness} \textbf{Uniform integrability.} There exists a non-negative random variable $\overline c$ such that $\mb E_\theta[\bar c]<\infty$ for all $\theta\in\Theta$ and $|\widehat c_T(x)|\leq \overline c$ $\mb P_\theta$-almost surely for all $T\in\N$, $x\in X$ and $\theta\in\Theta$.
\item \label{def:dd_prediction:iii:convergence} \textbf{Convergence of objective.} There exists a deterministic Borel-measurable function $c_\infty:X\times\Theta\to\Re$ such that, as $T$ grows, $\widehat c_T(x)$ converges in probability under $\mb P_\theta$ to $c_\infty(x,\theta)$ for every $x\in X$ and $\theta\in\Theta$.
\item \label{def:dd_pres:iii:convergence} \textbf{Convergence of optimal value.} There exists a deterministic Borel-measurable function $v_\infty:\Theta\to\Re$ such that, as $T$ grows, $\min_{x\in X}\widehat c_T(x)$ converges in probability under $\mathbb P_\theta$ to $v_\infty(\theta)$ for every $\theta\in\Theta$.
\end{enumerate}
\end{definition}

If we use data-driven predictors as accessible proxies for inaccessible model-based predictors, then it is reasonable to assume that they share all known properties of the model-based predictors. The continuity condition~\ref{def:dd_prediction:0} in Definition~\ref{def:dd_prediction} is thus a natural consequence of Assumption~\ref{ass:continuity}. In addition, as~$X$ is compact, this condition guarantees that the data-driven decision problem~$\min_{x\in X} \widehat c_T(x)$ is sovlable for every~$T\in\mathbb N$. The non-anticipativity condition~\ref{def:dd_prediction:i} implies via \cite[Theorem~5.4.2]{ref:Ash-00} that for any $T\in\N$ there exists a Borel-measurable function~$f_T:X\times\Xi^T\to\Re$ with $\widehat c_T(x) = f_T(x,\xivec)$. This means that~$\widehat c_T(x)$ may depend only on the history~$\xivec$ of the data-generating process observed up to time $T$. The uniform integrability condition~\ref{def:dd_prediction:ii:boundedness} is of technical nature and non-restrictive in all examples studied in this paper. 
The convergence condition~\ref{def:dd_prediction:iii:convergence} implies that for any fixed $\theta\in\Theta$, the predictor~$\widehat c_T(x)$ represents a consistent estimator for $c_\infty(x,\theta)$ if the data is generated under $\mb P_\theta$. Note that we explicitly allow for the possibility that $c_\infty(x,\theta)\neq c(x,\theta)$, that is, $\widehat c_T(x)$ may in fact be a {\em biased} estimator for the model-based predictor $c(x,\theta)$. Similarly, the convergence condition~\ref{def:dd_pres:iii:convergence} implies that for any fixed $\theta\in\Theta$ the optimal value $\widehat c_T(\widehat x_T)$ of the data-driven optimization problem $\min_{x\in X} \widehat c_T(x)$ represents a consistent estimator for~$v_\infty(\theta)$ if the data is generated under $\mb P_\theta$. 
Thus, it may be a {\em biased} estimator for the optimal value of the stochastic optimization problem $\min_{x\in X}c(x,\theta)$.
  From now on we denote the set of all data-driven predictors in the sense of Definition~\ref{def:dd_prediction} by~$\widehat{\mathcal{C}}$.

\begin{definition}[Data-driven prescriptors]
\label{def:dd_prescription}
A stochastic process $\widehat x=\{\widehat x_T\}_{T\in\N}$  valued in $X$ is called a data-driven prescriptor if it satisfies the following conditions.
\begin{enumerate}[(i)]
  \setlength\itemsep{0em}
\item \label{def:dd_prescription:i} \textbf{Non-anticipativity.} The process $\{\widehat x_T\}_{T\in\N}$ is adapted to the filtration $\{\mc F_T\}_{T\in\N}$.
\item \label{def:dd_prescription:ii}
\textbf{Compatibility with a data-driven predictor.} There exists a data-driven predictor $\widehat c$ that induces the data-driven prescriptor $\widehat x$ in the sense that $\widehat x_T \in  \arg\min_{x\in X}
\widehat c_T(x)$ for all $T\in\N$.
\end{enumerate}
\end{definition}

The non-anticipativity condition~\ref{def:dd_prescription:i} implies via \cite[Theorem~5.4.2]{ref:Ash-00} that for any $T\in\N$ there exists a Borel-measurable function $g_T:\Xi^T\to\Re$ with $\widehat x_T = g_T(\xivec)$. The compatibility condition~\ref{def:dd_prescription:ii} requires that any data-driven prescriptor is a minimizer of some data-driven predictor. 
From now on we use~$\widehat{\mathcal{X}}$ to denote the set of all data-driven predictor-prescriptor-pairs of the form~$(\widehat c,\widehat x)$, where $\widehat x$ is induced by $\widehat c$.

One can show that {\em any} data-driven predictor~$\widehat c$ induces a (not necessarily unique) data-driven prescriptor~$\widehat x$. The reason for this is that since $X$ is compact and since $\widehat c_T(x)$ depends continuously on~$x\in X$ and represents an $\mc F_T$-measurable random variable for every fixed~$x$, there exists an $\mc F_T$-measurable random vector $\widehat x_T\in \arg\min_{x\in X} \widehat c_T(x)$ thanks to \cite[Theorem~14.37]{rockafellar1998variational}. Combining~$\widehat x_T$ for all~$T\in\mb N$ yields the desired prescriptor.

We emphasize that essentially any procedure for mapping the available data to an `asymptotically deterministic' feasible decision defines a data-driven prescriptor. Indeed, if a stochastic process $\widehat x=\{\widehat x_T\}_{T\in\N}$ with state space~$X$ is adapted to the filtration~$\{\mc F_T\}_{T\in\N}$ and converges in probability under $\mb P_\theta$ to some deterministic Borel-measurable function $x_\infty(\theta)$ for every $\theta\in\Theta$, then one readily verifies that the decision-dependent stochastic process $\widehat c=\{\widehat c_T(x)\}_{T\in\N}$ defined through $\widehat c_T(x)=\min\{1,\|x-\widehat x_T\|_2\}$ for all~$T\in\mb N$ is a data-driven predictor in the sense of Definition~\ref{def:dd_prediction} that induces~$\widehat x$. 
This example shows that the notion of a data-driven prescriptor is very general. Moreover, Definition~\ref{def:dd_prescription} does not even require $\widehat x_T$ to converge.




\begin{example}[Empirical predictor for finite state i.i.d.\ processes]
\label{eq:naive-predictor}
In the context of Example~\ref{ex:finitestate:iid:part1}, assume that the model-based predictor represents an expected loss, that is,  set~$c(x,\theta)=\mb E_\theta[\ell(x,\xi)]$ 
for some loss function~$\ell(x,\xi)$ that is continuous in~$x$ and bounded on~$X\times\Xi$, and assume that the random variable~$\xi$ has the same distribution as the i.i.d.\ training samples~$\{\xi_t\}_{t\in\N}$. The newsvendor problem of Section~\ref{sec:newsvendor} satisfies all of these assumptions. 
We now define the empirical predictor~$\widehat c$ through 
\(
	\widehat c_T(x) = \frac{1}{T} \sum_{t=1}^T \ell(x,\xi_t)~ \forall T\in\mb N.
\)
Note that the empirical predictor simply evaluates the sample average of the loss across the observed dataset and represents a data-driven predictor in the sense of Definition~\ref{def:dd_prediction}. While the continuity condition~\ref{def:dd_prediction:0} and the non-anticipativity assumption~\ref{def:dd_prediction:i} hold by construction, the boundedness condition~\ref{def:dd_prediction:ii:boundedness} holds because~$\widehat c_T(x)$ is trivially bounded by the finite constant $\overline c=\max_{x\in X,\, \xi\in\Xi} |\ell(x,\xi)|$ for every $T\in\mb N$. The convergence condition~\ref{def:dd_prediction:iii:convergence} follows by setting $c_\infty(x,\theta) = c(x,\theta)$ and observing that
$\lim_{T\to\infty}|\widehat c_T(x) - c(x,\theta)| =0$ $\mb P_\theta$-almost surely for all $\theta\in\Theta$ thanks to the strong law of large numbers. Note also that $c_\infty(x,\theta)$ is continuous by Assumption~\ref{ass:continuity}.
As $\ell$ is continuous and $X$ is compact, the uniform law of large numbers \cite[Lemma~2.4]{ref:Newey-94} further guarantees that $\lim_{T\rightarrow \infty}\sup_{x\in X} \|\frac{1}{T} \sum_{t=1}^T \ell(x,\xi_t) - c(x,\theta)\| = 0$ $\mb P_\theta$-almost surely for all~$\theta\in\Theta$. Therefore, the convergence condition~\ref{def:dd_pres:iii:convergence} is satisfied if we set $v_\infty(\theta)=\min_{x\in X} c(x,\theta)$.
\end{example}

We will now investigate sequences of surrogate decision problems of the form~$\min_{x\in X} \widehat c_T(x)$ indexed by~$T\in\mb N$, where $\widehat c$ is a data-driven predictor. As the set~$\widehat{\mc C}$ of admissible predictors is vast, there are endless possibilities to design such surrogate decision problems. An ideal design should have the following property for {\em any} model~$\theta\in \Theta$: If the observable data is generated by~$\mb P_\theta$, then the surrogate decision problem~$\min_{x\in X} \widehat c_T(x)$, which must be constructed without knowledge of~$\theta$, should provide a `good' approximation for the stochastic optimization problem~$\min_{x\in X} c(x,\theta)$ corresponding to model~$\theta$. If such an ideal design can be found, it will provide---in particular---a `good' approximation for the actual decision problem corresponding to the unknown true model~$\theta_\star$. Intuitively, a data-driven predictor~$\widehat c$ and the corresponding predictor~$\widehat x$ provide a `good' design if the data-driven objective function~$\widehat c_T(x)$ is close to the function~$c(x,\theta)$ for large~$T$ and if the data-driven decision~$\widehat x_T$ is near-optimal in the decision problem~$\min_{x\in X} c(x,\theta)$ for large~$T$ whenever the data is generated by~$\mb P_\theta$. In the following we will formalize these intuitions.

The key idea is to find the best possible data-driven predictor~$\widehat c$ by solving an optimization problem over~$\widehat{\mc C}$ and to find the best possible data-driven prescriptor~$\widehat x$ by solving an optimization problem over~$\widehat{\mc X}$. As any predictor $\widehat c$ encodes a procedure for transforming data to surrogate optimization problems, an optimization problem over~$\widehat{\mc C}$ can be viewed as an optimization problem over optimization problems. We will therefore refer to it as a {\em meta-optimization model}. As any data-driven prescriptor is induced by a data-driven predictor, an optimization problem over the set~$\widehat{\mc X}$ of predictor-prescriptor pairs can also be viewed as a meta-optimization problem. In the special case when the data is generated by a simple \ac{iid} processes, such meta-optimization problems were already studied in~\cite{ref:vanParys:fromdata-17}. Here, we will show that these ideas have a much wider scope.

To formulate the desired meta-optimization problems, we first need to introduce some terminology. For any fixed model~$\theta\in\Theta$ and data-driven predictor~$\widehat c$, we will henceforth refer to~$\widehat c_T(x)$ as the {\em in-sample risk} and to $c(x,\theta)$ as the {\em out-of-sample risk} of the decision~$x\in X$. Specifically, if~$\widehat x$ is a data-driven prescriptor induced by~$\widehat c$, then $\widehat c_T(\widehat x_T)$ and $c(\widehat x_T,\theta)$ represent the in-sample and out-of-sample risk of~$\widehat x_T$, respectively. We emphasize that the out-of-sample risk under the true model~$\theta_\star$ is the actual quantity of interest as it represents the objective function value of a given candidate decision in the true stochastic optimization problem~\eqref{eq:prescription_problem}. If the data-generating process is ergodic (which is the case for all examples studied Section~\ref{sec:models}), then the out-of-sample risk also coincides almost surely with the average cost incurred of the given candidate decision along an infinitely long sample path. Unfortunately, only the in-sample risk is observable at the time when the decision problem needs to be solved. Of course, the out-of-sample risk can in principle be computed for any model~$\theta\in\Theta$. But the benefits of this capability remain limited as long as~$\theta_\star$ is unknown.

The ideal meta-optimization problem over all data-driven prescriptors would be tailored to the length~$T$ of the available observation history and would minimize the out-of-sample risk $c(\widehat x_T,\theta_\star)$ of $\widehat x_T$ over all~$(\widehat c,\widehat x)\in \widehat {\mc X}$. As the true model~$\theta_\star$ is unknown, however, such an approach would only be successful if there existed a Pareto
dominant prescriptor that minimizes the out-of-sample risk of~$\widehat x_T$ simultaneously for all models~$\theta\in\Theta$ (and thus in particular for~$\theta_\star$). Unfortunately, finding such a Pareto dominant prescriptor seems too ambitious and is probably impossible. This prompts us to work with an alternative notion of optimality. The key idea is to minimize the in-sample risk subject to a constraint that forces the out-of-sample risk to be smaller than or equal to the in-sample risk. As both the in-sample and the out-of-sample risk are random objects, we impose this constraint probabilistically. To this end, we define a notion of {\em out-of-sample disappointment}. 



\begin{definition}[Out-of-sample disappointment]
\label{def:oos-disappointment}
For any data-driven predictor $\widehat c$ the probability 
\begin{subequations}
\label{eq:oos-disappointment}
\(	
	\mb P_\theta [ c(x, \theta) > \widehat c_T(x)]
\)
is referred to as the out-of-sample prediction disappointment of~$x\in X$ at time $T$ under model $\theta\in\Theta$. Similarly, for any data-driven prescriptor $\widehat x$ induced by a data-driven predictor $\widehat c$ the probability 
\(	
	\mb P_\theta [ c(\widehat x_T, \theta) > \widehat c_T(\widehat x_T)]
\)
\end{subequations}
is termed the out-of-sample prescription disappointment at time $T$ under model $\theta\in\Theta$.
\end{definition}

Note that the out-of-sample disappointment represents the probability that the out-of-sample risk strictly exceeds the in-sample risk. Intuitively, a smaller out-of-sample disappointment should be preferred over a large out-of-sample disappointment. For example, in the context of the newsvendor problem studied in Section~\ref{sec:newsvendor}, a high out-of-sample disappointment entailed a high probability of budget overruns. 

The meta-optimization problem to be developed below aims to minimize the in-sample risk. As $\widehat c_T(x)$ for~$x\in X$ as well as~$\widehat c_T(\widehat x_T)$ are random variables, however, this informal objective is not well-defined. The properties of a data-driven predictor laid out in Definition~\ref{def:dd_prediction} further imply that even the {\em expected} in-sample risk is not well-defined. Indeed, if the data is generated under~$\mb P_\theta$, then~$\mb E_\theta [ \widehat c_T(x)]$ converges to~$c_\infty(x,\theta)$ as $T$ grows, where~$c_\infty$ is the Borel-measurable function whose existence is postulated in Definition~\ref{def:dd_prediction}\ref{def:dd_prediction:iii:convergence}. This follows directly from Lemma~\ref{lem:aux:prob}, which applies because of conditions~\ref{def:dd_prediction:ii:boundedness} and~\ref{def:dd_prediction:iii:convergence} in Definition~\ref{def:dd_prediction}. The same lemma implies that~$\mb E_\theta [ \widehat c_T(\widehat x_T)]$ converges to~$v_\infty(\theta)$ as $T$ grows, where~$v_\infty$ is the Borel-measurable function whose existence is postulated in Definition~\ref{def:dd_prediction}\ref{def:dd_pres:iii:convergence}. 


The above reasoning indicates that both the out-of-sample disappointment as well as the expected in-sample risk depend on the data-generating model~$\theta$ and the length~$T$ of the available observation history. As~$\theta$ is unobservable, however, the meta-optimization problem to be developed may {\em not} depend on~$\theta$ for otherwise its solution would not be implementable. Even though~$T$ is known to the decision-maker, we did not manage to construct a meta-optimization problem that adapts to~$T$ and can still be solved. To eliminate the dependence on both~$\theta$ as well as~$T$, we thus propose to minimize the {\em asymptotic} expected in-sample performance of {\em every}~$\theta\in\Theta$ subject to an upper bound on the {\em asymptotic} exponential decay rate of the out-of-sample disappointment for {\em every}~$\theta\in\Theta$. Note that since the proposed meta-optimization problem accommodates multiple objective functions (one for each~$\theta\in \Theta$) and a constraint that must hold for all realizations of the uncertain parameter~$\theta\in\Theta$, it constitutes a {\em robust multi-objective optimization problem}.

The meta-optimization problem for finding the best data-driven predictor can thus be formulated as
\begin{subequations}\label{eq:optimal}
\begin{equation}
\label{eq:optimal-predictor}
	\begin{array}{ll}
	\displaystyle\mathop{\text{minimize}}_{\widehat c\in\widehat{\mathcal{C}}}{} & \left\{ \displaystyle \lim_{T\to\infty} \mb E_\theta \left[\widehat c_T(x) \right] \right\}_{x\in X,\, \theta\in\Theta} \\
	\text{subject to} & \displaystyle \limsup_{T\to\infty} \frac{1}{T} \log \mb P_\theta [ c(x, \theta) > \widehat c_T(x)]\leq -r \quad \forall x\in X,\;\theta\in\Theta.
	\end{array}
\end{equation}
Recall that the asymptotic expected in-sample risk~$\lim_{T\to\infty} \mb E_\theta [\widehat c_T(x)]$ under model~$\theta$ is well-defined and coincides with the limit function~$c_\infty(x,\theta)$ of Definition~\ref{def:dd_prediction}\ref{def:dd_prediction:iii:convergence} for every~$x\in X$ and~$\theta\in\Theta$. The constraint requires that the out-of-sample disappointment under model~$\theta$ satisfies $\mb P_\theta [ c(x, \theta) > \widehat c_T(x)]\leq e^{-rT+o(T)}$ for every~$x\in X$ and~$\theta\in\Theta$, where~$r> 0$ is a risk-aversion parameter chosen by the decision-maker.

Similarly, the meta-optimization problem for finding the best predictor-prescriptor-pair can be formulated~as
\begin{equation}
\label{eq:optimal-prescriptor}
	\begin{array}{ll}
	\displaystyle\mathop{\text{minimize}}_{(\widehat c,\widehat x)\in{\widehat{\mathcal{X}}}}{}_{} &  \left\{ \displaystyle \lim_{T\to\infty} \mb E_\theta \left[\widehat c_T(\widehat x_T) \right] \right\}_{\theta\in\Theta}  \\
	\text{subject to} & \displaystyle \limsup_{T\to\infty} \frac{1}{T} \log \mb P_\theta [ c(\widehat x_T, \theta) > \widehat c_T(\widehat x_T)] \leq -r \quad \forall \theta\in\Theta.
	\end{array}
\end{equation}
\end{subequations}
As above, $\lim_{T\to\infty} \mb E_\theta [\widehat c_T(\widehat x_T)]$ is well-defined and coincides with the limit function~$v_\infty(\theta)$ of Definition~\ref{def:dd_prediction}\ref{def:dd_pres:iii:convergence} for every~$\theta\in\Theta$, and the constraint requires that $\mb P_\theta [ c(\widehat x_T, \theta) > \widehat c_T(x)]\leq e^{-rT+o(T)}$ for every~$\theta\in\Theta$.

To gain some intuition for the rate constraint in~\eqref{eq:optimal-predictor}, recall from Definition~\ref{def:dd_prediction}\ref{def:dd_prediction:iii:convergence} that $\widehat c_T(x)$ converges in probability to~$c_\infty(x,\theta)$. As convergence in probability implies convergence in distribution, we thus have
\(
\lim_{T\to\infty} \mb P_\theta [ c(x, \theta) > \widehat c_T(x)]
= \Indic{c(x, \theta) >  c_\infty(x,\theta)}
\)
for all $x\in X, \ \theta\in\Theta\text{ with } c(x, \theta) \neq  c_\infty(x,\theta)$.
The rate constraint in~\eqref{eq:optimal-predictor} requires the out-of-sample disappointment to converge to~$0$ as~$T$ grows. The above reasoning thus implies that the rate constraint is {\em not} satisfiable if there exists a decision~$x\in X$ and a model~$\theta\in\Theta$ with~$c(x, \theta) >  c_\infty(x,\theta)$. In other words, any feasible data-driven predictor~$\widehat c$ must asymptotically exceed (or match) the model-based predictor~$c(x,\theta)$ for all~$x\in X$ and~$\theta\in\Theta$. This conclusion is consistent with the reasoning that led to the meta-optimization problem~\eqref{eq:optimal-predictor}. In the remainder of the paper we will show that an exponentially decaying out-of-sample disappointment necessitates indeed a biased data-driven predictor that {\em strictly} overestimates~$c(x,\theta)$. In addition, the bias increases with the desired decay rate~$r$.

Multi-objective optimization problems such as~\eqref{eq:optimal-predictor} and~\eqref{eq:optimal-prescriptor} typically only admit Pareto optimal solutions, {\em i.e.}, feasible solutions that are not Pareto dominated by any other feasible solution. Perhaps surprisingly, in the remainder of this paper we will show that under some regularity conditions both~\eqref{eq:optimal-predictor} and~\eqref{eq:optimal-prescriptor} admit Pareto dominant solutions, {\em i.e.}, feasible solutions that Pareto dominate all other feasible solutions. Moreover, these solutions admit intuitive closed-form expressions.

\subsection{Data compression} \label{sec:compression:scheme}

A defining property of data-driven predictors and prescriptors is that they are adapted to the filtration generated by the data. Thus, they can be seen as sequences of functions that map the increasingly high-dimensional observation history $\xivec\in\mb R^{dT}$ to a cost estimate or a decision, respectively. Processing or even storing such functions might easily become impractical for large~$T$. As a remedy, we will try to compress the observation history~$\xivec$ into a statistic~$\widehat S_T$ of constant dimension~$d$ without sacrificing useful information.

\begin{definition}[Statistic] \label{def:estimator}
  A stochastic process $\widehat S=\{\widehat S_T\}_{T\in\N}$ with a closed state space $\SS\subseteq \Re^d$ is called a statistic if it is adapted to the filtration $\{\mc F_T\}_{T\in\N}$ and if there exists a local homeomorphism~$S_\infty:\Theta\to \SS$ such that, as $T$ grows, $\widehat S_T$ converges in probability under $\mb P_\theta$ to $S_\infty(\theta)$ for every $\theta\in\Theta$. If $S_\infty(\theta)=\theta$ for all~$\theta\in\Theta$, then the statistic~$\widehat S$ is called a consistent model estimator.
\end{definition}

As~$\widehat S$ is adapted to~$\{\mc F_T\}_{T\in\N}$, we know from \cite[Theorem~5.4.2]{ref:Ash-00} that for any $T\in\N$ there exists a Borel-measurable function $h_T:\Xi^T\to\Re^d$ with $\widehat S_T = h_T(\xivec)$.
In the following we will always assume that the state space~$\SS\subseteq \Re^{d}$ is defined as the smallest closed set that satisfies~$\mb P_\theta[\widehat S_T\in \SS]=1$ for all~$\theta\in \Theta$ and~$T\in \N$. It is also useful to define~$\SS_\infty= \{S_\infty(\theta):\theta\in \Theta\}\subseteq \SS$ as the set of all asymptotic realizations of the statistic~$\widehat S$. 
As~$\Theta$ is open with respect to the subspace topology on~$\Theta$ and as~$S_\infty$ constitutes a local homeomorphism, $\SS_\infty$ is an open subset of~$\SS$ with respect to the subspace topology on~$\SS$. 

\begin{example}[Empirical distribution for finite state i.i.d.\ processes] \label{ex:finitestate:iid:part2}
In the context of the finite state i.i.d.\ processes described in Example~\ref{ex:finitestate:iid:part1}, we define the empirical distribution~$\widehat S_T\in\mb R^d$ through
\begin{equation} \label{estimator:iid:finite:state}
	(\widehat S_T)_{i} = \frac1T \sum_{t=1}^{T} \Indic{\xi_t=i} \quad \forall i\in\Xi,~T\in\mb N.
\end{equation}
Thus, the $i^{\rm th}$ component of $\widehat S_T$ records the empirical frequency of observing state~$i$ over the first~$T$ time periods. By construction, $\widehat S=\{\widehat S_T\}_{T\in\N}$ constitutes a consistent model estimator in the sense of Definition~\ref{def:estimator}. Indeed, the strong law of large numbers guarantees that, under~$\mb P_\theta$, the empirical distribution~$\widehat S_T$ converges almost surely (and thus in probability) to~$S_\infty(\theta)=\theta$ for every~$\theta\in\Theta$. Hence, the set~$\SS_\infty$ coincides with the open probability simplex~$
\Theta$. As the support of~$\widehat S_T$ is given by $\Delta_d\cap (\mb Z^d/T)$ for each $T\in \N$, we also have
  \(
    \SS = \cl \left(\cup_{T\in \N} \Delta_d\cap (\mb Z^d/T)\right) =  \cl \left( \Delta_d\cap \mb Q^d\right)  =\Delta_d=\cl \Theta.
  \)
\end{example}


We are now ready to introduce families of data-driven predictors and prescriptors that depend on the data only indirectly through a statistic, which may or may not be a consistent model estimator. To our best knowledge, all predictors and prescriptors studied in the existing literature can be represented in this form.


\begin{definition}[Compressed data-driven predictors and prescriptors]
\label{def:compressed:dd_prediction}
If~$\SS$ and~$\SS_\infty$ represent the state space and the set of asymptotic realizations of a statistic~$\widehat S$, then~$\tilde c:X\times\SS\to\Re$ is called a compressed data-driven predictor if it is bounded and continuous in~$x$ on~$X\times\SS$ and continuous in~$(x,s)$ on~$X\times\SS_\infty$. In addition, $\tilde x: \SS\to X$ is called a compressed data-driven prescriptor if it is quasi-continuous on~$\SS_\infty$ and there exists a compressed data-driven predictor~$\tilde c$ that induces~$\tilde x$ in the sense that $\tilde x(s) \in \arg\min_{x\in X} \tilde c(x, s)$ for all $s\in\SS$.
\end{definition}

One can show that {\em any} compressed data-driven predictor~$\tilde c$ induces a (not necessarily unique) compressed data-driven prescriptor~$\tilde x$. To see this, note first that the multifunction~$\arg\min_{x\in X} \tilde c(x, s)$ is non-empty-valued because~$X$ is compact and~$\tilde c(x,s)$ is continuous in~$x\in X$ for every fixed $x\in\SS$. Moreover, the restriction of this multifunction to~$\SS_\infty$ admits a quasi-continuous selector. This follows from the reasoning after Definition~3 in~\cite{ref:vanParys:fromdata-17}, which applies here because~$\tilde c$ is continuous on~$X\times \SS_\infty$ and~$X$ is compact.
Note also that any compressed data-driven predictor~$\tilde c$ and the underlying statistic~$\widehat S$ induce a data-driven predictor~$\widehat c$ defined through~$\widehat c_T(x)=\tilde c(x,\widehat S_T)$ for all $x\in X$ and~$T\in\N$. One readily verifies that $\widehat c$ satisfies all conditions of Definition~\ref{def:dd_prediction}. Indeed, conditions~\ref{def:dd_prediction:0}--\ref{def:dd_prediction:ii:boundedness} follow directly from the definitions of the statistic~$\widehat S$ and the compressed data-driven predictor~$\tilde c$. To check condition~\ref{def:dd_prediction:iii:convergence}, fix a probability measure~$\mb P_\theta$, and recall that~$\widehat S_T$ converges in probability to~$S_\infty(\theta)$. By the continuous mapping theorem~\cite[Theorem~3.2.4]{durrett_book}, which applies because~$S_\infty(\theta)\in\SS_\infty$ and because~$\tilde c(x,s)$ is continuous in~$s\in\SS_\infty$ for every fixed~$x\in X$, we may then conclude that $\widehat c_T(x)$ converges in probability to~$\tilde c(x,S_\infty(\theta))$. As this reasoning applies to every model~$\theta\in\Theta$, condition~\ref{def:dd_prediction:iii:convergence} holds with $c_\infty(x,\theta)=\tilde c(x,S_\infty(\theta))$. To check
condition~\ref{def:dd_pres:iii:convergence}, fix again a probability measure~$\mb P_\theta$, and introduce a real-valued function $\tilde v(s) = \min_{x\in X} \tilde c(x,s)$, which is continuous in $s\in\SS$ by Berge's maximum theorem \cite[pp.~115--116]{berge1997topological}. Invoking the continuous mapping theorem as above, it then follows that $\tilde v(\widehat S_T)=\min_{x\in X} \widehat c_T(x)$ converges in probability to $\tilde v(S_\infty(\theta))$. As this reasoning applies to every model~$\theta\in\Theta$, condition~\ref{def:dd_pres:iii:convergence} holds with $v_\infty(\theta)=\tilde v(S_\infty(\theta))$, which is a continuous function by construction. Finally, any compressed data-driven prescriptor~$\tilde x$ and the corresponding statistic~$\widehat S$ induce a data-driven prescriptor $\widehat x$ defined trough $\widehat x_T=\tilde x(\widehat S_T)$ for all~$T\in\mb N$. One readily verifies that~$\widehat x$  satisfies all conditions of Definition~\ref{def:dd_prescription}. Indeed, condition~\ref{def:dd_prescription:i} follows directly from the defining properties of a statistic. To check condition~\ref{def:dd_prescription:ii}, recall that any compressed data-driven prescriptor $\tilde x$ is induced by some compressed data-driven predictor~$\tilde c$. Next, define an ordinary data-driven predictor~$\widehat c$ through $\widehat c_T(x) = \tilde c (x, \widehat S_T)$ for all~$x\in X$ and~$T\in\mb N$. By our earlier reasoning, $\widehat c$ satisfies indeed all conditions of Definition~\ref{def:dd_prediction}. Then, we have 
\[
    \widehat x_T = \tilde x(\widehat S_T)\in \arg\min_{x\in X} \tilde c(x,\widehat S_T) =\arg\min_{x\in X} \widehat c_T(x)\quad \forall T\in\mb N,
\]
where the two equalities follow from the definitions of~$\widehat x_T$ and~$\widehat c_T$, respectively, while the membership relation holds by assumption. Hence, $\widehat x$ is induced by the data-driven predictor~$\widehat c$, and thus condition~\ref{def:dd_prescription:ii} holds.

In analogy to our conventions of Section~\ref{ssec:dd_predictions_prescriptions}, from now on we denote the set of all compressed data-driven predictors by~$\tilde{\mc C}$ and the set of all compressed data-driven predictor-prescriptor-pairs by~$\tilde {\mc X}$.

\begin{example}[Empirical predictor for finite state \ac{iid} processes revisited]
\label{eq:compressed:naive-predictor}
If $\widehat S=\{\widehat S_T\}_{T\in\mb N}$ is any statistic whose state space $\SS$ is a subset of~$\cl(\Theta)$, then the model-based predictor~$c$ of Definition~\ref{def:parametric-predictor} constitutes a trivial compressed data-driven predictor with respect to~$\widehat S$. Indeed, recall that $c$ admits a continuous extension to~$X\times\cl(\Theta)$ thanks to Assumption~\ref{ass:continuity}. In the context of the finite state \ac{iid} processes described in Example~\ref{ex:finitestate:iid:part1}, it is natural to set~$\widehat S_T$ to the empirical distribution over the first~$T$ observations as in Example~\ref{ex:finitestate:iid:part2}. In this case, we have~$\SS=\cl(\Theta)$, which ensures that $\widehat c_T(x)=c(x,\widehat S_T)$ is well-defined for every~$x\in X$ and~$T\in\mb N$. In the special case when $c(x,
\theta)=\mb E_\theta[\ell(x,\xi)]$, a direct calculation shows that~$\widehat c_T(x)=\frac{1}{T} \sum_{t=1}^T \ell(x,\xi_t)$. Thus, the data-driven predictor~$\widehat c=\{
\widehat c_T\}_{T\in\mb N}$ induced by~$c$ and~$\widehat S$ coincides with the empirical predictor of Example~\ref{eq:naive-predictor}.
\end{example}


We now consider a restriction of the meta-optimization problem~\eqref{eq:optimal-predictor} that optimizes only over {\em compressed} data-driven predictors~$\tilde c\in\tilde{\mc C}$. As in~\eqref{eq:optimal-predictor}, we minimize the asymptotic expected in-sample performance of {\em every}~$\theta\in\Theta$ subject to an upper bound on the asymptotic exponential decay rate of the out-of-sample disappointment for {\em every}~$\theta\in\Theta$. Identifying each compressed data-driven predictor~$\tilde c$ with an ordinary data-driven predictor~$\widehat c$ defined through~$\widehat c_T(x)=\tilde c(x,\widehat S_T)$, $T\in\mb N$, and observing that $\lim_{T\to\infty} \mb E_\theta [\tilde c(x,\widehat S_T)] = \tilde c(x,S_\infty(\theta))$ for all~$x\in X$ and~$\theta\in\Theta$ thanks to the continuous mapping theorem~\cite[Theorem~3.2.4]{durrett_book} and Lemma~\ref{lem:aux:prob}, the restricted meta-optimization problem can be formulated as follows.
\begin{subequations} \label{eq:optimal:compressed}
\begin{equation}
\label{eq:optimal-predictor:compressed}
	\begin{array}{ll}
	\displaystyle\mathop{\text{minimize}}_{\tilde c\in \tilde{\mathcal{C}}}&  \{ \tilde c(x,S_\infty(\theta)) \}_{x\in X,\, \theta\in\Theta} \\
	\text{subject to} & \displaystyle \limsup_{T\to\infty} \frac{1}{T} \log \mb P_\theta [ c(x, \theta) > \tilde c(x, \widehat S_T)]\leq -r \quad \forall x\in X,\;\theta\in\Theta
	\end{array}
      \end{equation}
Likewise, identifying each compressed data-driven predictor-prescriptor pair~$(\tilde c,\tilde x)$ with an ordinary data-driven predictor-prescriptor pair~$(\widehat c,\widehat x)$ defined through~$\widehat c_T(x)=\tilde c(x,\widehat S_T)$ and~$\widehat x_T=\tilde x(\widehat S_T)$, $T\in\mb N$, and observing that $\lim_{T\to\infty} \mb E_\theta [\tilde c(\tilde x(\widehat S_T),\widehat S_T)] = \tilde c(\tilde x(S_\infty(\theta)),S_\infty(\theta))$ for all~$\theta\in\Theta$ thanks to the continuous mapping theorem and Lemma~\ref{lem:aux:prob}, we obtain the following restriction of the meta-optimization problem~\eqref{eq:optimal-prescriptor}.
\begin{equation}
\label{eq:optimal-prescriptor:compressed}
	\begin{array}{ll}
	\displaystyle\mathop{\text{minimize}}_{(\tilde c,\tilde x)\in \tilde{\mathcal{X}}}{}_{} & \{ \tilde c(\tilde x(S_\infty(\theta)),S_\infty(\theta)) \}_{\theta\in\Theta} \\
	\text{subject to} & \displaystyle \limsup_{T\to\infty} \frac{1}{T} \log \mb P_\theta [ c(\tilde x(\widehat S_T), \theta) > \tilde c(\tilde x(\widehat S_T), \widehat S_T)] \leq -r \quad \forall \theta\in\Theta
	\end{array}
\end{equation}
\end{subequations}
Focusing on compressed data-driven predictors and prescriptors 
seems natural and is indeed the {\em de facto} standard. On the one hand, one would expect that the corresponding restricted meta-optimization problems~\eqref{eq:optimal:compressed} are easier to solve than the original meta-optimization problems~\eqref{eq:optimal}. On the other hand, it is unclear how much performance is sacrificed by this restriction. In the following we will first show that the restricted meta-optimization problems~\eqref{eq:optimal:compressed} admit Pareto dominant solutions whenever the underlying statistic~$\widehat S$ satisfies a large deviation principle. Later we will show that the compressed and original meta-optimization problems are equivalent whenever~$\widehat S$ represents a sufficient statistic.

\section{Pareto dominant predictors and prescriptors} \label{sec:optimal:dd:prescriptors}

\subsection{Large deviation principles}
\label{ssec:LD}

In order to construct Pareto dominant solutions for the restricted meta-optimization problems~\eqref{eq:optimal:compressed}, if they exist, we first review and extend some fundamental definitions and concepts from large deviations theory.
Large deviations theory provides bounds on the exponential rate at which the probabilities of atypical realizations of a given statistic~$\widehat S$ decay as the length~$T$ of the observation history grows. These bounds are expressed in terms of a rate function, which depends on a realization of~$\widehat S$ and the data-generating model~$\theta$.

\begin{definition}[{Rate function \cite[Section~2.1]{dembo2009large}}]
\label{def:rate_function:original}
A function $I:\SS\times \cl\Theta\rightarrow [0,\infty]$ is called a rate function if~$I(s,\theta)$ is lower semi-continuous in $s$ throughout~$\SS\times\cl \Theta$.
\end{definition}
\begin{definition}[Large deviation principle]
\label{def:wldt}
The statistic $\widehat S=\{\widehat S_T\}_{T\in \N}$ with state space $\SS$ satisfies a \ac{ldp} with rate function $I$, if for all $\theta\in\Theta$ and Borel sets $\mc D \subseteq \SS$ we have
\begin{subequations}
\label{eq:ldp_exponential_rates:old}
\begin{align}
	\label{eq:ldp_exponential_rates_lb:old}
	-\inf_{s \in \interior{\mc D}} \, I(s, \theta)~& \leq \liminf_{T\to \infty}~\frac1T \log \mb P_{\theta}[ \widehat S_T \in \mc D ] \\
	& \leq \limsup_{T\to \infty}~\frac1T \log \mb P_\theta[ \widehat S_T \in \mc D ] \leq -\inf_{s \in \cl {\mc D}} \,  I(s, \theta).
	\label{eq:ldp_exponential_rates_ub:old}
\end{align}
\end{subequations}
\end{definition}

As the subspace topology on~$\SS$ induced by the Euclidean topology on~$\mb R^d$ is Hausdorff, 
we know from~\cite[Lemma~4.1.4]{dembo2009large} that if~$\widehat S$ satisfies an LDP, then the inequalities~\eqref{eq:ldp_exponential_rates:old} uniquely determine the rate function on~$\SS\times\Theta$. 
However, Definition~\ref{def:rate_function:original} requires the rate function to be defined on~$\SS\times\cl\Theta$. Even though its values on the boundary of~$\Theta$ are immaterial, extending the rate function  to~$\SS\times\cl\Theta$ will  simplify some of the derivations in Section~\ref{sec:optimal:dd:prescriptors}, provided the extension preserves the regularity conditions of Definition~\ref{def:rate_function} below.

Before defining regular rate functions, we discuss a few immediate consequences of the inequalities~\eqref{eq:ldp_exponential_rates:old}. First, as $\widehat S_T$ converges in
probability to $S_\infty(\theta)$ under~$\mb P_\theta$, the LDP bound \eqref{eq:ldp_exponential_rates_ub:old} implies that
\[
    0 = \lim_{T\to\infty} \frac{1}{T}\log \mb P_{\theta}\left[\| \widehat S_T-S_\infty(\theta)\|_2 \leq \frac{1}{k} \right] \le -\inf_{s\in\SS} \left\{ I(s,\theta) : \| s-S_\infty(\theta)\|_2 \leq \frac{1}{k} \right\}\quad \forall k\in\mb N.
\]
Thus, there is a sequence $\{s_k\}_{k\in\mb N}$ in~$\SS$ that converges to~$S_\infty(\theta)$ and satisfies $\lim\inf_{k\in\mb N} I(s_k,\theta)\leq 0$, which implies via the non-negativity and lower semi-continuity of the rate function that~$I(S_\infty(\theta),\theta)=0$. This in turn implies that the minima of the optimization problems in~\eqref{eq:ldp_exponential_rates_lb:old} and~\eqref{eq:ldp_exponential_rates_ub:old} evaluate to~$0$ whenever~$S_\infty(\theta)$ falls within the interior of~$\mc D$. In this case the LDP inequalities reduce to the trivial statement that $\mb P_{\theta}[ \widehat S_T \in \mc D ]$ converges to~$1$ as~$T$ grows. In general, the LDP inequalities~\eqref{eq:ldp_exponential_rates:old} imply that the probability~$\mb P_{\theta}[ \widehat S_T \in \mc D ]$ is bounded below by $e^{- \overline r\,T +o(T)}$, where $\overline r=\inf_{s \in \interior{\mc D}} I(s, \theta))$ represents the $I$-distance between~$\theta$ and the interior of~$\mc D$, and bounded above by $e^{- \underline r\,T +o(T)}$, where $\underline r=\inf_{s \in \cl{\mc D}} I(s, \theta)$ represents the $I$-distance between~$\theta$ and the closure of~$\mc D$. 

In the following we will study the class of compressed data-driven predictors and prescriptors corresponding to a statistic~$\widehat S$ that satisfies an LDP. We will see that the {\em optimal} predictors and prescriptors within this class (that is, the Pareto dominant solutions of the meta-optimization problems~\eqref{eq:optimal-predictor:compressed} and~\eqref{eq:optimal-prescriptor:compressed}) can be constructed in closed form if this LDP's rate function is regular in the sense of the following definition.

\begin{definition}[Regular rate function]
\label{def:rate_function}
We call a rate function $I$ regular if the following conditions hold. 
\begin{enumerate}[(i)]
    \setlength\itemsep{0em}
\item \label{def:regular:rate:i:radial:monotonicity}{\bf Radial monotonicity in $\theta$.} $\cl \{\theta\in \Theta:I(s, \theta)< r\}=\{\theta\in \cl\Theta:I(s, \theta)\leq r\}$ for all~$s \in \SS_\infty$, $r>0$.
\item \label{item:regular:rate:continuity} {\bf Continuity.} $I(s,\theta)$ is continuous on $\SS \times\Theta$. 
\item \label{item:regular:rate:compactness} {\bf Level-compactness.} $\{(s,\theta)\in  \SS \times \cl\Theta :I(s,\theta)\leq r\}$ is compact for every $r\geq 0$. 
\end{enumerate}
\end{definition}

Definition~\ref{def:rate_function} strengthens the more common notion of a `good' rate function. Recall that a rate function~$I$ is called good if 
$\{s\in\SS : I(s,\theta)\leq r\}$ is compact for every $r\ge 0$ and~$\theta\in\Theta$ \cite[Section~1.2]{dembo2009large}. As $\SS_\infty$ is a subset of~$\SS$ and as~$\SS$ is closed, condition~\ref{item:regular:rate:compactness} of Definition~\ref{def:rate_function} implies indeed that every regular rate function is good. Note also that the radial monotonicity condition~\ref{def:regular:rate:i:radial:monotonicity} may be difficult to check in practice. A more easily checkable sufficient condition for radial monotonicity is that for any~$s \in \SS_\infty$, $\theta\in\cl\Theta$ and~$\theta_s\in\Theta$ with~$S_\infty(\theta_s)=s$ (note that~$\theta_s$ exists because~$s\in\SS_\infty$) we have
\begin{equation}\label{eq:suff:radial:mon}
	I(s,(1-\lambda) \theta_s+\lambda\theta)\leq I(s,\theta)\quad \forall \lambda\in[0,1),
\end{equation}
where the inequality is strict if~$I(s,\theta)>0$. The inequality~\eqref{eq:suff:radial:mon} actually inspired the name `radial monotonicity' for condition~\ref{def:regular:rate:i:radial:monotonicity}. We will now prove that~\eqref{eq:suff:radial:mon} together with condition~\ref{item:regular:rate:compactness} implies condition~\ref{def:regular:rate:i:radial:monotonicity}. To this end, fix any~$s\in \SS_\infty$, and set $A(s)=\{\theta\in \Theta:I(s, \theta)< r\}$ and $B(s)=\{\theta\in \cl\Theta:I(s, \theta)\leq r\}$. By construction, we have~$A(s)\subseteq B(s)$. As~$B(s)$ is compact thanks to condition~\ref{item:regular:rate:compactness}, this even implies that~$\cl A(s)\subseteq B(s)$. It remains to be shown that~\eqref{eq:suff:radial:mon} implies the converse inclusion~$ B(s)\subseteq \cl A(s)$. To this end, fix any~$\theta\in B(s)$, and choose any~$\theta_s\in\Theta$ with $s=S_\infty(\theta_s)$, which exists because~$s\in\SS_\infty$. Next, define~$\theta(\lambda)=(1-\lambda) \theta_s+\lambda\theta$ for all~$\lambda\in[0,1)$. As~$\theta_s\in\Theta$ and~$\theta\in\cl\Theta$, and as~$\Theta$ is open and convex, the line segment principle~\cite[Proposition~1.3.1]{bertsekas2009convex} implies that~$\theta(\lambda)\in \Theta$ for all~$\lambda\in [0,1)$. By~\eqref{eq:suff:radial:mon}, we also have~$I(s,\theta(\lambda))\leq I(s,
\theta)\leq r$ for all~$\lambda\in [0,1)$, where at least one of the two inequalities is strict. This reasoning shows that~$\theta(\lambda)\in A(s)$ for all~$\lambda\in[0,1)$. As~$\theta(\lambda)$ approaches~$\theta$ arbitrarily closely when $\lambda$ increases towards~$1$, we may finally conclude that~$\theta\in \cl A(s)$. We have therefore shown that condition~\ref{def:regular:rate:i:radial:monotonicity} holds.

\begin{example}[An LDP for finite state \ac{iid} processes] \label{ex:finitestate:iid:LDP}
Consider the class of finite state \ac{iid} processes introduced in Example~\ref{ex:finitestate:iid:part1}, and let~$\widehat S_T$ be the empirical distribution defined in Example~\ref{ex:finitestate:iid:part2}. The classical Sanov theorem \cite[Theorem~2.1.10]{dembo2009large} asserts that~$\widehat S$ satisfies an LPD with good rate function~$I(s,\theta) =  \D{s}{\theta}$, where $\D{s}{\theta}=\sum_{i=1}^d s_i \log (s_i/\theta_i )$ denotes the relative entropy of~$s$ with respect to~$\theta$, and where we use the standard conventions that~$0\log(0/p)=0$ for any~$p\ge 0$ and~$p\log(p/0)=\infty$ for any~$p>0$. The relative entropy is also referred to as Kullback-Leibler divergence. By the information inequality \cite[Theorem~2.6.3]{cover2006elements}, it is non-negative and vanishes if and only if~$\theta=s$. Moreover, the relative entropy is a regular rate function in the sense of Definition~\ref{def:rate_function}. To see this, note that~$\D{s}{\theta}$ is continuous on~$\SS\times \Theta$ and level-compact \cite[pp.~13--18]{dembo2009large}. In addition, $\D{s}{\theta}$ is jointly convex in~$s$ and~$\theta$ thanks to~\cite[Theorem~2.7.2]{cover2006elements}. Recalling from Example~\ref{ex:finitestate:iid:part2} that~$S_\infty$ is the identity function, for any~$s\in\SS_\infty=\Theta$ and~$\theta\in\cl\Theta$ we thus have
\(
	\D{s}{(1-\lambda) s+\lambda\theta}\leq (1-\lambda)\D{s}{s}+ \lambda \D{s}{\theta}\leq \D{s}{\theta}~ \forall \lambda\in [0,1),
\)
where the second inequality holds because~$\D{s}{s}=0$ and~$\D{s}{\theta}\ge 0$. Note that this inequality is strict for~$\D{s}{\theta}>0$, and thus $\D{s}{\theta}$ is radially monotonic in $\theta$ due to~\eqref{eq:suff:radial:mon}. Hence, the relative entropy is indeed a regular rate function. 
In Section~\ref{sec:models} we will present a broad spectrum of additional data-generating stochastic processes for which there exists a statistic that satisfies an \ac{ldp} with a regular rate function.
\end{example}

We are now ready to demonstrate that if the statistic~$\widehat S$ satisfies an~\ac{ldp} with a regular rate function, then the restricted meta-optimization problems~\eqref{eq:optimal:compressed} admit Pareto dominant solutions. In Section~\ref{ssec:dd_prediction}, we first construct a compressed data-driven predictor that is strongly optimal in~\eqref{eq:optimal-predictor:compressed}. In Section~\ref{ssec:dd_prescription} we then construct a compressed data-driven predictor-prescriptor pair that is strongly optimal in~\eqref{eq:optimal-prescriptor:compressed}.

\subsection{Distributionally robust predictors} \label{ssec:dd_prediction}
In order to solve the meta-optimization problems~\eqref{eq:optimal:compressed} over compressed data-driven predictors and prescriptors, we assume that the statistic~$\widehat S$ satisfies an LDP and that the underlying rate function is regular. 

\begin{assumption}[LDP]
\label{ass:ldp}
The statistic~$\widehat S$ satisfies an LDP with a regular rate function.
\end{assumption}


We will impose Assumption~\ref{ass:ldp} throughout the rest of this section. We can now construct a compressed data-driven predictor, which will later be shown to represent a Pareto dominant solution for~\eqref{eq:optimal-predictor:compressed}.

\begin{definition}[Distributionally robust predictor]
  \label{def:dro-predictor}
  The function $\tilde c^\star:X\times\SS\to \Re$ defined through
  \begin{equation}
    \label{eq:gen:dro}
    \tilde c^\star(x, s) = \left\{ \begin{array}{ll} \textstyle \max_{\theta \in \cl \Theta}\, \left\{ c(x, \theta) : I(s, \theta)\leq r \right\} & \text{if } \exists\, \theta\in\cl\Theta \text{ with } I(s,\theta)\leq r, \\
    \textstyle \sup_{\theta \in \cl \Theta}\, \phantom{\{} c(x, \theta) & \text{if } \nexists\, \theta\in\cl\Theta \text{ with } I(s,\theta)\leq r,
    \end{array}\right.
  \end{equation}
  is the distributionally robust predictor induced by the rate function~$I$ and the risk-aversion parameter~$r$.
\end{definition}

Note that the maximum of the first optimization problem in~\eqref{eq:gen:dro} is indeed attained because the feasible set is compact due to the level-compactness of the regular rate function~$I(s,\theta)$ and because the objective function~$c(x,\theta)$ is continuous in~$\theta$ on~$X\times\cl\Theta$ thanks to the discussion after Assumption~\ref{ass:continuity}. In addition, the supremum of the second optimization problem in~\eqref{eq:gen:dro} is finite because~$c(x,\theta)$ is bounded on~$X\times\cl\Theta$. The following proposition confirms that~$\tilde c^\star$ is a compressed data-driven predictor in the sense of Definition~\ref{def:compressed:dd_prediction}.

\begin{proposition}[Continuity of $\tilde c^\star$]
\label{prop:continuity_cr}
If the rate function $I$ is regular and $r>0$, then the distributionally robust predictor~$\tilde c^\star(x,s)$ is bounded and continuous in~$x$ on~$X\times\SS$ and continuous in~$(x,s)$ on~$X\times\SS_\infty$.
\end{proposition}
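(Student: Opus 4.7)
The plan is to split the claim into three pieces: boundedness, continuity in $x$ alone on the full domain $X\times\SS$, and joint continuity on $X\times\SS_\infty$. The first two are direct consequences of Assumption~\ref{ass:continuity}; the third is the real content and will follow from Berge's maximum theorem once I verify that the feasible-set correspondence is continuous.

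\textbf{Easy pieces.} By Assumption~\ref{ass:continuity}, $c$ admits a uniformly continuous and bounded extension to $X\times\cl\Theta$. Hence both branches of \eqref{eq:gen:dro} are dominated by $\sup_{X\times\cl\Theta}|c|<\infty$, giving boundedness. For continuity in $x$ at a fixed $s\in\SS$, note that the set over which the (sup or max) is taken does not depend on $x$; since $c(\cdot,\theta)$ is uniformly continuous in $x$ uniformly over $\theta\in\cl\Theta$, a standard $\varepsilon$-argument shows that the sup/max is continuous in $x$. This handles $s\in\SS\setminus\SS_\infty$ in particular.

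\textbf{Joint continuity on $X\times\SS_\infty$.} Introduce the feasible-set correspondence $\Gamma:\SS_\infty\rightrightarrows\cl\Theta$ defined by $\Gamma(s)=\{\theta\in\cl\Theta: I(s,\theta)\leq r\}$. The strategy is to verify the hypotheses of Berge's maximum theorem for the objective $c$ (jointly continuous on $X\times\cl\Theta$) and the correspondence $\Gamma$, so that $\tilde c^\star(x,s)=\max_{\theta\in\Gamma(s)}c(x,\theta)$ is jointly continuous on $X\times\SS_\infty$.

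\emph{Nonempty compact values.} For $s\in\SS_\infty$ there exists $\theta_s\in\Theta$ with $S_\infty(\theta_s)=s$, and the discussion after Definition~\ref{def:wldt} shows $I(s,\theta_s)=0\leq r$, so $\theta_s\in\Gamma(s)$. Compactness follows from the level-compactness part of Definition~\ref{def:rate_function}.

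\emph{Upper hemicontinuity.} The graph $\{(s,\theta)\in\SS_\infty\times\cl\Theta:I(s,\theta)\leq r\}$ is closed in $\SS_\infty\times\cl\Theta$ because $I$ is lower semi-continuous in $s$ (Definition~\ref{def:rate_function:original}) and continuous on $\SS\times\Theta$ (and the graph is contained in the compact level set of Definition~\ref{def:rate_function}\,\ref{item:regular:rate:compactness}). A closed-graph, compact-valued correspondence into a Hausdorff space is upper hemicontinuous.

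\emph{Lower hemicontinuity.} This is the main obstacle and is precisely where the radial monotonicity of $I$ enters. I will show that for any open set $U\subseteq\cl\Theta$ meeting $\Gamma(s)$, one has $\Gamma(s')\cap U\neq\emptyset$ for all $s'$ near $s$. Pick $\theta\in\Gamma(s)\cap U$. By Definition~\ref{def:rate_function}\,\ref{def:regular:rate:i:radial:monotonicity}, $\theta\in\cl\{\theta'\in\Theta:I(s,\theta')<r\}$, so there exists $\theta^{(k)}\in\Theta\cap U$ with $I(s,\theta^{(k)})<r$ for some $k$ large enough (using openness of $U$). Since $I$ is continuous on $\SS\times\Theta$ at the point $(s,\theta^{(k)})$, the strict inequality persists in a neighborhood of $s$, i.e.\ $I(s',\theta^{(k)})<r$ for $s'$ sufficiently close to $s$ in $\SS_\infty$. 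Hence $\theta^{(k)}\in\Gamma(s')\cap U$, establishing lower hemicontinuity.

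\textbf{Conclusion and expected obstacle.} With $\Gamma$ continuous and compact-valued on $\SS_\infty$ and $c$ jointly continuous on $X\times\cl\Theta$, Berge's theorem delivers joint continuity of $\tilde c^\star$ on $X\times\SS_\infty$. The hardest step is the lower hemicontinuity argument: the inequality $I(s,\theta)\leq r$ defining $\Gamma(s)$ can be tight at $\theta$, and without an ``inner approximation'' property one could not in general perturb $s$ and keep $\theta$ (or a nearby point) feasible. Radial monotonicity, condition~\ref{def:regular:rate:i:radial:monotonicity} of Definition~\ref{def:rate_function}, is tailored exactly to supply that inner approximation.
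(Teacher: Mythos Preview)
Your proposal is correct and follows essentially the same route as the paper's own proof: apply Berge's maximum theorem to the correspondence $\Gamma(s)=\{\theta\in\cl\Theta:I(s,\theta)\le r\}$, obtaining upper hemicontinuity from the compact graph (level-compactness of the regular rate function) and lower hemicontinuity by using radial monotonicity to replace any $\theta\in\Gamma(s)$ by a nearby $\theta'\in\Theta$ with $I(s,\theta')<r$, then invoking continuity of $I$ on $\SS\times\Theta$ to make the strict inequality survive perturbations of $s$. One minor caution: the blanket statement ``closed-graph, compact-valued into a Hausdorff space $\Rightarrow$ usc'' is not true without an additional compactness hypothesis, but your parenthetical that the graph lies in the compact sublevel set is exactly the extra ingredient needed, and this is also how the paper argues.
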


Intuitively, the compressed data-driven predictor~$\tilde c^\star(x,s)$ evaluates the worst-case objective function of the stochastic optimization problem~\eqref{eq:examples:c} over all probability measures~$\mb P_\theta$ corresponding to models~$\theta\in\cl\Theta$ that reside in an $I$-ball of radius~$r$ around~$s$. Thus, $\tilde c^\star(x,s)$ admits a distributionally robust interpretation, which justifies our terminology.

\begin{example}[Distributionally robust predictors for finite state i.i.d.\ processes] \label{ex:finitestate:iid:part5}
Consider the class of finite state \ac{iid} processes introduced in Example~\ref{ex:finitestate:iid:part1}, and let~$\widehat S_T$ be the empirical distribution defined in Example~\ref{ex:finitestate:iid:part2}. From Example~\ref{ex:finitestate:iid:LDP} we know that $\widehat S$ satisfies an LDP and that the underlying regular rate function coincides with the relative entropy. Thus, the distributionally robust predictor~\eqref{def:dro-predictor} simplifies to
\(
    \tilde c^\star(x, s) = \max_{\theta \in \Delta_d}\, \left\{ c(x, \theta) : \D{s}{\theta}\leq r \right\}.
\)
This problem is feasible for every possible estimator realization because~$\SS=\Delta_d$ (see Example~\ref{ex:finitestate:iid:part2}), and if $c(x,\theta)=\mb E_\theta[\ell(x,\xi)]$, then it is equivalent to the one-dimensional convex minimization problem
\(
\tilde c^\star(x,s) = \min_{\alpha\geq \bar{\ell}(x)} \alpha - e^{-r} \prod_{i=1}^d(\alpha-\ell(x,i))^{s_i}
\)
with $\bar\ell(x)=\max_{i\in\Xi}\ell(x,i)$, which can be solved efficiently via line search methods (see \cite[Proposition~2]{ref:vanParys:fromdata-17}). 
\end{example}

The following theorem establishes that the distributionally robust predictor~\eqref{eq:gen:dro} strikes indeed an optimal balance between expected in-sample performance and out-of-sample disappointment. 



\begin{theorem}[Optimality of $\tilde c^\star$]
\label{thm:optimality}
If Assumptions~\ref{ass:parametrization}, \ref{ass:continuity} and~\ref{ass:ldp} hold and if $r>0$, then $\tilde c^\star$ is a Pareto dominant solution of the meta-optimization problem~\eqref{eq:optimal-predictor:compressed}. 
\end{theorem}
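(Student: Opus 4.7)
The strategy is to verify in turn that (a) $\tilde c^\star$ lies in $\tilde{\mc C}$, (b) $\tilde c^\star$ is feasible for the rate constraint in~\eqref{eq:optimal-predictor:compressed}, and (c) every other feasible $\tilde c \in \tilde{\mc C}$ satisfies $\tilde c(x,S_\infty(\theta))\ge \tilde c^\star(x,S_\infty(\theta))$ for all $x\in X$ and $\theta\in\Theta$. Admissibility (a) is handed to us by Proposition~\ref{prop:continuity_cr}, so the work is concentrated in (b) and (c).

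\textbf{Feasibility.} Fix $x\in X$ and $\theta\in\Theta$. The key observation is that whenever $I(s,\theta)\le r$, the model $\theta$ is itself admissible in the maximization defining $\tilde c^\star(x,s)$, so $\tilde c^\star(x,s)\ge c(x,\theta)$. Contrapositively, the disappointment event is contained in the pullback of a $\theta$-parametrized level-excess set:
\begin{equation*}
\{c(x,\theta)>\tilde c^\star(x,\widehat S_T)\}\;\subseteq\; \{\widehat S_T\in \{s\in\SS:I(s,\theta)>r\}\}.
\end{equation*}
The right-hand set is contained in the closed set $\{s\in\SS:I(s,\theta)\ge r\}$, and applying the LDP upper bound~\eqref{eq:ldp_exponential_rates_ub:old} to this closed set gives
\begin{equation*}
\limsup_{T\to\infty}\tfrac{1}{T}\log\mb P_\theta[c(x,\theta)>\tilde c^\star(x,\widehat S_T)]\;\le\;-\inf\{I(s,\theta):s\in\SS,\ I(s,\theta)\ge r\}\;\le\;-r,
\end{equation*}
which is exactly the required rate bound.

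\textbf{Pareto dominance.} Fix an arbitrary feasible $\tilde c\in\tilde{\mc C}$, a decision $x_0\in X$ and a model $\theta_0\in\Theta$, and set $s_0 = S_\infty(\theta_0)\in\SS_\infty$. Assume for contradiction that $\tilde c(x_0,s_0)<\tilde c^\star(x_0,s_0)$. Because the maximum in~\eqref{eq:gen:dro} is attained (by level-compactness of $I$ and continuity of $c$ in~$\theta$), there is $\theta'_0\in\cl\Theta$ with $I(s_0,\theta'_0)\le r$ and $c(x_0,\theta'_0)=\tilde c^\star(x_0,s_0)>\tilde c(x_0,s_0)$. Radial monotonicity (Definition~\ref{def:rate_function}\ref{def:regular:rate:i:radial:monotonicity}) gives a sequence $\{\theta'_n\}\subset\Theta$ with $\theta'_n\to\theta'_0$ and $I(s_0,\theta'_n)<r$. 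By continuity of $c(x_0,\cdot)$ on $\cl\Theta$ and continuity of $\tilde c(x_0,\cdot)$ on $\SS_\infty$, for all sufficiently large $n$ I can choose an open relative neighborhood $B_n\subseteq\SS$ of $s_0$ on which
\begin{equation*}
\tilde c(x_0,s)<c(x_0,\theta'_n)\qquad\forall s\in B_n.
\end{equation*}
On the event $\{\widehat S_T\in B_n\}$ the disappointment $c(x_0,\theta'_n)>\tilde c(x_0,\widehat S_T)$ already occurs, so the LDP lower bound~\eqref{eq:ldp_exponential_rates_lb:old} applied under $\mb P_{\theta'_n}$ yields
\begin{equation*}
\liminf_{T\to\infty}\tfrac{1}{T}\log\mb P_{\theta'_n}[c(x_0,\theta'_n)>\tilde c(x_0,\widehat S_T)]\;\ge\;-\inf_{s\in B_n}I(s,\theta'_n)\;\ge\;-I(s_0,\theta'_n)\;>\;-r,
\end{equation*}
which strictly violates the rate constraint that $\tilde c$ must satisfy at the model $\theta'_n\in\Theta$. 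This contradiction forces $\tilde c(x_0,s_0)\ge\tilde c^\star(x_0,s_0)$ for every $(x_0,\theta_0)$, which is exactly Pareto dominance. Note that the case where no $\theta\in\cl\Theta$ satisfies $I(s_0,\theta)\le r$ never arises here, since $\theta_0$ itself witnesses $I(s_0,\theta_0)=0$.

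\textbf{Expected main obstacle.} The delicate step is Pareto dominance: simply knowing $I(s_0,\theta'_0)\le r$ would only yield $\limsup\ge -r$ under $\mb P_{\theta'_0}$, which does not contradict feasibility. The whole argument hinges on using radial monotonicity to replace $\theta'_0$ by interior approximants $\theta'_n\in\Theta$ with \emph{strict} inequality $I(s_0,\theta'_n)<r$, and on combining the continuity of both $c$ and $\tilde c$ at the limit point $(x_0,s_0)\in X\times\SS_\infty$ to propagate the strict gap to a whole neighborhood $B_n$. Getting those two continuities, the LDP lower bound and the radial-monotonicity perturbation to line up cleanly is the technical crux; everything else is routine bookkeeping.
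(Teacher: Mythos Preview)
Your proof is correct and follows essentially the same route as the paper: feasibility via the inclusion $\{c(x,\theta)>\tilde c^\star(x,s)\}\subseteq\{I(s,\theta)>r\}$ combined with the LDP upper bound, and Pareto dominance by perturbing the maximizer $\theta'_0$ to an interior point $\theta'_n\in\Theta$ with $I(s_0,\theta'_n)<r$ via radial monotonicity, then using continuity of $\tilde c$ on $\SS_\infty$ to open up a neighborhood on which the LDP lower bound delivers a strict violation. The paper's version differs only cosmetically---it fixes a single perturbed model $\theta_0^\star$ with an explicit $\varepsilon$-gap rather than working along the sequence $\{\theta'_n\}$, and it names the disappointment set $\mc D(x_0,\theta_0^\star)$ and argues $s_0$ lies in its interior rather than constructing your neighborhood $B_n$ directly---but the logical skeleton and the identification of radial monotonicity as the crux are identical.
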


\subsection{Distributionally robust prescriptors} \label{ssec:dd_prescription}

We will now demonstrate that if the statistic~$\widehat S$ satisfies an LDP with a regular rate function, then the distributionally robust predictor~$\tilde c^\star$ of Definition~\ref{def:dro-predictor} and any compressed data-driven prescriptor~$\tilde x^\star$ induced by~$\tilde c^\star$ represent a Pareto dominant solution for the meta-optimization problem~\eqref{eq:optimal-prescriptor:compressed}.

\begin{definition}[Distributionally robust prescriptor]
  \label{def:dro-prescriptor}
  If $\tilde c^\star$ is a distributionally robust predictor in the sense of Definition~\ref{def:dro-predictor}, then any function $\tilde x^\star:\SS\to X$ that is quasi-continuous on~$\SS_\infty$ and satisfies
  \begin{equation}
    \label{eq:gen:dro_prescriptor}
	\tilde x^\star(s) \in \arg\min_{x\in X} \, \tilde c^\star(x, s) \quad \forall s\in\SS
  \end{equation}
  is a distributionally robust prescriptor. 
\end{definition}

One can show that any distributionally robust predictor~$\tilde c^\star$ induces at least one distributionally robust prescriptor~$\tilde x^\star$. To see this, note first that the multifunction~$\arg\min_{x\in X} \tilde c^\star(x, s)$ is non-empty-valued because~$X$ is compact and~$\tilde c^\star(x,s)$ is continuous in~$x$ on~$X\times\SS$; see Proposition~\ref{prop:continuity_cr}. Moreover, the restriction of this multifunction to~$\SS_\infty$ admits a quasi-continuous selector. This follows from the reasoning after Definition~3 in~\cite{ref:vanParys:fromdata-17}, which applies here because~$\tilde c^\star$ is continuous on~$X\times \SS_\infty$ and~$X$ is compact.
Therefore, $(\tilde c^\star,\tilde x^\star)$ belongs to the family $\mc X$ of all compressed data-driven predictor-prescriptor-pairs. 

\begin{theorem}[Optimality of $(\tilde c^\star, \tilde x^\star)$]
\label{thm:optimality_prescriptor}
If Assumptions~\ref{ass:parametrization}, \ref{ass:continuity} and~\ref{ass:ldp} hold and if $r>0$, then $(\tilde c^\star,\tilde x^\star)$  is a Pareto dominant solution of the meta-optimization problem~\eqref{eq:optimal-prescriptor:compressed}. 
\end{theorem}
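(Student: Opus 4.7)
The plan is to mimic the two-step structure used for Theorem~\ref{thm:optimality}, adapting it to the prescriptor setting by exploiting the quasi-continuity of $\tilde x^\star$ on~$\SS_\infty$.

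\textbf{Step 1 (Feasibility).} I would first note that $(\tilde c^\star,\tilde x^\star)\in\tilde{\mc X}$ by the discussion following Definition~\ref{def:dro-prescriptor}. For feasibility, fix any $\theta\in\Theta$ and define the prescription disappointment set
\[
A(\theta)=\bigl\{s\in\SS:c(\tilde x^\star(s),\theta)>\tilde c^\star(\tilde x^\star(s),s)\bigr\}.
\]
I can discard the trivial case $A(\theta)=\emptyset$. For any $s\in A(\theta)$, if $I(s,\theta)\le r$, then $\theta$ is feasible in the maximization~\eqref{eq:gen:dro} defining $\tilde c^\star(\tilde x^\star(s),s)$, so $c(\tilde x^\star(s),\theta)\le \tilde c^\star(\tilde x^\star(s),s)$, contradicting $s\in A(\theta)$. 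Hence $A(\theta)\subseteq\{s\in\SS:I(s,\theta)>r\}$, and by continuity of $I(\cdot,\theta)$ on $\SS$ (from Assumption~\ref{ass:ldp}) the closure satisfies $\cl A(\theta)\subseteq\{s\in\SS:I(s,\theta)\ge r\}$. The LDP upper bound~\eqref{eq:ldp_exponential_rates_ub:old} then delivers
\[
\limsup_{T\to\infty}\tfrac{1}{T}\log\mathbb{P}_\theta[\widehat S_T\in A(\theta)]\le-\inf_{s\in\cl A(\theta)}I(s,\theta)\le-r.
\]

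\textbf{Step 2 (Pareto dominance).} Suppose for contradiction that some feasible $(\tilde c,\tilde x)\in\tilde{\mc X}$ is not Pareto dominated by $(\tilde c^\star,\tilde x^\star)$. Then there exists $\theta_0\in\Theta$ with
\[
\varepsilon:=\tilde c^\star\bigl(\tilde x^\star(s_0),s_0\bigr)-\tilde c\bigl(\tilde x(s_0),s_0\bigr)>0,\qquad s_0:=S_\infty(\theta_0).
\]
Set $x_0:=\tilde x(s_0)$. Since $\tilde x^\star(s_0)$ minimizes $\tilde c^\star(\cdot,s_0)$, I obtain $\tilde c^\star(x_0,s_0)\ge\tilde c^\star(\tilde x^\star(s_0),s_0)=\tilde c(x_0,s_0)+\varepsilon$. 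Now exactly as in the proof of Theorem~\ref{thm:optimality}, the maximum in~\eqref{eq:gen:dro} at $(x_0,s_0)$ is attained by some $\theta^\star\in\cl\Theta$ with $I(s_0,\theta^\star)\le r$; the radial monotonicity of~$I$ together with the continuity of~$c(x_0,\cdot)$ on~$\cl\Theta$ yields $\theta^\star_0\in\Theta$ with $r_0:=I(s_0,\theta^\star_0)<r$ and
\[
\tilde c(x_0,s_0)\le\tilde c^\star(x_0,s_0)-\varepsilon=c(x_0,\theta^\star)-\varepsilon<c(x_0,\theta^\star_0).
\]

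\textbf{Step 3 (Exploiting quasi-continuity).} This is the step that is genuinely new compared with Theorem~\ref{thm:optimality}, and I expect it to be the main obstacle. The prescription disappointment set is
\[
\mc D(\theta^\star_0)=\bigl\{s\in\SS:c(\tilde x(s),\theta^\star_0)>\tilde c(\tilde x(s),s)\bigr\},
\]
and the strict inequality above only holds at the single point $s_0$ where $\tilde x(s_0)=x_0$. Since $\tilde x$ need not be continuous, I cannot directly propagate the inequality to a neighborhood of $s_0$. Instead, I invoke quasi-continuity of $\tilde x$ on $\SS_\infty$: fix $\delta>0$ small enough that whenever $\|x-x_0\|+\|s-s_0\|<\delta$ and $s\in\SS_\infty$ one has both $c(x,\theta^\star_0)>c(x_0,\theta^\star_0)-\tfrac{1}{3}[c(x_0,\theta^\star_0)-\tilde c(x_0,s_0)]$ and $\tilde c(x,s)<\tilde c(x_0,s_0)+\tfrac{1}{3}[c(x_0,\theta^\star_0)-\tilde c(x_0,s_0)]$; this is possible because $c$ is continuous on $X\times\cl\Theta$ and $\tilde c$ is continuous on $X\times\SS_\infty$. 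Taking $\mc U$ to be the $\delta$-neighborhood of $s_0$ in $\SS$ and $\mc V$ the $\delta$-neighborhood of $x_0$ in $X$, quasi-continuity of $\tilde x$ at $s_0$ yields a non-empty open set $\mc W\subseteq\mc U\cap\SS_\infty$ with $\tilde x(\mc W)\subseteq\mc V$, hence $\mc W\subseteq\interior\mc D(\theta^\star_0)$.

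\textbf{Step 4 (LDP lower bound).} Since $I(\cdot,\theta^\star_0)$ is continuous and $I(s_0,\theta^\star_0)=r_0<r$, I may shrink $\mc W$ (or choose $\delta$ small from the outset) so that $\sup_{s\in\mc W}I(s,\theta^\star_0)<r$. Then
\[
\liminf_{T\to\infty}\tfrac{1}{T}\log\mathbb{P}_{\theta^\star_0}[\widehat S_T\in\mc D(\theta^\star_0)]\ge-\inf_{s\in\interior\mc D(\theta^\star_0)}I(s,\theta^\star_0)\ge-\sup_{s\in\mc W}I(s,\theta^\star_0)>-r,
\]
which contradicts the feasibility of $(\tilde c,\tilde x)$ in~\eqref{eq:optimal-prescriptor:compressed} and completes the proof.
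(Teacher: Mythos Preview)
Your proof is correct and follows the same overall two-step architecture as the paper, but your treatment of the quasi-continuity issue in Steps~3--4 is genuinely different from the paper's. The paper does \emph{not} apply quasi-continuity at~$s_0$ directly; instead it first observes (via Berge's theorem) that both $\tilde c^\star(\tilde x^\star(S_\infty(\theta)),S_\infty(\theta))$ and $\tilde c(\tilde x(S_\infty(\theta)),S_\infty(\theta))$ are continuous in~$\theta$, so the strict gap persists under perturbations of~$\theta_0$. It then invokes the fact that a quasi-continuous function is continuous on a dense subset of~$\SS_\infty$ together with the local surjectivity of~$S_\infty$ to \emph{perturb}~$\theta_0$ (and hence~$s_0$) to a point where~$\tilde x$ is actually continuous; from there the argument proceeds exactly as in Theorem~\ref{thm:optimality}. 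Your route avoids this perturbation: you keep~$s_0$ fixed and use quasi-continuity in its raw form to manufacture a nearby open set~$\mc W$ (possibly not containing~$s_0$) on which~$\tilde x$ stays close to~$x_0$, then feed~$\mc W$ into the LDP lower bound. This is slightly more elementary in that it bypasses Berge's theorem and the local homeomorphism property of~$S_\infty$, at the cost of a marginally more delicate $\varepsilon$--$\delta$ bookkeeping (note that in Step~4 you need $\delta$ small enough that the whole $\delta$-ball lies in $\{s:I(s,\theta_0^\star)<(r_0+r)/2\}$, which is fine by continuity of~$I$). Your Step~1 is also a touch more direct than the paper's: the paper bounds the prescription disappointment by the uniform-in-$x$ set $\{s:\max_{x\in X}[c(x,\theta)-\tilde c^\star(x,s)]>0\}$, whereas you work with the exact disappointment set; both yield the same inclusion $\cl A(\theta)\subseteq\{I\ge r\}$.
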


An interesting question arises as to whether an alternative parametrization for either the statistic $\widehat{S}$ or the model class $\Theta$ would impact the optimal data-driven predictor-prescriptor pair. Notably, an invariance principle can be demonstrated, indicating that the optimal solution remains unchanged under homeomorphic coordinate transformations. A detailed discussion of this invariance is relegated to Appendix~\ref{ssec:representation:invariance}.

\section{Separation of estimation and optimization} \label{sec:equivalence}
We are now ready to tackle a fundamental question in data-driven decision-making that is of theoretical as well as practical interest: Under what conditions on the statistic~$\widehat S$ can we restrict the class of {\em all} data-driven predictors and prescriptors to the subclass of all {\em compressed} data-driven predictors and prescriptors induced by~$\widehat S$ without incurring any loss of optimality? In other words, we aim to identify conditions under which any decision-relevant information contained in the raw data~$\xivec$ is also contained in the summary statistic~$\widehat S_T$ for every~$T\in\mb N$, such that the meta-optimization problems~\eqref{eq:optimal-predictor} and~\eqref{eq:optimal-prescriptor} become equivalent to~\eqref{eq:optimal-predictor:compressed} and~\eqref{eq:optimal-prescriptor:compressed}, respectively. 
In statistical estimation it is well known that the possibility of lossless compression is intimately related to the existence of a sufficient statistic; see, {\em e.g.}, \cite{ref:Lehmann-98}. In the following, we will argue that such a result also holds in the context of data-driven decision-making. Although this result has intuitive appeal, it seems not to have been established before, and we find it surprisingly difficult to prove.

\begin{definition}[Sufficient statistic] \label{def:sufficient:statistic}
A statistic $\widehat S$ with state space~$\SS$ is called sufficient for $\theta$ if the conditional distribution of $\xivec$ given $\widehat S_T=s$ under $\mb P_\theta$ is independent of $\theta\in \Theta$ for all $s\in\SS$ and $T\in\N$. 
\end{definition}

Intuitively, $\widehat S$ is a sufficient statistic for~$\theta$ if knowing the full observation history~$\xivec$ provides no advantage for estimating~$\theta$ over only knowing~$\widehat S_T$. In other words, compressing~$\xivec$ into~$\widehat S_T$, which is equivalent to a Borel-measurable function of~$\xivec$, does not destroy any information that could be useful for estimating~$\theta$. 
%
%
The Pitman-Koopman-Darmois theorem~\cite{ref:Koopman-36} implies that if the observed data is \ac{iid} over time, then there exists a sufficient statistic if and only if the data generation process belongs to an exponential family. Even though we do not restrict attention to \ac{iid} processes, this result prompts us to require that the ambiguity set~$\mc P$ represents an exponential family of stochastic processes. To formalize this requirement, we henceforth denote by~$\mb P^T_\theta$ the restriction of the probability measure~$\mb P_\theta$ to the $\sigma$-algebra~$\mc F_T$ generated by~$\xivec$ for all~$T\in\mb N$ and~$\theta\in\Theta$. In addition, for  any~$T\in\mb N$ we define the log-moment generating function~$\Lambda_T:\mb R^d\times\Theta\to(-\infty,+\infty]$ of~$\widehat S_T$ through~$\Lambda_T(\lambda,\theta) = \log\mb E_{\theta}[\exp(\iprod{\lambda}{\widehat S_T})]$ if the expectation is finite and $\Lambda_T(\lambda,\theta) =+\infty$ otherwise. As~$\Lambda_T(0,\theta)=0$ by construction, the function~$\Lambda_T(\lambda,\theta)$ is proper in~$\lambda$. Moreover, $\Lambda_T(\lambda,\theta)$ is convex and lower semi-continuous in~$\lambda$ thanks to~\cite[Theorem~7.1]{barndorff2014information}.

\begin{assumption}[Exponential family of stochastic processes]
  \label{ass:suff}
 The ambiguity set $\mc P$ represents a time-homogeneous exponential family of stochastic processes. This means that there exist a baseline model~$\bar\theta\in\Theta$, a continuous parametrization function $g:\Theta\to\Re^d$ and a sequence of log-partition functions~$A_T:\mb R^d\to(-\infty,+\infty]$ for~$T\in\mb N$ defined through~$A_T(\lambda)=\Lambda_T(\lambda,\bar\theta)$ such that $Tg(\theta)\in \dom(A_T)$ for all~$\theta\in\Theta$ and 
\begin{equation}\label{eq:RND:exp:fam}
  \frac{\d \mb P^T_\theta}{\d \mb P^T_{\bar\theta}} = \exp\left(\textstyle \iprod{Tg(\theta)}{\widehat S_T}- A_T(T g(\theta))\right)
  \quad \forall T\in \N,~\theta\in\Theta.
\end{equation}
\end{assumption}

Exponential families that obey Assumption~\ref{ass:suff} are called time-homogeneous because the parametrization function~$g$ is independent of~$T$  \cite[Section 3.1]{kuchler2006exponential}. As the Radon-Nikodym derivative~\eqref{eq:RND:exp:fam} is strictly positive, all probability measures within a given exponential family are mutually equivalent. For every~$T\in\mb N$ and~$\theta\in\Theta$, the log-partition function~$A_T$ ensures that the probability measure~$\mb P^T_\theta$ is normalized, and it inherits properness, convexity and lower semi-continuity from the log-moment generating function~$\Lambda_T$.
Even though the log-partition function was defined as the log-moment generating function corresponding to the baseline model~$\bar\theta$, any other log-moment generating function corresponding to an arbitrary model~$\theta\in\Theta$ can be recovered from~$A_T$. This follows from the change of measure formula~\eqref{eq:RND:exp:fam} and the observation that expectations of~$\mc F_T$-measurable functions with respect to~$\mb P_\theta$ depend only on the restriction of~$\mb P_\theta$ to~$\mc F_T$, {\em i.e.},
\begin{equation}
  \label{eq:log-moment-function}
  \begin{aligned}
    \Lambda_T(\lambda, \theta) 
    & = \log\mb E_{\bar\theta}\left[\exp\left(\iprod{\lambda}{\widehat S_T}+\iprod{Tg(\theta)}{\widehat S_T}- A_T(T g(\theta))\right)\right]\\
    & = A_T(\lambda+Tg(\theta))-A_T(Tg(\theta)).
  \end{aligned}
\end{equation}
Assumption~\ref{ass:suff} guarantees via the Fisher-Neyman factorization theorem~\cite[Theorem~6.5]{ref:Lehmann-98} that the statistic~$\widehat S$ is sufficient. This can also be verified directly. Indeed, if it is known that~$\widehat S_T=s$ for some~$s\in\SS$, then the Radon-Nikodym derivative~\eqref{eq:RND:exp:fam} reduces to a deterministic function, and therefore the conditional distribution of~$\xivec$ given~$\widehat S_T=s$ is identical under~$\mb P^T_\theta$ and~$\mb P^T_{\bar\theta}$, that is, it does not depend on~$\theta\in \Theta$. As this argument holds for every~$s\in\SS$ and~$T\in\N$, we may conclude that~$\widehat S$ is indeed a sufficient statistic.

The next assumption will ensure via the celebrated G\"artner-Ellis theorem that~$\widehat S$ also satisfies an LDP.

\begin{assumption}[Log-moment generating functions]\label{ass:LDP2}
The log-moment generating functions corresponding to the statistic~$\widehat S$ display the following properties.
First, we have~$\Lambda_T(\lambda, \theta) <\infty$ for all $\lambda\in \Re^d$ and $T\in \N$, and the limiting log-moment generating function $\Lambda:\mb R^d\times\Theta\to(-\infty,\infty]$ defined as the limit
  \begin{equation}
    \label{eq:def:Lambda}
    \Lambda(\lambda, \theta) = \lim_{T\to\infty} \frac 1T \Lambda_T(T\lambda, \theta)
  \end{equation}
  exists as an extended real number for all~$\lambda\in\Re^d$ and~$\theta\in\Theta$. In addition,  the origin belongs to the interior of $\dom\Lambda(\cdot,\theta)$ for all~$\theta\in\Theta$. Finally, the gradient $\nabla_\lambda\Lambda(\lambda, \theta)$ exists on the interior of $\dom\Lambda(\cdot,\theta)$, and its norm tends to infinity when~$\lambda$ approaches the boundary of $\dom\Lambda(\cdot,\theta)$ for all $\theta\in\Theta$.
\end{assumption}

As~$\Lambda_T(0, \theta) =0$ for all~$T\in\mb N$, it is clear that~$\Lambda(0, \theta) =0$, that is, the origin belongs to~$\dom\Lambda(\cdot,\theta)$. Note that Assumption~\ref{ass:LDP2} imposes the stronger condition that the origin belongs to the interior of~$\dom\Lambda(\cdot,\theta)$. Recall next that the log-moment generating functions $\Lambda_T(\lambda, \theta)$ are convex in~$\lambda$ for all~$T\in \N$. By~\cite[Lemma~2.3.9]{dembo2009large}, their asymptotic counterpart~$\Lambda(\lambda, \theta)$ inherits convexity in~$\lambda$. 
Assumption~\ref{ass:LDP2} further stipulates that~$\dom\Lambda_T(\cdot, \theta)=\mb R^d$, which implies via~\cite[Theorem~7.2]{barndorff2014information} that~$\Lambda_T(\lambda, \theta)$ is analytical in~$\lambda$, throughout all of~$\mb R^d$, for all~$T\in\mb N$. By leveraging the dominated convergence theorem, it is then easy to prove that~$\nabla_\lambda \Lambda_T(0, \theta)=\mb E_{\theta}[\widehat S_T]$
for all~$T\in \N$. The following lemma extends this result to the gradient~$\nabla_\lambda \Lambda(0, \theta)$ of the limiting log-moment generating function, which exists thanks to Assumption~\ref{ass:LDP2}.

\begin{lemma}
  \label{eq:exp:limit}
  If Assumption~\ref{ass:LDP2} holds, then we have~$\nabla_\lambda \Lambda(0, \theta) =\lim_{T\to\infty}\mb E_{\theta}[\widehat S_T]$ for all $\theta\in \Theta$.
\end{lemma}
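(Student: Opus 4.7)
The plan is to leverage the convexity of the log-moment generating functions together with the pointwise convergence postulated in Assumption~\ref{ass:LDP2} and invoke a classical convex-analytic fact: if a sequence of finite convex functions converges pointwise on an open convex set to a limit that is differentiable at an interior point, then the gradients at that point converge too (see, e.g., Rockafellar, \emph{Convex Analysis}, Theorem~25.7).

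First I would observe that, by Assumption~\ref{ass:LDP2}, $\Lambda_T(\cdot,\theta)$ is finite on all of $\mathbb{R}^d$ and hence analytic on $\mathbb{R}^d$ by~\cite[Theorem~7.2]{barndorff2014information}. A standard dominated-convergence argument (justified by the analyticity, which provides integrable dominating functions on any compact neighborhood of the origin) then yields $\nabla_\lambda \Lambda_T(0,\theta) = \mathbb{E}_\theta[\widehat S_T]$ for every $T\in\mathbb{N}$, as already noted in the text.

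Next, introduce the rescaled functions $f_T(\lambda) = \tfrac{1}{T}\Lambda_T(T\lambda,\theta)$ for $\lambda\in\mathbb{R}^d$. Each $f_T$ is convex in $\lambda$ (as a positive rescaling of the convex $\Lambda_T(\cdot,\theta)$), finite everywhere, and by the chain rule its gradient at the origin satisfies $\nabla f_T(0) = \nabla_\lambda \Lambda_T(0,\theta) = \mathbb{E}_\theta[\widehat S_T]$. By the very definition~\eqref{eq:def:Lambda}, $f_T(\lambda)\to\Lambda(\lambda,\theta)$ pointwise on $\mathbb{R}^d$. Assumption~\ref{ass:LDP2} further guarantees that the origin lies in the interior of $\dom\Lambda(\cdot,\theta)$ and that $\nabla_\lambda\Lambda(\cdot,\theta)$ exists throughout that interior, so $\Lambda(\cdot,\theta)$ is in particular differentiable at $0$.

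I would then apply the convex-analytic convergence result: since the convex functions $f_T$ converge pointwise on the open set $\mathrm{int}\,\dom\Lambda(\cdot,\theta)\ni 0$ to a limit that is differentiable at $0$, their gradients at $0$ converge, i.e.\ $\nabla f_T(0)\to\nabla_\lambda\Lambda(0,\theta)$. Combining this with the identification $\nabla f_T(0)=\mathbb{E}_\theta[\widehat S_T]$ gives the claim $\nabla_\lambda\Lambda(0,\theta)=\lim_{T\to\infty}\mathbb{E}_\theta[\widehat S_T]$. The only subtle point---and what I view as the main obstacle---is invoking the convergence-of-gradients theorem correctly: one needs the $f_T$ to be finite convex functions on an open neighborhood of $0$ (which holds since $\dom\Lambda_T(\cdot,\theta)=\mathbb{R}^d$) and the limit to be differentiable at $0$ (which is exactly what Assumption~\ref{ass:LDP2} guarantees), so all hypotheses are in place.
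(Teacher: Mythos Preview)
Your proposal is correct and follows essentially the same route as the paper: both arguments identify $\nabla f_T(0)=\mathbb E_\theta[\widehat S_T]$ for $f_T(\lambda)=\tfrac1T\Lambda_T(T\lambda,\theta)$ and then transfer the pointwise convergence $f_T\to\Lambda(\cdot,\theta)$ to gradient convergence at~$0$ via convexity. The only difference is packaging: you invoke Rockafellar's Theorem~25.7 as a black box, whereas the paper reproves the needed special case by hand---fixing a direction $b$, using the secant bound $(\Lambda(hb,\theta)-\Lambda(0,\theta))/h$ from differentiability, passing to the limit in $T$, and bounding $\langle\nabla_\lambda\Lambda_T(0,\theta),b\rangle$ from above by the first-order convexity inequality, then flipping the sign of $b$ to obtain equality.
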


\begin{remark}
\label{ex:exp:limit-genralized}
Lemma~\ref{eq:exp:limit} admits the following generalization. If Assumptions~\ref{ass:suff} and~\ref{ass:LDP2} hold and~$\eta=g(\theta')-g(\theta)$ for some~$\theta,\theta'\in \Theta$, then one can proceed as in the proof of Lemma~\ref{eq:exp:limit} to show that
\(
    \nabla_\lambda [ \Lambda(\lambda, \theta)]_{\lambda=\eta} = \lim_{T\to\infty}\mb E_{\theta}[\widehat S_T \cdot\exp(\iprod{\eta}{T\widehat S_T}- \Lambda_T(T \eta,\theta)) ].
\)
\end{remark}


The following example shows that Assumptions~\ref{ass:suff} and \ref{ass:LDP2} are satisfied if the observable data is governed by an~\ac{iid} process with a finite state space and if~$\widehat S_T$ denotes the empirical distribution. 

\begin{example}[Exponential families of finite state \ac{iid} processes] \label{ex:finitestate:iid:exp:fam}
Consider the class of finite state \ac{iid} processes introduced in Example~\ref{ex:finitestate:iid:part1}, and let~$\widehat S_T$ be the empirical distribution defined in Example~\ref{ex:finitestate:iid:part2}. In this case, the Assumptions~\ref{ass:suff} and \ref{ass:LDP2} are satisfied. To see this, set the baseline model~$\theta_0$ to the uniform probability vector, that is, set~$(\theta_0)_i = 1/d$ for all~$i=1,\hdots d$. Recalling that the probability of observing~$\xivec$ is given by~$\prod_{t=1}^T\theta_{\xi_t}$ under an arbitrary model~$\theta\in\Theta$ and by~$d^{-T}$ under the baseline model~$\bar\theta$, we then find
\(
  \tfrac{\d \mb P^T_\theta}{\d \mb P^T_{\bar\theta}} 
  =  d^T \prod_{t=1}^T \theta_{\xi_t} 
  =  d^T\prod_{j=1}^d  \theta_j^{\sum_{t=1}^T \Indic{\xi_t=j}} =   \textstyle \exp(\iprod{T \log \theta}{\widehat S_T} + T \log d),
\)
where~$\log \theta$ is evaluated component-wise. In addition, the $T^{\rm th}$ log-moment generating function is given by
\begin{align*}
    \Lambda_T(\lambda,\theta)
    & = \log \mb E_\theta \left[\exp(\textstyle \frac 1T \sum_{t=1}^T \sum_{j=1}^d\lambda_j \Indic{\xi_t=j})\right]  = \log \mb E_\theta \left[\textstyle \prod_{t=1}^T \exp( \frac 1T \sum_{j=1}^d\lambda_j \Indic{\xi_t=j})\right]\\
    & = T\log \mb E_\theta \left[\textstyle \exp(\frac 1T \sum_{j=1}^d\lambda_j \Indic{\xi_1=j})\right]  = \textstyle T \log \sum_{i=1}^d \theta_i \exp( \frac 1T\sum_{j=1}^d\lambda_j \Indic{i=j})=T\log\sum_{i=1}^d\theta_i e^{\lambda_i/T},
\end{align*}
where the second equality follows from the serial independence of the observations, and the third inequality holds because all observations have the same marginal distribution as~$\xi_1$. Thus, the family of all finite state \ac{iid} processes corresponding to the models~$\theta\in \Theta$ form a time-homogeneous exponential family with parametrization function~$g(\theta)=\log\theta$ and log-partition function~$A_T(\lambda) = \Lambda_T(\lambda,\theta_0)=T\log\frac1d\sum_{i=1}^d e^{\lambda_i/T}$, which ensures that~$A_T(T g(\theta)) = -T\log d$. This confirms Assumption~\ref{ass:suff} and consequently shows that the empirical distribution is a sufficient statistic for~$\theta$. Next, observe that $\Lambda_T(\lambda,\theta)<\infty$ for all~$T\in\mb N$ and~$\lambda\in\mb R^d$, 
\(
  \Lambda(\lambda,\theta)=\lim_{T\to \infty}\frac 1T \Lambda_T(T\lambda, \theta) = \log \sum_{i=1}^d \theta_i e^{\lambda_i}<\infty
\)
and~$\partial_{\lambda_i} \Lambda(\lambda, \theta)=\theta_i e^{\lambda_i}/(\sum_{j=1}^d\theta_j e^{\lambda_j})$ for all~$i=1,\ldots,d$ and~$\lambda\in\mb R^d$. Therefore, $\Lambda(\lambda,\theta)$ is smooth and convex in~$\lambda$ and continuous in~$\theta$ on~$\mb R^d\times\Theta$. These findings imply that Assumption~\ref{ass:LDP2} holds.
\end{example}

Assumption~\ref{ass:LDP2} guarantees  via the celebrated G\"artner-Ellis theorem that~$\widehat S$ satisfies an \ac{ldp}.

\begin{theorem}[{G\"artner-Ellis theorem \cite[Theorem~2.3.6]{dembo2009large}}] \label{thm:Gaertner-Ellis}
If the limiting log-moment generating function~$\Lambda$ satisfies Assumption~\ref{ass:LDP2}, then the statistic $\widehat S$ satisfies an \ac{ldp} with good rate function 
\begin{equation}\label{eq:GE:ratefct}
    I(s,\theta) = \textstyle \sup_{\lambda\in \Re^d} \; \iprod{\lambda}{s}-\Lambda(\lambda, \theta).
\end{equation}
\end{theorem}

Note that the limiting log-moment generating function~$\Lambda(\lambda,\theta)$ and the rate function~$I(s,\theta)$ of Theorem~\ref{thm:Gaertner-Ellis} are only defined on~$\mb R^d\times\Theta$. However, it is usually easy to extend~$I(s,\theta)$ to~$\mb R^d\times\cl\Theta$ so that it becomes a rate function in the sense of Definition~\ref{def:rate_function:original}. In Section~\ref{sec:models} we will provide several examples where~$I(s,\theta)$ can even be extended to a {\em regular} rate function on~$\mb R^d\times\cl\Theta$. Note that~$I(s,\theta)$ displays the following properties for every fixed~$\theta\in\Theta$. First, it coincides with the convex conjugate of the limiting log-moment generating function~$\Lambda(\lambda, \theta)$ with respect to~$\lambda$. Consequently, $I(s,\theta)$ represents a pointwise supremum of affine functions and is thus convex and lower semi-continuous in~$s$. By Assumption~\ref{ass:LDP2}, $\Lambda(\lambda, \theta)$ is essentially smooth in~$\lambda$, that is, the gradient~$\nabla_\lambda\Lambda(\lambda, \theta)$ exists on the interior of $\dom\Lambda(\cdot,\theta)$, and its norm tends to infinity when~$\lambda$ approaches the boundary of~$\dom\Lambda(\cdot,\theta)$. This implies via~\cite[Theorem 26.3]{rockafellar1970convex} that the rate function~$I(s,\theta)$ is strictly convex in~$s$ on the relative interior of~$\dom I(\cdot,\theta)$. Conversely, if~$I(s,\theta)$ is strictly convex in~$s$, then the same theorem guarantees that~$\Lambda(\lambda,\theta)$ is essentially smooth in~$\lambda$. This implication is sometimes useful to verify Assumption~\ref{ass:LDP2}. As~$\Lambda(0, \theta)=0$, we further have $I(s,\theta)\geq 0$ for all~$s\in\SS$.
Finally, as we will show in the following lemma, Assumption~\ref{ass:LDP2} implies via the G\"artner-Ellis theorem that $S_\infty(\theta)=\nabla_\lambda \Lambda(0, \theta)$.
\begin{lemma}[Asymptotic consistency of~$\widehat S$]
  \label{lemm:ge-consistent}
  If Assumption~\ref{ass:LDP2} holds, then, as~$T$ grows, $\widehat S_T$ converges in probability under $\mb P_\theta$ to $\nabla_\lambda \Lambda(0, \theta)$ for every~$\theta\in\Theta$. This implies that~$S_\infty(\theta)=\nabla_\lambda \Lambda(0, \theta)$.
\end{lemma}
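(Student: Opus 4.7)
The plan is to leverage the G\"artner-Ellis upper bound together with a uniqueness-of-minimizer argument to force $\widehat S_T$ to concentrate on $s_\star := \nabla_\lambda \Lambda(0,\theta)$. I will write $I(s,\theta) = \sup_{\lambda} \iprod{\lambda}{s} - \Lambda(\lambda,\theta)$ throughout and abbreviate $s_\star = \nabla_\lambda \Lambda(0,\theta)$.

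First I would verify that $I(s_\star,\theta) = 0$. The map $\lambda \mapsto \iprod{\lambda}{s_\star} - \Lambda(\lambda,\theta)$ is concave, and its gradient at $\lambda = 0$ equals $s_\star - \nabla_\lambda \Lambda(0,\theta) = 0$ by the choice of $s_\star$; hence the supremum defining $I(s_\star,\theta)$ is attained at $\lambda = 0$, yielding the value $-\Lambda(0,\theta) = 0$. Since $I(\cdot,\theta) \ge 0$, this already identifies $s_\star$ as a minimizer of $I(\cdot,\theta)$.

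Next I would show that $s_\star$ is the \emph{unique} zero of $I(\cdot,\theta)$. By the Fenchel-Young equality, $I(s,\theta) = 0$ if and only if $s \in \partial_\lambda \Lambda(0,\theta)$. Assumption~\ref{ass:LDP2} guarantees that $\Lambda(\cdot,\theta)$ is differentiable at the origin (which lies in the interior of its effective domain), so the subdifferential is the singleton $\{\nabla_\lambda \Lambda(0,\theta)\} = \{s_\star\}$. This pins down $s_\star$ as the sole zero of $I(\cdot,\theta)$.

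Now I would apply the LDP upper bound. For any $\varepsilon > 0$, the set $F_\varepsilon = \{s \in \SS : \|s - s_\star\|_2 \ge \varepsilon\}$ is closed in $\SS$, and the G\"artner-Ellis theorem delivers
\[
\limsup_{T\to\infty} \frac{1}{T} \log \mb P_\theta[\widehat S_T \in F_\varepsilon] \le -\inf_{s \in F_\varepsilon} I(s,\theta).
\]
The key step is to show that this infimum is strictly positive. Because $I(\cdot,\theta)$ is a good rate function (its sublevel sets are compact), any sequence $\{s_n\} \subset F_\varepsilon$ with $I(s_n,\theta) \to 0$ would lie eventually in a compact sublevel set, hence admit a subsequential limit $\bar s \in F_\varepsilon$ (since $F_\varepsilon$ is closed); lower semi-continuity would then force $I(\bar s,\theta) = 0$, contradicting the uniqueness established above (since $s_\star \notin F_\varepsilon$). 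Thus $\inf_{s \in F_\varepsilon} I(s,\theta) > 0$, and the probability in question decays exponentially to zero. This is precisely convergence in probability of $\widehat S_T$ to $s_\star$, and by uniqueness of probabilistic limits together with Definition~\ref{def:estimator}, we conclude $S_\infty(\theta) = s_\star = \nabla_\lambda \Lambda(0,\theta)$.

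The main obstacle is the strict positivity of $\inf_{F_\varepsilon} I(\cdot,\theta)$; it is where differentiability of $\Lambda$ at the origin (to get uniqueness of the zero) and goodness of the rate function (to rule out minimizing sequences escaping to infinity) both come crucially into play.
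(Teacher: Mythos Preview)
Your proposal is correct and follows essentially the same route as the paper: apply the G\"artner--Ellis upper bound to the closed set $\{\|s-s_\star\|\ge\varepsilon\}$, then show the infimum of $I(\cdot,\theta)$ over that set is strictly positive by combining goodness of the rate function with the fact that $s_\star=\nabla_\lambda\Lambda(0,\theta)$ is the unique zero of $I(\cdot,\theta)$. The only difference is cosmetic: you invoke the Fenchel--Young equality to identify the zero set of $I(\cdot,\theta)$ with $\partial_\lambda\Lambda(0,\theta)=\{s_\star\}$, whereas the paper reaches the same conclusion by a direct directional-derivative computation (if $I(s^\star,\theta)=0$ then $\iprod{v}{s^\star}\le\lim_{\delta\downarrow0}\delta^{-1}(\Lambda(\delta v,\theta)-\Lambda(0,\theta))=\iprod{v}{\nabla_\lambda\Lambda(0,\theta)}$ for all $v$).
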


The following example shows that the G\"artner-Ellis theorem subsumes Sanov's theorem as a special case.

\begin{example}[An LDP for finite state \ac{iid} processes revisited] \label{ex:finitestate:iid:gaertner:ellis}
Consider the class of finite state \ac{iid} processes of Example~\ref{ex:finitestate:iid:part1}, and let~$\widehat S_T$ be the empirical distribution defined in Example~\ref{ex:finitestate:iid:part2}. From Example~\ref{ex:finitestate:iid:exp:fam} we know that the limiting log-moment generating function is given by~$\Lambda(\lambda,\theta) = \log \sum_{i=1}^d \theta_i e^{\lambda_i}$ and that Assumptions~\ref{ass:suff} and~\ref{ass:LDP2} are satisfied. By Theorem~\ref{thm:Gaertner-Ellis}, $\widehat S$ thus satisfies an LPD with good rate function
\(
    I(s,\theta) = \sup_{\eta\in \Re^d} \; \iprod{\eta}{s}-\Lambda(\eta, \theta) = \D{s}{\theta},
\)
where the second equality follows from an elementary but tedious calculation. This reasoning reveals that Sanov's theorem \cite[Theorem~2.1.10]{dembo2009large}, which describes an LDP for the empirical distributions on \ac{iid} data, emerges as a special case of the G\"artner-Ellis theorem. Recall also from Example~\ref{ex:finitestate:iid:LDP} that the relative entropy admits a lower semi-continuous extension to~$\SS\times\cl\Theta=\Delta_d\times\Delta_d$ and constitutes a regular rate function.
\end{example}

We will now demonstrate that if the statistic~$\widehat S$ not only satisfies an \ac{ldp} with a regular rate function but is also sufficient, then even the original meta-optimization problems~\eqref{eq:optimal} admit Pareto dominant solutions that are available in closed form. To this end, denote as usual by~$\tilde c^\star$ the distributionally robust predictor of Definition~\ref{def:dro-predictor}, and introduce a data-driven predictor~$\widehat c^{\,\star}$ defined through~$\widehat c_T^{\,\star}(x)=\tilde c^\star(x,\widehat S_T)$ for all~$T\in\mb N$.

\begin{theorem}[Optimality of~$\widehat c^{\,\star}$] \label{thm:predictor:equivalence}
If the Assumptions~\ref{ass:parametrization}, \ref{ass:continuity}, \ref{ass:suff} and \ref{ass:LDP2} hold, the rate function~\eqref{eq:GE:ratefct} is regular and $r>0$, then~$\widehat c^{\,\star}$ is a Pareto dominant solution of the meta-optimization problem~\eqref{eq:optimal-predictor}.
\end{theorem}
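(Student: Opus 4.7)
The plan is to mirror the two-step structure of the proof of Theorem~\ref{thm:optimality}. For \emph{feasibility}, Assumption~\ref{ass:LDP2} together with the G\"artner-Ellis theorem (Theorem~\ref{thm:Gaertner-Ellis}) gives $\widehat S$ an LDP with the rate function~\eqref{eq:GE:ratefct}, which is regular by hypothesis, so Assumption~\ref{ass:ldp} holds. Theorem~\ref{thm:optimality} then certifies the exponential disappointment bound for $\tilde c^\star$, and since the induced predictor $\widehat c^{\,\star}_T(x)=\tilde c^\star(x,\widehat S_T)$ lies in $\widehat{\mathcal C}$ (by the discussion after Definition~\ref{def:compressed:dd_prediction}) and inherits this pointwise bound, $\widehat c^{\,\star}$ is feasible in~\eqref{eq:optimal-predictor}.

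For \emph{Pareto dominance} I will argue by contradiction. Suppose some feasible $\widehat c\in\widehat{\mathcal C}$ satisfies $c_\infty(x_0,\theta_0)<\tilde c^\star(x_0,s_0)$ for some $x_0\in X$ and $\theta_0\in\Theta$, with $s_0:=S_\infty(\theta_0)$. The maximum defining $\tilde c^\star(x_0,s_0)$ is attained at some $\theta^\star\in\cl\Theta$ with $I(s_0,\theta^\star)\le r$, and the radial monotonicity of~$I$ together with the continuity of $c(x_0,\cdot)$ produce, exactly as in Step~2 of the proof of Theorem~\ref{thm:optimality}, a model $\theta_0^\star\in\Theta$ with $r_0:=I(s_0,\theta_0^\star)<r$ and $c(x_0,\theta_0^\star)>c_\infty(x_0,\theta_0)$. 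It therefore suffices to show that the $(x_0,\theta_0^\star)$-disappointment of $\widehat c$ decays no faster than $e^{-r_0T}$, contradicting feasibility since $r_0<r$.

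This is where sufficiency enters. By Assumption~\ref{ass:suff} and the identity~\eqref{eq:log-moment-function}, the measures $\mb P^T_{\theta_0^\star}$ and $\mb P^T_{\theta_0}$ are mutually equivalent with
\[
  \frac{\d \mb P^T_{\theta_0^\star}}{\d \mb P^T_{\theta_0}} = \exp\!\bigl(\iprod{T\eta}{\widehat S_T}-\Lambda_T(T\eta,\theta_0)\bigr),\qquad \eta:=g(\theta_0^\star)-g(\theta_0).
\]
Changing measure and restricting the expectation to $B_T(\varepsilon):=\{\tnorm{\widehat S_T-s_0}\le\varepsilon\}\cap\{c(x_0,\theta_0^\star)>\widehat c_T(x_0)\}$, I plan to exploit that $\mb P_{\theta_0}[B_T(\varepsilon)]\to 1$, because $\widehat S_T\to s_0$ and $\widehat c_T(x_0)\to c_\infty(x_0,\theta_0)<c(x_0,\theta_0^\star)$ in $\mb P_{\theta_0}$-probability, together with the lower bound $\tfrac{1}{T}\log\tfrac{\d \mb P^T_{\theta_0^\star}}{\d \mb P^T_{\theta_0}}\ge \iprod{\eta}{s_0}-\Lambda(\eta,\theta_0)-\tnorm{\eta}\varepsilon-o(1)$ valid on $B_T(\varepsilon)$ thanks to the existence of the limit in Assumption~\ref{ass:LDP2}. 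Substituting~\eqref{eq:log-moment-function} into the Fenchel dual~\eqref{eq:GE:ratefct} and using $I(s_0,\theta_0)=0$ then yields the key identity $\iprod{\eta}{s_0}-\Lambda(\eta,\theta_0)=-I(s_0,\theta_0^\star)=-r_0$.

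Taking $\liminf_{T\to\infty}\tfrac{1}{T}\log(\cdot)$ of the resulting estimate and letting $\varepsilon\downarrow 0$ produces $\liminf_{T\to\infty}\tfrac{1}{T}\log\mb P_{\theta_0^\star}[c(x_0,\theta_0^\star)>\widehat c_T(x_0)]\ge -r_0>-r$, contradicting feasibility of $\widehat c$. The hard part will be the change-of-measure estimate on $B_T(\varepsilon)$: it needs uniform control of $\Lambda_T(T\eta,\theta_0)/T$ delivered by Assumption~\ref{ass:LDP2}, and it hinges on the duality identity along the exponential tilt $\eta=g(\theta_0^\star)-g(\theta_0)$, which is the algebraic core of the Rao-Blackwell-type argument that the sufficiency of $\widehat S$ enables.
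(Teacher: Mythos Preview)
Your proposal is correct and follows the same overall strategy as the paper: feasibility is inherited from Theorem~\ref{thm:optimality} via the G\"artner--Ellis LDP, and Pareto dominance is obtained by contradiction using an exponential change of measure along the tilt $\eta$ between $\mb P_{\theta_0}$ and $\mb P_{\theta_0^\star}$, localised to $\{\widehat S_T\approx s_0\}\cap\{\widehat c_T(x_0)<c(x_0,\theta_0^\star)\}$, an event of $\mb P_{\theta_0}$-probability tending to~$1$.

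The one noteworthy difference is how you justify the key identity $\iprod{\eta}{s_0}-\Lambda(\eta,\theta_0)=-I(s_0,\theta_0^\star)$. The paper proves that $g(\theta_0)-g(\theta_0^\star)$ is the maximiser in the Fenchel formula~\eqref{eq:GE:ratefct} for $I(s_0,\theta_0^\star)$ by a first-order argument, computing $\nabla_\lambda\Lambda(\cdot,\theta_0^\star)$ at the tilt via Remark~\ref{ex:exp:limit-genralized} and Lemmas~\ref{eq:exp:limit} and~\ref{lemm:ge-consistent}. Your route---substitute the shift relation $\Lambda(\lambda,\theta_0^\star)=\Lambda(\lambda+\eta,\theta_0)-\Lambda(\eta,\theta_0)$ implied by~\eqref{eq:log-moment-function} into~\eqref{eq:GE:ratefct}, change variables $\mu=\lambda+\eta$, and use $I(s_0,\theta_0)=0$---is a legitimate and in fact slightly cleaner algebraic shortcut that bypasses the differentiability bookkeeping. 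Both approaches rely on exactly the same structural input (Assumption~\ref{ass:suff} for the shift identity and Assumption~\ref{ass:LDP2} for the existence of the limiting log-MGF), so the arguments are equivalent in scope.
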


The assumptions of Theorem~\ref{thm:predictor:equivalence} ensure via the G\"artner-Ellis theorem that~$\widehat S$ satisfies an \ac{ldp}, and thus they imply the assumptions of Theorem~\ref{thm:optimality}. From Theorem~\ref{thm:optimality} we further know that~$\tilde c^\star$ represents a Pareto dominant solution to the restricted meta-optimization problem~\eqref{eq:optimal-predictor:compressed} over compressed data-driven predictors. The discussion after Example~\ref{eq:compressed:naive-predictor} finally implies that the objective function value of~$\widehat c^{\,\star}$ in~\eqref{eq:optimal-predictor} coincides with that of~$\tilde c^\star$ in~\eqref{eq:optimal-predictor:compressed} for every fixed decision~$x\in X$ and model~$\theta\in\Theta$, that is, we have
\begin{equation*}
\lim_{T\to\infty} \mb E_\theta[\widehat c_T^{\,\star}(x)] = \tilde c^\star(x,S_\infty(\theta)).
\end{equation*}
As Theorem~\ref{thm:predictor:equivalence} identifies~$\widehat c^{\,\star}$ as a Pareto dominant solution to~\eqref{eq:optimal-predictor}, the above identity thus implies that the original meta-optimization problem~\eqref{eq:optimal-predictor} is indeed equivalent to the restricted meta-optimization problem~\eqref{eq:optimal-predictor:compressed}. In other words, compressing the raw data~$\xivec$ into~$\widehat S_T$ incurs no loss of optimality.

Theorem~\ref{thm:predictor:equivalence} can be interpreted as establishing a separation principle that enables a decoupling of estimation and optimization. Instead of directly solving a data-driven optimization problem of the form~$\min_{x\in X}\widehat c_T(x)$ constructed from the raw data~$\xivec$, which may become increasingly difficult as~$T$ grows, we can first solve an estimation problem that evaluates the statistic~$\widehat S_T$ and subsequently solve an optimization problem~$\min_{x\in X}\tilde c(x,\widehat S_T)$ constructed merely from~$\widehat S_T$. Theorem~\ref{thm:predictor:equivalence} guarantees that if these two data-driven optimization problems are designed optimally, then no optimality is sacrificed by this separation.

Next, we show that  the meta-optimization problem~\eqref{eq:optimal-prescriptor} over data-driven predictor-prescriptor pairs also admits a Pareto dominant solution. To this end, define the distributionally robust predictor~$\tilde c^\star$ and the corresponding data-driven predictor~$\widehat c^\star$ as before, and let~$\tilde x^\star$ be a distributionally robust prescriptor as in Definition~\ref{def:dro-prescriptor}. Then, introduce a data-driven prescriptor~$\widehat  x^{\,\star}$ defined through~$\widehat x_T^{\,\star} =\tilde x^\star(\widehat S_T)$ for all~$T\in\mb N$.

\begin{theorem}[Optimality of~$(\widehat c^{\,\star}, \widehat x^\star)$] \label{thm:prescriptor:equivalence}
If the Assumptions~\ref{ass:parametrization}, \ref{ass:continuity}, \ref{ass:suff} and \ref{ass:LDP2} hold, the rate function~\eqref{eq:GE:ratefct} is regular and $r>0$, then~$(\widehat c^{\,\star}, \widehat x^\star)$ is a Pareto dominant solution of the meta-optimization problem~\eqref{eq:optimal-prescriptor}.
\end{theorem}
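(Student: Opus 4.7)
The plan is to mirror the contradiction scheme of Theorem~\ref{thm:predictor:equivalence} but to accommodate the fact that the prescriptor~$\widehat x_T$ is random by introducing a finite open covering of the decision set~$X$. First, feasibility of $(\widehat c^{\,\star},\widehat x^\star)$ in~\eqref{eq:optimal-prescriptor} is inherited directly from Theorem~\ref{thm:optimality_prescriptor}: the pair $(\tilde c^\star,\tilde x^\star)$ satisfies the rate constraint of the compressed problem~\eqref{eq:optimal-prescriptor:compressed}, and this constraint transfers verbatim to $(\widehat c^{\,\star},\widehat x^\star)$ via the identifications $\widehat c_T^{\,\star}(x)=\tilde c^\star(x,\widehat S_T)$ and $\widehat x_T^{\,\star}=\tilde x^\star(\widehat S_T)$. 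To evaluate the $\theta$-objective I would invoke Berge's maximum theorem to establish continuity of $s\mapsto\tilde c^\star(\tilde x^\star(s),s)=\min_{x\in X}\tilde c^\star(x,s)$ on~$\SS_\infty$, then combine the continuous mapping theorem with Lemma~\ref{lem:aux:prob} (applied to conditions~\ref{def:dd_prediction:ii:boundedness} and~\ref{def:dd_pres:iii:convergence} of Definition~\ref{def:dd_prediction}) to obtain $\lim_{T\to\infty}\mb E_\theta[\widehat c_T^{\,\star}(\widehat x_T^{\,\star})]=\tilde c^\star(\tilde x^\star(S_\infty(\theta)),S_\infty(\theta))$; the same lemma yields $\lim_{T\to\infty}\mb E_\theta[\widehat c_T(\widehat x_T)]=v_\infty(\theta)$ for any other feasible pair.

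For the contradiction step, I would assume that some feasible $(\widehat c,\widehat x)\in\widehat{\mc X}$ is not Pareto dominated, so that $v_\infty(\theta_0)<V:=\tilde c^\star(\tilde x^\star(s_0),s_0)$ at some $\theta_0\in\Theta$ with $s_0=S_\infty(\theta_0)$, and pick $\varepsilon>0$ so small that $v_\infty(\theta_0)+2\varepsilon<V$. The main difficulty, absent in Theorem~\ref{thm:predictor:equivalence}, is that the disappointment event $\{c(\widehat x_T,\theta)>\widehat c_T(\widehat x_T)\}$ couples the random prescriptor to the random predictor, so there is no single decision~$x_0$ on which to concentrate. The remedy is to unfold $V=\min_{x\in X}\max_{\theta\in\cl\Theta:\,I(s_0,\theta)\leq r}c(x,\theta)$: for every $x\in X$ there exists $\theta^\star(x)\in\cl\Theta$ with $I(s_0,\theta^\star(x))\leq r$ and $c(x,\theta^\star(x))>v_\infty(\theta_0)+2\varepsilon$; radial monotonicity of the regular rate function combined with continuity of $c$ in $\theta$ (Assumption~\ref{ass:continuity}) allows me to shift $\theta^\star(x)$ to some $\tilde\theta(x)\in\Theta$ with $I(s_0,\tilde\theta(x))<r$ while preserving $c(x,\tilde\theta(x))>v_\infty(\theta_0)+\varepsilon$, and continuity of $c(\cdot,\tilde\theta(x))$ in the decision then gives an open neighbourhood $U_x\subseteq X$ on which this strict inequality persists. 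Compactness of~$X$ extracts a finite subcover $U_{x_1},\dots,U_{x_N}$ corresponding to models $\tilde\theta_1,\dots,\tilde\theta_N\in\Theta$ each with $I(s_0,\tilde\theta_i)=r_i<r$.

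The last step is a pigeonhole argument fed into the exponential tilting machinery of Theorem~\ref{thm:predictor:equivalence}. Define $E_T=\{\widehat c_T(\widehat x_T)<v_\infty(\theta_0)+\varepsilon\}\cap\{\widehat S_T\in\mb B_\delta(s_0)\}$, whose $\mb P_{\theta_0}$-probability tends to~$1$ by condition~\ref{def:dd_pres:iii:convergence} of Definition~\ref{def:dd_prediction} together with Definition~\ref{def:estimator}. Since the cover forces $\bigcup_{i=1}^{N}\{\widehat x_T\in U_{x_i}\}$ to occupy the whole sample space at every~$T$, pigeonhole followed by a subsequence extraction produces an index~$i^\star$ and infinitely many~$T$ along which $\mb P_{\theta_0}[E_T\cap\{\widehat x_T\in U_{x_{i^\star}}\}]\geq 1/(2N)$. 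On this event one has $c(\widehat x_T,\tilde\theta_{i^\star})>v_\infty(\theta_0)+\varepsilon>\widehat c_T(\widehat x_T)$, so the event is contained in the disappointment event at $\tilde\theta_{i^\star}$. Changing measure to $\mb P_{\tilde\theta_{i^\star}}$ via the exponential-family Radon--Nikodym identity~\eqref{eq:radon-nikodym} with tilt $\eta=g(\theta_0)-g(\tilde\theta_{i^\star})$, and then running the telescoping estimate in~\eqref{eq:opt:ctr:diappointment:part2} with $\delta\downarrow 0$, I expect to obtain $\limsup_{T\to\infty}\frac{1}{T}\log\mb P_{\tilde\theta_{i^\star}}[c(\widehat x_T,\tilde\theta_{i^\star})>\widehat c_T(\widehat x_T)]\geq -I(s_0,\tilde\theta_{i^\star})=-r_{i^\star}>-r$, contradicting the feasibility of $(\widehat c,\widehat x)$. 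The hardest part of the argument is the covering/pigeonhole reduction, which is what allows one to sidestep having to track where the random prescriptor actually lives; the subsequent tilting calculation largely recycles the computations already carried out in the proof of Theorem~\ref{thm:predictor:equivalence}.
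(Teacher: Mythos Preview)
Your proposal is correct and reaches the same conclusion as the paper, but the two proofs handle the random prescriptor~$\widehat x_T$ differently.

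The paper does \emph{not} cover~$X$. Instead it invokes an auxiliary probabilistic accumulation-point lemma (Lemma~\ref{lem:auxiliary:lem1}): because~$X$ is compact, there exists a deterministic point~$x_0\in X$ with $\limsup_{T\to\infty}\mb P_{\theta_0}[\|\widehat x_T-x_0\|<\rho]>0$ for every~$\rho>0$. With this single~$x_0$ in hand, the paper then mimics Step~2 of Theorem~\ref{thm:optimality} to produce a \emph{single} model~$\theta_0^\star\in\Theta$ with $I(s_0,\theta_0^\star)=r_0<r$ and $c(x,\theta_0^\star)>v_\infty(\theta_0)+\varepsilon$ for all~$x$ in a $\rho$-ball around~$x_0$; the tilting calculation then proceeds exactly as in Theorem~\ref{thm:predictor:equivalence} but with the additional event $\{\|\widehat x_T-x_0\|<\rho\}$, whose $\mb P_{\theta_0}$-probability is bounded away from zero along a subsequence.

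Your covering/pigeonhole route is a legitimate alternative that trades the accumulation-point lemma for an elementary compactness argument: instead of one~$x_0$ and one~$\theta_0^\star$, you carry finitely many pairs~$(U_{x_i},\tilde\theta_i)$ and let pigeonhole select the operative one. The resulting lower bound~$1/(2N)$ on the $\mb P_{\theta_0}$-probability vanishes after $\frac{1}{T}\log(\cdot)$, so the tilting step goes through unchanged. Your approach is arguably more self-contained, since it does not rely on Lemma~\ref{lem:auxiliary:lem1}; the paper's approach is slightly cleaner in that it reduces to literally the same single-model estimate as the predictor theorem. Both are exploiting compactness of~$X$ to pin down where~$\widehat x_T$ lives often enough, and the change-of-measure computation you sketch is identical to the paper's.
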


The assumptions of Theorem~\ref{thm:prescriptor:equivalence} imply the assumptions of Theorem~\ref{thm:optimality_prescriptor}, which in turn implies that~$(\tilde c^\star, \tilde x^\star)$ represents a Pareto dominant solution to the restricted meta-optimization problem~\eqref{eq:optimal-prescriptor:compressed}. The discussion after Definition~\ref{def:compressed:dd_prediction} further implies that the objective function value of~$(\widehat c^{\,\star},\widehat x^{\,\star})$ in~\eqref{eq:optimal-prescriptor} coincides with that of~$(\tilde c^\star,\tilde x^\star)$ in~\eqref{eq:optimal-prescriptor:compressed} for every fixed model~$\theta\in\Theta$, that is, we have
\begin{equation*}
\lim_{T\to\infty} \mb E_\theta[\widehat c_T^{\,\star}(\widehat x_T^{\,\star})] = \tilde c^\star(\tilde x^\star(S_\infty(\theta)),S_\infty(\theta)).
\end{equation*}
As Theorem~\ref{thm:prescriptor:equivalence} identifies~$(\widehat c^{\,\star}, \widehat x^\star)$ as a Pareto dominant solution to~\eqref{eq:optimal-prescriptor}, the original meta-optimization problem~\eqref{eq:optimal-prescriptor} is thus equivalent to the restricted meta-optimization problem~\eqref{eq:optimal-prescriptor:compressed}. Therefore, Theorem~\ref{thm:prescriptor:equivalence} establishes another separation principle that enables a decoupling of estimation and optimization.

Theorems~\ref{thm:predictor:equivalence} and~\ref{thm:prescriptor:equivalence} are reminiscent of the celebrated Rao-Blackwell theorem~\cite{ref:Rao-45, ref:Blackwell-47}, which asserts that any given estimator~$\widehat \theta_T$ of the unknown parameter~$\theta$ can be improved by conditioning it on a sufficient statistic~$\widehat S_T$. The resulting estimator~$\mb E_{\theta}[\widehat \theta_T|\widehat S_T]$ is non-inferior to~$\widehat \theta_T$ with respect to the mean squared error criterion and depends on the available data only through~$\widehat S_T$. The proof of the Rao-Blackwell theorem critically relies on Jensen's inequality, which is applicable because the mean squared error is convex in~$\widehat \theta_T$. Unfortunately, it is not possible to improve a given data-driven predictor~$\widehat c_T(x)$ by simply conditioning it on~$\widehat S_T$. This approach fails because the out-of-sample disappointment is {\em non}-convex in~$\widehat c_T(x)$. The proofs of Theorems~\ref{thm:predictor:equivalence} and~\ref{thm:prescriptor:equivalence} are therefore substantially more involved than that of the Rao-Blackwell theorem.

\begin{example}[Optimal predictors and prescriptors for finite state \ac{iid} processes] \label{ex:finitestate:iid:optimality}
Consider the class of finite state \ac{iid} processes of Example~\ref{ex:finitestate:iid:part1}, and let~$\widehat S_T$ be the empirical distribution defined in Example~\ref{ex:finitestate:iid:part1}. We know from Example~\ref{ex:finitestate:iid:LDP} that $\widehat S_T$ satisfies an LDP with regular rate function~$\D{s}{\theta}$. By Theorems~\ref{thm:optimality} and~\ref{thm:optimality_prescriptor}, the distributionally robust predictor~$\tilde c^\star$ with a relative entropy ambiguity set and the corresponding prescriptor~$\tilde x^\star$ thus provide Pareto dominant solutions for the restricted meta-optimization problems~\eqref{eq:optimal:compressed}. From Example~\ref{ex:finitestate:iid:exp:fam} we further know that Assumptions~\ref{ass:suff} and \ref{ass:LDP2} hold. By Theorems~\ref{thm:predictor:equivalence} and~\ref{thm:prescriptor:equivalence} the data-driven predictor~$\widehat c^\star$ and the corresponding prescriptor~$\widehat x^\star$ induced by~$\tilde c^\star$ and~$\tilde x^\star$, respectively, thus provide Pareto dominant solutions for the original meta-optimization problems~\eqref{eq:optimal}.
\end{example}

\section{Data-generating processes} \label{sec:models}

We now describe several data-generating processes for which the restricted meta-optimization problems~\eqref{eq:optimal:compressed} or even the original meta-optimization problems~\eqref{eq:optimal} admit Pareto dominant solutions.

\subsection{Finite-state Markov chains} \label{sssec:Markovchain}

Assume that $\{\xi_t\}_{t=1}^T$ represents a time-homogeneous ergodic Markov chain with state space~$\Xi=\{1,\ldots,m\}$ and dummy deterministic initial state $\xi_0=i_0\in\Xi$ satisfying $\lim_{t\to\infty}\mb P_\star[\xi_t=i,\,\xi_{t+1}=j]=(\theta_\star)_{ij}>0$ for all $i,j\in\Xi$. The matrix~$\theta_\star$ encodes the stationary probability mass function of the doublet $(\xi_t,\xi_{t+1})$, and thus
\[
	\sum_{j\in\Xi}(\theta_\star)_{ij} = \lim_{t\to\infty}\sum_{j\in\Xi} \mb P_\star[\xi_t=i,\,\xi_{t+1}=j]  =  \lim_{t\to\infty}\mb P_\star[\xi_t=i] = \lim_{t\to\infty}\sum_{j\in\Xi} \mb P_\star[\xi_{t-1}=j,\,\xi_{t}=i] = \sum_{j\in\Xi}(\theta_\star)_{ji},
\]
{\em i.e.}, the row sums of~$\theta_\star$ coincide with the respective column sums. 
These properties of~$\theta_\star$ prompt us to define
\(
	\textstyle\Theta=\{\theta\in\mb R^{m\times m}_{++}: \sum_{i,j\in\Xi} \theta_{ij}=1,~ \sum_{j\in\Xi}\theta_{ij} = \sum_{j\in\Xi} \theta_{ji}~\forall i\in\Xi \}
\)
as the set of all strictly positive doublet probability mass functions with balanced marginals. Note that every~$\theta\in\Theta$ induces a unique row vector~$\pi_\theta\in\mb R^{1\times m}_{++}$ of stationary state probabilities and a unique transition probability matrix~$P_\theta\in\mb R^{m\times m}_{++}$ defined through~$(\pi_\theta)_i=\sum_{j\in\Xi}\theta_{ij}$ and $(P_\theta)_{ij}=\theta_{ij}/(\pi_\theta)_i$, respectively. By construction, $P_\theta$ is a stochastic matrix whose rows represent strictly positive probability vectors, and the stationary distribution~$\pi_\theta$ satisfies~$\pi_\theta P_\theta =\pi_\theta$; see Ross~\cite[Chapter~4]{ross2010introduction} for further details on Markov chains. We conclude that~$\mb P_\star$ belongs to a finitely parametrized ambiguity set of the form~$\mc P=\{\mb P_\theta:\theta\in\Theta\}$, where each model~$\theta\in\Theta$ encodes a probability measure~$\mb P_\theta$ on~$(\Omega, \mc F)$ with
\begin{equation*}
  \textstyle \mb P_\theta[ \xivec = (i_1,\hdots,i_T) ] = \prod_{t=1}^{T} (P_\theta)_{i_{t-1} i_{t+1}} \quad \forall (i_1,\hdots,i_T)\in \Xi^T , ~ T\in\mb N.
\end{equation*}
Note also that~$\Theta$ is embedded in a Euclidean space of finite dimension~$d=m^2$. In summary, we have thus shown that Assumption~\ref{ass:parametrization} holds. Next, we define the empirical doublet distribution~$\widehat S_T\in\mb R^{m\times m}$ through
\begin{equation} \label{MC:estimator}
  \textstyle(\widehat S_T)_{ij} = \frac{1}{T} \sum_{t=1}^{T} \Indic{(\xi_{t-1},\xi_{t})=(i,j)} \quad \forall i,j\in\Xi.
\end{equation}
By construction, $\widehat S=\{\widehat S_T\}_{T\in\N}$ constitutes a statistic with state space
\(
  \textstyle \SS =\cl\left(\cup_{T\in\N} \Delta_{m\times m}\cap(\mb Z^{m\times m}/T) \right)= \cl\left( \Delta_{m\times m}\cap \mb Q^{m\times m}\right) = \Delta_{m\times m}. 
\)
We emphasize that~$\SS$ is a strict superset of the model space~$\Theta$. The ergodic theorem for Markov chains further ensures that the empirical doublet distribution~$\widehat S_T$ converges $\mb P_\theta$-almost surely to the true doublet distribution~$\theta$ as $T$ grows; see \cite[Theorem 4.1]{ross2010introduction}. Consequently, we have~$S_\infty(\theta)=\theta$ for all~$\theta\in\Theta$, which implies that~$\widehat S$ is a consistent model estimator in the sense of Definition~\ref{def:estimator} and that the set~$\SS_\infty$ of all asymptotic realizations of~$\widehat S$ coincides with $\Theta$. In addition, $S_\infty$ is clearly a local homeomorphism.

We now follow the reasoning in~\cite{ref:Billingsley-61} to show that the ambiguity set~$\mc P$ represents a time-homogeneous exponential family. Specifically, we define the baseline model~$\bar\theta\in\Theta$ through~$\bar\theta_{ij} = 1/m^2$ for all~$i,j\in\Xi$. The observations~$\xi_t$, $t\in\N$, are thus serially independent and uniformly distributed under~$\mb P_{\bar\theta}$, and the corresponding transition probability matrix satisfies~$(P_{\bar\theta})_{ij}=1/m$ for all~$i,j\in\Xi$. In addition, the probability of observing~$\xivec$ under~$\mb P_{\bar \theta}$ is given by~$1/m^{T}$, and
\(
	\tfrac{\d \mb P^T_\theta}{\d \mb P^T_{\bar\theta}} 
	= m^T  \prod_{t=1}^{T} (P_\theta)_{\xi_{t-1} \xi_{t}}
	= m^{T}\prod_{i,j\in\Xi} (P_\theta)_{ij}^{\sum_{t=1}^{T} \Indic{(\xi_{t-1},\xi_{t})=(i,j)}}
	= m^{T}\prod_{i,j\in\Xi} (P_\theta)_{ij}^{T(\widehat S_T)_{ij}} = \exp(\iprod{T \log(P_\theta)}{\widehat S_T} + T\log m),
\)
where the logarithm of the matrix~$P_\theta$ is evaluated element-wise. This reveals that~$\mc P$ constitutes an exponential family in the sense of Assumption~\ref{ass:suff} with parametrization function~$g(\theta)=\log(P_\theta)$ and that~$\widehat S$ is a sufficient statistic. 
The $T^{\rm th}$ log-moment generating function~$\Lambda_T(\lambda,\theta)$---and thus also the log-partition function~$A_T(\lambda)$---admit no concise closed-form expression. However, the proof of~\cite[Theorem~3.1.2]{dembo2009large} implies that the limiting log-moment generating function~$\Lambda(\lambda, \theta) = \lim_{T\to\infty} \frac1T \Lambda_T(T\lambda, \theta)$ is everywhere finite and differentiable in~$\lambda$ for all~$\theta\in\Theta$. 
In addition, we have~$\nabla_\lambda\Lambda(0,\theta) =\lim_{T\to\infty} \mb E_\theta[\widehat S_T] = \mb E_\theta[\lim_{T\to\infty} \widehat S_T]=\theta$, where the three equalities follow from Lemma~\ref{eq:exp:limit}, the dominated convergence theorem and our insight that~$\widehat S$ converges $\mb P_\theta$-almost surely to~$\theta$, respectively. Hence, Assumption~\ref{ass:LDP2} holds, which ensures via the G\"artner-Ellis theorem that~$\widehat S$ satisfies an \ac{ldp}; see also~\cite[Theorem~3.1.13]{dembo2009large}. The corresponding rate function~$I(s,\theta)$ is given by the convex conjugate of the limiting log-moment generating function~$\Lambda(\lambda,\theta)$ with respect to~$\lambda$, which coincides with conditional relative entropy of~$s$ with respect to~$\theta$ \cite[Section~3.1.3]{dembo2009large}.



 
\begin{definition}[Conditional relative entropy] 
\label{def:conditional_relative_entropy}
Using the standard convention that~$0\log(0/p)=0$ for any $p\ge 0$, the conditional relative entropy of~$s\in\SS$ with respect to~$\theta\in\Theta$ is defined~as
\begin{align*}
	\Dc{s}{\theta} =\sum_{i,j\in\Xi} s_{ij} \left( \log\left( \frac{s_{ij}}{\sum_{k\in\Xi} s_{ik}} \right) -  \log\left( \frac{\theta_{ij}}{\sum_{k\in\Xi} \theta_{ik}}\right) \right).
\end{align*}
\end{definition}

If we denote the $i^{\rm th}$ rows of the transition probability matrices~$P_{s}$ and~$P_\theta$ by~$(P_{s})_{i\cdot}$ and~$(P_\theta)_{i\cdot}$, respectively,\footnote{If $(\pi_s)_i=\sum_{j\in\Xi}s_{ij}=0$, then we may define without loss of generality $(P_s)_{ij}=1$ if $j=i$ and $(P_s)_{ij}=0$ otherwise.} and if we denote the relative entropy as usual by~$\D{\cdot}{\cdot}$, then an elementary calculation reveals that
\(
    \Dc{s}{\theta} =\sum_{i\in\Xi} (\pi_{s})_i \, \D{(P_{s})_{i\cdot}}{(P_\theta)_{i\cdot}}.
\)
Thus, $\Dc{s}{\theta}$ can be viewed as the relative entropy distance between the transition probability vectors under~$s$ and~$\theta$ emanating from a random state of the Markov chain, averaged by the invariant state distribution associated with~$s$. This interpretation justifies the name `conditional relative entropy.' Note also that Definition~\ref{def:conditional_relative_entropy} specifies~$\Dc{s}{\theta}$ only on~$\SS\times\Theta$ and that~$\Dc{s}{\theta}$ is continuous on~$\SS\times\Theta$ thanks to our standard conventions for the logarithm. We emphasize that~$\Dc{s}{\theta}$ cannot be continuously extended beyond~$\SS\times\Theta$. However, $\Dc{s}{\theta}$ admits a unique lower semi-continuous extension to~$\SS\times\cl\Theta$, which is obtained by setting
\[
    \Dc{s}{\theta} = \lim_{\delta\downarrow 0} \inf_{(s',\theta')\in\SS\times\Theta} \left\{\Dc{s'}{\theta'} : \|(s',\theta')-(s,\theta)\|\leq \delta \right\}\quad \forall (s,\theta)\in\SS\times (\cl\Theta\backslash \Theta);
\]
see also~\cite[Definition~1.5]{rockafellar1998variational}. In the following, we will always mean this lower semi-continuous extension to~$\SS\times\cl\Theta$ when referring to the conditional relative entropy~$\Dc{s}{\theta}$. The next proposition establishes that the conditional relative entropy represents a regular rate function in the sense of Definition~\ref{def:rate_function}. 

\begin{proposition}[Properties of the conditional relative entropy] \label{prop:properties:cond:rel:entropy}
The conditional relative entropy $\Dc{s}{\theta}$ is a regular rate function in the sense of Definition~\ref{def:rate_function}. In addition, $\Dc{s}{\theta}$ is convex in~$s$.
\end{proposition}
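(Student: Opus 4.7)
My strategy will be to exploit the row-decomposition identity
\[
\Dc{s}{\theta} = \sum_{i\in\Xi}(\pi_s)_i\,\D{(P_s)_{i\cdot}}{(P_\theta)_{i\cdot}},
\]
already recorded in the excerpt, which rewrites the conditional relative entropy as a $\pi_s$-weighted sum of ordinary Kullback--Leibler divergences between transition rows. Each of the four claims (continuity on $\SS\times\Theta$, level-compactness on $\SS\times\cl\Theta$, radial monotonicity, and convexity in $s$) can then be reduced to standard properties of the Kullback--Leibler divergence combined with the linearity of the marginal map $s\mapsto \pi_s$ and the parametric structure $(P_s)_{ij}=s_{ij}/(\pi_s)_i$.

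\textbf{Continuity and level-compactness.} For $\theta\in\Theta$ every entry $(\pi_\theta)_i$ and $\theta_{ij}$ is strictly positive, so the entries of $\log P_\theta$ are continuous in $\theta$; the convention $0\log(0/p)=0$ handles rows of $s$ with $(\pi_s)_i=0$ (which force $s_{ij}=0$ for all $j$), and the standard estimate $x\log x\to 0$ as $x\downarrow 0$ takes care of the limiting behaviour when marginal entries of $s$ tend to zero. This will yield continuity on $\SS\times\Theta$. For level-compactness, I note that $\SS=\Delta_{m\times m}$ and $\cl\Theta\subseteq\Delta_{m\times m}$ are both compact, and that $\Dc{s}{\theta}$ is lower semi-continuous on $\SS\times\cl\Theta$ by construction of its lsc extension; hence every sublevel set is a closed subset of a compact set and thus compact.

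\textbf{Convexity in $s$.} For $s^\lambda=(1-\lambda)s^{(1)}+\lambda s^{(2)}$ the marginal $\pi_{s^\lambda}$ is the corresponding convex combination of $\pi_{s^{(1)}}$ and $\pi_{s^{(2)}}$, and a direct calculation shows that the $i^{\rm th}$ row $(P_{s^\lambda})_{i\cdot}$ is the convex combination of $(P_{s^{(1)}})_{i\cdot}$ and $(P_{s^{(2)}})_{i\cdot}$ with weight $\alpha_i=(1-\lambda)(\pi_{s^{(1)}})_i/(\pi_{s^\lambda})_i$. Applying convexity of $\D{\cdot}{(P_\theta)_{i\cdot}}$ row-wise and multiplying by $(\pi_{s^\lambda})_i$, the crucial cancellations $(\pi_{s^\lambda})_i\alpha_i = (1-\lambda)(\pi_{s^{(1)}})_i$ and $(\pi_{s^\lambda})_i(1-\alpha_i) = \lambda(\pi_{s^{(2)}})_i$ will collapse the upper bound to $(1-\lambda)\Dc{s^{(1)}}{\theta}+\lambda \Dc{s^{(2)}}{\theta}$, as desired.

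\textbf{Radial monotonicity.} I will verify the sufficient condition~\eqref{eq:suff:radial:mon}. Since $S_\infty$ is the identity on $\Theta$, the preimage of $s\in\SS_\infty=\Theta$ is simply $\theta_s=s$; for $\theta\in\cl\Theta$ and $\lambda\in[0,1)$ the line-segment principle gives $\theta^\lambda=(1-\lambda)s+\lambda\theta\in\Theta$. Whenever $(\pi_\theta)_i>0$, the row $(P_{\theta^\lambda})_{i\cdot}$ is the convex combination of $(P_s)_{i\cdot}$ and $(P_\theta)_{i\cdot}$ with weight $\mu_i=\lambda(\pi_\theta)_i/(\pi_{\theta^\lambda})_i\in[0,1)$, and strict convexity of $\D{(P_s)_{i\cdot}}{\cdot}$ on the simplex (which holds as soon as $(P_s)_{i\cdot}\neq(P_\theta)_{i\cdot}$) yields
\[
\D{(P_s)_{i\cdot}}{(P_{\theta^\lambda})_{i\cdot}}\leq \mu_i\,\D{(P_s)_{i\cdot}}{(P_\theta)_{i\cdot}}\leq \D{(P_s)_{i\cdot}}{(P_\theta)_{i\cdot}}.
\]
Summing against $(\pi_s)_i$ produces $\Dc{s}{\theta^\lambda}\leq \Dc{s}{\theta}$, and the chain above will be strict for some $i$ whenever $\Dc{s}{\theta}>0$. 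The main obstacle, and the only place where some care is required, is the boundary case $\theta\in\cl\Theta\setminus\Theta$ with some $(\pi_\theta)_i=0$, where $(P_\theta)_{i\cdot}$ is not even defined; in that case the lsc extension forces $\Dc{s}{\theta}=+\infty$ while $\theta^\lambda\in\Theta$ keeps $\Dc{s}{\theta^\lambda}$ finite, so the strict inequality is recovered for free.
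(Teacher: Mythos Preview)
Your argument is correct and establishes all four properties, but the route you take for radial monotonicity differs genuinely from the paper's. The paper proceeds by direct differentiation: it computes
\[
\frac{\d}{\d\lambda}\Dc{s}{\theta(\lambda)}
= \frac{1}{\lambda}\sum_{i\in\Xi}\left(\sum_{j\in\Xi}\frac{s_{ij}^2}{\theta_{ij}(\lambda)}-\frac{\bigl(\sum_{j\in\Xi}s_{ij}\bigr)^2}{\sum_{j\in\Xi}\theta_{ij}(\lambda)}\right)\ge 0
\]
and invokes Jensen's inequality to sign each bracket, with equality precisely when $P_s=P_\theta$. You instead exploit the row decomposition together with convexity of $\D{(P_s)_{i\cdot}}{\cdot}$ in its second argument. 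Your argument is more structural and avoids calculus, whereas the paper's computation is self-contained and makes the strictness transparent without invoking strict convexity of KL. For convexity in~$s$, the paper simply observes that $\Dc{s}{\theta}$ is a linear combination (in~$s$) of perspective functions $v\log(v/w)$; your route via the row decomposition and convexity of $\D{\cdot}{q}$ is correct but in fact reproduces the same log-sum inequality underneath, so the difference there is cosmetic.

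One point deserves a sharper justification. When $(\pi_\theta)_{i_0}=0$ you assert that the lsc extension gives $\Dc{s}{\theta}=+\infty$, citing only that $(P_\theta)_{i_0\cdot}$ is undefined. That row is actually harmless: along an approximating sequence $\theta'\to\theta$ in~$\Theta$ one can arrange $(P_{\theta'})_{i_0\cdot}\to(P_s)_{i_0\cdot}$, so its contribution tends to zero. The divergence comes from a \emph{different} row: the balance condition on $\cl\Theta$ forces the entire column $i_0$ of $\theta$ to vanish, so for any $i$ with $(\pi_\theta)_i>0$ (at least one such $i$ exists) we have $(P_{\theta'})_{i,i_0}\to 0$ while $(P_s)_{i,i_0}>0$, whence $\D{(P_s)_{i\cdot}}{(P_{\theta'})_{i\cdot}}\to+\infty$. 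Your conclusion stands, but the mechanism should be stated correctly.
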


By Theorems~\ref{thm:optimality} and~\ref{thm:optimality_prescriptor}, we may now conclude that the distributionally robust predictor~$\tilde c^\star$ with a conditional relative entropy ambiguity set and the corresponding prescriptor~$\tilde x^\star$ provide Pareto dominant solutions for the restricted meta-optimization problems~\eqref{eq:optimal:compressed}. Moreover, by Theorems~\ref{thm:predictor:equivalence} and~\ref{thm:prescriptor:equivalence} the data-driven predictor~$\widehat c^\star$ and the corresponding prescriptor~$\widehat x^\star$ induced by~$\tilde c^\star$ and~$\tilde x^\star$, respectively, provide Pareto dominant solutions for the original meta-optimization problems~\eqref{eq:optimal}. As~$\Dc{s}{\theta}$ fails to be convex in~$\theta$, computing~$\tilde c^\star(x,s)$ for a fixed~$x\in X$ and~$s\in\SS$ necessitates the solution of a challenging non-convex optimization problem with~$\mathcal O(m^2)$ decision variables \cite{ICML-Li-21}. In Appendix~\ref{subsec:AR} we show that the restricted meta-optimization problems sometimes admit Pareto dominant solutions even if the training data is generated by an autoregressive process with an uncountable state space instead of a finite-state Markov chain.

\subsection{Independent observations with identical parametric distribution functions}
\label{sec:iid-parametric}
As a last example, assume that the observations~$\{\xi_t\}_{t=1}^T$ are valued in~$\Re^m$ and that they are serially independent and share the same distribution function~$F_{\theta_\star}$ under~$\mb P_\star$, that is, we have $\mb P_\star[\xi_t\leq z]=F_{\theta_\star}(z)$ for all~$z\in\Re^m$ and $t\in\mb N$. Here, $F_{\theta}$, $\theta\in\Theta$, is a family of distribution functions with common support~$\Xi\subseteq\Re^m$, where the parameter~$\theta$ ranges over the relative interior~$\Theta$ of a convex subset of~$\Re^d$, and $\theta_\star$ denotes the unknown true parameter. Clearly, the mean value of~$F_\theta$ must be a function of~$\theta$ and can thus be expressed as~$S_\infty(\theta)$. Throughout this section we assume that the function~$S_\infty$ constitutes a homeomorphism from the set~$\Theta$ to its image~$\SS_\infty=\{S_\infty(\theta):\theta\in\Theta\}$. As any homeomorphism is invertible, this assumption means that the parameter~$\theta$ is uniquely determined by the mean value of~$F_\theta$. We may then conclude that~$\mb P_\star$ belongs to an ambiguity set $\{\mb P_\theta:\theta\in\Theta\}$, where each~$\theta\in\Theta$ encodes a probability  measure $\mb P_\theta$ on $(\Omega, \mc F)$ satisfying
\begin{equation*}
	\mb P_\theta[ \xi_t\leq z_t~\forall t=1,\ldots,T] = \textstyle \prod_{t=1}^T F_\theta(z_t) \qquad \forall z\in \Re^{mT} , ~ T\in\mb N.
\end{equation*}
In order to estimate the mean value~$S_\infty(\theta)$ (and thereby implicitly also~$\theta$) we use the sample mean
\begin{equation} \label{cts:sample:mean}
	\widehat S_T= \textstyle\frac{1}{T}\sum_{t=1}^T \xi_t \quad \forall T\in\N.
\end{equation}
By our standard conventions, the state space~$\SS$ of~$\widehat S$ is given by the closure of the convex hull of~$\Xi$. In the following we assume that the distribution function~$F_\theta$  has exponentially bounded tails for every~$\theta\in \Theta$. The strong law of large numbers then implies that~$\widehat S_T$ converges $\mb P_\theta$-almost surely to~$S_\infty(\theta)$. 
More specifically, we henceforth focus on several popular families of distribution functions that are susceptible to analytical treatment:
\begin{enumerate}[label={(\alph*)}]
  \setlength\itemsep{-0.25em}
    \item normal distributions on~$\Re^m$ with an unknown mean vector~$\theta\in\Re^m$ and a positive definite covariance matrix~$\Sigma\in\mathbb \Re^{m\times m}$;
    \item exponential distributions on~$\Re_+$ with an unknown rate parameter~$\theta>0$;
    \item Gamma distributions on~$\Re_+$ with an unknown scale parameter~$\theta>0$ and a shape parameter~$k>0$;
    \item Poisson distributions on~$\N\cup\{0\}$ with an unknown rate parameter~$\theta\in\Re_{++}$;
    \item Bernoulli distributions on~$\{0,1\}$ with an unknown success probability $\theta\in(0,1)$;
    \item geometric distributions on~$\mathbb{N}$ with an unknown success probability~$\theta\in(0,1)$;
    \item binomial distributions on~$\N\cup\{0\}$ with an unknown success probability~$\theta\in (0,1)$ and~$N\in\mathbb{N}$ trials.
\end{enumerate}
Clearly, each of these examples satisfies Assumption~\ref{ass:parametrization}. It is also well known that each of these examples gives rise to a time-homogeneous exponential family in the sense of Assumption~\ref{ass:suff} and that the sample mean~\eqref{cts:sample:mean} is a sufficient statistic for~$\theta$. To see that the sample mean also satisfies an \ac{ldp} with a regular rate function, note that for \ac{iid} data the limiting log-moment generating function simplifies to
\begin{equation}
    \label{log-mgf-iid}
    \Lambda(\lambda,\theta) = \textstyle\lim_{T\to\infty} \frac{1}{T}\log\mb E_{\theta}\left[\exp(\langle T\lambda, \widehat S_T \rangle)\right]= \log\left( \int_{\Re^m} e^{\lambda\tpose \xi} \, \d F_\theta(\xi)\right).
\end{equation}
As~$F_\theta$ is assumed to have exponentially bounded tails, $\Lambda(\lambda,\theta)$ is finite on a neighborhood of~$\lambda=0$ for every fixed~$\theta\in\Theta$. Moreover, $\Lambda(\lambda,\theta)$ is available in closed form for all families of distribution functions listed above; see Appendix~\ref{app:table}. In each case one can therefore verify by inspection that the gradient $\nabla_\lambda\Lambda(\lambda, \theta)$ exists on the interior of $\dom\Lambda(\cdot,\theta)$ and that its norm tends to infinity when~$\lambda$ approaches the boundary of $\dom\Lambda(\cdot,\theta)$. Thus, Assumption~\ref{ass:LDP2} holds, which ensures via the G\"artner-Ellis theorem that~$\widehat S$ satisfies an \ac{ldp}. The corresponding rate function~$I(s,\theta)$ coincides with the Cram\'er function~$\Lambda^*(s,\theta)$, that is, the convex conjugate of the limiting log-moment generating function~\eqref{log-mgf-iid} with respect to~$\lambda$. The Cram\'er function is again available in closed form for all examples listed above; see Table~\ref{tab:rate_functions}. In each case one can  verify by inspection that~$\Lambda^*(s,\theta)$ represents in fact a regular rate function. By Theorems~\ref{thm:optimality} and~\ref{thm:optimality_prescriptor}, the distributionally robust predictor~$\tilde c^\star$ constructed from the Cram\'er function and the corresponding prescriptor~$\tilde x^\star$ thus provide Pareto dominant solutions for the restricted meta-optimization problems~\eqref{eq:optimal:compressed}. Moreover, by Theorems~\ref{thm:predictor:equivalence} and~\ref{thm:prescriptor:equivalence} the data-driven predictor~$\widehat c^\star$ and the corresponding prescriptor~$\widehat x^\star$ induced by~$\tilde c^\star$ and~$\tilde x^\star$, respectively, provide Pareto dominant solutions for the original meta-optimization problems~\eqref{eq:optimal}.

\section{Conclusions}
\label{sec:conclusion}
This paper proposes a rigorous framework for identifying optimal estimators for the objective functions and the optimal solutions of data-driven decision problems. To conclude we provide recommendations for practitioners and discuss potential generalizations of our results.


  Our paper offers the following three-step guideline for practitioners faced with a data-driven decision problem. First, users should identify a finitely parametrized time series model consistent with the observable data. Second, they should find a statistic for the unknown parameters of the time series model that satisfies an LDP. Third, they should construct efficient data-driven predictors and prescriptors by solving the DRO problems~\eqref{eq:gen:dro} and~\eqref{eq:gen:dro_prescriptor}, which involve an ambiguity set constructed form the rate function of the LDP. The out-of-sample disappointment of these predictors and prescriptors is guaranteed to be equal to $e^{-rT+o(T)}$, where $r$ is the radius of the ambiguity set. Due to its direct physical interpretation, we believe that it is natural for decision-makers to {\em choose}~$r$ in view of their risk tolerance instead of {\em calibrating} it algorithmically. Nevertheless, some decision-makers may want to calibrate $r$ via cross-validation with the goal to minimize the out-of-sample risk. In doing so, however, direct control over the out-of-sample disappointment is lost.

The main results of this paper rely on several assumptions, some of which could be generalized. Assumption~\eqref{ass:parametrization} requires that $\Theta$ constitutes a {\em finitely} parametrized ambiguity set. However, we believe that the results of Section~\ref{sec:optimal:dd:prescriptors} extend to {\em infinitely} parametrized ({\em i.e.}, non-parametric) ambiguity sets. For example, in \cite{ref:vanParys:fromdata-17} our results for finite-state i.i.d.\ processes are extended to i.i.d.\ processes with a continuous state space. This generalization does not require fundamentally new ideas but requires more sophisticated topological arguments that make the proofs less accessible. Assumption~\ref{ass:continuity} requires $c(x,\theta)$ to be uniformly continuous and bounded. It is non-restrictive for practical purposes. We believe that it can be relaxed to requiring that $c(x,\theta)$ be lower semi-continuous at the expense of complicating the proofs of Proposition~\ref{prop:continuity_cr}, Theorem~\ref{thm:optimality} and Theorem~\ref{thm:optimality_prescriptor}. Assumption~\ref{ass:ldp} requires the statistic $\widehat S$ to satisfy an LDP with a regular rate function and thus guarantees that the restricted meta-optimization problems~\eqref{eq:optimal:compressed} are solvable. This assumption seems more difficult to relax as our results critically rely on large deviations theory. Assumption~\ref{ass:suff} requires $\mathcal P = \{\mathbb P_\theta :
\theta \in\Theta\}$ to represent an exponential family,
and Assumption~\ref{ass:LDP2} captures standard technical conditions required for the G\"artner-Ellis Theorem (Theorem~\ref{thm:Gaertner-Ellis}). Together, these assumptions imply that $\widehat S$ is a
sufficient statistic satisfying an LDP, and thus they imply Assumption~\ref{ass:ldp}. Clearly, the statistic $\widehat S$ must satisfy
some notion of sufficiency for Theorems~\ref{thm:predictor:equivalence} and~\ref{thm:prescriptor:equivalence} to hold.  Nevertheless, we believe that Assumptions~\ref{ass:suff} and~\ref{ass:LDP2} can be relaxed and that Theorems~\ref{thm:predictor:equivalence} and~\ref{thm:prescriptor:equivalence} remain valid if~$\widehat S$ is only sufficient in an {\em asymptotic} sense. Finally, the meta-optimization problems~\eqref{eq:optimal} and~\eqref{eq:optimal:compressed} involve two {\em asymptotic} performance criteria, that is, the {\em asymptotic} in-sample risk and the {\em asymptotic} decay rate of the out-of-sample disappointment. While the asymptotic nature of these performance criteria is undesirable from a modeling perspective, the meta-optimization problems corresponding to a fixed sample size~$T$ may no longer admit Pareto dominant solutions. However, if the statistic~$\widehat S_T$ enjoys a finite sample guarantee, then the distributionally robust predictors and prescriptors~\eqref{eq:gen:dro} and~\eqref{eq:gen:dro_prescriptor} may still be {\em approximately} Pareto dominant.


\textbf{Acknowledgements.} This research was supported by the Swiss National Science Foundation under the NCCR Automation, grant agreement~51NF40\_180545.

\bibliographystyle{plain}
\bibliography{references}

\appendix

\section{Invariance under coordinate transformations} \label{ssec:representation:invariance}
We now demonstrate that the restricted meta-optimization problems~\eqref{eq:optimal:compressed} are invariant under homeomorphic coordinate transformations of the state space~$\SS$ and the model space~$\cl\Theta$. Note first that if~$\widehat S$ is a statistic and~$\psi:\SS\to\SS$ is a homeomorphism, then~$\psi(\widehat S)=\{\psi(\widehat S_T)\}_{T\in\mb N}$ is also a statistic in the sense of Definition~\ref{def:estimator}. Indeed, $\psi\circ S_\infty$ is a local homeomorphism because~$\psi$ is continuous and~$S_\infty$ is a local homeomorphism. In addition, for any fixed~$\theta\in\Theta$, we know that~$\widehat S_T$ converges in probability to~$S_\infty(\theta)$ under~$\mb P_\theta$. The continuous mapping theorem~\cite[Theorem~3.2.4]{durrett_book} thus implies that~$\psi(\widehat S_T)$ converges in probability to~$\psi(S_\infty(\theta))$ under~$\mb P_\theta$. 

If the transformed statistic~$\psi(\widehat S)$ satisfies an \ac{ldp} with a regular rate function, then Theorems~\ref{thm:optimality} and~\ref{thm:optimality_prescriptor} imply that the corresponding distributionally robust predictors and prescriptors must provide Pareto dominant solutions to the compressed meta-optimization problems~\eqref{eq:optimal:compressed}. In the following we demonstrate that these Pareto dominant solutions corresponding to different homeomorphisms~$\psi$ are indeed all equivalent.

\begin{proposition}[Invariance under coordinate transformations of~$\SS$] \label{prop:invariance:predictor}
If Assumptions~\ref{ass:parametrization}, \ref{ass:continuity} and~\ref{ass:ldp} hold and $\psi:\SS\to\SS$ is a homeomorphism, then the statistic~$\psi(\widehat S)$ satisfies an~\ac{ldp} with regular rate function~$I_\psi(s,\theta) = I(\psi^{-1}(s),\theta)$. In addition, $\tilde c^\star_\psi (x, s) =\tilde c^\star (x,\psi^{-1}(s))$ is the distributionally robust predictor induced by~$I_\psi$, and~$\tilde x^\star_\psi(s)=\tilde x^\star (\psi^{-1}(s))$ is a corresponding distributionally robust prescriptor.
\end{proposition}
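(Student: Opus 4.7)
The plan is to verify the proposition in four steps: (i) $\psi(\widehat S)$ is itself a statistic, (ii) it satisfies an LDP with rate function $I_\psi$, (iii) $I_\psi$ is regular, and (iv) the induced distributionally robust predictor and prescriptor admit the stated closed forms. The first step is essentially already dispatched in the paragraph preceding the proposition via the continuous mapping theorem (with asymptotic map $\psi\circ S_\infty$), so I would just cite that and record that the set of asymptotic realizations of $\psi(\widehat S)$ is $\SS_{\infty,\psi}=\psi(\SS_\infty)$. For the LDP itself I would run a standard pull-back argument: for any Borel set $\mc D\subseteq\SS$, homeomorphicity of $\psi$ gives $\interior\psi^{-1}(\mc D)=\psi^{-1}(\interior\mc D)$ and $\cl\psi^{-1}(\mc D)=\psi^{-1}(\cl\mc D)$. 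Since $\mb P_\theta[\psi(\widehat S_T)\in\mc D]=\mb P_\theta[\widehat S_T\in\psi^{-1}(\mc D)]$, the LDP~\eqref{eq:ldp_exponential_rates:old} for $\widehat S$ applied to $\psi^{-1}(\mc D)$ yields the desired bounds once the change of variables $s'=\psi^{-1}(s)$ is used to rewrite $\inf_{s'\in\psi^{-1}(\interior\mc D)} I(s',\theta)=\inf_{s\in\interior\mc D} I_\psi(s,\theta)$ and analogously for the closure.

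For regularity of $I_\psi$ I would check the three conditions of Definition~\ref{def:rate_function} in turn. Lower semi-continuity in $s$ (needed to qualify as a rate function per Definition~\ref{def:rate_function:original}) is immediate because $I_\psi(\cdot,\theta)$ is the composition of the lower semi-continuous map $I(\cdot,\theta)$ with the continuous map $\psi^{-1}$. Joint continuity on $\SS\times\Theta$ transfers the same way. Level-compactness follows by writing the sub-level set $\{(s,\theta)\in\SS\times\cl\Theta:I_\psi(s,\theta)\leq r\}$ as the image of the compact set $\{(s',\theta):I(s',\theta)\leq r\}$ under the continuous bijection $(s',\theta)\mapsto(\psi(s'),\theta)$. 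For radial monotonicity at $s\in\SS_{\infty,\psi}=\psi(\SS_\infty)$, I would note that $\psi^{-1}(s)\in\SS_\infty$ so the identity $I_\psi(s,\cdot)=I(\psi^{-1}(s),\cdot)$ directly reduces Definition~\ref{def:rate_function}(i) for $I_\psi$ to the corresponding property of $I$.

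For (iv) I would substitute $I_\psi$ into Definition~\ref{def:dro-predictor} to obtain
\[
    \tilde c^\star_\psi(x,s)=\max_{\theta\in\cl\Theta}\bigl\{c(x,\theta):I(\psi^{-1}(s),\theta)\leq r\bigr\}=\tilde c^\star(x,\psi^{-1}(s)),
\]
and handle the case when the feasible set is empty analogously. Then $\tilde x^\star_\psi(s):=\tilde x^\star(\psi^{-1}(s))$ evidently selects from $\arg\min_{x\in X}\tilde c^\star_\psi(x,s)$, and quasi-continuity on $\SS_{\infty,\psi}$ is inherited from quasi-continuity of $\tilde x^\star$ on $\SS_\infty$ because pre-composition with a homeomorphism maps every neighborhood of $s\in\SS_{\infty,\psi}$ to a neighborhood of $\psi^{-1}(s)\in\SS_\infty$, so the defining open set $\mc W$ in the quasi-continuity property transports back through $\psi$. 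The main obstacle I anticipate is purely topological book-keeping: the pull-back step for the LDP and the radial-monotonicity argument both hinge on the identities $\interior\psi^{-1}(\mc D)=\psi^{-1}(\interior\mc D)$, $\cl\psi^{-1}(\mc D)=\psi^{-1}(\cl\mc D)$ and $\SS_{\infty,\psi}=\psi(\SS_\infty)$, which must be invoked cleanly to avoid circularity with the regularity of $I_\psi$.
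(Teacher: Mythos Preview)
Your proposal is correct and follows essentially the same line as the paper's proof. The only noteworthy difference is that the paper obtains the LDP for $\psi(\widehat S)$ by citing the contraction principle \cite[Theorem~4.2.1]{dembo2009large} (which for a bijection immediately gives $I_\psi(s,\theta)=I(\psi^{-1}(s),\theta)$), whereas you unpack this via the pull-back identities $\interior\psi^{-1}(\mc D)=\psi^{-1}(\interior\mc D)$ and $\cl\psi^{-1}(\mc D)=\psi^{-1}(\cl\mc D)$; both arguments are equivalent here, and your treatment of regularity and of the predictor/prescriptor formulas matches the paper's almost verbatim.
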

\begin{proof}[Proof of Proposition~\ref{prop:invariance:predictor}]
By the contraction principle \cite[Theorem~4.2.1]{dembo2009large}, which applies because~$\psi$ is continuous, the transformed statistic~$\psi(\widehat S)$ satisfies an \ac{ldp} with rate function~$I_\psi(s,\theta) = I(\psi^{-1}(s),\theta)$. As the homeomorphism~$\psi$ has a continuous inverse and preserves compactness, one readily verifies that $I_\psi$ inherits the radial monotonicity in~$\theta$, the continuity on~$\SS\times \Theta$, and the level-compactness from~$I$. Thus, $I_\psi$ is regular in the sense of Definition~\ref{def:rate_function}. By Definition~\ref{def:dro-predictor}, the distributionally robust predictor induced by~$I_\psi$ satisfies
  \begin{equation*}
    \tilde c^\star_\psi (x, s) = \left\{ \begin{array}{ll} \displaystyle \max_{\theta \in \cl \Theta}\, \left\{ c(x, \theta) : I_\psi(s, \theta)\leq r \right\} & \text{if } \exists\, \theta\in\cl\Theta \text{ with } I_\psi(s,\theta)\leq r, \\
    \displaystyle \sup_{\theta \in \cl \Theta}\, \phantom{\{} c(x, \theta) & \text{if } \nexists\, \theta\in\cl\Theta \text{ with } I_\psi(s,\theta)\leq r.
    \end{array}\right.
  \end{equation*}
Clearly, we have~$\tilde c^\star_\psi(x,s)=\tilde c^\star(x,\psi^{-1}(s))$ by the definition of~$I_\psi$. Next, define~$\tilde x^\star_\psi(s)=\tilde x^\star (\psi^{-1}(s))$, and note that~$\tilde x^\star_\psi$ inherits quasi-continuity from~$\tilde x^\star$ because~$\psi$ is continuous. As~$\tilde x^\star$ satisfies~\eqref{eq:gen:dro_prescriptor}, we further have
\[
    \tilde x^\star_\psi(s)=\tilde x^\star (\psi^{-1}(s))\in \arg \min_{x\in X} \tilde c^\star(x, \psi^{-1}(s)) = \arg\min_{x\in X} \tilde c^\star_\psi(x, s),
\]
and thus~$\tilde x^\star (\psi^{-1}(s))$ is a distributionally robust prescriptor corresponding to~$\tilde c^\star_\psi$.
\end{proof}

Proposition~\ref{prop:invariance:predictor} implies that the data-driven predictor induced by~$\tilde c^\star_\psi$ and the transformed statistic~$\psi(\widehat S)$ coincides with that induced by~$\tilde c^\star$ and the original statistic~$\widehat S$ because~$\tilde c^\star_\psi(x,\psi(\widehat S_T))=\tilde c^\star(x,\widehat S_T)$ for all~$T\in\mb N$. Similarly, we have~$\tilde x^\star_\psi(\psi(\widehat S_T))=\tilde x^\star(\widehat S_T)$ for all~$T\in\mb N$. Thus, homeomorphic transformations of the estimator~$\widehat S$ have no impact on how we map the raw data~$\xivec$ to a prediction of the cost or to a decision.

Similar invariance properties hold under coordinate transformations of the model space. To see this, note that if~$\varphi:\cl \Theta \to\cl \Theta$ is a homeomorphism, then~$\varphi$ maps~$\Theta$ onto~$\Theta$ thanks to a simple generalization of~\cite[Exercise~5.4]{massey1991basic} and because~$\cl\Theta$ is convex. This implies that the transformed ambiguity set~$\mc P_\varphi= \{ \mb P_{\varphi(\theta)} : \theta\in\Theta\}$ coincides with the original ambiguity set~$\mc P$. We now show that the key properties of ambiguity sets, model-based predictors and regular rate functions are preserved and 
that the distributionally robust predictors and prescriptors are invariant under homeomorphic coordinate transformations of~$\cl\Theta$.

\begin{proposition}[Invariance under coordinate transformations of~$\cl\Theta$] \label{prop:invariance:predictor:model}
If Assumptions~\ref{ass:parametrization}, \ref{ass:continuity} and~\ref{ass:ldp} hold and~$\varphi:\cl \Theta\to\cl \Theta$ is a homeomorphism, then~$\mc P_\varphi= \{ \mb P_{\varphi(\theta)} : \theta\in\Theta\}$ is a finitely parametrized ambiguity set in the sense of Assumption~\ref{ass:parametrization}, the model-based predictor~$c_\varphi(x,\theta)=c(x,\varphi^{-1}(\theta))$ satisfies Assumption~\ref{ass:continuity}, and the rate function~$I_\varphi(s,\theta) = I(s,\varphi^{-1}(\theta))$ is regular. In addition, the distributionally robust predictor and any corresponding distributionally robust prescriptor are invariant under this coordinate~transformation.
\end{proposition}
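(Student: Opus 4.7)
The plan is to verify each of the four claims in order by pushing properties through the homeomorphism $\varphi$, and finally to establish invariance of the distributionally robust predictor and prescriptor via a change of variables in the inner maximization. I would begin by noting, as the excerpt already does, that since $\varphi$ is a homeomorphism of $\cl\Theta$ onto itself and $\Theta$ is a relatively open convex subset of $\Re^d$, an invariance-of-domain argument (cf.\ the generalization of Exercise~5.4 in~\cite{massey1991basic} cited in the paper) forces $\varphi$ to restrict to a homeomorphism of $\Theta$ onto itself. Hence $\mc P_\varphi$ is parametrized by the same relatively open convex set $\Theta$, so Assumption~\ref{ass:parametrization} is inherited, while $c_\varphi(x,\theta)=c(x,\varphi^{-1}(\theta))$ inherits boundedness directly from $c$ and, by composition with the continuous map $\varphi^{-1}:\cl\Theta\to\cl\Theta$, extends continuously to $X\times\cl\Theta$; this extension is exactly what the discussion following Assumption~\ref{ass:continuity} identifies as equivalent to uniform continuity and boundedness of $c_\varphi$ on $X\times\Theta$.

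Next I would verify the three conditions of Definition~\ref{def:rate_function} for $I_\varphi(s,\theta)=I(s,\varphi^{-1}(\theta))$. Continuity on $\SS\times\Theta$ is immediate from the continuity of $I$ on $\SS\times\Theta$ and of $\varphi^{-1}$ on $\Theta$. Level-compactness follows from the identity
\[
\{(s,\theta)\in\SS\times\cl\Theta:I_\varphi(s,\theta)\le r\}=(\mathrm{id}_\SS\times\varphi)\bigl(\{(s,\theta')\in\SS\times\cl\Theta:I(s,\theta')\le r\}\bigr),
\]
which exhibits the sublevel set as the image of a compact set (by regularity of $I$) under the continuous map $(s,\theta')\mapsto(s,\varphi(\theta'))$. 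The main technical step is radial monotonicity: for any fixed $s\in\SS_\infty$ and $r>0$, because the restriction of $\varphi$ to $\Theta$ is a homeomorphism onto $\Theta$ it commutes with closures taken in $\cl\Theta$, so I obtain
\[
\cl\{\theta\in\Theta:I_\varphi(s,\theta)<r\}=\varphi\bigl(\cl\{\theta'\in\Theta:I(s,\theta')<r\}\bigr)=\varphi\bigl(\{\theta'\in\cl\Theta:I(s,\theta')\le r\}\bigr)=\{\theta\in\cl\Theta:I_\varphi(s,\theta)\le r\},
\]
where the middle equality is precisely the radial monotonicity of the original regular rate function $I$.

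Finally, invariance of the distributionally robust predictor follows from a direct change of variables in~\eqref{eq:gen:dro} applied to $c_\varphi$ and $I_\varphi$: since $\varphi$ is a bijection of $\cl\Theta$, the substitution $\theta'=\varphi^{-1}(\theta)$ yields a one-to-one correspondence between the feasible sets $\{\theta\in\cl\Theta:I_\varphi(s,\theta)\le r\}$ and $\{\theta'\in\cl\Theta:I(s,\theta')\le r\}$ along which the objectives satisfy $c_\varphi(x,\theta)=c(x,\theta')$, so $\tilde c^\star_\varphi(x,s)=\tilde c^\star(x,s)$ on both branches of~\eqref{eq:gen:dro}. It follows that $\arg\min_{x\in X}\tilde c^\star_\varphi(x,s)=\arg\min_{x\in X}\tilde c^\star(x,s)$ for every $s\in\SS$, and hence any distributionally robust prescriptor $\tilde x^\star$ for $\tilde c^\star$ is simultaneously one for $\tilde c^\star_\varphi$. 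I expect the radial monotonicity step to be the chief obstacle, since it rests on the delicate fact that $\varphi$ actually restricts to a self-homeomorphism of $\Theta$ and hence respects relative closures in $\cl\Theta$; the remaining steps are essentially routine transfer-of-structure arguments.
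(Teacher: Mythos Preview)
Your proposal is correct and follows essentially the same approach as the paper's proof. The only notable difference is that for radial monotonicity you invoke the general fact that a homeomorphism of $\cl\Theta$ commutes with closures (so $\cl(\varphi(A))=\varphi(\cl A)$), whereas the paper establishes the inclusion $B_\varphi\subseteq\cl A_\varphi$ by an explicit sequence argument; note, however, that it is the full homeomorphism $\varphi:\cl\Theta\to\cl\Theta$ (not merely its restriction to $\Theta$) that guarantees this commutation with closures in $\cl\Theta$.
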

\begin{proof}[Proof of Proposition~\ref{prop:invariance:predictor:model}]
The assertions concerning~$\mc P_\varphi$ and $c_\varphi(x,\theta)$ follow directly from the defining properties of a homeomorphism. In addition, the transformed rate function~$I_\varphi(x,\theta)$ is non-negative and lower semi-continuous in~$s$ on~$\SS\times\cl\Theta$, and it satisfies the continuity and level-compactness conditions of Definition~\ref{def:rate_function}. All these properties are inherited from the original rate function~$I(x,\theta)$ because~$\varphi^{-1}$ is continuous. To show that~$I_\varphi(x,\theta)$ satisfies the radial monotonicity condition of Definition~\ref{def:rate_function}, we introduce the sets
\begin{equation*}
    A=\{\theta\in\Theta : I(s,\theta) <r\} \quad \text{and} \quad
    B=\{\theta\in\cl\Theta : I(s,\theta)\leq r\}
\end{equation*}
and note that~$\cl A=B$ because the original rate function is radially monotonic. Similarly, we introduce
\begin{equation*}
    A_\varphi=\{\theta\in\Theta : I_\varphi(s,\theta) <r\} \quad \text{and} \quad
    B_\varphi=\{\theta\in\cl\Theta : I_\varphi(s,\theta)\leq r\}.
\end{equation*}
By the definition of~$I_\varphi(s,\theta)$ and because~$\varphi$ maps~$\Theta$ onto~$\Theta$, we have~$A_\varphi=\varphi(A)$. Similarly, as~$\varphi$ maps~$\cl\Theta$ onto~$\cl\Theta$, we have~$B_\varphi=\varphi(B)$. To prove that the new rate function is radially monotonic, we need to show that~$\cl A_\varphi=B_\varphi$. 
As~$A_\varphi\subseteq B_\varphi$ and~$B_\varphi$ is closed thanks to the level-compactness of~$I_\varphi(s,\theta)$, we have~$\cl A_\varphi\subseteq B_\varphi$. To prove the converse inclusion, select any~$\theta \in B_\varphi$, and note that~$\varphi^{-1}(\theta)\in B=\cl A$. Thus, there exist~$\theta_k\in A$, $k\in\N$, such that~$\lim_{k\to \infty} \theta_k=\varphi^{-1} (\theta)$.  As~$\varphi(A)=A_\varphi$, we then have~$\varphi(\theta_k)\in A_\varphi$ for all~$k\in\mb N$, and as~$\varphi$ is continuous, we have~$\lim_{k\to \infty} \varphi(\theta_k)=\theta$. This implies that~$\theta\in\cl A_\varphi$. As~$\theta\in B_\varphi$ was chosen arbitrarily, we have thus shown that~$B_\varphi\subseteq \cl A_\varphi$ and consequently that~$I_\varphi(x,\theta)$ is regular.

The distributionally robust predictor of Definition~\ref{def:dro-predictor} and the distributionally robust prescriptor of Definition~\ref{def:dro-prescriptor} are thus manifestly invariant under homeomorphic coordinate transformations of~$\cl\Theta$.
\end{proof}

Propositions~\ref{prop:invariance:predictor} and~\ref{prop:invariance:predictor:model} testify to the reasonableness of Assumptions~\ref{ass:parametrization}, \ref{ass:continuity} and~\ref{ass:ldp}.

\section{Log-moment generating functions and Cram\'er functions}\label{app:table}
Table~\ref{tab:rate_functions} lists log-moment generating functions and their conjugates for popular distribution families.
\begin{table} {\scriptsize{
\centering
\renewcommand{\arraystretch}{2}
\begin{tabular}{c@{\;}l|l@{~~}l@{\hspace{2em}}l@{~~}l@{\hspace{2em}}l}
 \hline\hline
 & Law of $\xi_t$ & $S_\infty(\theta)$ \ \ \  & log-MGF $\Lambda(\lambda,\theta)$ & $\text{dom}(\Lambda(\cdot,\theta))$& Cram\'er Function $\Lambda^*(s,\theta)$ & $\text{dom}(\Lambda^*(\cdot,\theta))$ \\
 \hline  &&&&&\\[-2ex]
 (a) & Normal & $\theta$ & $\theta\tpose \lambda +\frac12 \lambda\tpose \Sigma \lambda$ &$\Re^d$ & $\frac12 (s-\theta)\tpose \Sigma^{-1} (s-\theta)$ & $\Re^d$\\[1ex]
 (b) & Exponential & $1/\theta$ & $\log(\frac{\theta}{\theta-\lambda})$ & $(-\infty, \theta)$ & $\theta s - 1 - \log(\theta s)$ & $\Re_{++}$\\[1ex]
 (c) & Gamma & $k \theta$ & $-k \log(1-\theta\lambda)$ & $(-\infty,1/\theta)$ & $s/\theta - k +k \log(k\theta/s)$ & $\Re_{++}$\\[1ex]
 (d) & Poisson & $\theta$ & $\theta(e^\lambda -1)$ & $\Re$ & $s\log(s/\theta) - s+\theta$ & $\Re_{++}$ \\[0.0ex]
 (e) & Bernoulli & $\theta$ & $\log(1-\theta + \theta e^\lambda)$ & $\Re$ & $s\log( \frac{s(1-\theta)}{\theta(1-s)})-\log(\frac{1-\theta}{1-s})$ & $(0,1)$\\
 (f) & Geometric & $1/\theta$ & $\lambda + \log(\frac{\theta}{1-(1-\theta)e^\lambda})$ & $(-\infty, -\log(1-\theta))$ & $(s-1)\log(\frac{1-s}{s(\theta-1)})-\log(\theta s)$ & $(1,\infty)$ \\
 (g) & Binomial & $N\theta$ & $N \log(1-\theta + \theta e^\lambda)$ & $\Re$ & $s \log(\frac{s(\theta-1)}{\theta(s-N)})-N\log(\frac{N(1-\theta)}{N-s})$   & $(0,N)$ \\
 \hline
 \end{tabular}
\caption{Log-moment generating functions (log-MGFs) and their conjugates for popular distribution families.}
\label{tab:rate_functions}
}}\end{table}

\section{Autoregressive processes} \label{subsec:AR}
We now show that the restricted meta-optimization problems sometimes admit Pareto dominant solutions even if the training data is generated by an autoregressive process with an uncountable state space.

\subsection{Vector autoregressive processes with unknown drift} \label{subsec:AR:part1}
Assume now that the observable data $\{\xi_t\}_{t=1}^T$ follows a vector autoregressive process of the form
\begin{equation}
    \label{AR:process:general:general}	
    \xi_{t+1} = \theta_\star+\A \xi_t + \varepsilon_{t+1}\quad \forall t\in\N
\end{equation}
with state space~$\Xi=\Re^d$, where the drift term~$\theta_\star\in\Re^d$ is deterministic but unknown. Assume further that the disturbances~$\{\varepsilon_t\}_{t\in\N}$ are normally distributed with zero mean and known positive definite covariance matrix~$\Sigma\in\mathbb \Re^{d\times d}$ and that the initial state~$\xi_1$ and all disturbances are mutually independent under~$\mb P_\star$. Finally, assume that~$A\in\Re^{d\times d}$ is asymptotically stable in the sense that all of its eigenvalues reside strictly inside the complex unit circle. Hence, the process~$\{\xi_t\}_{t\in\N}$ is ergodic and admits a unique stationary distribution \cite[Example~3.43 and Proposition~3.44]{white2001asymptotic}. It is well known that the stationary distribution is Gaussian with mean vector~$(\mathbb{1}_d-\A)^{-1}\theta_\star$ and that its covariance matrix~$R_0$ is the unique solution to the discrete Lyapunov equation~$R_0 = A R_0 A\tpose + \Sigma$,
see, {\em e.g.}, \cite[Section~6.10\,E]{ref:Antsaklis-06}. In the remainder of this section we will assume that the process~$\{\xi_t\}_{t\in\N}$ is stationary under~$\mb P_\star$. This means that~$\xi_t$ follows the stationary distribution for every~$t\in\N$. An elementary calculation further reveals that the cross-covariance matrix~$R_{\delta}\in\Re^{d\times d}$ of any~$\xi_t$ and~$\xi_s$ with~$\delta =t-s$ is given by~$R_{\delta} = A^{\delta} R_0$ if~$\delta\geq 0$ and~$R_{\delta} =  R_0(A^{-\delta})\tpose$ if~$\delta < 0$.


Assume now that the drift~$\theta_\star$ is known to belong to an open convex set~$\Theta\subseteq \Re^d$ that captures any available structural information. We then define an ambiguity set~$\mc P=\{\mb P_\theta:\theta\in\Theta\}$, where each~$\theta\in\Theta$ encodes a probability measure~$\mb P_\theta$ on~$(\Omega, \mc F)$ under which the observations~$\{\xi_t\}_{t\in\N}$ are jointly normally distributed with mean vector~$\mb E_\theta[\xi_t] = (\mb 1_d-A)^{-1}\theta$ for all~$t\in\N$ and cross-covariance matrix~$\mb E_\theta[(\xi_t-\mb E_\theta[\xi_t])(\xi_{s}-\mb E_\theta[\xi_s])^\top] = R_{t-s}$ for all~$s,t\in\N$.
In this setting, a natural estimator for~$\theta$ is the scaled sample mean 
\begin{align}
    \label{estimator:SAA:scaled}
    \widehat{S}_T= & (\mathbb{1}_d -\A)\frac{1}{T}\sum_{t=1}^T \xi_t \quad \forall T\in\N
\end{align}
with state space $\SS=\Re^d$. By~\cite[Theorem 3.34]{white2001asymptotic}, which applies because the data process is ergodic, $\widehat S$ represents a consistent model estimator in the sense of Definition~\ref{def:estimator}. Consequently, we have~$S_\infty(\theta)=\theta$ for all~$\theta\in\Theta$, and~$\SS_\infty=\Theta$. In addition, the function~$S_\infty$ is clearly a local homeomorphism. The next proposition asserts that the statistic~$\widehat S$ also satisfies an \ac{ldp} with a regular quadratic rate function.


\begin{proposition}[\ac{ldp} for stationary autoregressive processes with unknown drift] \label{thm:AR:multi-dim:modelclass:2}
If $\{\xi_t\}_{t\in\N}$ follows a stationary autoregressive process of the form~\eqref{AR:process:general:general} with drift~$\theta\in \Theta$, then the scaled sample mean~\eqref{estimator:SAA:scaled} satisfies an \ac{ldp} with regular convex quadratic rate function~$I(s,\theta)= \frac{1}{2}(s-\theta)\tpose  \Sigma^{-1} (s-\theta)$.
\end{proposition}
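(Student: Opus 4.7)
The plan is to apply the G\"artner-Ellis theorem (\Cref{thm:Gaertner-Ellis}) by explicitly computing the limiting log-moment generating function of $\widehat S_T$. Under $\mb P_\theta$, the stationary process $\{\xi_t\}$ is jointly Gaussian, so $\widehat S_T$ is Gaussian as a deterministic linear functional of $(\xi_1,\ldots,\xi_T)$. Its mean is $(\mathbb{1}_d-A)\mb E_\theta[\xi_t]=(\mathbb{1}_d-A)(\mathbb{1}_d-A)^{-1}\theta=\theta$, so if we denote $\Sigma_T=\mathrm{Var}_\theta(\widehat S_T)$, we have $\Lambda_T(\lambda,\theta)=\langle\lambda,\theta\rangle+\tfrac12\lambda^\top\Sigma_T\lambda$. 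The whole calculation therefore reduces to identifying the asymptotic scaling $T\Sigma_T\to\Sigma$.

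The key algebraic step is a telescoping identity. Setting $Y_t=\xi_t-\mb E_\theta[\xi_t]$ so that $Y_{t+1}=AY_t+\varepsilon_{t+1}$, we rewrite $(\mathbb{1}_d-A)Y_t=\varepsilon_{t+1}-(Y_{t+1}-Y_t)$ and sum from $t=1$ to $T$ to obtain
\[
T(\widehat S_T-\theta)=(\mathbb{1}_d-A)\sum_{t=1}^T Y_t=\sum_{t=1}^T\varepsilon_{t+1}-(Y_{T+1}-Y_1).
\]
The iid sum on the right contributes $T\Sigma$ to the variance. The boundary variance $\mathrm{Var}_\theta(Y_{T+1}-Y_1)=2R_0-A^TR_0-(A^TR_0)^\top$ and the cross-term $\mathrm{Cov}_\theta(\sum_t\varepsilon_{t+1},Y_{T+1}-Y_1)=\sum_{k=0}^{T-1}A^k\Sigma=(\mathbb{1}_d-A^T)(\mathbb{1}_d-A)^{-1}\Sigma$ remain bounded uniformly in $T$ thanks to asymptotic stability of $A$ (the geometric decay $\|A^k\|\to 0$). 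Hence $T^2\Sigma_T=T\Sigma+O(1)$, so $T\Sigma_T\to\Sigma$ and
\[
\Lambda(\lambda,\theta)=\lim_{T\to\infty}\tfrac{1}{T}\Lambda_T(T\lambda,\theta)=\langle\lambda,\theta\rangle+\tfrac12\lambda^\top\Sigma\lambda.
\]

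With $\Lambda$ in hand, I would verify \Cref{ass:LDP2}: each $\Lambda_T$ and $\Lambda$ are everywhere finite, the origin lies in the interior of the (full) domain, and $\nabla_\lambda\Lambda(\lambda,\theta)=\theta+\Sigma\lambda$ exists with norm blowing up at infinity, which here is the only ``boundary'' of the domain, by positive-definiteness of $\Sigma$. \Cref{thm:Gaertner-Ellis} then gives the LDP with rate function $I(s,\theta)=\sup_\lambda\{\langle\lambda,s-\theta\rangle-\tfrac12\lambda^\top\Sigma\lambda\}$; setting the gradient to zero yields $\lambda^\star=\Sigma^{-1}(s-\theta)$ and $I(s,\theta)=\tfrac12(s-\theta)^\top\Sigma^{-1}(s-\theta)$.

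Finally, I would check the three clauses of \Cref{def:rate_function}. Continuity of $I$ on $\SS\times\Theta$ is automatic for a smooth quadratic. Level-compactness of $\{(s,\theta)\in\SS\times\cl\Theta:I(s,\theta)\leq r\}$ follows from closedness of $I$ together with boundedness, which uses that $\cl\Theta$ is taken compact in the data of the problem and that $\|s-\theta\|$ is controlled on the level set. Radial monotonicity follows from the sufficient condition~\eqref{eq:suff:radial:mon}: since $\SS_\infty=\Theta$, for any $s\in\Theta$ and $\theta\in\cl\Theta$ the choice $\theta_s=s$ gives $\theta(\lambda)=(1-\lambda)s+\lambda\theta$ and
\[
I(s,\theta(\lambda))=\tfrac12\lambda^2(s-\theta)^\top\Sigma^{-1}(s-\theta)=\lambda^2\,I(s,\theta),
\]
which is strictly increasing in $\lambda\in[0,1)$ whenever $I(s,\theta)>0$. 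The main obstacle is the asymptotic variance computation $T\Sigma_T\to\Sigma$: although each piece is elementary, correctly accounting for the boundary term $(Y_{T+1}-Y_1)$ and its cross-covariance with $\sum_t\varepsilon_{t+1}$, and confirming these contributions are only $O(1)$, is where the real work lies. Everything downstream is a direct verification.
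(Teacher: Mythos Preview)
Your proof is correct and reaches the same rate function via the G\"artner--Ellis theorem, but the route to the limiting log-moment generating function is genuinely different from the paper's. The paper works with the \emph{unscaled} sample mean $\widehat\mu_T=\tfrac{1}{T}\sum_t\xi_t$ under a centered auxiliary measure, computes its limiting log-MGF by Ces\`aro-summing the autocovariances $\sum_{\delta}(1-|\delta|/T)R_\delta$ via geometric series in~$A$, obtains $\Lambda'(\lambda)=\tfrac12\lambda^\top Q\lambda$ with $Q=(\mathbb{1}_d-A)^{-1}R_0+R_0(\mathbb{1}_d-A^\top)^{-1}-R_0$, uses the discrete Lyapunov equation to identify $(\mathbb{1}_d-A)Q(\mathbb{1}_d-A^\top)=\Sigma$, and finally invokes the contraction principle to transfer the LDP from $\widehat\mu_T$ to $\widehat S_T=(\mathbb{1}_d-A)\widehat\mu_T$. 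Your telescoping identity $(\mathbb{1}_d-A)\sum_t Y_t=\sum_t\varepsilon_{t+1}-(Y_{T+1}-Y_1)$ bypasses all of this: it represents $T(\widehat S_T-\theta)$ directly as an i.i.d.\ noise sum plus an $O(1)$ boundary correction, so the asymptotic variance~$\Sigma$ appears immediately and neither the long-run variance formula for~$Q$ nor the contraction step is needed. Your argument is shorter and more structural; the paper's autocovariance-sum approach is more classical and perhaps more portable to stationary processes without such a convenient telescoping.

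One small caveat on regularity: for level-compactness you invoke that $\cl\Theta$ is compact, but Section~5.2 only assumes $\Theta\subseteq\Re^d$ is open and convex. Without boundedness of $\cl\Theta$ the level set $\{(s,\theta):\tfrac12(s-\theta)^\top\Sigma^{-1}(s-\theta)\le r\}$ is a tube around the diagonal and is not compact. The paper merely asserts that regularity is ``easy to check'' and does not address this point either, so you are no worse off---but be aware that boundedness of $\cl\Theta$ is implicitly needed here.
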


Proposition~\ref{thm:AR:multi-dim:modelclass:2} generalizes \cite[Exercise~2.3.23]{dembo2009large}, which focuses on scalar autoregressive processes without drift. The results of this section imply via Theorems~\ref{thm:optimality} and~\ref{thm:optimality_prescriptor} that the distributionally robust predictor~$\tilde c^\star$ with an ellipsoidal ambiguity set for~$\theta$ around the scaled sample mean~\eqref{estimator:SAA:scaled} and the corresponding prescriptor~$\tilde x^\star$ provide Pareto dominant solutions for the restricted meta-optimization problems~\eqref{eq:optimal:compressed}. As the scaled sample mean fails to be a sufficient statistic for~$\theta$, however, we are unable to find Pareto dominant solutions for the original meta-optimization problems~\eqref{eq:optimal}. Details are omitted for brevity.

\begin{remark}[I.i.d.\ processes as degenerate autoregressive processes]
Any \ac{iid} process of multivariate normal random variables of the kind studied in Section~\ref{sec:iid-parametric} can alternatively be interpreted as a degenerate vector autoregressive process with a vanishing coefficient matrix~$A=0$. It is therefore not surprising that if~$A=0$, then the scaled sample mean~\eqref{estimator:SAA:scaled} coincides with the ordinary sample mean~\eqref{cts:sample:mean}, and the rate function derived in Proposition~\ref{thm:AR:multi-dim:modelclass:2} coincides with the Cram\'er function in Table~\ref{tab:rate_functions}(a).
\end{remark}

\subsection{Scalar autoregressive processes with unknown coefficient} 
\label{subsec:AR2}
Assume now that the observable data $\{\xi_t\}_{t=1}^T$ follows a scalar autoregressive process of the form
\begin{equation}
    \label{AR:process:general:general2}
    \xi_{t+1} = \theta_\star\xi_t + \varepsilon_{t+1}\quad \forall t\in\N
\end{equation}
with state space~$\Xi=\Re$, where the autoregressive coefficient~$\theta_\star\in(-1,1)$ is deterministic but unknown. Assume further that the disturbances~$\{\varepsilon_t\}_{t\in\N}$ are normally distributed with known mean~$\mu\in\Re$ and variance~$\sigma^2>0$ and that the initial state~$\xi_1$ and all disturbances are mutually independent under~$\mb P_\star$. As in Section~\ref{subsec:AR}, we finally assume that the process~$\{\xi_t\}_{t=1}^T$ is stationary under~$\mb P_\star$. In this case~$\mb P_\star$ belongs to an ambiguity set~$\mc P=\{\mb P_\theta:\theta\in\Theta\}$, where each~$\theta\in\Theta=(-1,1)$ encodes a probability measure~$\mb P_\theta$ on~$(\Omega, \mc F)$ under which the observations~$\{\xi_t\}_{t\in\N}$ are jointly normally distributed with mean~$\mb E_{\theta}[\xi_t]=\mu/(1-\theta)$ for all~$t\in\N$ and autocovariance~$\mb E_{\theta}[(\xi_t-\mb E_{\theta}[\xi_t])(\xi_s-\mb E_{\theta}[\xi_s])] = \sigma^2 \theta^{|t-s|}/(1-\theta^2)$ for all~$s,t\in\N$. 

In the following we investigate two complementary estimators for the autoregressive coefficient~$\theta$. We first study the least squares estimator, which is defined through
\begin{equation}
    \label{least-squares:estimator}
    \widehat{S}_T=\frac{\sum_{t=2}^T \xi_t \xi_{t-1}}{\sum_{t=2}^T \xi^2_{t-1}} \quad \forall T\in\N.
\end{equation}
By construction, the state space of~$\widehat S$ is given by~$\SS=\Re$, and it is well known that~$\widehat{S}_T$ converges $\mb P_\theta$-almost surely to~$\theta$ 
for all~$\theta\in\Theta$; see, {\em e.g.}, \cite{bercu1997quadraticLDP}. Thus, $S_\infty(\theta)=\theta$ represents a local homeomorphism, the set of asymptotic estimator realizations simplifies to~$\SS_\infty=\Theta$ and $\widehat S$ is a consistent model estimator in the sense of Definition~\ref{def:estimator}. Moreover, it is also well known that~$\widehat{S}$ satisfies an \ac{ldp}.

\begin{proposition}[\ac{ldp} for stationary autoregressive processes with unknown coefficient (I)] \label{thm:AR:multi-dim:modelclass:3}
If $\{\xi_t\}_{t\in\N}$ follows a stationary autoregressive process of the form~\eqref{AR:process:general:general2} with autoregressive parameter~$\theta\in \Theta$, then the least squares estimator~\eqref{least-squares:estimator} satisfies an \ac{ldp} with regular rate function
\begin{equation}
    \label{ratefunction:least-squares}
    I(s,\theta)= \left\{ \begin{array}{ll}
    \frac{1}{2} \log\left( \frac{1- 2\theta s+\theta^2}{1-s^2} \right) & \text{if }s\in[a(\theta),b(\theta)], \\
    \log\left( | \theta - 2s | \right) &\text{otherwise},
    \end{array} \right.
\end{equation}
where $a(\theta)=\frac{1}{4}(\theta - \sqrt{\theta^2+8})$ and $b(\theta)=\frac{1}{4}(\theta + \sqrt{\theta^2+8})$.
\end{proposition}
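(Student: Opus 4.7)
My plan is to invoke the large deviation result of~\cite{bercu1997quadraticLDP} for the least squares estimator of a stable autoregressive process to establish the LDP, and then verify that the rate function~\eqref{ratefunction:least-squares} meets the regularity conditions of Definition~\ref{def:rate_function}. The underlying derivation of the LDP rests on the following observations. The estimator $\widehat S_T = N_T/D_T$ is the ratio of the quadratic forms
\begin{equation*}
    N_T = \frac{1}{T}\sum_{t=2}^T \xi_t \xi_{t-1}, \qquad D_T = \frac{1}{T}\sum_{t=2}^T \xi_{t-1}^2,
\end{equation*}
in the jointly Gaussian vector $(\xi_1,\dots,\xi_T)$, whose precision matrix is tridiagonal thanks to the AR(1) structure. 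Consequently, the limiting log-moment generating function
\begin{equation*}
    \Lambda(\lambda,s,\theta) = \lim_{T\to\infty}\frac 1T \log \mb E_\theta[\exp(T\lambda(N_T-sD_T))]
\end{equation*}
admits a closed-form expression whose effective domain is the interval of $\lambda$ for which the associated tilted covariance operator is positive definite. Since $\{\widehat S_T\geq s\}=\{N_T-sD_T\geq 0\}$ on the full-probability event $\{D_T>0\}$, a Chernoff-type argument yields an LDP for $\widehat S_T$ with rate function $I(s,\theta)=-\inf_\lambda\{\Lambda(\lambda,s,\theta):\partial_\lambda\Lambda(\lambda,s,\theta)\geq 0\}$, together with the symmetric bound for $\{\widehat S_T\leq s\}$.

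To recover the piecewise expression~\eqref{ratefunction:least-squares}, I would solve this extremal problem in closed form. The stationarity equation $\partial_\lambda \Lambda(\lambda,s,\theta)=0$ admits a solution inside the effective domain precisely when $s\in[a(\theta),b(\theta)]$, where $a(\theta)$ and $b(\theta)$ are the real roots of the quadratic $2x^2-\theta x-1=0$; substituting the minimizer back and simplifying recovers the first branch of~\eqref{ratefunction:least-squares}. For $s$ outside this interval the infimum is attained on the boundary of the effective domain and evaluates to $\log|\theta-2s|$, giving the second branch. A direct algebraic check shows that the two branches coincide at $s=a(\theta)$ and $s=b(\theta)$ (the identity reduces to $|\theta-2s|\cdot|s|=1$ when $2s^2=\theta s+1$), so $I$ is continuous on $\SS\times\Theta$.

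The remaining two regularity conditions then follow by inspection of~\eqref{ratefunction:least-squares}. Level-compactness in $s$ holds because the first branch diverges as $|s|\uparrow 1$ and the second as $|s|\to\infty$, while compactness in $\theta$ is automatic because $\Theta\subseteq(-1,1)$. For radial monotonicity I would invoke the sufficient condition~\eqref{eq:suff:radial:mon} with $\theta_s=s$, which is available since $S_\infty$ is the identity on $\Theta=\SS_\infty$: along the segment $\theta(\lambda)=(1-\lambda)s+\lambda\theta$ the map $\lambda\mapsto I(s,\theta(\lambda))$ starts at its unique zero $\lambda=0$ (because $I(s,s)=0$ via the first branch, noting that $s\in(a(s),b(s))$), and a first-order analysis on whichever branch is active shows that this map is monotonically nondecreasing on $[0,1)$, strictly so whenever $I(s,\theta)>0$.

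The principal obstacle is the explicit closed-form evaluation of the Chernoff extremal problem and the accompanying identification of the two regimes separated by $a(\theta)$ and $b(\theta)$; although the optimality equation is elementary, unwinding the resulting algebra to obtain the compact log-ratio form is delicate. The subsequent verification of continuity at the branch boundaries and of radial monotonicity across both pieces of the formula is a tedious but routine piecewise calculation.
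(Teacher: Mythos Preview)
Your proposal is correct and follows the same approach as the paper: invoke \cite[Proposition~8]{bercu1997quadraticLDP} for the LDP and then verify the three regularity conditions of Definition~\ref{def:rate_function}. The paper does not re-derive the rate function formula (what you call the ``principal obstacle'') but simply cites the reference; its proof consists almost entirely of the explicit derivative computations establishing radial monotonicity via~\eqref{eq:suff:radial:mon}, which you correctly anticipate but only sketch.
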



An alternative estimator for the autoregressive coefficient~$\theta$ is the Yule-Walker estimator defined through
\begin{equation}
    \label{Yule-Walker:estimator}
    \widehat{S}_T=\frac{\sum_{t=2}^T \xi_t \xi_{t-1}}{\sum_{t=1}^T \xi^2_{t}} \quad \forall T\in\N.
\end{equation}
By construction, the state space of~$\widehat S$ is given by~$\SS=\Re$, and~$\widehat S_T$ converges $\mb P_\theta$-almost surely to~$\theta$ for all~$\theta\in\Theta$ \cite{bercu1997quadraticLDP}.
Hence, $S_\infty(\theta)=\theta$ and $\SS_\infty=\Theta$, which means that the Yule-Walker estimator constitutes a consistent model estimator. Interestingly, it satisfies a different \ac{ldp} than the least squares estimator.

\begin{proposition}[\ac{ldp} for stationary autoregressive processes with unknown coefficient (II)] \label{thm:AR:multi-dim:modelclass:4}
If $\{\xi_t\}_{t\in\N}$ follows a stationary autoregressive process of the form~\eqref{AR:process:general:general2} with autoregressive parameter~$\theta\in \Theta$, then the Yule-Walker estimator~\eqref{Yule-Walker:estimator} satisfies an \ac{ldp} with regular rate function
\begin{equation} \label{ratefunction:Yule:Walker}
I(s,\theta)= \left\{ \begin{array}{ll}
\frac{1}{2} \log\left( \frac{1- 2\theta s+\theta^2}{1-s^2} \right) & \text{if }s\in(-1,1), \\
0 & \text{if }s=\theta=1\text{ or } s=\theta=-1, \\
+\infty &\text{otherwise}.
\end{array} \right.
\end{equation}
\end{proposition}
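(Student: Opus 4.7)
The plan is to establish the large deviation principle by the same strategy used in Proposition~\ref{thm:AR:multi-dim:modelclass:3}, appealing to the results of \cite{bercu1997quadraticLDP} on LDPs for quadratic functionals of stationary Gaussian sequences, and then to verify that the rate function~\eqref{ratefunction:Yule:Walker} satisfies every clause of Definition~\ref{def:rate_function}.

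For the LDP itself, I would write $\widehat S_T = U_T/V_T$ with $U_T = \tfrac{1}{T}\sum_{t=2}^T \xi_t\xi_{t-1}$ and $V_T = \tfrac{1}{T}\sum_{t=1}^T \xi_t^2$. Both $U_T$ and $V_T$ are normalized Toeplitz quadratic forms in the stationary Gaussian vector $(\xi_1,\dots,\xi_T)$ under $\mb P_\theta$, and their joint limiting cumulant generating function can be evaluated in closed form by the Szeg\H{o}-type computations of \cite{bercu1997quadraticLDP}; this yields a joint LDP for $(U_T,V_T)$. A direct application of AM--GM, $2|\xi_t\xi_{t-1}|\le \xi_t^2+\xi_{t-1}^2$, telescopes to the deterministic bound $|\widehat S_T|\le 1$ $\mb P_\theta$-almost surely. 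Consequently, the contraction principle applied to the continuous map $(u,v)\mapsto u/v$ on $\{v>0\}$ produces the first branch of~\eqref{ratefunction:Yule:Walker} for $s\in(-1,1)$ via an explicit minimization of the joint rate function along the hyperbola $u = sv$; the deterministic bound automatically gives the $+\infty$ branch outside $[-1,1]$, while the corner $s=\theta=\pm 1$ is handled by inspection since both sides of the LDP bounds vanish there.

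The regularity verification is now routine but delicate at the boundary. Non-negativity follows from the identity $(1-2\theta s+\theta^2)-(1-s^2)=(\theta-s)^2\ge 0$. Continuity on $\SS\times\Theta$ follows from smoothness of the first branch on $(-1,1)\times(-1,1)$ together with the observation that as $|s|\uparrow 1$ inside the first branch the denominator $1-s^2$ vanishes while the numerator stays bounded away from zero for $\theta\in\Theta$, matching the extended-real value $+\infty$ prescribed outside. Level-compactness of $\{(s,\theta)\in\SS\times\cl\Theta:I(s,\theta)\le r\}$ is obtained because lower semi-continuity gives closedness, while the denominator forces $|s|$ to stay bounded away from $1$ in any such sublevel set except at the isolated points $(1,1)$ and $(-1,-1)$. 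For radial monotonicity I would set $\theta(\lambda) = (1-\lambda)s + \lambda\theta$ for $s\in\SS_\infty=(-1,1)$ and $\theta\in\cl\Theta=[-1,1]$, and use the algebraic identity
\[
    1 - 2\theta(\lambda)s + \theta(\lambda)^2 = (1-s^2) + \lambda^2(\theta-s)^2
\]
to conclude that $I(s,\theta(\lambda)) = \tfrac{1}{2}\log\!\bigl(1+\lambda^2(\theta-s)^2/(1-s^2)\bigr)$, which is strictly increasing in $\lambda\in[0,1)$ whenever $\theta\ne s$, so the sufficient condition~\eqref{eq:suff:radial:mon} is verified and radial monotonicity follows.

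The main obstacle will be the treatment of the boundary $|s|=1$. Unlike the least-squares estimator in Proposition~\ref{thm:AR:multi-dim:modelclass:3}, whose denominator $\sum_{t=2}^T\xi_{t-1}^2$ omits $\xi_T^2$ and is therefore \emph{not} dominated by the numerator via AM--GM, the Yule-Walker normalization enforces the deterministic bound $|\widehat S_T|\le 1$. This bound is precisely what distinguishes the two rate functions and produces the qualitatively different branches of~\eqref{ratefunction:Yule:Walker} versus~\eqref{ratefunction:least-squares}; matching it through the contraction principle will require a careful variational analysis of $\inf\{J(u,v) : u/v = s\}$ as $|s|\uparrow 1$, where $J$ is the joint rate function of $(U_T,V_T)$.
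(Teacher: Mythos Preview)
Your proposal is correct and essentially matches the paper's approach: the paper simply cites \cite[Proposition~8]{bercu1997quadraticLDP} for the LDP (rather than re-sketching its derivation via the joint LDP for $(U_T,V_T)$ and the contraction principle, which is precisely how that reference proceeds) and then verifies regularity by the same checklist, deferring radial monotonicity to the derivative computation of Proposition~\ref{thm:AR:multi-dim:modelclass:3}. Your closed-form identity $1-2\theta(\lambda)s+\theta(\lambda)^2=(1-s^2)+\lambda^2(\theta-s)^2$ for radial monotonicity is in fact cleaner than the paper's treatment; note, however, that the corners $(\pm1,\pm1)$ lie on $\partial\Theta$, so they are not LDP cases to be ``handled'' but merely the values of the lower semi-continuous extension of $I$ to $\SS\times\cl\Theta$ required by Definition~\ref{def:rate_function:original}.
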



Figure~\ref{fig:AR:estimators} visualizes the rate functions~\eqref{ratefunction:least-squares} and~\eqref{ratefunction:Yule:Walker} for fixed values of the estimator realization~$s$ and the model~$\theta$. As is also evident from their definitions, the two rate functions coincide whenever~$s\in[a(\theta,b(\theta)]$. In general, however, the rate function corresponding to the Yule-Walker estimator majorizes the one corresponding to the least squares estimator. This indicates that the probability of unlikely estimator realizations decays faster when we use the Yule-Walker estimator. One can show that neither the least squares nor the Yule-Walker estimator represent a sufficient statistic for~$\theta$. Therefore, the corresponding distributionally robust predictors and prescriptors cannot be used to construct Pareto dominant solutions for the original meta-optimization problems~\eqref{eq:optimal}. As both statistics satisfy an \ac{ldp} with a regular rate function (as shown in Propositions~\ref{thm:optimality} and~\ref{thm:optimality_prescriptor}), however, the corresponding distributionally robust predictors and prescriptors are strongly optimal in the respective {\em restricted} meta-optimization problems~\eqref{eq:optimal:compressed}. We also emphasize that these predictor-prescriptor pairs are {\em not} equivalent. Indeed, for any desired decay rate~$r\ge 0$ of the out-of-sample disappointment and for any fixed estimator realization~$s\in\SS$, the predictor induced by the Yule-Walker estimator is less conservative than the one induced by the least squares estimator because the rate ball of radius~$r$ around~$s$ corresponding to the Yule-Walker estimator is always contained in the rate ball of radius~$r$ around~$s$ corresponding to the least squares estimator. Intuitively, the Yule-Walker estimator thus results in a less conservative predictor with the same guarantees on the out-of-sample disappointment.

\begin{figure}[!htb] 
\centering
   \subfloat[$I(s,\theta)$ for fixed $\theta=0.5$; see also {\cite[Figure~1]{bercu1997quadraticLDP}}.]{\input{figure_LDP.tex} } \qquad
   \subfloat[$I(s,\theta)$ for fixed $s=0.8$.]{\input{figure_LDP_2.tex} }
    \caption[]{Comparison of the rate functions corresponding to the least squares and Yule-Walker estimators. }
    \label{fig:AR:estimators}
\end{figure}

\section{Proofs of Section~\ref{sec:optimal:dd:prescriptors}}


In order to prove Proposition~\ref{prop:continuity_cr}, we have to recall several notions of continuity for set-valued mappings.

\begin{definition}[Continuity of set-valued mappings {\cite[Chapter~VI, Section~1]{berge1997topological}}] 
\label{def:hemicontinuity}
Consider a set-valued mapping $\Gamma:X\rightrightarrows Y$ between two topological spaces.
\begin{itemize}
\item[(i)] $\Gamma$ is called lower semi-continuous (lsc) at $x_0$ if for every open set $V\subseteq Y$ with $\Gamma(x_0)\cap V\neq \emptyset$ there exists an open neighborhood $U\subseteq X$ of $x_0$ such that $\Gamma(x)\cap V\neq \emptyset$ for all $x\in U$;
\item[(ii)] $\Gamma$ is called upper semi-continuous (usc) at $x_0$ if for every open set $V\subseteq\Theta$ with $\Gamma(x_0)\subseteq V$ there exists an open neighborhood $U\subseteq Y$ of $x_0$ such that $\Gamma(x)\subseteq V$ for all $x\in U$;
\item[(iii)] $\Gamma$ is called continuous at $x_0$ if it is both lsc and usc at $x_0$;
\item[(iv)] $\Gamma$ is called lsc  if it is lsc at every point $x_0\in X$;
\item[(v)] $\Gamma$ is called usc  if it is compact-valued and usc at every point $x_0\in X$;
\item[(vi)] $\Gamma$ is called continuous  if it is lsc and usc.
\end{itemize}
\end{definition}

\begin{proof}[Proof of Proposition~\ref{prop:continuity_cr}]
We first show that the distributionally robust predictor~$\tilde c^\star(x,s)$ is continuous in~$(x,s)$ on~$X\times\SS_\infty$. To this end, define the set-valued mapping $\Gamma: \SS\rightrightarrows\cl\Theta$ through $\Gamma(s)=\{\theta\in\cl\Theta : I(s,\theta)\leq r \}$ for every $s\in \SS$. Note that the graph $\{ (s,\theta)\in \SS \times \cl \Theta : I(s,\theta)\leq r\}$ of~$\Gamma$ is compact because the regular rate function~$I(s,\theta)$ has compact sublevel sets. 
Hence, $\Gamma$ has a closed graph and is compact-valued, which implies via~\cite[Proposition~1.4.8]{ref:Aubin-09} that~$\Gamma$ is usc. Recall now that~$\cl\Theta$ is equipped with the subspace topology induced by the Euclidean topology on~$\mb R^d$, and choose any~$s_0\in \SS_\infty$
and any open set~$V\subseteq \cl\Theta$ with~$\Gamma(s_0)\cap V\neq \emptyset$. As~$\Theta$ is the relative interior of a convex subset of~$\mb R^d$ (see Assumption~\ref{ass:parametrization}), it is open with respect to the subspace topology on~$\cl\Theta$. Thus, both~$V$ and~$\Theta$ are open. This implies that~$V\subseteq \interior \cl \Theta = \Theta$, where the equality follows from \cite[Theorem~6.3]{rockafellar1970convex}, which applies because~$\Theta$ is convex and open. Hence, there exists~$\theta_0\in V\subseteq \Theta$ with~$I(s_0, \theta_0)\leq r$. In the following we may assume without loss of generality that~$I(s_0, \theta_0)<r$. Suppose to the contrary that~$I(s_0,\theta_0)=r$. Since~$s_0\in\SS_\infty$ and~$r>0$ and since the regular rate function~$I(s,\theta)$ is radially monotonic in~$\theta$, there exist~$\theta_k\in \Theta$, $k\in\N$, such that $I(s_0,\theta_{k})<r$ for all~$k\in\N$ and $\lim_{k\to\infty}\theta_{k}=\theta_0$. As~$\theta_0\in V$ and~$V$ is open, there further exists~$k_V\in\N$ such that~$\theta_k\in V$ and $I(s_0,\theta_k)<r$ for all~$k\ge k_V$. We may thus re-define~$\theta_0$ as~$\theta_k$ for any~$k\ge k_V$. Next, define~$U= \{s\in \SS_\infty:I(s, \theta_0)<r\}$, and note that~$U$ is open because~$\SS_\infty$ is open and the regular rate function~$I(s,\theta)$ is continuous on~$\SS\times\Theta$. By construction, we have that~$s_0\in U$ and~$\theta_0\in \Gamma(s)\cap V$ for all~$s\in U$. Thus, $\Gamma$ is lsc at~$s_0$. As~$s_0\in\SS_\infty$ was chosen arbitrarily, $\Gamma$ is indeed lsc on~$\SS_\infty$. Being lsc as well as usc, $\Gamma$ represents a continuous set-valued mapping on~$\SS_\infty\subseteq\SS$. In addition, $\Gamma(s)$ is non-empty for every~$s\in \SS_\infty$ because there exists~$\theta\in\Theta$ with~$s=S_\infty(\theta)$. Indeed, by the discussion after Definition~\ref{def:wldt}, we have~$I(s, \theta)=0$ and thus~$\theta\in \Gamma(s)$. As the model-based predictor~$c(x,\theta)$ is continuous on $X\times \cl\Theta$ due to the arguments outlined after Assumption~\ref{ass:continuity}, we may finally invoke Berge's maximum theorem \cite[pp.~115--116]{berge1997topological}) to conclude that the distributionally robust predictor $\tilde c^\star(x, s) = \max_{\theta \in \Gamma(s)} c(x, \theta)$ is continuous on~$X\times \SS_\infty$. 

We may use a similar but significantly simpler reasoning to demonstrate that~$\tilde c^\star(x,s)$ is continuous in~$x$ on~$X\times\SS$. The simplification arises because the set-valued mapping~$\Gamma(s)$ is constant and thus trivially continuous in~$x$ for any fixed~$s\in\SS$. Finally, $\tilde c^\star(x,s)$ inherits boundedness from~$c(x,\theta)$; see Assumption~\ref{ass:continuity}.
\end{proof}

Theorems~\ref{thm:optimality} and~\ref{thm:optimality_prescriptor} significantly generalize Theorems~3, 4, 6 and~7 in~\cite{ref:vanParys:fromdata-17}, which apply only to finite-state i.i.d.\ processes and where $\widehat S_T$ reduces to the empirical distribution that satisfies an LDP with rate function~$I(s,\theta)=\D{s}{\theta}$. Even though we can adopt similar proof techniques as in \cite{ref:vanParys:fromdata-17}, the proofs of Theorems~\ref{thm:optimality} and~\ref{thm:optimality_prescriptor} require more care because we have to handle general statistics and LDPs that admit general regular rate functions. Indeed, the mere notion of a regular rate function is a new concept introduced in this paper; see Definition~\ref{def:rate_function}. In particular, unlike in~\cite{ref:vanParys:fromdata-17}, we need to account here for the possibility that $\Theta$ differs from~$\mathbb S$, that the set~$\mathbb S_\infty$ of asymptotic estimator realizations is a strict subset of the interior of~$\mathbb S$ and that the regular rate function~$I(s,\theta)$ fails to be convex in~$s$. In addition, unlike in~\cite{ref:vanParys:fromdata-17}, the LDP bound~\eqref{eq:ldp_exponential_rates_ub:old} involves the closure of the atypical set~$\mathcal D$, which requires more subtle topological arguments.

\begin{proof}[Proof of Theorem~\ref{thm:optimality}] 
We first show (Step~1) that $\tilde c^\star$ is feasible in problem~\eqref{eq:optimal-predictor:compressed}, and subsequently (Step~2) we demonstrate that~$\tilde c^\star$ Pareto dominates any other feasible solution of problem~\eqref{eq:optimal-predictor:compressed}.

{\em Step~1.} Proposition~\ref{prop:continuity_cr} readily implies that~$\tilde c^\star\in\tilde{\mc C}$. 
It remains to be shown that the out-of-sample disappointment of $\tilde c^\star$ decays at a rate of at least $r$. To this end, fix any~$x\in X$ and~$\theta\in\Theta$, and define the~sets
\[
    A(x,\theta)=\{s\in\SS : c(x,\theta)>\tilde c^\star(x,s)\}\quad\text{and}\quad B(x,\theta)=\{s\in\SS : I(s,\theta)>r\}.
\]
We may assume without loss of generality that $A(x,\theta)\neq \emptyset$ for otherwise the out-of-sample disappointment~$\mb P_\Theta[\widehat S_T\in A(x,\theta)]$ vanishes for all~$T\in \mb N$ and thus decays at any exponential rate. We will now show that~$A(x,\theta)\subseteq B(x,\theta)$. To this end, choose any~$s\in A(x,\theta)$, and assume that~$I(s,\theta)\leq r$. Thus, we~have
\[
    c(x,\theta)> \tilde c^\star(x,s)=\max_{\theta' \in \cl \Theta}\, \left\{ c(x, \theta') : I(s, \theta')\leq r \right\}\geq c(x,\theta),
\]
where the strict inequality holds because~$s\in A(x,\theta)$, whereas the equality and the weak inequality follow from the definition of~$\tilde c^\star(x,s)$ and the assumptions that~$\theta\in\Theta$ and~$I(s,\theta)\leq r$. The resulting conclusion is manifestly false, which implies that~$I(s,\theta)> r$, that is, $s\in B(x,\theta)$. As~$s\in A(x,\theta)$ was chosen arbitrarily, we have thus shown that~$A(x,\theta)\subseteq B(x,\theta)$. This result further implies that
\begin{equation}
    \label{eq:A-B-inclusion}
    \cl A(x,\theta)\subseteq \cl B(x,\theta) \subseteq \{s\in\SS : I(s,\theta)\geq r\},
\end{equation}
where the second inclusion holds because the set on the right hand side covers~$B(x,\theta)$ and is closed thanks to the continuity of the regular rate function~$I(s,\theta)$ in~$s$ on~$\SS$. The above reasoning implies that
\begin{align*}
\limsup_{T\to\infty}\frac 1T \log \mb P_\theta [ c(x,\theta) > \tilde c^\star(x,\widehat S_T) ]&=\limsup_{T\to\infty}\frac 1T \log \mb P_\theta [\widehat S_T\in A(x,\theta) ] \leq - \inf_{s \in \cl A(x,\theta)} ~ I(s,\theta) \leq -r ,
\end{align*}
where the first inequality follows from~\eqref{eq:ldp_exponential_rates_ub:old}, which applies because~$\widehat S$ satisfies an \ac{ldp} with rate function~$I(s,\theta)$, and the second inequality is a direct consequence of~\eqref{eq:A-B-inclusion}. As~$x\in X$ and~$\theta\in\Theta$ were chosen arbitrarily, we may thus conclude that~$\tilde c^\star$ is feasible in problem~\eqref{eq:optimal-predictor:compressed}.




{\em Step~2.} We now prove that~$\tilde c^\star$ Pareto dominates every other feasible solution of problem~\eqref{eq:optimal-predictor:compressed}. Assume to the contrary that there exists a compressed data-driven predictor $\tilde c\in\tilde{\mc C}$ that is feasible in~\eqref{eq:optimal-predictor:compressed} but is {\em not} Pareto dominated by~$\tilde c^\star$. Hence, there exist a decision~$x_0\in X$ and a model~$\theta_0\in\Theta$ with 
\[
    \varepsilon=\tilde c^\star(x_0,S_\infty(\theta_0))-\tilde c(x_0,S_\infty(\theta_0))>0.
\]
As we will see, the above inequality implies that~$\tilde c$ is infeasible in~\eqref{eq:optimal-predictor:compressed}. This contradiction will reveal that our initial assumption must have been false and that there cannot be any feasible~$\tilde c$ that Pareto dominates~$\tilde c^\star$.

Define~$s_0= S_\infty(\theta_0)$, and recall from the discussion below Definition~\ref{def:wldt} that $I(s_0,\theta_0)=0$. Thus, we have
\begin{equation}
    \label{eq:dro-predictor-s_0}
     \tilde c^\star(x_0, s_0)=\max_{\theta \in \cl \Theta}\, \left\{ c(x_0, \theta) : I(s_0, \theta)\leq r \right\}.
\end{equation}
By construction, $\theta_0$ is feasible in~\eqref{eq:dro-predictor-s_0}. From the discussion after Definition~\ref{def:dro-predictor} we further know that the maximization problem~\eqref{eq:dro-predictor-s_0} is solvable. In the following we denote by~$\theta^\star\in\cl\Theta$ an arbitrary maximizer. Feasibility of~$\theta^\star$ then guarantees that~$I(s_0,\theta^\star)\leq r$, and optimality implies the identity
\begin{equation}
	\label{eq:inequality:pred:2}
	\tilde c^\star(x_0, s_0)= c(x_0, \theta^\star).
\end{equation}
Recall now that~$s_0\in\SS_\infty$ and that~$r>0$. Recall also that the rate function~$I(s,\theta)$ is regular and thus radially monotonic thanks to Assumption~\ref{ass:ldp}. This implies that there exist~$\theta^\star_{k}\in \Theta$, $k\in\N$, such that $I(s_0,\theta^\star_{k})<r$ for all~$k\in\N$ and $\lim_{k\to\infty}\theta^\star_{k}=\theta^\star$. In addition, as~$c(x,\theta)$ is continuous on~$X\times\cl\Theta$ thanks to the discussion after Assumption~\ref{ass:continuity}, this further implies that there exists~$\theta_0^\star\in\Theta$ with~$I(s_0,\theta_0^\star)=r_0<r$ and 
\begin{equation*}
	c(x_0, \theta^\star)< c(x_0, \theta_0^\star) +\varepsilon.
\end{equation*}
Using this inequality, we then find
\begin{align}
	\label{eq:theta'-feasibility}
	\tilde c(x_0,s_0) = \tilde c^\star(x_0,s_0)-\varepsilon =  c(x_0,\theta^\star) -\varepsilon < c(x_0,\theta^\star_0), 
\end{align}
where the first equality follows from the definitions of~$\varepsilon$ and~$s_0$, and the second equality holds due to~\eqref{eq:inequality:pred:2}. 
In the following we will use~\eqref{eq:theta'-feasibility} to show that the prediction disappointment~$\mb P_{\theta_0^\star} [c(x, \theta^\star_0) > \tilde c(x,\widehat S_T)]$ of~$\tilde c$ under decision~$x_0$ and model~$\theta_0^\star$ decays no faster than~$e^{r_0T}$ for large sample sizes~$T$. As~$r_0<r$, this will imply that~$\tilde c$ is {\em in}feasible in~\eqref{eq:optimal-predictor:compressed}. To this end, we define the set of disappointing realizations of~$\widehat S$ as
\[
	 \mc D(x_0,\theta^\star_0) = \{s \in \SS:c(x_0, \theta_0^\star) > \tilde c(x_0, s)\}.
\]
By~\eqref{eq:theta'-feasibility}, this disappointment set contains $s_0$. As~$s_0\in\SS_\infty\subseteq\SS$ and as $\SS_\infty$ is open, we further know that~$s_0$ resides in the interior of~$\SS$. In addition, $\tilde c(x,s)$ is continuous in~$s$ on~$X\times\SS_\infty$. We may thus conclude that~$s_0$ belongs in fact to the interior of~$\mc D(x_0,\theta^\star_0)$ and that
\[
	\inf_{s\in \interior \mc D(x_0,\theta^\star_0)} \, I(s,\theta^\star_0) \leq I(s_0,\theta^\star_0)= r_0,
\]
where the equality holds by the definition of $r_0$. As the statistic~$\widehat S$ satisfies an \ac{ldp} with rate function~$I(s,\theta)$, the above inequality in conjunction with~\eqref{eq:ldp_exponential_rates_lb:old} finally implies that
\[
	-r<-r_0\leq-\inf_{s\in \interior \mc D(x_0,\theta^\star_0)} \, I(s,\theta^\star_0) \leq \liminf_{T\rightarrow \infty}\frac 1T \log \mb P_{\theta^\star_0} [  \widehat S_T \in \mc D(x_0,\theta^\star_0) ].
\]
By the definition of~$\mc D(x_0,\theta^\star_0)$, this means that the out-of-sample disappointment of~$\tilde c$ corresponding to~$x_0$ and~$\theta_0^\star$ decays strictly slower than~$e^{rT}$ as~$T$ grows. This in turn contradicts our assumption that~$\tilde c$ is feasible in~\eqref{eq:optimal-predictor:compressed} and thus implies that~$\tilde c^\star$ is a Pareto dominant solution of problem~\eqref{eq:optimal-predictor:compressed}.
\end{proof}

\begin{proof}[Proof of Theorem~\ref{thm:optimality_prescriptor}]
We first show that $(\tilde c^\star,\tilde x^\star)$ is feasible in problem~\eqref{eq:optimal-prescriptor:compressed} (Step~1), and subsequently we demonstrate that~$(\tilde c^\star,\tilde x^\star)$ Pareto dominates any other feasible solution of problem~\eqref{eq:optimal-prescriptor:compressed} (Step~2).

{\em Step~1.} From the discussion after Definition~\ref{def:dro-prescriptor} we already know that $(\tilde c^\star,\tilde x^\star)\in\tilde{\mc X}$. It remains to be shown that the out-of-sample disappointment of~$(\tilde c^\star,\tilde x^\star)$ decays at a rate of at least~$r$. To this end, fix any~$\theta\in\Theta$, and define the set of all estimator realizations that lead to disappointment for {\em some} decision~$x\in X$ as
\[
    A(\theta)=\left\{s\in\SS : \max_{x\in X}\left\{ c(x,\theta)-\tilde c^\star(x,s)\right\}>0\right\}.
\]
The maximum in this definition is indeed attained because~$X$ is compact and because the model-based predictor~$c(x,\theta)$ and the distributionally robust predictor~$\tilde c^\star(x,s)$ are  continuous in~$x$ thanks to Assumption~\ref{ass:continuity} and Proposition~\ref{prop:continuity_cr}, respectively. Note also that~$A(\theta)=\cup_{x\in X}A(x,\theta)$, where~$A(x,\theta)$ is defined as in Step~1 of the proof of Theorem~\ref{thm:optimality}. The inclusion~\eqref{eq:A-B-inclusion} and the continuity of $I(s,\theta)$ in~$s$ thus ensure that
\begin{equation}\label{eq:feasiblity:pred:step:to:show:pres}
    \cl A(\theta)\subseteq \cl\{s\in\SS : I(s,\theta)\geq r\}= \{s\in\SS : I(s,\theta)\geq r\},
\end{equation}
respectively. 
The above reasoning implies that
\begin{align*}
& \limsup_{T\to\infty}\frac 1T \log \mb P_\theta \left[ c(\tilde x^\star(\widehat S_T),\theta) > \tilde c^\star(\tilde x^\star(\widehat S_T),\widehat S_T) \right] \\
&\hspace{3cm}\leq \limsup_{T\to\infty}\frac 1T \log \mb P_\theta \left[ \max_{x\in X} \left\{ c(x,\theta) - \tilde c^\star(x,\widehat S_T) \right\} >0 \right] \leq - \inf_{s\in\cl A(\theta)} ~ I(s,\theta)  \leq -r.
\end{align*}
Indeed, the first inequality holds because $\tilde x^\star(\widehat S_T)\in X$ $\mb P_\theta$-almost surely, the 
second follows from the definition of~$A(\theta)$ and the LDP bound~\eqref{eq:ldp_exponential_rates_ub:old} and the third inequality is a direct consequence of~\eqref{eq:feasiblity:pred:step:to:show:pres}. As~$\theta\in\Theta$ was chosen arbitrarily, we may thus conclude that~$(\tilde c^\star,\tilde x^\star)$ is feasible in problem~\eqref{eq:optimal-prescriptor:compressed}.



{\em Step~2.} We now prove that~$(\tilde c^\star, \tilde x^\star)$ Pareto dominates every other feasible solution of problem~\eqref{eq:optimal-prescriptor:compressed}. Assume to the contrary that there exists a compressed data-driven predictor-prescriptor pair $(\tilde c,\tilde x)\in\tilde{\mc X}$ that is feasible in~\eqref{eq:optimal-prescriptor:compressed} but is {\em not} Pareto dominated by~$(\tilde c^\star,\tilde x^\star)$. Hence, there exists a model~$\theta_0\in\Theta$ with 
\begin{equation} \label{feasibility:prescriptor:step1}
\tilde c^\star (\tilde x^\star(S_\infty(\theta_0)),S_\infty(\theta_0)) -\tilde c(\tilde x(S_\infty(\theta_0)),S_\infty(\theta_0))>0.
\end{equation}
As we will see, \eqref{feasibility:prescriptor:step1} implies that~$(\tilde c,\tilde x)$ is infeasible in~\eqref{eq:optimal-prescriptor:compressed}. This contradiction will reveal that our initial assumption must have been false and that there cannot be any feasible~$(\tilde c,\tilde x)$ that Pareto dominates~$(\tilde c^\star,\tilde x^\star)$.

As~$X$ is compact and independent of~$s$ and as~$\tilde c^\star(x,s)$ is continuous on~$X\times\SS_\infty$ by virtue of Proposition~\ref{prop:continuity_cr}, Berge's maximum theorem \cite[pp.~115--116]{berge1997topological}) implies that~$\tilde c^\star (\tilde x^\star(s),s)=\min_{x\in X}\tilde c^\star(x,s)$ is continuous on~$\SS_\infty$. In addition, $S_\infty(\theta)$ is continuous on~$\Theta$ by Definition~\ref{def:estimator}, and thus the combination~$\tilde c^\star (\tilde x^\star(S_\infty(\theta)),S_\infty(\theta))$ is also continuous on~$\Theta$. The exact same arguments can be used to show that~$\tilde c(\tilde x(S_\infty(\theta)),S_\infty(\theta))$ is continuous on~$\Theta$ as well. This implies that the strict inequality~\eqref{feasibility:prescriptor:step1} remains valid under small perturbations of~$\theta_0$. Next, set~$s_0= S_\infty(\theta_0)$, and recall from Definition~\ref{def:estimator} that $S_\infty(\theta)$ is a local homeomorphism and thus locally surjective. 
Recall also from Definition~\ref{def:compressed:dd_prediction} that~$\tilde x(s)$ is quasi-continuous on~$\SS_\infty$ and therefore continuous on a dense subset of~$\SS_\infty$ \cite{ref:Blendsoe-52}. By perturbing~$\theta_0$ if necessary, we may thus assume without loss of generality that~$\tilde x(s)$ is continuous at~$s_0=S_\infty(\theta_0)$ while still maintaining the strict inequality~\eqref{feasibility:prescriptor:step1}.

Identifying~$x_0$ with~$\tilde x(s_0)$, we can now reuse the arguments from the proof of Theorem~\ref{thm:optimality} that led to the inequality~\eqref{eq:theta'-feasibility} to show that there exists a  model~$\theta_0^\star\in\Theta$ with~$I(s_0,\theta_0^\star)=r_0<r$ and 
\begin{align}
	\label{eq:theta_0^star-feasibility}
	\tilde c (\tilde x(s_0),s_0) < c(\tilde x(s_0),\theta^\star_0).
\end{align}
Details are omitted to avoid redundancy. 
In the following we will use this inequality to show that the prediction disappointment~$\mb P_{\theta_0^\star} [c(\tilde x(\widehat S_T), \theta^\star_0) > \tilde c(\tilde x(\widehat S_T),\widehat S_T)]$ of the predictor-prescriptor pair~$(\tilde c,\tilde x)$ under model~$\theta_0^\star$ decays no faster than~$e^{r_0T}$ for large sample sizes~$T$. As~$r_0<r$, this will imply that~$(\tilde c,\tilde x)$ is {\em in}feasible in~\eqref{eq:optimal-prescriptor:compressed}. To this end, we define the set of disappointing realizations of~$\widehat S$ as
\[
\mc D(\theta^\star_0) = \left\{ s\in\SS : c(\tilde x(s),\theta^\star_0) > \tilde c (\tilde x(s),s)  \right\}.
\]
By~\eqref{eq:theta_0^star-feasibility}, this disappointment set contains~$s_0$. As~$s_0\in\SS_\infty\subseteq\SS$ and as $\SS_\infty$ is open, we further know that~$s_0$ resides in the interior of~$\SS$. In addition, $\tilde c(x,s)$ is continuous in~$s$ on~$X\times\SS_\infty$ and~$\tilde x(s)$ is continuous at~$s_0\in\SS_\infty$ by the construction of~$s_0$. We may thus conclude that~$s_0$ belongs in fact to the interior of~$\mc D(\theta^\star_0)$ and that
\begin{equation*}
\inf_{s \in \interior \mc D(\theta^\star_0)} I(s, \theta^\star_0)  \leq I(s_0,\theta^\star_0) = r_0,
\end{equation*}
where the equality holds by the definition of $r_0$. Together with the LDP bound~\eqref{eq:ldp_exponential_rates_lb:old}, this implies that
\begin{equation*}
-r < -r_0 \leq - \inf_{s \in \interior \mc D(\theta^\star_0)} I(s, \theta^\star_0) \leq \liminf_{T\to\infty} \frac{1}{T} \log \mb P_{\theta^\star_0}  [ \widehat S_T \in \mc D(\theta^\star_0) ].
\end{equation*}
By the definition of~$\mc D(\theta^\star_0)$, this means that the out-of-sample disappointment of~$(\tilde c,\tilde x)$ corresponding to~$\theta_0^\star$ decays strictly slower than~$e^{rT}$ as~$T$ grows. This in turn contradicts our assumption that~$(\tilde c,\tilde x)$ is feasible in~\eqref{eq:optimal-prescriptor:compressed} and thus implies that~$(\tilde c^\star,\tilde x^\star)$ is a Pareto dominant solution of problem~\eqref{eq:optimal-prescriptor:compressed}.
\end{proof}

\section{Proofs of Section~\ref{sec:equivalence}}

\begin{proof}[Proof of Lemma~\ref{eq:exp:limit}]
  Fix any~$\theta\in\Theta$, and recall that $\mb E_{\theta}[\widehat S_T]=\nabla_\lambda \Lambda_T(0, \theta)= \frac{1}{T} \nabla_\lambda[ \Lambda_T(T\lambda, \theta)]_{\lambda=0}$ for all~$T\in\mb N$. Driving~$T$ to infinity, the claim follows if we can interchange the limit and the gradient on the right hand side of this identity to obtain $\lim_{T\to\infty} \nabla_\lambda[\frac{1}{T}  \Lambda_T(T\lambda, \theta)]_{\lambda=0} = \nabla_\lambda \Lambda(0, \theta)$.
  To this end, select a tolerance~$\varepsilon>0$ and a direction~$b\in \Re^d$. By the definition of the directional derivative of $\Lambda(\lambda,\theta)$ at~$\lambda=0$ along the direction~$b$, there exists a step size~$h>0$ such that
  \(
    \tfrac{(\Lambda(h b,\theta)-\Lambda(0,\theta))}{h}  < \iprod{\nabla_\lambda \Lambda(0, \theta)}{b} + \varepsilon .
  \)
  In addition, by the definition~\eqref{eq:def:Lambda} of the limiting log-moment generating function~$\Lambda$, there exists~$T_0\in\mb N$ such that 
  \begin{align*}
     \tfrac{(\Lambda_T(h b T, \theta)-\Lambda_T(0, \theta))}{(Th)}  < \iprod{\nabla_\lambda \Lambda(0, \theta)}{b}  + \varepsilon\quad \forall T\ge T_0.
  \end{align*}
  Next, the first-order condition of convexity for~$\Lambda_T(T\lambda, \theta)/T$ guarantees that
  \begin{align*}
    \iprod{\nabla_\lambda \Lambda_T(0, \theta)}{b} \leq & \tfrac{(\Lambda_T(h b T, \theta)-\Lambda_T(0, \theta))}{(Th)}\quad \forall T\in\mb N. 
  \end{align*}
  Combining the last two inequalities then yields the estimate $\lim_{T\to\infty} \iprod{\nabla_\lambda\Lambda_T(0, \theta)}{b}<\iprod{\nabla_\lambda \Lambda(0, \theta)}{b}  + \varepsilon$. As~$\varepsilon>0$ was chosen arbitrarily, this implies that $\lim_{T\to\infty}\iprod{\nabla_\lambda\Lambda_T(0, \theta)}{b}\leq \iprod{\nabla_\lambda \Lambda(0, \theta)}{b}$, and as $b\in \Re^d$ was also chosen arbitrarily, we may in fact conclude that~$\lim_{T\to\infty} \nabla_\lambda\Lambda_T(0, \theta) = \nabla_\lambda \Lambda(0, \theta)$.
\end{proof}

\begin{proof}[Proof of Lemma~\ref{lemm:ge-consistent}]
  Fix any~$\theta\in \Theta$ and~$\varepsilon>0$, and set~$s_0=\nabla_\lambda \Lambda(0, \theta)$. 
  By Theorem~\ref{thm:Gaertner-Ellis} we have
  \begin{equation}\label{eq:decay:lemma:cons}
    \limsup_{T\to\infty}\frac 1T \log \mb P_\theta[\tnorm{\widehat S_T-s_0}\geq \varepsilon]\leq -\inf_{s\in\mb R^d} \left\{ I(s, \theta):\tnorm{s-s_0}\geq \varepsilon\right\} = -I(s^\star, \theta)
  \end{equation}
  for some~$s^\star\neq s_0$, where the equality holds because the good rate function~$I(s,\theta)$ has compact sublevel sets. Next, we show that~$I(s^\star, \theta)>0$. 
Suppose for the sake of argument that~$I(s^\star,\theta) =0$. This implies that
\[
     \inprod{\lambda}{s^\star} = \inprod{\lambda}{s^\star} - I(s^\star, \theta) \leq \Lambda(\lambda,\theta) = \Lambda(\lambda,\theta)-\Lambda(0,\theta) \quad\forall\lambda\in\mb R^d,
\]
where the inequality exploits the definition of~$I(s^\star,\theta)$. Setting~$\lambda=\delta v$ for~$\delta>0$ and~$v\in\mb R^d$, we then find
\begin{equation*}
    \inprod{v}{s^\star}\leq \lim_{\delta\downarrow 0}\frac{1}{\delta}\left( \Lambda(\delta v,\theta)-\Lambda(0,\theta) \right)
    =\inprod{v}{\nabla_{\lambda}\Lambda(0,\theta)} \quad \forall v\in\Re^d,
\end{equation*}
which in turn implies that~$s^\star = \nabla_{\lambda}\Lambda(0,\theta)=s_0$ and thus contradicts the construction of~$s^\star$. We therefore conclude that~$I(x^\star,\theta)>0$, which ensures via~\eqref{eq:decay:lemma:cons} that~$\mb P_\theta[\tnorm{\widehat S_T-s_0}\geq \varepsilon]\leq \exp(-T\cdot I(x^\star,\theta)+o(T))$ tends to~$0$ as~$T$ grows. The claim then follows because~$\theta\in\Theta$ and~$\varepsilon>0$ were chosen arbitrarily.
\end{proof}

\begin{proof}[Proof of Theorem~\ref{thm:predictor:equivalence}]
As~$\tilde c^\star\in\tilde{\mc C}$ is a compressed data-driven predictor in the sense of Definition~\ref{def:compressed:dd_prediction}, $\widehat c^{\,\star}$ constitutes a data-driven predictor in the sense of Definition~\ref{def:dd_prediction}, that is, $\widehat c^{\,\star}\in\widehat{\mc C}$. This follows from the discussion after Definition~\ref{def:compressed:dd_prediction}. Similarly, as~$\tilde c^\star$ satisfies the rate constraint in~\eqref{eq:optimal-predictor:compressed}, one readily verifies that~$\widehat c^{\,\star}$ satisfies the rate constraint in~\eqref{eq:optimal-predictor}. We may thus conclude that~$\widehat c^{\,\star}$ is feasible in~\eqref{eq:optimal-predictor}. In the remainder of the proof we will show that~$\widehat c^{\,\star}$ Pareto dominates every other feasible solution of problem~\eqref{eq:optimal-predictor}.

Assume for the sake of contradiction that there exists a data-driven predictor~$\widehat c$ that is feasible in~\eqref{eq:optimal-predictor} but not dominated by~$\widehat c^{\,\star}$. Hence, there exist a decision~$x_0\in X$ and a model~$\theta_0\in\Theta$ with 
\begin{equation*}
    \lim_{T\to\infty} \mb E_{\theta_0}[\widehat c_T^\star(x_0)] - \lim_{T\to\infty} \mb E_{\theta_0}[\widehat c_T(x_0)] > 0.
\end{equation*}
As we will see, the above inequality implies that~$\widehat c$ is infeasible in~\eqref{eq:optimal-predictor}. This contradiction will reveal that our initial assumption must have been false and that there cannot be any feasible~$\widehat c$ that Pareto dominates~$\widehat c^{\,\star}$.

Since the data-driven predictor~$\widehat c$ must satisfy the conditions~\ref{def:dd_prediction:ii:boundedness} and~\ref{def:dd_prediction:iii:convergence} of Definition~\ref{def:dd_prediction}, we may conclude via Lemma~\ref{lem:aux:prob} that~$\lim_{T\to\infty} \mb E_{\theta_0}[\widehat c_T(x_0)] = c_\infty(x_0,\theta_0)$, where~$c_\infty$ is the Borel-measurable function whose existence is postulated in Definition~\ref{def:dd_prediction}\ref{def:dd_prediction:iii:convergence}. Similarly, since~$\widehat S_T$ converges in probability under~$\mb P_{\theta_0}$ to~$S_\infty(\theta_0)$ and since Proposition~\ref{prop:continuity_cr} ensures that~$\tilde c^\star(x_0, s)$ is bounded and continuous in~$s$ on~$\SS_\infty$, we may invoke the continuous mapping theorem~\cite[Theorem~3.2.4]{durrett_book} and the fact that convergence in probability implies convergence in distribution to conclude that~$\lim_{T\to\infty} \mb E_{\theta_0}[\widehat c^\star_T(x_0)]=\lim_{T\to\infty} \mb E_{\theta_0}[\tilde c^\star(x_0,\widehat S_T)]=\tilde c^\star(x_0,S_\infty(\theta_0))$. In summary, we have thus shown that~$\tilde c^\star(x_0,S_\infty(\theta_0))- c_\infty(x_0,\theta_0)>0$.

Defining~$s_0=S_\infty(\theta_0)$, 
we may reuse the reasoning at the beginning of Step~2 in the proof of Theorem~\ref{thm:optimality} to show that there exists~$\theta_0^\star\in\Theta$ with~$I(s_0,\theta_0^\star)=r_0<r$ and~$c(x_0,\theta_0^\star)-c_\infty(x_0,\theta_0)>0$. In the following, we select any~$\varepsilon>0$ that is strictly smaller than~$c(x_0,\theta_0^\star)-c_\infty(x_0,\theta_0)$ and any~$\delta>0$. 
Thus, we have
\begin{align}
    \nonumber \mb P_{\theta_0^\star}\left[ \widehat c_T(x_0)<c(x_0,\theta_0^\star) \right] &\geq \mb P_{\theta_0^\star}\left[ \widehat c_T(x_0)<c(x_0,\theta_0^\star) \ \wedge \ \widehat S_T\in\mb B_\delta(s_0) \right] \\ 
    & \geq \mb P_{\theta_0^\star}\left[ \widehat c_T(x_0)<c_\infty(x_0,\theta_0)+\varepsilon \ \wedge \ \widehat S_T\in\mb B_\delta(s_0) \right] \label{eq:proof:optimality:step:middle} \\
    & \geq \mb P_{\theta_0^\star}\left[ |\widehat c_T(x_0)-c_\infty(x_0,\theta_0)|<\varepsilon\ \wedge \ \widehat S_T\in\mb B_\delta(s_0) \right], \nonumber
\end{align}
where~$\mb B_\delta(s_0)$ denotes the Euclidean ball of radius~$\delta$ around~$s_0$. Here, the second inequality holds because~$c_\infty(x_0,\theta_0)+\varepsilon< c(x_0,\theta_0^\star)$ thanks to the choice of~$\varepsilon$. The other two inequalities are elementary. 
By Assumption~\ref{ass:suff}, the probability measures~$\mb P_{\theta_0}$ and $\mb P_{\theta_0^\star}$ both belong to an exponential family of the form~\eqref{eq:RND:exp:fam} and are therefore equivalent. The chain rule for Radon-Nikodym derivatives thus implies that
\begin{equation}
    \label{eq:radon-nikodym}
    \frac{\d \mb{P}_{\theta_0}^T}{\d \mb P_{\theta_0^\star}^T} = \frac{\d \mb{P}_{\theta_0}^T}{\d \mb P_{\bar \theta}^T} \left( \frac{\d \mb{P}_{\theta_0^\star}^T}{\d \mb P_{\bar \theta}^T} \right)^{-1} =\frac{\exp\left(\textstyle \iprod{Tg(\theta_0)}{\widehat S_T}- A_T(T g(\theta_0))\right)}{\exp\left(\textstyle \iprod{Tg(\theta_0^\star)}{\widehat S_T}- A_T(T g(\theta_0^\star))\right)} =   \exp\left(\iprod{\eta}{T \widehat S_T}- \Lambda_T(T\eta, \theta_0^\star)\right),
\end{equation}
where~$\eta=g(\theta_0)-g(\theta_0^\star)\in\Re^d$ characterizes an exponential tilting between $\mb P_{\theta_0}$ and $\mb P_{\theta_0^\star}$. Here, the second equality follows from~\eqref{eq:RND:exp:fam}, while the third equality exploits the relation~\eqref{eq:log-moment-function}, which ensures that
\[
    A_T(T g(\theta_0))- A_T(T g(\theta_0^\star)) = A_T(T\eta+ g(T \theta_0^\star))- A_T(T g(\theta_0^\star)) = \Lambda_T(T\eta,\theta_0^\star).
\]
In order to simplify the subsequent arguments, we introduce the $\mc F_T$-measurable Bernoulli random variable
\begin{equation*}
    \widehat \zeta_T= \left\{ \begin{array}{ll} 
    1 & \text{if }|\widehat c_T(x_0)-c_\infty(x_0,\theta_0)|<\varepsilon \text{ and } \widehat S_T\in\mb B_\delta(s_0), \\
    0 & \text{otherwise.}
    \end{array}\right.
\end{equation*}
Combining all preparatory results derived so far, we then obtain
\begin{subequations} \label{eq:opt:ctr:diappointment:part2}
\begin{align}
&\nonumber \limsup_{T\to\infty} \frac{1}{T} \log \mb P_{\theta_0^\star}\left[ \widehat c_T(x_0)<c(x_0,\theta_0^\star) \right] \\
&\label{eq:opt:stp:1}\quad \geq \limsup_{T\to\infty} \frac{1}{T} \log  \mb P_{\theta_0^\star }\left[ |\widehat c_T(x_0)-c_\infty(x_0,\theta_0)|<\varepsilon \ \wedge \ \widehat S_T\in\mb B_\delta(s_0) \right] \\
&\label{eq:opt:stp:2}\quad = \limsup_{T\to\infty} \frac{1}{T} \log \mb E_{\theta_0^\star}\left[\widehat \zeta_T \right]\\
&\label{eq:opt:stp:3}\quad = \limsup_{T\to\infty} \frac{1}{T} \log \mb E_{\theta_0^\star} \left[ \widehat \zeta_T \cdot \exp ( - \iprod{\eta}{T \widehat S_T} + \Lambda_T(T\eta, \theta_0^\star)) \cdot \exp ( \iprod{\eta}{T\widehat S_T} - \Lambda_T(T\eta, \theta_0^\star)) \right]\\
&\label{eq:opt:stp:4}\quad = \limsup_{T\to\infty} \frac{1}{T} \log \mb E_{\theta_0} \left[ \widehat \zeta_T \cdot \exp ( - \iprod{\eta}{T \widehat S_T} + \Lambda_T(T\eta, \theta_0^\star)) \right] \\
&\label{eq:opt:stp:5}\quad = \Lambda(\eta, {\theta_0^\star})-\inprod{\eta}{s_0} +  \limsup_{T\to\infty}\frac{1}{T} \log \mb E_{\theta_0} \left[\widehat \zeta_T \cdot \exp (  \langle T \eta,s_0-\widehat S_T\rangle) \right],
\end{align}
\end{subequations}
where~\eqref{eq:opt:stp:1} and~\eqref{eq:opt:stp:2} follow from~\eqref{eq:proof:optimality:step:middle} and the definition of~$\widehat \zeta_T$, respectively, \eqref{eq:opt:stp:3} is obtained by multiplying~$\widehat \zeta_T$ by~$1$, and~\eqref{eq:opt:stp:4} follows from~\eqref{eq:radon-nikodym} and the Radon-Nikodym theorem. Equation~\eqref{eq:opt:stp:5}, finally, is obtained by extracting the deterministic factor~$\exp(\Lambda_T(T\eta, {\theta_0^\star})-\langle T \eta,s_0\rangle)$ from the expectation and recalling that~$\lim_{T\to\infty} \frac{1}{T} \Lambda_T(T\eta,\theta_0^\star) = \Lambda(\eta, {\theta_0^\star})$. As~$\widehat \zeta_T=1$ only if~$\|\widehat S_T - s_0\|\leq \delta$, the Cauchy-Schwartz inequality implies that~$\langle \eta,s_0 - \widehat S_T \rangle \geq -\|\eta\| \delta$ whenever~$\widehat \zeta_T=1$. Thus, \eqref{eq:opt:ctr:diappointment:part2} implies that
\begin{align}
  &\limsup_{T\to\infty} \frac{1}{T} \log \mb P_{\theta_0^\star}\left[ \widehat c_T(x_0)<c(x_0,\theta_0^\star) \right] \nonumber \\
  & \quad \geq \Lambda(\eta, {\theta_0^\star})-\inprod{\eta}{s_0} -\|\eta\|\delta+ \limsup_{T\to\infty} \frac{1}{T} \log \mb E_{\theta_0}\left[\widehat\zeta\right] \nonumber\\
  &\quad \geq \Lambda(\eta, {\theta_0^\star})-\inprod{\eta}{s_0} -\|\eta\|\delta \label{eq:optimality:final:ingred:0}\\
  &\quad \quad \quad +  \limsup_{T\to\infty} \frac{1}{T} \log \bigg(  \mb P_{\theta_0} \left[ |\widehat c_T(x_0)-c_\infty(x_0,\theta_0)|<\varepsilon \right] + \mb P_{\theta_0} \left[ \widehat S_T\in\mb B_\delta(s_0) \right] -1 \bigg) \nonumber \\
  &\quad \geq \Lambda(\eta, {\theta_0^\star})-\inprod{\eta}{s_0} -\|\eta\|\delta, \nonumber
\end{align}
where the second inequality follows from the definition of the random variable~$\widehat \zeta_T$ and the elementary insight that~$\mb P_{\theta_0}[A\cap B] \geq \mb P_{\theta_0}[A] + \mb P_{\theta_0}[B] -1$ for all events~$A,B\in\mc F$. The last inequality holds because
\begin{equation*}
  \lim_{T\to\infty} \mb P_{\theta_0} \left[ |\widehat c_T(x_0)-c_\infty(x_0,\theta_0)|<\varepsilon \right]=1
\end{equation*}
thanks to the definition of a data-driven predictor (see Definition~\ref{def:dd_prediction}\ref{def:dd_prediction:iii:convergence}) and because
\begin{equation*}
  \lim_{T\to\infty} \mb P_{\theta_0}\left[ \widehat S_T\in\mb B_\delta(s_0) \right]=1 \quad \forall \delta>0
\end{equation*}
thanks to the definition of a statistic (see Definition~\ref{def:estimator}) and the definition of~$s_0=S_\infty(\theta_0)$. As~\eqref{eq:optimality:final:ingred:0} holds for every~$\delta>0$, we have effectively shown that
\begin{align*}
\limsup_{T\to\infty} \frac{1}{T} \log \mb P_{\theta_0^\star}\left[ \widehat c_T(x_0)<c(x_0,\theta_0^\star) \right] \geq \Lambda(\eta, {\theta_0^\star})-\inprod{\eta}{s_0} = -I(s_0,\theta_0^\star)= -r_0 > -r,
\end{align*}
which contradicts the feasibility of~$\widehat c_T$ in~\eqref{eq:optimal-predictor}. Here, the first equality holds because~$\eta=g(\theta_0)-g(\theta_0^\star)$ is a maximizer of the unconstrained convex optimization problem on the right hand side of~\eqref{eq:GE:ratefct} at~$s=s_0$ and~$\theta=\theta_0^\star$, which defines the rate function of the G\"artner-Ellis theorem. To see this, note that
\begin{align*}
    \nabla_\lambda\left[\inprod{\lambda}{s_0} - \Lambda(\lambda, {\theta_0^\star}) \right]_{\lambda=\eta}& = s_0 - \nabla_\lambda \left[\Lambda(\lambda,\theta^\star_0)\right]_{\lambda=\eta} \\
    & = s_0 - \lim_{T\to\infty}\mb E_{\theta_0^\star}[\widehat S_T \cdot \exp(\iprod{\eta}{T\widehat S_T}- \Lambda_T(T \eta,\theta_0^\star)) ]\\
    & = s_0- \lim_{T\to\infty}\mb E_{\theta_0}[\widehat S_T] = s_0 - \nabla_\lambda \left[\Lambda(\lambda,\theta_0)\right]_{\lambda=0} =0,
\end{align*}
where the second equality holds due to Remark~\ref{ex:exp:limit-genralized}, the third equality follows from~\eqref{eq:radon-nikodym} and the Radon-Nikodym theorem, the fourth equality exploits~Lemma~\ref{eq:exp:limit}, and the fifth equality follows from~Lemma~\ref{lemm:ge-consistent} and the definition of~$s_0$. In summary, we may conclude that our initial assumption was false and that~$\widehat c^{\,\star}$ indeed Pareto dominates every other feasible solution of problem~\eqref{eq:optimal-predictor}.
\end{proof}

\begin{proof}[Proof of Theorem~\ref{thm:prescriptor:equivalence}]
As~$(\tilde c^\star, \tilde x^\star)\in\tilde{\mc X}$ comprises a compressed data-driven predictor and a compressed data-driven prescriptor in the sense of Definition~\ref{def:compressed:dd_prediction}, $(\widehat c^{\,\star},\widehat x^\star)$ comprises a data-driven predictor in the sense of Definition~\ref{def:dd_prediction}, and a data-driven prescriptor in the sense of Definition~\ref{def:dd_prescription}, that is, $(\widehat c^{\,\star},\widehat x^\star)\in\widehat{\mc X}$. This follows from the discussion after Definition~\ref{def:compressed:dd_prediction}. Similarly, as~$(\tilde c^\star, \tilde x^\star)$ satisfies the rate constraint in~\eqref{eq:optimal-prescriptor:compressed}, one readily verifies that~$(\widehat c^{\,\star},\widehat x^\star)$ satisfies the rate constraint in~\eqref{eq:optimal-prescriptor}. Hence, $(\widehat c^\star, \widehat x^\star)$ is feasible in~\eqref{eq:optimal-prescriptor}. Below we will further show that~$(\widehat c^\star, \widehat x^\star)$ Pareto dominates every other feasible solution of problem~\eqref{eq:optimal-prescriptor}.

Assume for the sake of contradiction that there exists a data-driven predictor-prescriptor pair~$(\widehat c, \widehat x)$ that is feasible in~\eqref{eq:optimal-prescriptor} but not dominated by~$(\widehat c^{\,\star}, \widehat x^\star)$. Hence, there exist a model~$\theta_0\in\Theta$ with 
\begin{equation}
    \label{eq:epsilon-definition}
    \lim_{T\to\infty} \mb E_{\theta_0}[\widehat c^\star_T(\widehat x^\star_T )] - \lim_{T\to\infty} \mb E_{\theta_0}[\widehat c_T(\widehat x_T)] > 0.
\end{equation}
In the following we will show that this inequality contradicts our assumption that~$(\widehat c, \widehat x)$ is feasible in~\eqref{eq:optimal-predictor}.

By the defining properties of data-driven predictors and prescriptors, $\widehat c_T(\widehat x_T)=\min_{x\in X} \widehat c_T(x)$ converges in probability under~$\mb P_{\theta_0}$ to~$v_\infty(\theta_0)$, where~$v_\infty$ is the Borel-measurable function whose existence is postulated in Definition~\ref{def:dd_prediction}\ref{def:dd_pres:iii:convergence}. As there exists a random variable~$\overline c$ with~$\mb E_{\theta_0}[\overline c]<\infty$ and~$|\widehat c_T(\widehat x_T)|\leq \overline c$ $\mb P_{\theta_0}$-almost surely for all~$T\in\mb N$ (see Definition~\ref{def:dd_prediction}\ref{def:dd_prediction:ii:boundedness}), Lemma~\ref{lem:aux:prob} implies that~$\lim_{T\to\infty} \mb E_{\theta_0}[\widehat c_T(\widehat x_T)] = v_\infty(\theta_0)$. Similarly, since~$\widehat S_T$ converges in probability under~$\mb P_{\theta_0}$ to~$S_\infty(\theta_0)$ and since~$\tilde c^\star(x, s)$ is bounded and continuous in~$(x,s)$ on~$\SS_\infty$ thanks to Proposition~\ref{prop:continuity_cr}, the continuous mapping theorem~\cite[Theorem~3.2.4]{durrett_book} implies that~$\widehat c^\star_T(\widehat x^\star_T)= \tilde c^\star(\tilde x^\star(\widehat S_T),\widehat S_T)$ converges in probability under~$\mb P_{\theta_0}$ to~$\tilde c^\star(\tilde x^\star(S_\infty(\theta_0)), S_\infty(\theta_0))$. This in turn ensures via Lemma~\ref{lem:aux:prob} that~$\lim_{T\to\infty} \mb E_{\theta_0}[\widehat c^\star_T(\widehat x^\star_T)]=\tilde c^\star(\tilde x^\star(S_\infty(\theta_0)), S_\infty(\theta_0))$.

We now introduce the optimal value function~$\tilde v^\star:\SS\rightarrow \mb R$ through~$\tilde v^\star(s)=\tilde c^\star(\tilde x^\star(s),s)=\min_{x\in X} \tilde c^\star(x,s)$. Note that~$\tilde v^\star$ inherits boundedness from~$\tilde c^\star$ and is continuous in $s\in\SS$ by Berge's maximum theorem \cite[pp.~115--116]{berge1997topological}. The above arguments show that~\eqref{eq:epsilon-definition} is equivalent to~$\varepsilon=\frac{1}{3}[\tilde v^\star(S_\infty(\theta_0))- v_\infty(\theta_0)]>0$.

By Lemma~\ref{lem:auxiliary:lem1}, which applies because all realizations of the random variable~$\widehat x_T$ fall into the compact set~$X$ for all~$T\in\mb N$, there exists a deterministic function~$x_\infty:\Theta\to X$ that satisfies
\begin{equation}\label{proof:prescr:optimality}
\limsup_{T\to\infty} \mb P_\theta \left[ \|\widehat x_T - x_\infty(\theta)\| <\rho \right]>0 \quad \forall \rho>0 \quad \forall \theta\in \Theta.
\end{equation}
Defining~$x_0=x_\infty(\theta_0)$ and~$s_0=S_\infty(\theta_0)$, we may reuse the reasoning at the beginning of Step~2 in the proof of Theorem~\ref{thm:optimality} to show that there exists~$\theta_0^\star\in\Theta$ with~$I(s_0,\theta_0^\star)=r_0<r$ and 
\begin{equation*}
	\tilde c^\star(x_0,s_0)
    < c(x_0, \theta_0^\star) +\varepsilon.
\end{equation*}
As~$x_0\in X$ by construction, we further have
\begin{equation*}
\tilde c^\star(x_0,s_0)
\geq  \min_{x\in X} \tilde c^\star(x,s_0) =  \tilde v^\star(s_0)= v_\infty(\theta_0) + 3\varepsilon,
\end{equation*}
where the two equalities follow from the definitions of~$\tilde v^\star$ and~$\varepsilon$, respectively. Combining the two inequalities above then yields~$c(x_0, \theta_0^\star) > v_\infty(\theta_0) + 2\varepsilon$. As~$c(x,\theta_0^\star)$ is continuous in~$x\in X$ thanks to Assumption~\ref{ass:continuity}, we may finally conclude that there exists a tolerance~$\rho>0$ such that
\begin{equation} \label{eq:proof:prescr:optimality:step2}
c(x,\theta_0^\star) > v_\infty(\theta_0)+\varepsilon \quad \forall x\in X:\; \|x-x_0\|<\rho.
\end{equation}
Armed with these preliminary results, we are now ready to show that~$(\widehat c,\widehat x)$ fails to be feasible in~\eqref{eq:optimal-predictor}. 
To this end, we may use a similar reasoning as in the proof of Theorem~\ref{thm:predictor:equivalence} to demonstrate that
\begin{equation}
\label{eq:proof:prescr:optimality:step3}
\begin{aligned}
&\limsup_{T\to\infty} \frac{1}{T} \log \mb P_{\theta_0^\star}[\widehat c_T(\widehat x_T) < c(\widehat x_T, \theta_0^\star)]\\
&\quad \geq  \limsup_{T\to\infty} \frac{1}{T} \log \mb P_{\theta_0^\star}\left[ \widehat c_T(\widehat x_T) < v_\infty(\theta_0)+\varepsilon  \ \wedge \ \widehat S_T\in \mb B_{\delta}(s_0) \ \wedge \ \|\widehat x_T - x_0\| <\rho\right] \\
& \quad \geq  \Lambda(\eta, \theta_0^\star)-\iprod{\eta}{s_0} -\|\eta\| \delta      \\
& \qquad \quad + \limsup_{T\to\infty} \frac{1}{T} \log  \mb P_{\theta_0}\left[ \widehat c_T(\widehat x_T) < v_\infty(\theta_0) +\varepsilon 
     \ \wedge \ \widehat S_T\in \mb B_{\delta}(s_0) \ \wedge \ \|\widehat x_T - x_0\| <\rho \right],
\end{aligned}
\end{equation}
where the two inequalities follow from~\eqref{eq:proof:prescr:optimality:step2} and from a change of measure argument akin to~\eqref{eq:opt:ctr:diappointment:part2}--\eqref{eq:optimality:final:ingred:0}. Details are omitted for brevity of exposition. As~$\widehat c_T(\widehat x_T)$ converges in probability to~$v_\infty(\theta_0)$ under~$\mb P_{\theta_0}$ (by the definition of a data-driven predictor) and as~$\widehat S_T$ converges in probability to~$S_\infty(\theta_0)$ under~$\mb P_{\theta_0}$ (by the definition of a statistic and by the construction of~$s_0=S_\infty(\theta_0)$), one readily verifies that
\begin{equation*} 
\lim_{T\to\infty} \mb P_{\theta_0} \left[\widehat c_T(\widehat x_T) < v_\infty(\theta_0) \ \wedge \ \widehat S_T\in \mb B_{\delta}(s_0)\right]=1.
\end{equation*}
As~$x_0=x_\infty(\theta_0)$, \eqref{proof:prescr:optimality} further implies that~$\limsup_{T\to\infty} \mb P_{\theta_0}[ \|\widehat x_T - x_0\| <\rho]>0$ for all~$\delta>0$. Thus, we have 
\begin{align*}
&\limsup_{T\to\infty}\frac{1}{T}\log \mb P_{\theta_0} \left[ \widehat c_T(\widehat x_T) < v_\infty(\theta_0)+\varepsilon \ \wedge \ \widehat S_T\in \mb B_{\delta}(s_0) \ \wedge \ \|\widehat x_T - x_0\| <\rho \right] \\
& \qquad \geq \limsup_{T\to\infty} \frac{1}{T} \log \bigg(  \mb P_{\theta_0} \left[\widehat c_T(\widehat x_T) < v_\infty(\theta_0) \ \wedge \ \widehat S_T\in \mb B_{\delta}(s_0)\right] + \mb P_{\theta_0}[ \|\widehat x_T - x_0\| <\rho] -1 \bigg)=0,
\end{align*}
where the inequality follows using the elementary insight that~$\mb P_{\theta_0}[A\cap B] \geq \mb P_{\theta_0}[A] + \mb P_{\theta_0}[B] -1$ for all events~$A,B\in\mc F$. Combining this estimate with~\eqref{eq:proof:prescr:optimality:step3} then yields
\begin{align*}
&\limsup_{T\to\infty}\frac{1}{T}\log \mb P_{\theta_0^\star}[\widehat c_T(\widehat x_T) < c(\widehat x_T, \theta_0^\star)]
\geq \Lambda(\eta, \theta_0^\star) - \inprod{\eta}{s_0}-\|\eta\| \delta.
\end{align*}
As the above inequality holds for every~$\delta>0$, we have effectively shown that
\begin{align*}
&\limsup_{T\to\infty}\frac{1}{T}\log \mb P_{\theta_0^\star}[\widehat c_T(\widehat x_T) < c(\widehat x_T, \theta_0^\star)]
\geq \Lambda(\eta, \theta_0^\star) - \inprod{\eta}{s_0} 
=-I(s_0,\theta_0^\star) = -r_0 >-r,
\end{align*}
where the two equalities follow from the relation~$I(s_0, \theta_0^\star)=\iprod{\eta}{s_0}-\Lambda(\eta, \theta_0^\star)$ established in the proof of Theorem~\ref{thm:predictor:equivalence} and the definition of~$r_0$, respectively. This contradicts the feasibility of $(\widehat c, \widehat x)$ in~\eqref{eq:optimal-prescriptor}.
\end{proof}

\section{Proofs of Section~\ref{sec:models}}

\begin{proof}[Proof of Proposition~\ref{prop:properties:cond:rel:entropy}]
The continuity of~$D_c(s\|\theta)$ on~$\SS \times \Theta$ follows directly from Definition~\ref{def:conditional_relative_entropy} and our standard conventions for the logarithm. Next, we show that the level sets of the form~$\{(s,\theta)\in \SS\times\cl \Theta:\Dc{s}{\theta}\leq r\}$ are compact for all fixed thresholds~$r\geq 0$. It is clear that all these level sets are bounded because both~$\SS$ and~$\cl\Theta$ are bounded. In addition, they are closed because~$\Dc{s}{\theta}$ is lower semi-continuous on~$\SS\times\cl\Theta$ by construction. 
To prove that the conditional relative entropy~$\Dc{s}{\theta}$ is radially monotonic in $\theta$, 
we first observe that the following equivalent inequalities hold for all vectors~$v,w\in\mb R^d_{++}$ thanks to Jensen's inequality.
\begin{align}
	\label{eq:jensen}
	\sum_{j\in\Xi} \frac{w_j}{\sum_{k\in\Xi} w_k} \left(\frac{v_j}{w_j}\right)^2\geq \left(\sum_{j\in\Xi} \frac{w_j}{\sum_{k\in\Xi} w_k} \frac{v_j}{w_j}\right)^2  \quad \iff\quad \sum_{j\in\Xi}  \frac{v_j^2}{w_j} 
	\geq \frac{\left(\sum_{j\in\Xi} v_j\right)^2}{\sum_{k\in\Xi} w_k} 
\end{align}
Note further that these inequalities are strict unless~$v$ and~$w$ are parallel, in which case the fraction~$v_j/w_j$ is constant in~$j$. Next, select any~$\theta\in\cl\Theta$ and~$s\in\SS_\infty=\Theta$, and define $\theta(\lambda)= (1-\lambda)s+\lambda\theta$ for any $\lambda\in[0,1)$. By construction, $\theta(\lambda)\in\Theta$ for all~$\lambda\in [0,1)$. Basic algebra further implies that
\begin{align*}
	\frac{\d}{\d \lambda} \Dc{s}{\theta(\lambda)} ~&= \sum_{i,j\in\Xi} s_{ij} \left( \frac{\sum_{k\in\Xi} \theta_{ik}- s_{ik}}{\sum_{k\in\Xi} \theta_{ik}(\lambda)}  - \frac{\theta_{ij}-s_{ij}}{\theta_{ij}(\lambda)}\right) \\
	&= \frac{1}{\lambda}\sum_{i,j\in\Xi} s_{ij} \left( \frac{\sum_{k\in\Xi} \theta_{ik}(\lambda) - s_{ik}}{\sum_{k\in\Xi} \theta_{ik}(\lambda)}  - \frac{\theta_{ij}(\lambda)-s_{ij}}{\theta_{ij}(\lambda)}\right) \\
	&= \frac{1}{\lambda}\sum_{i\in\Xi} \left( \sum_{j\in\Xi} \frac{\left(s_{ij}\right)^2}{\theta_{ij}(\lambda)}  - \frac{\left(\sum_{j\in\Xi} s_{ij}\right)^2}{\sum_{j\in\Xi}\theta_{ij}(\lambda)}\right) \geq 0 \quad \forall \lambda\in(0,1),
\end{align*}
where the inequality follows from~\eqref{eq:jensen}. Note that this inequality collapses to an equality if and only if each row of $\theta$ is parallel to the corresponding row of $s$, that is, if and only if the transition probability matrices~$P_{\theta}$ and~$P_{s}$ induced by~$\theta$ and~$s$, respectively, are identical. In this special case we have~$\Dc{s}{\theta(\lambda)}=0$ for all~$\lambda\in[0,1)$. Otherwise, $\Dc{s}{\theta(\lambda)}$ is strictly monotonically increasing in $\lambda$. Thus, $\Dc{s}{\theta}$ satisfies the inequality~\eqref{eq:suff:radial:mon}, which is sufficient for radial monotonicity; see the discussion after Definition~\ref{def:rate_function}. In summary, we have thus shown that the rate function~$\Dc{s}{\theta}$ is indeed regular. Finally, $\Dc{s}{\theta}$ is convex in~$s$ because the perspective function $v\log(v/w)$ is convex in~$(v,w)\in\mb R_+^2$ \cite[Section~3.2.6]{boyd2004convex} and because convexity is preserved under combinations with linear functions \cite[Section~3.2.2]{boyd2004convex}.
\end{proof}

\begin{proof}[Proof of Proposition~\ref{thm:AR:multi-dim:modelclass:2}]
Fix any~$\theta\in\Theta$, and define a probability measure~$\mb P'_\theta$ on~$(\Omega,\mc F)$ under which the observations~$\{\xi_t\}_{t\in\N}$ are jointly normally distributed with~$\mb E'_\theta[\xi_t] = 0$ and~$\mb E'_\theta[\xi_t \xi_{s}^\top] = R_\delta$ for~$\delta=t-s$, where~$\mb E'_\theta[\cdot]$ denotes the expectation under~$\mb P'_\theta$. Note that~$\mb P'_\theta$ and~$\mb P_\theta$ assign different means to~$\xi_t$ but are otherwise indistinguishable. 
The log-moment generating function of the sample mean~$\widehat \mu_T=\frac{1}{T} \sum_{t=1}^T \xi_t$ under~$\mb P'_\theta$ is then given by
\begin{align*}
    \Lambda'_T(\lambda,\theta) &= \log\mb E'_{\theta}\left[\exp(\lambda\tpose \widehat \mu_T)\right] = \frac{1}{2} \lambda\tpose \mb E'_\theta \left[\widehat \mu_T \widehat \mu_T\tpose \right]  \lambda = \frac{1}{2T^2} \sum_{s,t=1}^T \lambda\tpose R_{t-s}  \lambda=\frac{1}{2T} \sum_{\delta=-T}^T \left(1-\frac{|\delta|}{T}\right) \lambda\tpose R_\delta\lambda,
\end{align*}
where the second equality follows from the formula for the mean value of a lognormal random variable, while the third equality exploits the definitions of the sample mean~$\widehat \mu_T$ and the cross-covariance matrix of the observations~$\xi_t$ and~$\xi_s$.
This implies that the limiting log-moment generating function is representable as
\begin{align*}
    \Lambda'(\lambda,\theta) & = \lim_{T\to\infty} \frac{1}{T} \Lambda'_T( T \lambda,\theta) 
    = \lim_{T\to\infty} \frac{1}{2} \sum_{\delta=-T}^T \left(1-\frac{|\delta|}{T}\right)\lambda\tpose R_\delta \lambda \\ 
    & = \frac{1}{2}\lambda\tpose \left( R_0+ \lim_{T\to\infty} \sum_{\delta=1}^T\left(1 - \frac{\delta}{T}\right) A^\delta R_0 + \lim_{T\to\infty} \sum_{\delta=1}^T \left(1 - \frac{\delta}{T}\right)  R_0 (A^\delta)\tpose \right)\lambda\\
    & = \frac{1}{2}\lambda\tpose \left( (\mathbb{1}_d -\A)^{-1}R_0 + R_0 (\mathbb{1}_d -\A\tpose)^{-1} - R_0\right) \lambda,
\end{align*}
where the second equality holds because~$R_{\delta} = A^{\delta} R_0$ for~$\delta> 0$ and~$R_{\delta} =  R_0(A^{-\delta})\tpose$ for~$\delta < 0$, while the third equality follows from the asymptotic stability of~$A$ and the geometric series formulas
\[
    \lim_{T\to\infty}\sum_{\delta=1}^T A^\delta=(\mb 1_d-A)^{-1}-\mb 1_d\qquad \text{and} \qquad \lim_{T\to\infty}\sum_{\delta=1}^T \frac{\delta}{T} A^\delta = 0.
\]
We have thus demonstrated that~$\Lambda'(\lambda,\theta)= \frac{1}{2}\lambda\tpose Q\lambda$ is independent of~$\theta$ and quadratic in~$\lambda$ with Hessian matrix~$Q=(\mathbb{1}_d -\A)^{-1}R_0 + R_0 (\mathbb{1}_d -\A\tpose)^{-1} - R_0$. By the definitions of~$Q$ and~$R_0$ we have
\begin{equation}
    \label{eq:Q-vs-Sigma}
    (\mathbb{1}_d -\A)Q(\mathbb{1}_d -\A\tpose) = R_0 - \A R_0 \A\tpose = \Sigma.
\end{equation}
As~$A$ is asymptotically stable, the matrix~$\mathbb{1}_m -\A$ is invertible. The above relation thus implies that~$Q$ inherits positive definiteness from~$\Sigma$. Consequently, the quadratic function~$\Lambda'(\lambda,\theta)$ is strictly convex in~$\lambda$ and is easily seen to satisfy Assumption~\ref{ass:LDP2}. The G\"artner-Ellis Theorem thus ensures that the sample mean satisfies an \ac{ldp} under~$\mb P'_\theta$ with good rate function~$I_{\widehat\mu}'(\mu,\theta)= \sup_{\lambda\in \Re^d} \; \iprod{\lambda}{\mu}-\Lambda'(\lambda, \theta) = \frac{1}{2}\mu\tpose Q^{-1}\mu$; see Theorem~\ref{thm:Gaertner-Ellis}. Note that under~$\mb P_\theta$ the expected value of the sample mean~$\widehat \mu_T$ is given by~$(\mb 1_d-A)^{-1} \theta$ instead of~$0$, but the distribution of~$\widehat \mu_T$ has the same shape under~$\mb P_\theta$ and~$\mb P'_\theta$. Therefore, $\widehat\mu_T$ also satisfies an \ac{ldp} under~$\mb P_\theta$ with good rate function~$I_{\widehat\mu}(\mu,\theta)= \frac{1}{2}(\mu-(\mb 1_d-A)^{-1} \theta)\tpose Q^{-1}(\mu-(\mb 1_d-A)^{-1} \theta)$.
As~$\widehat S_T = (\mb 1_d-A)\widehat \mu_T$ and as~$A$ is asymptotically stable, the contraction principle \cite[Theorem~4.2.1]{dembo2009large} further implies that the scaled sample mean~$\widehat S_T$ defined in~\eqref{estimator:SAA:scaled} satisfies an \ac{ldp} with good rate function
\[
    I(s,\theta)=I_{\widehat \mu}((\mb 1_d-A)^{-1} s,\theta) = \frac{1}{2}(s-\theta)\tpose (\mathbb{1}_m -\A\tpose)^{-1} Q^{-1} (\mathbb{1}_m -\A)^{-1}(s-\theta) = \frac{1}{2}(s-\theta)\tpose \Sigma^{-1}(s-\theta),
\]
where the last equality follows from~\eqref{eq:Q-vs-Sigma}. The regularity of the rate function~$I(s,\theta)$ is easy to check.
\end{proof}

\begin{proof}[Proof of Proposition~\ref{thm:AR:multi-dim:modelclass:3}]
By \cite[Proposition~8]{bercu1997quadraticLDP}, the least squares estimator satisfies an \ac{ldp} with rate function~\eqref{ratefunction:least-squares}. To see that this rate function is regular, note first that~$a(\theta)>-1$ and~$b(\theta)<+1$ for all~$\theta\in \Theta$. This implies that~$(1-2\theta s+\theta^2)/(1-s^2)\ge 1$ whenever~$s\in[a(\theta),b(\theta)]$. Note also that~$|\theta-2s|>1$ whenever~$s\notin[a(\theta),b(\theta)]$. Hence, the rate function~\eqref{ratefunction:least-squares} is indeed non-negative on~$\SS\times\cl\Theta$. In addition, one readily verifies that the two pieces of~$I(s,\theta)$ described in the first and the second line of~\eqref{ratefunction:least-squares}, respectively, match whenever $s = a(\theta)$ or~$s=b(\theta)$, and thus~$I(s,\theta)$ is continuous on~$\SS\times \cl \Theta$. This in turn implies that all sublevel sets of~$I(x,\theta)$ are closed. As~$\cl\Theta=[-1,1]$ is bounded and as~$\log(|\theta-2s|)$ is bounded below by the coercive function~$\log(|2s|-1)$ uniformly across all~$s\notin [a(\theta,b(\theta)]$ and $\theta\in\cl \Theta$, the sublevel sets of~$I(x,\theta)$ are also bounded. Thus, $I(x,\theta)$ satisfies the level-compactness condition of Definition~\ref{def:rate_function}. To prove radial monotonicity, fix any~$s\in \SS_\infty = \Theta$ and any~$\theta\in\cl\Theta$, and define $\theta(\lambda)=(1-\lambda)s + \lambda \theta$ for all~$\lambda\in[0,1)$. If~$s=\theta$, then~$I(s,\theta(\lambda))=0$ for all~$\lambda\in[0,1)$, and thus~\eqref{eq:suff:radial:mon} is satisfied. If~$s\neq\theta$, on the other hand, then~$I(s,\theta(\lambda))$ is strictly monotonically increasing in $\lambda$, which implies that~\eqref{eq:suff:radial:mon} holds as a strict inequality. Indeed, if~$s\in[a(\theta),b(\theta)]$, then we have
\[
    \frac{\d}{\d \lambda}I(s,\theta(\lambda)) = \frac{(\theta(\lambda)-s)(\theta - s)}{1-2\theta(\lambda)s +\theta(\lambda)^2}>0 \quad\forall\lambda\in(0,1).
\]
Note that the denominator of the above fraction is strictly positive because~$s,\theta\in(-1,1)$ for~$s\in[a(\theta),b(\theta)]$. The numerator is also strictly positive because~$\theta(\lambda)-s$ and~$\theta-s$ must have the same sign and because~$s\neq \theta$. If~$s<a(\theta)$, on the other hand, then we have~$\theta-2s>1$, which implies that~$s<0$ because~$\theta<1$. In addition, as~$a(\theta)\leq \theta$, the assumption~$s<a(\theta)$ also ensures that~$s<\theta$. Using~$s<0$ and~$\theta-2s>1$, we thus find
\[
    \theta(\lambda)-2s\geq \min\{s-2s,\theta-2s\}>\min\{1,0\}=0\quad\forall \lambda\in (0,1).
\]
These observations imply that~$\frac{\d}{\d \lambda}I(s,\theta(\lambda)) = (\theta-s)/(\theta(\lambda)-2s)>0$ for all~$\lambda\in (0,1)$. If~$s>b(\theta)$, finally, then we have~$2s-\theta>1$, which implies that~$s>0$ because~$\theta>-1$. In addition, as~$\theta\leq b(\theta)$, the assumption~$s>b(\theta)$ also ensures that~$s>\theta$. Using~$s>0$ and~$2s-\theta>1$, we thus find
\[
    2s-\theta(\lambda)\geq \min\{2s-s,2s-\theta\}>\min\{0,1\}=0\quad\forall \lambda\in (0,1).
\]
These observations imply that~$\frac{\d}{\d \lambda}I(s,\theta(\lambda)) = (s-\theta)/(2s-\theta(\lambda))>0$ for all~$\lambda\in (0,1)$. Thus, $I(s,\theta(\lambda))$ is indeed strictly increasing in~$\lambda$ whenever~$s\neq \theta$. In summary, $I(s,\theta)$ satisfies the inequality~\eqref{eq:suff:radial:mon}, which is sufficient for radial monotonicity; see the discussion after Definition~\ref{def:rate_function}. Thus, the claim follows.
\end{proof}

\begin{proof}[Proof of Proposition~\ref{thm:AR:multi-dim:modelclass:4}]
By~\cite[Proposition~8]{bercu1997quadraticLDP}, the Yule-Walker estimator satisfies an \ac{ldp} with rate function~\eqref{ratefunction:Yule:Walker}. By construction, this rate function is non-negative and continuous throughout~$\SS\times\cl\Theta$ except at the two points~$(1,1)$ and~$(-1,-1)$, where the function is only lower semi-continuous. All sublevel sets of~$I(x,\theta)$ are closed (as the function is lower semicontinuous) and bounded (as the function evaluates to~$\infty$ outside of the bounded set~$[-1,1]^2$), and thus~$I(s,\theta)$ satisfies the level-compactness condition of Definition~\ref{def:rate_function}. Radial monotonicity can be proved as in Proposition~\ref{thm:AR:multi-dim:modelclass:3} with obvious minor modifications. Details are omitted for brevity. 
\end{proof}

\section{Auxiliary probabilistic results}\label{app:auxiliary}

Throughout this section we assume that all random objects are defined on a probability space~$(\Omega,\mc F,\mb P)$ with a fixed probability measure~$\mb P$, and we denote the expectation operator with respect to~$\mb P$ by~$\mb E[\cdot]$.

\begin{lemma} \label{lem:aux:prob}
If the real-valued random variables~$z_T$, $T\in\N$, converge in probability to a random variable~$z_\infty$ and if there exists a random variable~$\bar z$ with~$|z_T|\leq \bar z$ for all~$T\in\N$ and~$\mb E[\bar z]<\infty$, then $\lim_{T\to\infty}\mb E[z_T]=\mb E[z_\infty]$.
\end{lemma}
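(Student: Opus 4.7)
The plan is to establish this as a dominated convergence theorem for convergence in probability, via the standard subsequence trick. First I would observe that the dominating random variable $\bar z$ also controls the limit $z_\infty$: since $z_T\to z_\infty$ in probability, there exists a subsequence $\{z_{T_k}\}$ that converges $\mb P$-almost surely to $z_\infty$, and passing to the limit in the inequality $|z_{T_k}|\leq \bar z$ yields $|z_\infty|\leq \bar z$ $\mb P$-almost surely. In particular, $z_\infty$ is integrable and $\mb E[z_\infty]$ is well-defined.

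Next I would argue by contradiction. Suppose $\mb E[z_T]$ does not converge to $\mb E[z_\infty]$. Then there exist $\varepsilon>0$ and a subsequence $\{z_{T_k}\}_{k\in\N}$ with
\[
    |\mb E[z_{T_k}]-\mb E[z_\infty]| > \varepsilon \quad \forall k\in\N.
\]
Because $z_T$ converges to $z_\infty$ in probability, every subsequence inherits this convergence in probability, and hence $\{z_{T_k}\}_{k\in\N}$ admits a further subsequence $\{z_{T_{k_j}}\}_{j\in\N}$ that converges $\mb P$-almost surely to $z_\infty$.

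Now the classical dominated convergence theorem applies to $\{z_{T_{k_j}}\}_{j\in\N}$: the sequence converges almost surely to~$z_\infty$ and is uniformly dominated by the integrable envelope~$\bar z$. Therefore $\mb E[z_{T_{k_j}}]\to \mb E[z_\infty]$ as $j\to\infty$, which contradicts the lower bound $|\mb E[z_{T_{k_j}}]-\mb E[z_\infty]|>\varepsilon$ inherited from the first subsequence. Hence the original sequence $\mb E[z_T]$ must converge to $\mb E[z_\infty]$, as required. There is no real obstacle here; the only subtlety is the need to extract two successive subsequences (the first from the contradiction hypothesis and the second to upgrade convergence in probability to almost sure convergence) before invoking dominated convergence.
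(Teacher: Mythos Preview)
Your proof is correct and follows essentially the same approach as the paper: extract from any subsequence a further almost-surely convergent subsequence and apply the dominated convergence theorem. The paper phrases this via the abstract subsequence criterion for convergence (every subsequence has a further subsequence with the desired limit), whereas you phrase it as a contradiction argument, but these are equivalent formulations of the same idea.
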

\begin{proof}
If~$\{z_{T(m)}\}_{m\in\N}$ is a subsequence of~$\{z_T\}_{T\in\N}$, then~\cite[Theorem~2.3.2]{durrett_book} ensures that there exists a further subsequence~$\{z_{T(m_{k})}\}_{k\in\N}$ that converges almost surely to~$z_\infty$. By the dominated convergence theorem, we may thus conclude that~$\lim_{k\to\infty} \mb E[z_{T(m_{k})}] = \mb E[z_\infty]$. As the subsequence~$\{z_{T(m)}\}_{m\in\N}$ was chosen arbitrarily, this finally implies via~\cite[Theorem~2.3.3]{durrett_book} that $\lim_{T\to\infty}\mb E[z_T] = \mb E[z_\infty]$.
\end{proof}

\begin{lemma}\label{lem:auxiliary:lem1}
If the random variables~$z_T$, $T\in\N$, take values in a compact state space~$Z\subseteq \mb R^n$, then there exists a deterministic vector~$z_\infty\in Z$ such that~$\limsup_{T\to\infty} \mb P [ \|z_T - z_\infty\| <\rho]>0$ for all~$\rho>0$.
\end{lemma}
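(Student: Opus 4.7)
My plan is to exploit tightness and weak subsequential limits of the laws of $z_T$ on the compact set $Z$, then pick $z_\infty$ inside the support of any such limit measure.

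First, let $\mu_T$ denote the law of $z_T$ on $(\mb R^n, \mc B(\mb R^n))$, and note that each $\mu_T$ is supported on the compact set $Z$. Compactness of $Z$ makes the family $\{\mu_T\}_{T\in \N}$ automatically tight, so Prokhorov's theorem yields a subsequence $\{\mu_{T(k)}\}_{k\in \N}$ that converges weakly to some Borel probability measure $\mu_\infty$ on $\mb R^n$. Since $Z$ is closed, the Portmanteau theorem gives $\mu_\infty(Z) \geq \limsup_k \mu_{T(k)}(Z) = 1$, so $\mu_\infty$ is supported on $Z$ as well. In particular, the (topological) support $\mathrm{supp}(\mu_\infty) \subseteq Z$ is non-empty, and I will fix any $z_\infty \in \mathrm{supp}(\mu_\infty)$.

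Next, I will verify that this deterministic $z_\infty$ works for every radius $\rho > 0$. Let $B_\rho(z_\infty) = \{z\in \mb R^n : \|z - z_\infty\| < \rho\}$ be the open Euclidean ball. By definition of the support, $\mu_\infty(B_\rho(z_\infty)) > 0$, since $B_\rho(z_\infty)$ is an open neighborhood of $z_\infty \in \mathrm{supp}(\mu_\infty)$. Applying the Portmanteau theorem once more to the open set $B_\rho(z_\infty)$ yields
\[
  \liminf_{k\to\infty} \mb P\left[\|z_{T(k)} - z_\infty\| < \rho\right] = \liminf_{k\to\infty} \mu_{T(k)}(B_\rho(z_\infty)) \geq \mu_\infty(B_\rho(z_\infty)) > 0.
\]
Since $\{T(k)\}_{k\in\N}$ is a subsequence of $\N$, this immediately implies
\[
  \limsup_{T\to\infty} \mb P\left[\|z_T - z_\infty\| < \rho\right] \geq \liminf_{k\to\infty} \mb P\left[\|z_{T(k)} - z_\infty\| < \rho\right] > 0,
\]
which is the desired conclusion. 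The argument contains no real obstacle beyond standard measure-theoretic tools; the only point that requires attention is committing to a single $z_\infty$ that works uniformly across all $\rho > 0$, which is handled automatically by choosing $z_\infty$ in the support of $\mu_\infty$, since every open ball around a support point has strictly positive $\mu_\infty$-mass.
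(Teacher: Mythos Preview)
Your proof is correct and takes a genuinely different route from the paper. The paper builds~$z_\infty$ by hand: at stage~$k$ it covers the current region by finitely many balls of radius~$\rho^k=2^{-(k-1)}$, uses a pigeonhole argument to pick one ball~$B^k_{j^k}$ with $\limsup_{T\to\infty}\mb P[z_T\in B^k_{j^k}]>0$, and lets~$z_\infty^k$ be its center; the centers form a Cauchy sequence whose limit is the required~$z_\infty$. Your argument instead passes to a weakly convergent subsequence of the laws~$\mu_T$ via Prokhorov, takes~$z_\infty$ in the support of the limit measure~$\mu_\infty$, and reads off the conclusion from the open-set direction of Portmanteau. Your approach is shorter and more conceptual, and it makes transparent why a \emph{single}~$z_\infty$ works for all~$\rho>0$ (every open neighborhood of a support point has positive mass). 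The paper's construction, by contrast, is completely elementary---it avoids Prokhorov and Portmanteau entirely---and is explicitly constructive, which may be preferable if one wants to keep the appendix self-contained or track how~$z_\infty$ depends on the sequence.
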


If the sequence~$\{z_T\}_{T\in\N}$ converges in probability, then~$z_\infty$ may be set to any point in the support of the limiting random variable. We emphasize, however, that Lemma~\ref{lem:auxiliary:lem1} remains valid even if the sequence~$\{z_T\}_{T\in\N}$ fails to converge. One can thus think of~$z_\infty$ as a probabilistic accumulation point of~$\{z_T\}_{T\in\N}$. 

\begin{proof}[Proof of Lemma~\ref{lem:auxiliary:lem1}]
As~$Z$ is bounded, we may assume without loss of generality that~$\|z\|\leq 1$ for all~$z\in Z$. We then  construct~$z_\infty$ as follows. First, we define~$z_\infty^0=0$. Next, for each~$k\in\N$, we set~$\rho^k=1/2^{k-1}$, and we recursively use the procedure described below to construct~$z_\infty^k\in Z$ with~$\|z_\infty^k-z_\infty^{k-1}\|\leq \rho^k$ and
\begin{equation}
    \label{eq:iterate-k}
    \limsup_{T\to\infty} \mb P\Big[ \|z_T - z_{\infty}^k\| < \rho^k \Big] >0.
\end{equation}
Before showing how one can construct iterates~$z_\infty^k$ with these properties, we explain how they can be used to prove the the lemma. To this end, note that $\{z_\infty^k\}_{k\in\N}$ represents a Cauchy sequence because 
\[
    \|z_\infty^{k}-z_\infty^{k'}\|\leq \sum_{\ell=k}^{k'-1} \|z_\infty^{\ell+1}-z_\infty^\ell\|\leq \sum_{\ell=k}^{k'-1} \frac{1}{2^\ell}\leq \sum_{\ell=k}^{\infty} \frac{1}{2^\ell} = \frac{1}{2^{k}}=\frac{\rho^k}{2} \quad \forall k<k'
\]
and because~$\rho^k$ converges to~$0$ as~$k$ grows. As~$Z\subseteq \mb R^n$ is compact, the sequence~$\{z_\infty^k\}_{k\in\N}$ thus converges to a point~$z_\infty\in Z$ that satisfies~$\|z^{k}_\infty-z_\infty\|\leq \rho^k/2$ for all~$k\in\N$. Next, select any~$\rho>0$ and an arbitrary~$k\in \N$ with~$\rho^k<2\rho/3$. The triangle inequality then implies that for all~$z\in Z$ with~$\|z-z_\infty^k\|<\rho^k$ we have
\[
    \|z-z_\infty\|\leq \|z-z_\infty^k\|+\|z_\infty^k-z_\infty\|\leq 3\rho^k/2<\rho.
\]
We may thus conclude that
\begin{equation*}
    \limsup_{T\to\infty} \mb P\Big[ \|z_T - z_{\infty}\| < \rho \Big] \ge 
    \limsup_{T\to\infty} \mb P\Big[ \|z_T - z_{\infty}^k\| < \rho^k \Big]>0,
\end{equation*}
where the strict inequality follows from~\eqref{eq:iterate-k}. As~$\rho>0$ was chosen arbitrarily, the claim follows.

It remains to be shown that one can always construct iterates~$z_\infty^k\in Z$ with~$\|z_\infty^k-z_\infty^{k-1}\|\leq \rho^k$ that satisfy~\eqref{eq:iterate-k}. To see this, initialize the iteration counter as~$k=1$, and set~$Z^k=Z$. As~$Z^k$ is bounded, there exist a finite index set~$\mc J^k$ and finitely many points~$\overline z_{j}^k\in Z^k$, $j\in \mc J^k$, such that the open balls~$B_{j}^k=\{z\in Z:\|z-\overline z_{j}^k\|< \rho^k\}$, $j\in\mc J^k$, cover~$Z^k$. Next, select~$j^k\in\mc J^k$ with~$\limsup_{T\to\infty} \mb P[z_T\in B_{j^k}^k]>0$, and set~$z_{\infty}^k=\overline z_{j^k}^k$. Note that~$j^k$ exists because~$\mb P[z_T\in \cup_{j\in\mc J^k}B_{j}^k]=\mb P[z_T\in Z^k]$ for all~$T\in\N$ and because~$\limsup_{T\to\infty}\mb P[z_T\in Z^k]>0$. Finally, define~$Z^{k+1}=B^k_{j^k}$, increment the iteration counter~$k$ and repeat the above procedure. By construction, $z_\infty^{k+1}$ belongs to~$Z$ as well as to the ball of radius~$\rho^k$ around~$z_\infty^k$, and it satisfies~\eqref{eq:iterate-k} for every~$k\in\mb N$. Hence, the sequence~$\{z_\infty^k\}_{k\in\N}$ displays all desired properties.
\end{proof}

\section{Mean-variance portfolio selection}\label{app:portfolio:optimization}

We exemplify the construction of optimal data-driven predictors and prescriptors in the context of a Marko\-witz-type portfolio selection problem with i.i.d.\ Gaussian asset returns. We adopt here all conventions and assumptions of Section~\ref{sec:iid-parametric}. The portfolio selection problem to be studied seeks a long-short portfolio from within the feasible set $X = \{x\in\mathbb R^d:\sum_{i=1}^d x_i=1\}$ that minimizes the mean-variance objective
\begin{equation}\label{eq:cost:mean:var:port}
c(x,\theta) =  \mathbb{E}_\theta[-x^\top \xi] + \rho \, \mathsf{Var}_\theta(x^\top \xi)= - x^\top \theta + \rho x^\top \Sigma x,
\end{equation}
where the vector~$\xi$ of asset returns is governed by a Gaussian distribution with mean $\theta\in\mathbb R^d$ and covariance matrix $\Sigma\in\mathbb R^{d\times d}$. Note that~$\Sigma$ can be estimated to within reasonable accuracy from about one year of return data, whereas~$\theta$ is subject to a blurring phenomenon and is hard to estimate accurately even when ten years of return data are available \cite[Chapter~8]{ref:Luenberger-97}. It is therefore reasonable to assume that~$\Sigma$ is known but~$\theta$ is unknown. In the following, we aim to construct a data-driven predictor-prescriptor pair~$(\widehat c_T, \widehat x_T)$ whose out-of-sample disappointment decays at a
prescribed rate~$r>0$. From Section~\ref{sec:iid-parametric} we know that the optimal (least conservative) predictor-prescriptor pair with this property is given by
\begin{equation*}
    \widehat c_T^\star(x) = \max_{\theta\in\mathbb{R}^d} \left\{- x^\top \theta + \rho x^\top \Sigma x \ : \ \frac{1}{2}(\theta-\widehat S_T)^\top \Sigma^{-1}(\theta-\widehat S_T) \leq r \right\} \quad\text{and}\quad \widehat x_T^\star \in \arg\min_{x\in X} \widehat c_T^\star (x),
\end{equation*}
where $\widehat S_T=\frac{1}{T}\sum_{t=1}^T\xi_t$ denotes the empirical average return. More precisely, Theorems~\ref{thm:predictor:equivalence} and~\ref{thm:prescriptor:equivalence} imply that $(\widehat c_T^\star, \widehat x_T^\star)$ constructed as above Pareto-dominates every other conceivable data-driven predictor-prescriptor pair. As optimization problems with a linear objective function and an ellipsoidal feasible set can be solved in closed form, we can re-express the optimal predictor as~$ \widehat c_T^\star(x) = -x^\top \widehat S_T - \sqrt{2r} \| \Sigma^{1/2} x\|_2 + \rho x^\top \Sigma x$, and therefore the optimal prescriptor~$\widehat x_T^\star$ can be computed efficiently by solving a second-order cone program.
We will compare our optimal predictor against a penalized SAA predictor $\widehat c_T^{\,\mathsf{SAA}}(x) =c(x,\widehat S_T)+\varepsilon$, where the bias parameter~$\varepsilon\geq 0$ has no impact on the associated prescriptor $\widehat x_T^{\,\mathsf{SAA}} \in \arg\min_{x\in X} \widehat c_T^{\,\mathsf{SAA}} (x)$ but can be chosen judiciously to achieve any desired out-of-sample disappointment. We will also compare the optimal predictor against the distributionally robust predictor
\begin{equation*}
    \widehat c_T^{\,\mathsf{W}}(x) = \max_{\theta\in\mathbb{R}^d} \left\{- x^\top \theta + \rho x^\top \Sigma x \ : \ \|\theta-\widehat S_T\|_2 \leq \varepsilon \right\}, 
\end{equation*}
which evaluates the worst-case mean-variance functional across all Gaussian asset return distributions with a 2-Wasserstein distance of at most $\varepsilon\geq 0$ from the nominal distribution~$\mathcal N(\widehat S_T,\Sigma)$. Indeed, the 2-Wasserstein distance between two Gaussian distributions with mean vectors~$\theta$ and~$\widehat S_T$, respectively, and with the same covariance matrix is given by~$\|\theta-\widehat S_T\|_2$ \cite{gelbrich1990wasserstein}. One readily verifies that evaluating the Wasserstein distributionally robust prescriptor~$\widehat x_T^{\,\mathsf{W}} \in \arg\min_{x\in X} \widehat c_T^{\,\mathsf{W}}(x)$ is tantamount to solving a second-order cone program. We emphasize that the distributionally robust predictor-prescriptor pairs described in \cite[Section~5]{ref:vanParys:fromdata-17} are not well-defined in the context considered here because the mean-variance portfolio selection problem fails to be risk-neutral and because the Gaussian return distribution fails to have a compact support.

We conduct several numerical experiments with synthetic asset return data drawn from a normal distribution~$\mathcal{N}(\theta_\star,\Sigma)$, where~$\theta_\star$ and~$\Sigma$ are calibrated to match the historical mean vector and covariance matrix of the ``25 portfolios formed on size and book-to-market'' dataset from the Fama-French online data library.\footnote{See \url{http://mba.tuck.dartmouth.edu/pages/faculty/ken.french/data_library.html} (accessed on 7 September 2022).}
This dataset contains 1{,}153 monthly returns between July 1926 and July 2022 of $d=25$ portfolios of stocks (``assets'') formed on size and on the ratio of book equity to market equity.
Working with synthetic data allows us to test the performance of the proposed data-driven predictors and prescriptors based on datasets of an arbitrary size.
 In what follows we set the risk-aversion parameter to~$\rho=\frac{1}{2}$. Figure~\ref{fig:portf:pareto} visualizes the trade-off between the asymptotic in-sample cost $\lim_{T\to\infty} \mb E_{\theta_\star}[\widehat c_T(\widehat x_T)]$ and the decay rate of the out-of-sample disappointment $\lim_{T\to\infty}-\frac{1}{T}\log \mb P_{\theta_\star} [ c(\widehat x_T, {\theta_\star}) > \widehat c_T(\widehat x_T) ]$ for the optimal, the penalized SAA, the Wasserstein distributionally robust predictor-prescriptor pairs
as a function of~$r$ and~$\varepsilon$. 

While computing the optimal prescriptor~$\widehat x_T^\star$ for a single training dataset is essentially instant, numerically estimating the out-of-sample disappointment of~$(\widehat c_T^\star, \widehat x_T^\star)$ for a fixed sample size~$T$ requires on the order of $e^{rT}$ independent training datasets. Moreover, the sample size~$T$ needed to approximate the asymptotic decay rate of the out-of-sample disappointment is significantly higher in this portfolio selection problem with $25$ random asset returns than in the newsvendor problem of Section~\ref{sec:newsvendor} with a single random demand ($T=20{,}000$ versus $T=200)$. Evaluating the decay rate of the out-of-sample disappointment therefore becomes cumbersome as $r$ grows. For this reason, Figure~\ref{fig:portf:pareto} focuses only on a relatively narrow range of small decay rates.  
Nevertheless, Figure~\ref{fig:portf:pareto} does indeed corroborate the theoretical Pareto dominance property of $(\widehat c^\star, \widehat x^\star)$ in this range, which is guaranteed by Theorem~\ref{thm:prescriptor:equivalence}. We emphasize that practitioners do not have to compute the Pareto curve of Figure~\ref{fig:portf:pareto}. They only need to solve a single optimization problem to find the optimal decision corresponding to their dataset.

 

\begin{figure}[t] 
\centering
{
%
%
\begin{tikzpicture}

\begin{axis}[%
width=2.2in,
height=1.8in,
at={(1.011111in,0.641667in)},
scale only axis,
unbounded coords=jump,
xmin=0,
xmax=0.00055,
ymin=-0.041,
ymax=-0.0305,
ytick = {-0.04, -0.038,-0.036,-0.034,-0.032},
xlabel={\footnotesize{$\lim\limits_{T\to\infty}-\frac{1}{T}\log \mb P_{\theta_\star} [ c(\widehat x_T, {\theta_\star}) > \widehat c_T(\widehat x_T) ]$}},
x label style={xshift=-3.0em,yshift=-0.25em},
ylabel={\footnotesize{$\lim\limits_{T\to\infty} \mb E_{\theta_\star}[\widehat c_T(\widehat x_T)]$}},
y label style={yshift=0.25em},
title style={font=\bfseries},
legend to name=named:new,
legend style={legend cell align=left,align=left,draw=white!15!black,legend columns=3}
]
\addplot [color=red,solid,line width = 1.2pt, smooth]
  table[row sep=crcr]{%
  1.62126926341671e-05	-0.0409561686039733\\
1.62196081040843e-05	-0.0409557344068034\\
1.63488727529712e-05	-0.0409476279283147\\
1.65016506496156e-05	-0.0409380702814846\\
1.6515562725387e-05	-0.0409372012115284\\
1.66618737055642e-05	-0.0409280740272405\\
1.68366135213536e-05	-0.0409172036689653\\
1.68366135213536e-05	-0.0409172036689653\\
1.68459501536575e-05	-0.0409166237735459\\
1.71644374840744e-05	-0.0408968986571825\\
1.74329767143356e-05	-0.0408803512849132\\
1.74329767143356e-05	-0.0408803512849132\\
1.78313663012432e-05	-0.0408559436481864\\
1.90093650461948e-05	-0.0407847468240182\\
1.97683136715375e-05	-0.0407396360819442\\
2.01558932519908e-05	-0.0407168248737946\\
2.13829796808967e-05	-0.0406455953577706\\
2.21479656267872e-05	-0.0406019399465622\\
2.25859246312813e-05	-0.0405772020687374\\
2.38910385150152e-05	-0.0405045662055056\\
2.38910385150152e-05	-0.0405045662055056\\
2.48187491734004e-05	-0.0404539026224449\\
2.76489761698357e-05	-0.0403041388681379\\
3.05322979524101e-05	-0.0401586611412716\\
3.18320415559819e-05	-0.0400953103445331\\
3.38407309636193e-05	-0.040000008812546\\
3.38407309636193e-05	-0.040000008812546\\
4.14941852066708e-05	-0.0396636515446596\\
5.29407409397022e-05	-0.0392281695964092\\
5.58591837126773e-05	-0.0391280117108424\\
6.61565893081125e-05	-0.0388033482501249\\
8.07644163056138e-05	-0.0384031527290713\\
8.41811405311739e-05	-0.0383173515255692\\
9.66776737422971e-05	-0.0380220327526961\\
0.0001146977883148	-0.0376320709189825\\
0.000119189757420979	-0.0375392583753529\\
0.000133537720817926	-0.0372504596669901\\
0.000154943651623675	-0.0368346213936643\\
0.000160694414167858	-0.0367253309030392\\
0.000179256010991154	-0.0363798233098491\\
0.000203336469759456	-0.0359521879869786\\
0.000207704728331394	-0.0358777146410567\\
0.000264583641191468	-0.0350169525959605\\
0.000319846482760807	-0.0343380832837719\\
0.000387200165159138	-0.033389865256697\\
0.000425859659570812	-0.0326167806297877\\
0.000460517018598809	-0.0318133123713712\\
0.000515447633032215	-0.0309781091154828\\
};
\addlegendentry{penalized SAA\quad};

\addplot [color=blue,solid,line width = 1.2pt]
  table[row sep=crcr]{%
1.75867392843598e-05	-0.0408255794339219\\
2.49338744915172e-05	-0.0404184420635016\\
2.92475345041455e-05	-0.0402011573527395\\
3.28922194428779e-05	-0.0400289515316029\\
3.62628617239089e-05	-0.0398783157755419\\
3.62628617239089e-05	-0.0398783157755419\\
5.4196036996256e-05	-0.0391951579077383\\
6.98711359453867e-05	-0.0387243051419821\\
8.46409760686576e-05	-0.0383551593061337\\
9.89044523709774e-05	-0.0380460176106986\\
0.000112880586367566	-0.037773811146366\\
0.000127105606177939	-0.0375176460964654\\
0.000127105606177939	-0.0375176460964654\\
0.000202277719902633	-0.0363305666538489\\
0.000279078242096598	-0.0353695136972974\\
0.000345387763949107	-0.0347329654583103\\
0.000418152125579449	-0.0338907191954933\\
0.00044613291497622	-0.0334467234828731\\
0.000480790274004217	-0.0328381261466948\\
0.000515447633032215	-0.0323274581500017\\
};
\addlegendentry{optimal \quad};

\addplot [color=green,solid,line width = 1.2pt, smooth]
  table[row sep=crcr]{%
1.74282530487676e-05	-0.0407972233251059\\
2.07606229781522e-05	-0.0406106563102659\\
2.39259894507038e-05	-0.0404411137112566\\
2.76953556734221e-05	-0.0402484546763218\\
3.07630320951437e-05	-0.0400986809280225\\
3.36280273475779e-05	-0.0399641983394544\\
3.36280273475779e-05	-0.0399641983394544\\
3.36280273475779e-05	-0.0399641983394544\\
4.82390470517295e-05	-0.0393507588251983\\
4.89570894345972e-05	-0.0393234246385288\\
6.10785598034279e-05	-0.038895818576493\\
6.93280531674218e-05	-0.0386369872208215\\
7.2721522844343e-05	-0.0385370298458351\\
8.41183546228429e-05	-0.0382253122081461\\
8.75500738377944e-05	-0.0381378736439398\\
9.45017019248198e-05	-0.0379686116110982\\
0.000104521754157735	-0.0377407597411032\\
0.000104521754157735	-0.0377407597411032\\
0.000104521754157735	-0.0377407597411032\\
0.000104521754157735	-0.0377407597411032\\
0.000159465851131333	-0.0367033156610928\\
0.000201240924978675	-0.0360380659599477\\
0.000206550427329685	-0.0359580964640256\\
0.000210995389259872	-0.0358918671768251\\
0.000263276377186252	-0.0351603120977471\\
0.000300244378372006	-0.0346891388847453\\
0.000315998430704001	-0.0344950859067899\\
0.000342160837892228	-0.0341719521398363\\
0.000343748272807957	-0.034152023847667\\
0.000363221511146043	-0.0339011979268965\\
0.000405586404165404	-0.0332825379959788\\
0.000418152125579449	-0.0330697639776537\\
0.000418152125579449	-0.0330697639776537\\
0.000425859659570812	-0.0329308828306194\\
0.000425859659570812	-0.0329308828306194\\
0.000460517018598809	-0.0322181533384574\\
0.000460517018598809	-0.0322181533384574\\
0.000480790274004217	-0.0317317439773364\\
0.000480790274004217	-0.0317317439773364\\
0.000480790274004217	-0.0317317439773364\\
0.000480790274004217	-0.0317317439773364\\
0.000480790274004217	-0.0317317439773364\\
0.000480790274004217	-0.0317317439773364\\
};
\addlegendentry{Wasserstein DRO};


\end{axis}
\end{tikzpicture}
 \newline
\ref{named:new}
\caption[]{Asymptotic in-sample cost versus decay rate of out-of-sample disappointment. All probabilities and expectations involving random training data are evaluated empirically using~$10^4$ independent training~sets.}
\label{fig:portf:pareto}
\end{figure}



The previous experiment focused on asymptotic performance. In contrast,
Figure~\ref{fig:portfolio:iid:setting} depicts the out-of-sample disappointment of different predictor-prescriptor pairs and their decay rates for finite~$T$. As expected, the out-of-sample disappointment of the SAA predictor-prescriptor pair with $\varepsilon=0$ saturates at a strictly positive level as~$T$ tends to infinity. Although the out-of-sample disappointment decays exponentially for~$\varepsilon>0$, the decay rate appears to be quite sensitive to the particular choice of~$\varepsilon$. Calibrating~$\varepsilon$ to achieve a desired decay rate~$r$ seems therefore quite challenging. The same remark can be made concerning the Wasserstein distributionally robust predictor-prescriptor pairs.
In contrast, the optimal predictor-prescriptor pair attains the desired decay rate without any calibration; see Figure~\ref{fig:port:iid:decay}. 


Figure~\ref{fig:portf:out-of-sample} reports the expected out-of-sample cost~$\mathbb{E}_{\theta_\star}[c(\widehat x_T,\theta_\star)]$ of different prescriptors~$\widehat x_T$ as a function of~$T$. Even though our theory offers only {\em indirect} statistical guarantees on the out-of-sample cost of the optimal prescriptor~$\widehat x_T^\star$ ({\em e.g.}, the out-of-sample cost $c(\widehat x_T^\star,\theta_\star)$ falls below the in-sample cost $\widehat c_T^\star(\widehat x^\star_T)$ with high probability $\approx 1-e^{-rT}$), the expected out-of-sample cost~$\mathbb{E}_{\theta_\star}[c(\widehat x_T,\theta_\star)]$ is often the actual quantity of interest in applications. Note that unlike Figures~\ref{fig:portf:pareto} and~\ref{fig:portfolio:iid:setting}, Figure~\ref{fig:portf:out-of-sample} focuses on the small data regime. We observe that all distributionally robust prescriptors outperform the SAA prescriptor for small sample sizes~$T$, which is consistent with the findings in~\cite{ref:Peyman-18}. As~$T$ grows, however, the SAA prescriptor eventually displays the lowest out-of-sample cost because it is asymptotically consistent---unlike the DRO prescriptors with an ambiguity set of a fixed positive radius. In all experiments of Figure~\ref{fig:portf:out-of-sample} the radius~$\varepsilon$ of the Wasserstein ambiguity set is calibrated to ensure that the out-of-sample disappointment of the corresponding prescriptor decays at the prescribed rate~$r$. Figure~\ref{fig:portf:out-of-sample} reveals that the optimal and the Wasserstein distributionally robust prescriptors display a similar out-of-sample cost. Yet, in all experiments the out-of-sample disappointment of the optimal prescriptor decays faster.

\begin{figure}[t] 
\centering
\subfloat[Out-of-sample disappointment
]{
%
%
\definecolor{mycolor1}{rgb}{0.00000,0.44700,0.74100}%
\definecolor{mycolor2}{rgb}{0.85000,0.32500,0.09800}%
\begin{tikzpicture}

\begin{axis}[%
width=1.75in,
height=1.75in,
at={(1.011111in,0.641667in)},
scale only axis,
xmin=0,
xmax=8000,
ymin=0,
ymin=0,
ymax=1,
xtick = {0,2000,4000,6000,8000},
xticklabels = {0,2{,}000,4{,}000,6{,}000,8{,}000},
xlabel={\footnotesize{$T$}},
ylabel={\footnotesize{$\mb P_{\theta_\star} [ c(\widehat x_T, {\theta_\star}) > \widehat c_T(\widehat x_T) ]$}},
y label style={yshift=-0.25em},
scaled ticks=false, tick label style={/pgf/number format/fixed},
legend to name=named,
legend style={legend cell align=left,align=left,draw=white!15!black,legend columns=2}
]





\addplot [color=red,solid,line width = 1.5pt,smooth]
  table[row sep=crcr]{%
5	1\\
32	1\\
59	1\\
85	1\\
112	1\\
139	1\\
166	1\\
193	1\\
220	1\\
246	1\\
273	0.9999875\\
300	0.9999875\\
350	1\\
800	0.998425\\
1250	0.9914625\\
1700	0.9802125\\
2150	0.9649375\\
2600	0.951675\\
3050	0.935925\\
3500	0.922275\\
3950	0.909\\
4400	0.896875\\
4850	0.885875\\
5300	0.8754375\\
5750	0.866125\\
6200	0.856525\\
6650	0.84865\\
7100	0.8404\\
7550	0.8329125\\
8000	0.826525\\
};
\addlegendentry{SAA method};

\addplot [color=blue,solid,line width = 1.5pt,smooth]
  table[row sep=crcr]{%
5	1\\
32	1\\
59	1\\
85	1\\
112	1\\
139	1\\
166	1\\
193	0.9999875\\
220	0.9999625\\
246	0.9997875\\
273	0.9998375\\
300	0.99965\\
350	0.999025\\
800	0.9313\\
1250	0.738425\\
1700	0.5247375\\
2150	0.3524875\\
2600	0.227325\\
3050	0.1468875\\
3500	0.0924125\\
3950	0.05815\\
4400	0.0362375\\
4850	0.0222375\\
5300	0.0139375\\
5750	0.0086125\\
6200	0.0053875\\
6650	0.003325\\
7100	0.0021375\\
7550	0.0013625\\
8000	0.000825\\
};
\addlegendentry{optimal DRO method};

\addplot [color=red,dotted,line width = 1.5pt,smooth]
  table[row sep=crcr]{%
5	1\\
32	1\\
59	1\\
85	1\\
112	1\\
139	0.9999875\\
166	0.999925\\
193	0.9997875\\
220	0.9994\\
246	0.9985\\
273	0.997075\\
300	0.9949875\\
350	0.988275\\
800	0.7038125\\
1250	0.34455\\
1700	0.14635\\
2150	0.058375\\
2600	0.021875\\
3050	0.0081625\\
3500	0.0030625\\
3950	0.0010125\\
4400	0.0003875\\
4850	0.00015\\
5300	5e-05\\
5750	1.25e-05\\
6200	0\\
6650	0\\
7100	0\\
7550	0\\
8000	0\\
};
\addlegendentry{SAA - large gamma};

\addplot [color=red,dashed,line width = 1.5pt,smooth]
  table[row sep=crcr]{%
5	1\\
32	1\\
59	1\\
85	1\\
112	1\\
139	1\\
166	1\\
193	0.9999875\\
220	0.999975\\
246	0.999875\\
273	0.99985\\
300	0.9997\\
350	0.9991\\
800	0.9436625\\
1250	0.795575\\
1700	0.6234875\\
2150	0.474675\\
2600	0.351875\\
3050	0.25975\\
3500	0.1914\\
3950	0.140225\\
4400	0.1019875\\
4850	0.0748875\\
5300	0.05485\\
5750	0.039725\\
6200	0.029775\\
6650	0.0215625\\
7100	0.01615\\
7550	0.011425\\
8000	0.00815\\
};
\addlegendentry{SAA - small gamma};

\addplot [color=green,dotted,line width = 1.5pt,smooth]
  table[row sep=crcr]{%
5	1\\
32	0.9971375\\
59	0.942325\\
85	0.8151125\\
112	0.663925\\
139	0.5336625\\
166	0.4351875\\
193	0.36115\\
220	0.305625\\
246	0.2647875\\
273	0.2296625\\
300	0.2031875\\
350	0.1691125\\
800	0.0599125\\
1250	0.0274125\\
1700	0.0136875\\
2150	0.007225\\
2600	0.0040625\\
3050	0.0020875\\
3500	0.00105\\
3950	0.0004875\\
4400	0.0002875\\
4850	0.00015\\
5300	7.5e-05\\
5750	2.5e-05\\
6200	3.75e-05\\
6650	1.25e-05\\
7100	0\\
7550	0\\
8000	1.25e-05\\
};
\addlegendentry{Wasserstein~\eqref{eq:portf:Wstein} with $r_\mathsf{W}^{(3)}$};

\addplot [color=green,dashed,line width = 1.5pt,smooth]
  table[row sep=crcr]{%
5	1\\
32	1\\
59	1\\
85	1\\
112	1\\
139	1\\
166	1\\
193	0.9999875\\
220	0.9998875\\
246	0.999775\\
273	0.9996125\\
300	0.9992875\\
350	0.9982375\\
800	0.923\\
1250	0.752\\
1700	0.5741125\\
2150	0.4287125\\
2600	0.3138\\
3050	0.23105\\
3500	0.1689375\\
3950	0.12335\\
4400	0.08945\\
4850	0.0656625\\
5300	0.048275\\
5750	0.03545\\
6200	0.0261\\
6650	0.01915\\
7100	0.0138875\\
7550	0.0104125\\
8000	0.0076125\\
};
\addlegendentry{Wasserstein~\eqref{eq:portf:Wstein} with $r_\mathsf{W}^{(1)}$};

\addplot [color=green,solid,line width = 1.5pt,smooth]
  table[row sep=crcr]{%
5	1\\
32	1\\
59	1\\
85	0.9999\\
112	0.9993625\\
139	0.99745\\
166	0.9930125\\
193	0.984075\\
220	0.9685375\\
246	0.9480125\\
273	0.919725\\
300	0.886\\
350	0.807675\\
800	0.2036625\\
1250	0.041725\\
1700	0.0083875\\
2150	0.0018375\\
2600	0.0003\\
3050	5e-05\\
3500	1.25e-05\\
3950	0\\
4400	0\\
4850	0\\
5300	0\\
5750	0\\
6200	0\\
6650	0\\
7100	0\\
7550	0\\
8000	0\\
};
\addlegendentry{Wasserstein~\eqref{eq:portf:Wstein} with $r_\mathsf{W}^{(2)}$};


\end{axis}
\end{tikzpicture}%
\hspace{1mm}
\subfloat[Decay rate of out-of-sample disappointment]{
%
%
\definecolor{mycolor1}{rgb}{0.00000,0.44700,0.74100}%
\definecolor{mycolor2}{rgb}{0.85000,0.32500,0.09800}%
\begin{tikzpicture}

\begin{axis}[%
width=1.75in,
height=1.75in,
at={(1.011111in,0.641667in)},
scale only axis,
xmin=0,
xmax=8000,
ymin=0,
ymax=0.4,
xtick = {0,2000,4000,6000,8000},
xticklabels = {0,2{,}000,4{,}000,6{,}000,8{,}000},
ytick = {0,0.1,0.2,0.3,0.4},
yticklabels = {0$\%$,0.1$\%$,0.2$\%$,0.3$\%$,0.4$\%$},
xlabel={\footnotesize{$T$}},
ylabel={\footnotesize{$ -\frac{1}{T} \log \mb P_{\theta_\star} [ c(\widehat x_T, {\theta_\star}) > \widehat c_T(\widehat x_T) ]$}},
y label style={yshift=-0.25em},
scaled ticks=false, tick label style={/pgf/number format/fixed},
legend to name=named,
legend style={legend cell align=left,align=left,draw=white!15!black,legend columns=2}
]





\addplot [color=red,solid,line width = 1.5pt,smooth]
  table[row sep=crcr]{%
5	0\\
32	-0\\
59	-0\\
85	-0\\
112	-0\\
139	-0\\
166	-0\\
193	-0\\
220	-0\\
246	-0\\
273	4.5787831961985e-06\\
300	4.16669270854064e-06\\
350	-0\\
800	1.97030202046054e-04\\
1250	6.85932257611103e-04\\
1700	1.17564082873602e-03\\
2150	1.66009053860426e-03\\
2600	1.90506496273266e-03\\
3050	2.17114537442981e-03\\
3500	2.31176672158618e-03\\
3950	2.41544771657363e-03\\
4400	2.4736086371398e-03\\
4850	2.49854478044777e-03\\
5300	2.51002863363909e-03\\
5750	2.49958328755812e-03\\
6200	2.4979318239654e-03\\
6650	2.46779590036987e-03\\
7100	2.44897619734214e-03\\
7550	2.42154548807153e-03\\
8000	2.38156392680804e-03\\
};
\addlegendentry{SAA ($\varepsilon=0$)};

\addplot [color=blue,solid,line width = 1.5pt,smooth]
  table[row sep=crcr]{%
5	0\\
32	-0\\
59	-0\\
85	-0\\
112	-0\\
139	-0\\
166	-0\\
193	6.47672441742068e-06\\
220	1.70457741557283e-05\\
246	8.6391293221149e-05\\
273	5.9528646357314e-05\\
300	1.1668708809846e-04\\
350	2.78707320479763e-04\\
800	8.89672743181561e-03\\
1250	0.0242588591502838\\
1700	0.0379327730254233\\
2150	0.0484995403884408\\
2600	0.0569759448988523\\
3050	0.0628881406990845\\
3500	0.0680426579456834\\
3950	0.0720184658232269\\
4400	0.0754013814712355\\
4850	0.0784737159997976\\
5300	0.0806258911275811\\
5750	0.0826876633503019\\
6200	0.0842528036056818\\
6650	0.0858088060883356\\
7100	0.0865932162979346\\
7550	0.087396477237442\\
8000	0.0887515896453699\\
};
\addlegendentry{optimal};

\addplot [color=red,dotted,line width = 1.5pt,smooth]
  table[row sep=crcr]{%
5	0\\
32	-0\\
59	-0\\
85	-0\\
112	-0\\
139	8.99286196087908e-06\\
166	4.51824172534218e-05\\
193	1.10115327110895e-04\\
220	2.72809123651118e-04\\
246	6.10213872466297e-04\\
273	1.07299859801353e-03\\
300	1.67503490550914e-03\\
350	3.36979425220327e-03\\
800	0.0439054116906116\\
1250	0.0852412848943136\\
1700	0.113044368579073\\
2150	0.132133375022934\\
2600	0.147015801797633\\
3050	0.157646058504249\\
3500	0.165386391517465\\
3950	0.174565386303382\\
4400	0.178540788844793\\
4850	0.181543819873567\\
5300	0.186858255708229\\
5750	0.196344033280974\\
};
\addlegendentry{SAA ($\varepsilon=0.02$) \quad};

\addplot [color=red,dashed,line width = 1.5pt,smooth]
  table[row sep=crcr]{%
5	0\\
32	-0\\
59	-0\\
85	-0\\
112	-0\\
139	-0\\
166	-0\\
193	6.47672441742068e-06\\
220	1.13637784114824e-05\\
246	5.08161842077823e-05\\
273	5.4949176236304e-05\\
300	1.00015003000664e-04\\
350	2.57258640904044e-04\\
800	7.24833724362669e-03\\
1250	0.0182952124257254\\
1700	0.0277897977752052\\
2150	0.0346569730071004\\
2600	0.0401722800033916\\
3050	0.0441978901312394\\
3500	0.0472397085703795\\
3950	0.0497343545212085\\
4400	0.0518842050506186\\
4850	0.0534385214757133\\
5300	0.0547764734388849\\
5750	0.0561004272445106\\
6200	0.0566788090925931\\
6650	0.0576962343487302\\
7100	0.0581103553424433\\
7550	0.0592311436053228\\
8000	0.0601217168966171\\
};
\addlegendentry{SAA ($\varepsilon=0.01$)\quad};

\addplot [color=green,dotted,line width = 1.5pt,smooth]
  table[row sep=crcr]{%
139	0.451792562808812\\
166	0.501191750770706\\
193	0.527700463463876\\
220	0.538816553978637\\
246	0.540173846126651\\
273	0.538880746899738\\
300	0.531208693639338\\
350	0.507768887157902\\
800	0.351858764312048\\
1250	0.28774049321905\\
1700	0.252427780670907\\
2150	0.229312002092262\\
2600	0.211767566589472\\
3050	0.202353708237353\\
3500	0.195970431851792\\
3950	0.193068867532313\\
4400	0.185324720402883\\
4850	0.181543819873567\\
5300	0.179207970649584\\
5750	0.184289299706019\\
6200	0.164373703628837\\
6650	0.169771156596331\\
};
\addlegendentry{Wasserstein ($\varepsilon=5\cdot 10^{-5}$) \qquad};

\addplot [color=green,solid,line width = 1.5pt,smooth]
  table[row sep=crcr]{%
5	0\\
32	-0\\
59	-0\\
85	1.17652941568644e-04\\
112	5.69377937078213e-04\\
139	1.83687538684697e-03\\
166	4.22411258998581e-03\\
193	8.31770223950669e-03\\
220	0.0145309441886733\\
246	0.0217022728292832\\
273	0.0306522222219552\\
300	0.0403461094590187\\
350	0.0610272940283093\\
800	0.198911383369133\\
1250	0.254132384754739\\
1700	0.281236045687314\\
2150	0.292992991947781\\
2600	0.311989541665695\\
3050	0.324704509919217\\
3500	0.322565197533029\\
};
\addlegendentry{Wasserstein ($\varepsilon=5\cdot 10^{-6}$)};

\addplot [color=green,dashed,line width = 1.5pt,smooth]
  table[row sep=crcr]{%
5	0\\
32	-0\\
59	-0\\
85	-0\\
112	-0\\
139	-0\\
166	-0\\
193	6.47672441742068e-06\\
220	5.11392402725497e-05\\
246	9.14737058120108e-05\\
273	1.41968900192603e-04\\
300	2.37584649585943e-04\\
350	5.04015723015577e-04\\
800	0.0100157555599106\\
1250	0.0228015164025838\\
1700	0.0326429358138433\\
2150	0.0393938952504185\\
2600	0.0445769014878156\\
3050	0.048036758743143\\
3500	0.0508064701412992\\
3950	0.0529804920508043\\
4400	0.0548653515688047\\
4850	0.0561490163443723\\
5300	0.0571856877484445\\
5750	0.0580805569739826\\
6200	0.0588035478169918\\
6650	0.0594804896745185\\
7100	0.0602361425917801\\
7550	0.0604602420064792\\
8000	0.0609745455743265\\
};
\addlegendentry{Wasserstein ($\varepsilon=5\cdot 10^{-7}$)$\quad$};

\addplot [color=black,dashed,line width = 1.0pt,smooth]
  table[row sep=crcr]{%
0	0.1\\
15000	0.1\\
};
\addlegendentry{desired decay rate $r$};

\addplot [name path=statb, draw=black!40, samples=100, domain=2000:8000] {1151.29254/x} ;
\addplot [name path=plotb, draw=black!20, domain=0:8000, samples=200]{0.5};
\addplot[black!20, pattern=north east lines wide] fill between[ 
of = statb and plotb, 
];

\end{axis}
\end{tikzpicture}%
\vspace{2mm}

\ref{named}
\caption[]{Out-of-sample disappointment of different predictor-prescriptor pairs. 
  All probabilities involving random training data are evaluated empirically using~$10^5$ independent training sets.
  The striped area in the right panel indicates the region where the decay rate $-\frac{1}{T} \log \mb P_{\theta_\star} [ c(\widehat x_T, {\theta_\star}) > \widehat c_T(\widehat x_T)]$ of the out-of-sample disappointment cannot be determined accurately because we expect to observe less than one disappointment event among all independent training sets under consideration.
}
\label{fig:portfolio:iid:setting}
\end{figure}


\begin{figure}[t] 
\centering
\includegraphics[width=1.0\textwidth]{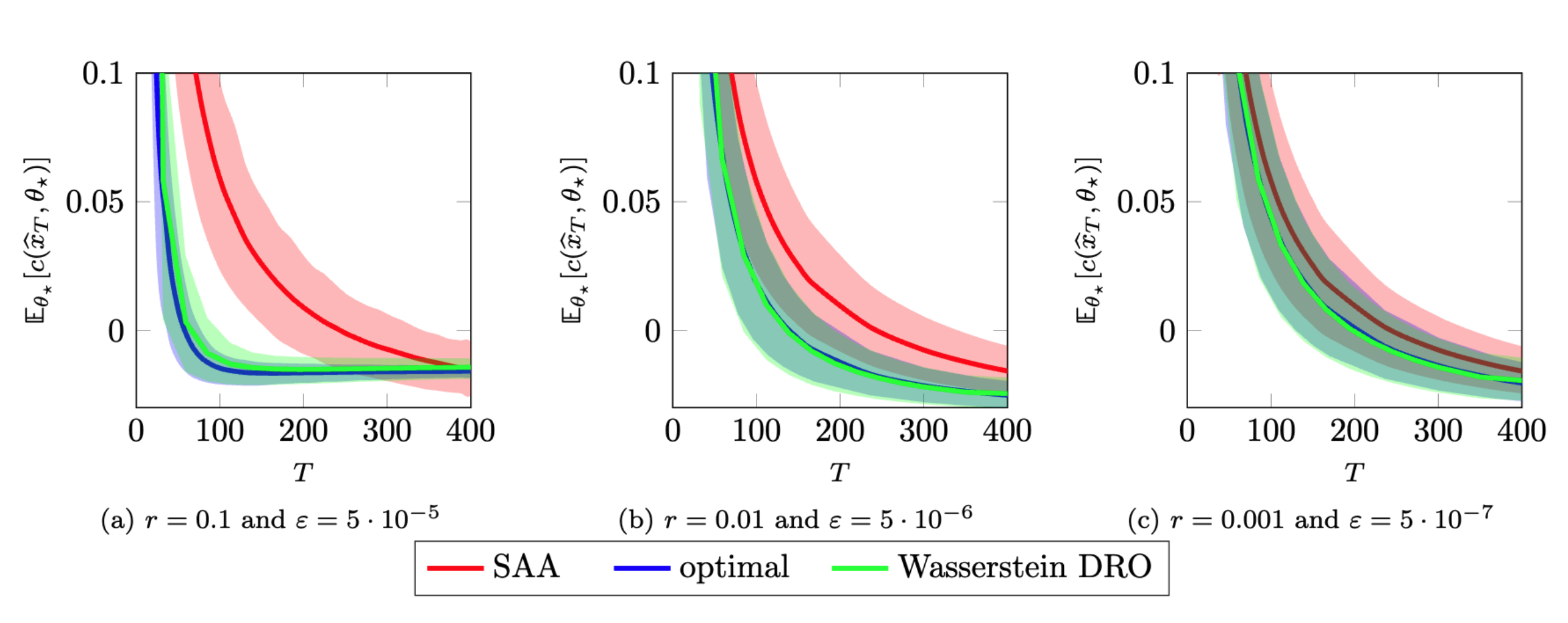}
\caption[]{Visualization of the out-of-sample cost, where the tubes show the 10$\%$ and 90$\%$ quantiles (shaded areas) and the means (solid lines) evaluated on $10^3$ independent training sets.}
\label{fig:portf:out-of-sample}
\end{figure}

\section{Relation to classical efficiency concepts}\label{sec:classical:efficiency}
The study of the fundamental performance limitations and the efficiency properties of various estimators has of course a long and distinguished history in statistics. In this appendix we highlight several connections between the Pareto dominance properties of data-driven predictors studied in Sections~\ref{sec:optimal:dd:prescriptors} and~\ref{sec:equivalence} and some classical efficiency concepts.

Any data-driven predictor~$\widehat c_T(x)$ can be regarded as an estimator for the cost~$c(x, \theta)$ of  a fixed decision~$x\in X$ under the probability measure~$\mathbb P_\theta$. If one is only interested in cost prediction, then the symmetric error probability
\(
  \mb P_\theta[(\widehat c_T( x)-c( x, \theta))^2> \varepsilon_T^2]
\)
for some prescribed error tolerance~$\varepsilon_T>0$ represents a more appropriate performance measure than the asymmetric out-of-sample disappointment introduced in Definition~\ref{def:oos-disappointment}. There is indeed a vast literature on quantifying the statistical efficiency of estimators based on how fast this error probability decays to zero as the sample size~$T$ grows. Estimators for which this decay is in some precise sense as fast as possible are designated as efficient. 
There are two classical notions of efficiency that correspond to different asymptotic regimes of the error tolerance~$\varepsilon_T$.

The Cram\'er-Rao inequality guarantees that the variance of $T^{\frac{1}{2}}(\widehat c_T(x)-c( x, \theta))$ is bounded below by the inverse Fisher information whenever~$\widehat c_T(x)$ represents an unbiased estimator for~$c(x,\theta)$ and some standard regularity conditions are met. 
Estimators that attain this bound asymptotically are termed \textit{relatively Pitman efficient} \cite{kester1985some}. For such estimators the error probability 
\(
\mb P_\theta[(\widehat c_T( x)-c( x, \theta))^2> \varepsilon_T^2]
\)
can be guaranteed to remain uniformly small if the error tolerances decay as~$\varepsilon_T=\mc O(T^{-\frac 12})$.

When focusing on constant error tolerances $\varepsilon_T=\varepsilon>0$, on the other hand, then the error rate
\begin{equation*}
  e(\varepsilon, \theta, \widehat c(x)) = \limsup_{T\to\infty}  \frac{1}{T} \log \mb P_\theta \left[ (\widehat c_T(x)-c(x, \theta))^2>\varepsilon^2 \right]
\end{equation*}
may be used as an appropriate yardstick for comparing estimators \cite{basu1956concept,bahadur1960stochastic}. 
Bahadur proved under standard regularity conditions that the error rate $e(\varepsilon, \theta, \widehat c(x))$ of any consistent estimator $\widehat c(x)$ is bounded below by a function $b(\varepsilon, \theta)$ and thus established a constant error counterpart to the Cram\'er-Rao bound \cite{bahadur1960stochastic}.
As small error tolerances are particularly important, it is sometimes reasonable to measure the quality of an estimator by its error rate in the limit when $\varepsilon$ tends to~$0$. Accordingly, an estimator is called \textit{locally Bahadur efficient}~if
\[
  \lim_{\varepsilon\to 0} \frac{e(\varepsilon, \theta, \widehat c(x))}{b(\varepsilon, \theta)}=1 \quad \forall \theta\in \Theta.
\]
Such estimators attain Bahadur's lower bound for small values of $\varepsilon$.
Similarly, an estimator is called \textit{globally Bahadur efficient} if $e(\varepsilon, \theta, \widehat c(x))=b(\varepsilon, \theta)$ for all~$\varepsilon>0$ and~$\theta\in\Theta$. As~$e(\varepsilon, \theta, \widehat c(x))$ is never smaller than~$b(\varepsilon, \theta)$, such an estimator constitutes a Pareto dominant solution of the multi-objective optimization problem
\begin{equation*}
  \displaystyle\mathop{\text{minimize}}_{\widehat c}{} \left\{ e(\varepsilon, \theta, \widehat c(x))  \right\}_{\varepsilon>0,\, \theta\in\Theta},
\end{equation*}
which is reminiscent of~\eqref{eq:optimal-predictor}. A globally efficient estimator, should it exist, enjoys an optimal error rate for constant errors of any size $\varepsilon>0$.
As most multi-objective optimization problems admit no Pareto dominant solutions, the existence of Bahadur efficient estimators can not be taken for granted. They are in fact only known to exist if the ambiguity set~$\mathcal P$ constitutes an exponential family, and there is strong evidence suggesting that they do not exist for more general ambiguity sets \cite{kester1985some}. These findings are in line with the strong optimality results presented in Section~\ref{sec:equivalence}, which also require~$\mathcal P$ to represent an exponential family.

\end{document}